\include{macpap2}
\documentclass[12pt,a4paper]{ociamthesis}
\usepackage{amsfonts,amscd,amsthm,amsgen,amsmath,amssymb,epsfig}
\usepackage[all]{xy}
\graphicspath{{./}{Figs/}{Plots/}}
\newtheorem{prop}{Proposition}[section]
\newtheorem{thm}{Theorem}[section]

\newtheorem{lemp}{Lemma}[prop]
\newtheorem{conj}{Conjecture}[section]

\newtheorem{corp}{Corollary}[prop]
\theoremstyle{remark}
\newtheorem{rem}{Remark}[section]

\theoremstyle{definition}
\newtheorem{defn}{Definition}[section]
\newcommand{\leftexp}[2]{{\vphantom{#2}}^{#1}{#2}}
\title{$G_2$ Geometry And \\ Integrable Systems}
\author{David Baraglia}
\college{Hertford College}
\degree{Doctor of Philosophy}
\degreedate{Trinity 2009}

\usepackage[pdftex]{hyperref}
\begin{document}


\baselineskip=18pt plus1pt

\setcounter{secnumdepth}{3}
\setcounter{tocdepth}{3}

\maketitle

\begin{abstract}
We study the Hitchin component in the space of representations of the fundamental group of a Riemann surface into a split real Lie group in the rank $2$ case. We prove that such representations are described by a conformal structure and class of Higgs bundle we call cyclic and we show cyclic Higgs bundles correspond to a form of the affine Toda equations. We also relate various real forms of the Toda equations to minimal surfaces in quadrics of arbitrary signature. In the case of the Hitchin component for ${\rm PSL}(3,\mathbb{R})$ we provide a new proof of the relation to convex $\mathbb{RP}^2$-structures and hyperbolic affine spheres. For ${\rm PSp}(4,\mathbb{R})$ we prove such representations are the monodromy for a special class of projective structure on the unit tangent bundle of the surface. We prove these are isomorphic to the convex-foliated projective structures of Guichard and Wienhard.

We elucidate the geometry of generic $2$-plane distributions in $5$ dimensions, work which traces back to Cartan. Nurowski showed that there is an associated signature $(2,3)$ conformal structure. We clarify this as a relationship between a parabolic geometry associated to the split real form of $G_2$ and a conformal geometry with holonomy in $G_2$. Moreover in terms of the conformal geometry we prove this distribution is the bundle of maximal isotropics corresponding to the annihilator of a spinor satisfying the twistor-spinor equation.

The moduli space of deformations of a compact coassociative submanifold $L$ in a $G_2$ manifold is shown to have a natural local embedding as a submanifold of $H^2(L,\mathbb{R})$. We consider $G_2$-manifolds with a $T^4$-action of isomorphisms such that the orbits are coassociative tori and prove a local equivalence to minimal $3$-manifolds in $\mathbb{R}^{3,3}\cong H^2(T^4,\mathbb{R})$ with positive induced metric. By studying minimal surfaces in quadrics we show how to construct minimal $3$-manifold cones in $\mathbb{R}^{3,3}$ and hence $G_2$-metrics from a set of affine Toda equations. The relation to semi-flat special Lagrangian fibrations and the Monge-Amp\`ere equation is explained.

\end{abstract}

\begin{acknowledgements}
I would like to thank my supervisor Professor Nigel Hitchin for his supervision and guidance over the years. He has been tremendously supportive and encouraging throughout my studies.

I am indebted to the University of Adelaide for awarding me the George Murray scholarship which financed my study in Oxford and without which I could not have come to Oxford. I would also like to thank my supervisors in Adelaide Nicholas Buchdahl and Mathai Varghese for their supervision and their continuing support over the years.

\end{acknowledgements}

\begin{romanpages}    
\tableofcontents
\end{romanpages}


\chapter{Introduction}
The space of representations of the fundamental group $\pi_1(\Sigma)$ of a compact Riemann surface into a simple Lie group is a rich source of geometry. If we restrict attention to the case that $\Sigma$ has genus $g >1$ and consider representations into ${\rm PSL}(2,\mathbb{R})$, there is a component in the space of such representations consisting of the Fuchsian representations. These correspond to constant negative curvature metrics by means of an identification $\Sigma \simeq \mathbb{H} / \pi_1(\Sigma)$ where $\mathbb{H}$ is the upper half-plane. The space of Fuchsian representations is then identified with Teichm\"uller space.\\

For groups other than ${\rm PSL}(2,\mathbb{R})$ one might hope to similarly find geometric structures on $\Sigma$ from which the representations arise. To be more specific let $H$ be the adjoint form of a simple, split real Lie group. Hitchin \cite{hit1} has identified a component of the space of representations of $\pi_1(\Sigma)$ into $H$ that is contractible and naturally contains a copy of Teichm\"uller space. It is for representations in this space, which following Labourie \cite{lab} we call the {\em Hitchin component}, that we seek a geometric interpretation.
Our approach succeeds in the case that $H$ has rank $2$ while for higher rank it gives only a hint as to what such representations may describe.\\

Associated to a representation in the Hitchin component is a Higgs bundle which is determined by a series of holomorphic differentials. We prove that when all but the highest differential vanishes the Higgs bundle takes on a special form. The corresponding Higgs bundle equations relate to a real form of the affine Toda equations. We then consider in generality the affine Toda equations and prove their relation to harmonic maps.\\

In Chapter \ref{chap2} we study Higgs bundles and their relation to harmonic maps and the affine Toda equations. We begin in Section \ref{higgsandharm} with a review of Higgs bundles and the relationship with harmonic maps. In Section \ref{hitcomp} we introduce the Hitchin component and the construction of Higgs bundles for representations in this component. The link with the Toda equations comes about through studying a special class of Higgs bundles that we call cyclic. We introduce cyclic Higgs bundles in Section \ref{speccase} and prove their equivalence to the Toda equations with particular reality conditions. In Section \ref{sectoda} we propose a general form of the affine Toda equations with a reality condition that covers a wide range of instances of the equations. In Section \ref{msaate} we define superconformal and superminimal surfaces in quadrics of arbitrary signature and show how they relate to the Toda equations and to Higgs bundles.\\

In Chapter \ref{chap3} we then use our results on cyclic Higgs bundles to understand the geometry of representations in the Hitchin component for rank $2$ groups. That is for ${\rm PSL}(3,\mathbb{R})$, ${\rm PSp}(4,\mathbb{R})$ and the split real form of $G_2$. In such cases we find that all such representations can be related to a set of Toda equations which allows us to understand in some detail the geometry underlying the representations. In the rank $2$ case all such representations are described by two holomorphic differentials a quadratic differential and a higher differential. In Section \ref{qdae} we examine how the quadratic differential is closely related to the space of complex structure on the surface and based on work by Labourie \cite{lab} we show it is sufficient to consider representations where the quadratic differential vanishes.

In the case of ${\rm PSL}(3,\mathbb{R})$ the Hitchin component is known to correspond to convex projective structures and hyperbolic affine spheres. In Section \ref{crp2s} we directly construct the projective and affine structures using Higgs bundles.

For ${\rm PSp}(4,\mathbb{R})$ it is known that such representations correspond to the convex-foliated projective structures of Guichard and Wienhard \cite{gui} on the unit tangent bundle of the surface. In Section \ref{hcfpsp} we construct projective structures on the unit tangent bundle and prove they differ from the convex-foliated structures only by homeomorphism of the surface, but have their own distinct features. In particular the fibres of the unit tangent bundle define lines in our projective structure.

Section \ref{g2reps} we are able to relate the Hitchin component for $G_2$ to almost complex curves in a quadric. This is a split real version of the usual notion of almost complex curves for the compact real form of $G_2$, although this is a less intrinsic structure than in the other rank $2$ cases.\\


The next two chapters move away from Higgs bundles but continue the theme of $G_2$ geometry. We study a geometry associated with the split real form of the exceptional Lie algebra $G_2$. This geometry is most conveniently described by the language of Cartan geometries, more specifically parabolic geometries.\\

Cartan geometries are a curved generalisation of homogeneous spaces. As the definition is very broad more important to us will be the notion of a parabolic geometry. Here the homogeneous model space is one associated to a parabolic subgroup. In addition to projective and conformal geometries the parabolic geometry of interest to us is one associated to the split real form of $G_2$. This geometry is the structure that naturally arises from a generic $2$-distribution in $5$ dimensions. Here generic means that taking successive commutators generates the entire tangent bundle. Such distributions were studied by Cartan in a section of his famous 5 variables paper \cite{cartan1}. This geometry has been briefly touched upon in a few places for example \cite{gardner} and \cite{stern} and later Nurowski \cite{nurowski1} in the context of studying a class of differential equations discovered there is an associated conformal structure on the $5$-manifold with split $G_2$ holonomy. Our contribution is to clarify the geometry from the parabolic Cartan geometry point of view and also to give an interpretation in terms of spinors.\\

The homogeneous space for this $G_2$ geometry is a $5$-dimensional quadric which can also be identified as the homogeneous model space for conformal geometry of signature $(2,3)$. As a result we are able to relate the structure to the more familiar conformal geometry. To each generic $2$-distribution we can therefore associate a unique conformal structure. We show the distribution consists of maximal isotropic subspaces in the conformal structure. Moreover since the distribution $V$ is maximal isotropic there is locally a spinor $\psi$ defined up to scale for which $V$ is the annihilator of $\psi$. We prove that we can scale $\psi$ so that it satisfies the twistor spinor equation.\\

Chapter \ref{chap4} begins with background material on parabolic geometries and Cartan connections in Section \ref{cartgeom} and continues with tractor bundles in Section \ref{tract}. We also consider conformal geometry specifically in Section \ref{confgeom} as this will relate to the $G_2$ geometry of the following chapter. Chapter \ref{chap5} begins with Section \ref{octonions} where we introduce a parabolic subgroup of split $G_2$ and study the corresponding parabolic geometry. We relate this to generic $2$-distributions in $5$ dimensions. In Section \ref{confg2} we prove that this $G_2$ geometry is equivalent to conformal geometry with $G_2$ holonomy thus moving the geometry into the realm of conformal geometry. In Section \ref{spinors} we consider the use of spinors in conformal geometry and prove the result that constant spinors for the tractor connection correspond to solutions of the twistor spinor equation. Returning to the $5$-dimensional geometry, the existence of the conformal structure allows one to define, at least locally, spinor bundles. We then prove the existence of a pure spinor for the $2$-distribution satisfying the twistor spinor equation.

Section \ref{examples} considers two examples of this $5$-dimensional geometry, the rolling of two surfaces and a class of differential equations. \\


In Chapter \ref{chap6} we move onto a geometry involving the compact form of $G_2$, namely the coassociative submanifolds of $G_2$ manifolds and coassociative fibrations.

The well-known conjecture of Strominger, Yau and Zaslow \cite{syz} provides a geometric picture of mirror symmetry, at least in the so-called large complex structure limit. The conjecture proposes mirror pairs of Calabi-Yau manifolds which are special Lagrangian torus fibrations over the same base, but with dual fibres. In understanding limiting cases of the conjecture one is motivated to study {\em semi-flat} special Lagrangian fibrations. These are fibrations in which the fibres are flat tori. It is known in this case that the base has natural affine coordinates and a function $\phi$ satisfying the real Monge-Amp\`ere equation ${\rm det}H(\phi)= 1$, where $H(\phi)$ is the Hessian of $\phi$ with respect to the affine coordinates \cite{hit}.

In M-theory $G_2$-manifolds play a role equivalent to Calabi-Yau manifolds in String theory, so it is natural to ask whether there is an analogue of the SYZ conjecture for $G_2$-manifolds. Gukov, Yau and Zaslow argue that the $G_2$ equivalent is a pair of $G_2$-manifolds fibred by coassociative submanifolds over the same base \cite{gyz}. We take this as motivation to study $G_2$-manifolds fibred by flat coassociative tori. More specifically we call a coassociative fibration $X$ {\em semi-flat} if there is a $T^4$-action of isomorphisms of $X$ such that the orbits are coassociative submanifolds. The key result is Theorem \ref{thethm} which states that the base $M$ locally maps into $H^2(T^4,\mathbb{R})$ (equipped with the intersection form) as a minimal $3$-submanifold and conversely such a minimal submanifold gives rise to a semi-flat coassociative fibration.\\

Before studying the specific case of semi-flat $G_2$-manifolds, we investigate the structure of the moduli space of deformations of a compact coassociative submanifold in Section \ref{defo}. Adapting the approach of \cite{hit} which studies the moduli space of special Lagrangians, we find that the moduli space $M$ of deformations of a compact coassociative submanifold $L$ has locally a natural map $u : M \to H^2(L,\mathbb{R})$ defined up to an affine map. The $L^2$ metric on the moduli space is then the induced metric under $u$.\\

Section \ref{seccyl} considers the case of a $G_2$-manifold that is a product $X = Y \times S^1$ of a Calabi-Yau manifold $Y$ and a circle. In this case the coassociative and special Lagrangian moduli spaces can be related. Moreover, in the semi-flat case we show that the minimal submanifold equations reduce to the Monge-Amp\`ere equation, recovering the known result on semi-flat Calabi-Yau manifolds.\\

Section \ref{cfocm} is on compact coassociative fibrations. We show that a compact manifold with holonomy equal to $G_2$ can only have degenerate coassociative fibrations. We then briefly consider the nature of the singularities and provide examples of torus fibrations that might serve as a model for the expected behaviour of coassociative fibrations. In Section \ref{secsemiflat} we consider semi-flat coassociative fibrations. These are coassociative fibrations with a torus action of isometries generating the fibres. In Section \ref{secconstruct} we prove the main result on semi-flat fibrations that they are locally equivalent to positive definite minimal $3$-submanifolds in $H^2(T^4,\mathbb{R}) \simeq \mathbb{R}^{3,3}$.

In \cite{lyz} the authors seek solutions to the Monge-Amp\`ere equation on a $3$-dimensional base that is a cone. This reduces to equations on a surface, in fact the equations for an elliptic affine sphere. This amounts to solving the following equation on a Riemann surface
\begin{equation*}
\psi_{z\overline{z}} + |U|^2e^{-2\psi} + \tfrac{1}{2}e^\psi = 0
\end{equation*}
where $U$ is a holomorphic cubic differential. Up to sign changes this is the Toda equation for the affine Dynkin diagram $A^{(2)}_2$, studied by Tzitz\'eica \cite{tzitz}. In a similar fashion in Section \ref{rtms} we reduce from the minimal submanifold equations on a $3$-manifold to equations on a surface. We consider semi-flat $G_2$-manifolds with a vector field commuting with the $T^4$-action which essentially scales the associative $3$-form. This corresponds to the minimal $3$-manifold being a cone, which in turn is equivalent to a minimal surface in the quadric of unit vectors. We then apply the results of Section \ref{secquadrics} to show that in the case of the unit quadric in $\mathbb{R}^{3,3}$ the equations for a superconformal minimal surface are
\begin{eqnarray*}
2(w_1)_{z\overline{z}} &=& -e^{2w_2-2w_1} - e^{2w_1}, \\
2(w_2)_{z\overline{z}} &=& q\overline{q}e^{-2w_2} + e^{2w_2 - 2w_1}.
\end{eqnarray*}
where $q$ is a holomorphic cubic differential. The case where the $G_2$-manifold is a product of a Calabi-Yau manifold and a circle corresponds to the reduction $e^{2w_2} = q\overline{q}e^{-2w_1}$ in which case the equations reduce to the equation of \cite{lyz}.\\


In Section \ref{extension} we extend the results on semi-flat coassociative fibrations to the case of manifolds with a split $G_2$ structure. In particular if they reduce to equations on a surface we can obtain different real forms of the Toda equations including the case that coincides with the Hitchin component for ${\rm PSp}(4,\mathbb{R})$ which ties in with the earlier work of the Thesis.\\

We conclude with Chapter \ref{further} in which we discuss some further questions that merit investigation.\\



\chapter{Higgs Bundles, harmonic maps and Toda equations}\label{chap2}
In this chapter we examine the links between Higgs bundles, harmonic maps and the affine Toda equations. In Section \ref{higgsandharm} we begin with a review of Higgs bundles followed by harmonic maps. We then explain how Higgs bundles are related to a class of harmonic map. In Section \ref{hitcomp} we introduce a class of Higgs bundles which parameterise a component in a space of representations of the fundamental group of the surface. We introduce a special case of such Higgs bundles which we call cyclic and we relate these to the affine Toda equations. In Section \ref{sectoda} we introduce the affine Toda equations more generally and in Section \ref{msaate} relate them to minimal surfaces in quadrics.

\section{Higgs bundles and harmonic maps}\label{higgsandharm}

Here review introduce the basic theory of Higgs bundles and harmonic maps and show their relationship.
\subsection{Higgs bundles}\label{higgs}

Let $\Sigma$ be a Riemann surface. To each holomorphic vector bundle $E$ over $\Sigma$ we define the {\em slope} $\mu(E)$ by
\begin{equation}
\mu(E) = \frac{{\rm deg}E}{{\rm rank}E}.
\end{equation}
We say that $E$ is {\em semi-stable} if for any proper, non-zero holomorphic sub-bundle $F \subset E$ we have the inequality $\mu(F) \le \mu(E)$. We say $E$ is {\em stable} if the above inequality is strict $\mu(F) < \mu(E)$ and we say $E$ is {\em polystable} if $E$ is a direct sum of stable bundles all with the same slope.\\

The collection of all holomorphic bundles considered up to isomorphism over a compact Riemann surface $\Sigma$ can not in general be given the structure of a Hausdorff topological space in any reasonable way. However Mumford's geometric invariant theory showed that by restricting to stable bundles a reasonable moduli space can be constructed. In fact the moduli space of stable bundles of given rank and degree has a natural structure of an algebraic variety. The stability condition is also important from another point of view. If $E$ is a holomorphic vector bundle of degree zero we can ask whether $E$ can be given a Hermitian metric such that the associated Chern connection is flat. The answer is the well-known result of Narasimhan and Seshadri \cite{nara}. Such a metric exists if and only if $E$ is poly-stable. This provides a link between the moduli space of semi-stable bundles of degree zero and representations of the fundamental group $\pi_1(\Sigma)$ into the unitary groups ${\rm U}(n)$. A similar relation holds for arbitrary degree by considering Hermitian-Einstein connections. In another direction the Narasimhan-Sashadri theorem is extended to non-unitary representations by means of Higgs bundles.\\

The Higgs bundle equations were introduced by Hitchin in \cite{hit2} as a dimensional reduction of the Yang-Mills self-duality equations. Let $\Sigma$ be a Riemann surface with canonical bundle $K$. Let $K$ (not to be confused with the canonical bundle) be the compact real form of a complex semisimple Lie group $G$ with corresponding Lie algebras $\mathfrak{k} \subset \mathfrak{g} = \mathfrak{k} \otimes \mathbb{C}$. Let $\rho$ denote the anti-linear involution corresponding to $\mathfrak{g}$. If $x \otimes y \mapsto k(x,y)$ is the Killing form on $\mathfrak{g}$ then corresponding to $\rho$ is the Hermitian form $x \otimes y \mapsto -k(x , \rho(y))$. The Hermitian adjoint of ${\rm ad}_x$ is $-{\rm ad}_{\rho(x)}$, hence we will denote $-\rho(x)$ by $x^*$.

The Higgs bundle equations for the group $K$ are equations for a pair $(\nabla_A,\Phi)$ consisting of a connection $\nabla_A$ on a principal $K$-bundle $P$ and a section $\Phi$ of the vector bundle ${\rm Ad}P^c \otimes K$ where ${\rm Ad}P^c = P \times_{{\rm Ad}} \mathfrak{g}$ is the complex adjoint bundle. The Higgs bundle equations are
\begin{eqnarray}
F_A + \left[ \Phi , \Phi^* \right] &=& 0 \label{hbe1} \\
\nabla_A^{0,1}\Phi &=& 0 \label{hbe2}
\end{eqnarray}
where $F_A$ is the curvature of $\nabla_A$ and $\nabla_A^{0,1}$ is the $(0,1)$ part of $\nabla_A$. Note also that a solution to these equations defines a flat $G$-connection $\nabla = \nabla_A + \Phi + \Phi^*$ and hence a representation of the fundamental group $\pi_1(\Sigma)$ of $\Sigma$ into $G$. We will have more to say on the link with representations of the fundamental group in Section \ref{higgsharmonic}.\\

Suppose we have a solution $(\nabla_A,\Phi)$ to the ${\rm SU}(n)$ Higgs bundle equations. We have a rank $n$ vector bundle $E$ of degree zero associated to the standard representation of ${\rm SU}(n)$ and $\nabla_A^{0,1}$ defines a holomorphic structure on $E$. We also have that $\Phi$ is a trace free section of ${\rm End}(E) \otimes K$ and from the Higgs bundle equations we have that $\Phi$ is holomorphic. This leads us to the following definition: a {\em Higgs bundle} is a pair $(E,\Phi)$ consisting of a holomorphic vector bundle $E$ and $\Phi$ a holomorphic section of ${\rm End}(E) \otimes K$. Since the Higgs bundle pair consists of holomorphic objects they can be approached from a purely algebro-geometric point of view.\\

So far we have that a solution to the ${\rm SU}(n)$ Higgs bundle equations $(\nabla_A , \Phi)$ defines a Higgs bundle $(E,\Phi)$ where ${\rm deg}E =0$ and $\Phi$ is trace free. Naturally we may ask the converse question. Suppose $(E,\Phi)$ is a Higgs bundle. Given a Hermitian form $h$ on $E$ we then have the Chern connection $\nabla_A$, the unique connection that preserves $h$ and is compatible with the holomorphic structure on $E$. This defines a pair $(\nabla_A, \Phi)$ and we are interested in whether we can find such a pair that solves the Higgs bundle equations (\ref{hbe1}), (\ref{hbe2}). Since $\Phi$ is holomorphic equation (\ref{hbe2}) is automatically satisfied. A necessary condition for a solution to equation (\ref{hbe1}) is that ${\rm deg}E=0$, however this is not sufficient. This leads us to the notion of stability for Higgs bundles.\\

We define the {\em slope} $\mu(E)$ of a Higgs bundle $(E,\Phi)$ to be the slope of $E$, $\mu(E) = {\rm deg}E / {\rm rank}E$. A Higgs bundle $(E,\Phi)$ is said to be {\em semi-stable} if for each proper, non-zero sub-bundle $F \subset E$ which is $\Phi$-invariant, that is $\Phi(F) \subset F \otimes K$ we have $\mu(F) \le \mu(E)$. Similarly we have notions of stable and polystable Higgs bundles.\\

Let us return to the question of the existence of a solution $(\nabla_A,\Phi)$ of the Higgs bundle equations corresponding to a Higgs bundle $(E,\Phi)$ with ${\rm deg}E = 0$. The result of Hitchin \cite{hit2} and Simpson \cite{simp1} is that such a connection $\nabla_A$ exists if and only if $(E,\Phi)$ is polystable and in this case the connection $\nabla_A$ is unique. In the case of a stable bundle the Hermitian metric is unique up to scale.\\

We can express this correspondence on the level of gauge isomorphism classes as follows. Let us fix a smooth vector bundle $E$ of rank $n$ and degree $0$. We restrict to Higgs bundle pairs consisting of a holomorphic structure $\overline{\partial}_E$ on $E$ such that ${\rm det}E = \mathcal{O}$ is the trivial holomorphic line bundle and trace free Higgs field $\Phi$. Two such Higgs bundles are isomorphic if and only if they are ${\rm SL}(n,\mathbb{C})$ gauge equivalent. Fix a Hermitian metric $h$ on $E$ compatible with the volume form given by the ${\rm SL}(n,\mathbb{C})$-structure. Then given a holomorphic structure $\overline{\partial}_E$ on $E$ we have a corresponding Chern connection.

The result of Hitchin and Simpson is that within an ${\rm SL}(n,\mathbb{C})$ gauge isomorphism class of Higgs bundles (here the metric $h$ is fixed and the Higgs bundle $(\overline{\partial}_E , \Phi)$ is gauge transformed) there is a solution to the Higgs bundle equations if and only if the corresponding isomorphism class is polystable. Moreover in the stable case the solution is unique up to ${\rm SU}(n)$ gauge isomorphism.\\

So far we have only considered the correspondence between Higgs bundles and the Higgs bundle equations in the ${\rm SU}(n)$ case. More generally we can set up a correspondence between the Higgs bundle equations for a complex semisimple Lie group $G$ and Higgs bundles of a particular form. Indeed If $(\nabla_A, \Phi)$ is a solution of the Higgs bundle equations with $\nabla_A$ a connection on a principal $K$-bundle $P$ then the complex adjoint bundle $E = {\rm Ad}P^c$ is a holomorphic bundle and since $\Phi$ is a section of $E \otimes K$ it can also be considered as a section of ${\rm End}(E) \otimes K$ thus defining a Higgs bundle pair $(E,\Phi)$ which is clearly polystable.

Conversely we can consider Higgs bundles $(E,\Phi)$ such that $E$ is the adjoint bundle associated to a holomorphic principal $G$-bundle $P^c$ and $\Phi$ is a holomorphic section of ${\rm Ad}P^c \otimes K$. Clearly we have ${\rm deg}E = 0$, so if $(E,\Phi)$ is polystable we know there exists an ${\rm SU}(n)$ solution to the Higgs bundle equations. In fact one can show there is a unique solution $(\nabla_A , \Phi)$ where $\nabla_A$ is a $K$-connection \cite{hit1}. The flat connection $\nabla = \nabla_A + \Phi - \rho(\Phi)$ is then a $G$-connection, hence the monodromy defines a representation of $\pi_1(\Sigma)$ in $G$.\\

\subsection{Harmonic maps}\label{harm}
We review harmonic maps, minimal submanifolds and establish some of their basic properties which we will have need for on several occasions. Standard references for harmonic maps are \cite{eelsam}, \cite{eells}, \cite{eells2}.\\

Let $M,N$ be manifolds with (possibly indefinite) metrics $g,h$ respectively. Given a smooth map $\phi : M \to N$, the differential $\phi_* : TM \to TN$ can be regarded as a section of $T^*M \otimes \phi^{-1}(TN)$. Using the Levi-Civita connections on $M$ and $N$ we have a natural connection on $T^*M \otimes \phi^{-1}(TN)$ which we simply denote $\nabla$. Then we may take the covariant derivative $\nabla \phi_*$ which turns out to be a section of $S^2(T^*M) \otimes \phi^{-1}(TN)$ called the {\em second fundamental form} of $\phi$. The trace of $\nabla \phi_*$ over the $S^2(T^*M)$ factor is a section $\tau(\phi)$ of $\phi^{-1}(TN)$ called the {\em tension field} of $\phi$. We say that $\phi$ is {\em harmonic} if the tension field vanishes.

In local coordinates $\{x^i\}_{i=1}^m$ on $M$ and $\{y^\alpha\}_{\alpha=1}^n$ on $N$ such that $\phi = \{\phi^\alpha(x)\}_{\alpha=1}^n$ the tension field is given by
\begin{equation}
\tau^\gamma(\phi) = g^{ij}\left( \partial^2_{ij} \phi^\gamma - \leftexp{M}{{\Gamma^k}_{ij}} \partial_k \phi^\gamma + \leftexp{N}{{\Gamma^\gamma}_{\alpha \beta}} \partial_i \phi^\alpha \partial_j \phi^\beta \right)
\end{equation}
where $\leftexp{M}{{\Gamma^k}_{ij}}$ and $\leftexp{N}{{\Gamma^\gamma}_{\alpha \beta}}$ are the Christoffel symbols on $M$ and $N$ respectively.\\

We say that $\phi: M \to N$ is a {\em minimal immersion} if $\phi$ is harmonic and $g$ is the pull-back metric $g = \phi^*h$. Equivalently a minimal immersion is an immersion that is a solution of the Euler-Lagrange equations for the volume functional $\phi \mapsto \int_M vol$ where $vol$ is the volume form on $M$ induced by the metric on $N$.\\

We will establish a formula for the Levi-Civita connection on certain homogeneous spaces for later use concerning harmonic maps into such spaces. Similar results can be found in \cite{nom}. Let $G$ be a Lie group, $V \subset G$ a subgroup, $\mathfrak{v} \subset \mathfrak{g}$ the corresponding Lie algebras. Suppose $\mathfrak{v}$ admits an ${\rm Ad}_V$-invariant complement $\mathfrak{h}$ so $\mathfrak{g} = \mathfrak{v} \oplus \mathfrak{h}$ as $V$-modules. Then via the Maurer-Cartan form $\omega : TG \to \mathfrak{g}$ we have the trivialisation
\begin{equation*}
TG = G \times \mathfrak{g}.
\end{equation*}
In a similar fashion we can identify the tangent space of the quotient $G/V$ with
\begin{equation*}
T(G/V) = G \times_V \mathfrak{h}.
\end{equation*}
Write $\omega = \omega^{\mathfrak{v}} + \omega^{\mathfrak{h}}$ with $\omega^{\mathfrak{v}} \in \mathfrak{v}$, $\omega^{\mathfrak{h}} \in \mathfrak{h}$. Then since $\mathfrak{v}$ and $\mathfrak{h}$ are ${\rm Ad}_V$-invariant, this decomposition is itself invariant.\\

Let $p : G \to G/V$ be the natural projection. Suppose now $U \subset G/V$ is an open subset over which we have a local section $\sigma: U \to G$ so $p \, \sigma : U \to G/V$ is simply the inclusion. Then we will denote $\sigma^*\omega^{\mathfrak{v}}$ and $\sigma^* \omega^{\mathfrak{h}}$ simply by $\omega^{\mathfrak{v}}$ and $\omega^{\mathfrak{h}}$. Now $\omega^{\mathfrak{h}}$ provides a canonical identification of $TU$ with $U \times \mathfrak{h}$ by sending $X \in T_uU$ to $(u , \omega^{\mathfrak{h}}(X))$.

We wish to define a torsion free connection over $U$. Given vector fields $X,Y$ it suffices to define $\omega^{\mathfrak{h}}(\nabla_X Y)$. We find that a torsion free connection has the form
\begin{equation}\label{connformula}
\omega^{\mathfrak{h}}(\nabla_X Y) = X(\omega^{\mathfrak{h}} Y) + \frac{1}{2} \left[ \omega X , \omega Y \right]^{\mathfrak{h}} + \frac{1}{2}M(\omega X , \omega Y)
\end{equation}
where $\left[ \; , \; \right]^{\mathfrak{h}}$ denotes the $\mathfrak{h}$-component of the Lie bracket and $M$ is a symmetric map $M : \mathfrak{g} \otimes \mathfrak{g} \to \mathfrak{h}$ (depending on $u \in U$).\\

Suppose now that $\langle \; , \; \rangle$ is a $G$-invariant inner product on $\mathfrak{g}$ such that $\mathfrak{v}$ and $\mathfrak{h}$ are orthogonal. Then $\langle \; , \; \rangle$ transports to a metric on $G$ via $\omega$ and descends to $G/V$ by invariance. So for vector fields $X,Y$ on $U$, the inner product $\langle X , Y \rangle$ is given by
\begin{equation*}
\langle X , Y \rangle = \langle \omega^{\mathfrak{h}} X , \omega^{\mathfrak{h}} Y \rangle.
\end{equation*}
If we require that (\ref{connformula}) preserves the metric this determines $M$. In fact we find
\begin{equation}
M(A,B) = \left[ A^{\mathfrak{v}} , B^{\mathfrak{h}} \right] - \left[ A^{\mathfrak{h}} , B^{\mathfrak{v}} \right].
\end{equation}
Substituting we find that the Levi-Civita connection is given by
\begin{equation}
\omega^{\mathfrak{h}}(\nabla_X Y) = X(\omega^{\mathfrak{h}} Y) + \frac{1}{2} \left[ \omega X , \omega Y \right]^{\mathfrak{h}} + \frac{1}{2} \left[ \omega^{\mathfrak{v}} X , \omega^{\mathfrak{h}} Y \right] - \frac{1}{2} \left[ \omega^{\mathfrak{h}} X , \omega^{\mathfrak{v}} Y \right].
\end{equation}
But since $\mathfrak{v}$ and $\mathfrak{h}$ are ${\rm ad}_{\mathfrak{v}}$-invariant this simplifies a little to
\begin{equation}\label{connformula2}
\omega^{\mathfrak{h}}(\nabla_X Y) = X(\omega^{\mathfrak{h}} Y) + \left[ \omega^{\mathfrak{v}} X , \omega^{\mathfrak{h}} Y \right] + \frac{1}{2} \left[ \omega^{\mathfrak{h}} X , \omega^{\mathfrak{h}} Y \right]^{\mathfrak{h}}.
\end{equation}

Now we shall apply equation (\ref{connformula2}) to the case of a harmonic map from a Riemann surface to a homogeneous space $G/V$ as above. Note that the harmonic equation for a map from a surface $\Sigma$ depends only on the conformal structure on $\Sigma$. Let $\phi : \Sigma \to G/V$ be a map from a Riemann surface to $G/V$.  Locally we compose with the section $\sigma : U \to G$ to get a lift $\tilde{\phi} : U \to G$. We have $\phi^* \sigma^* \omega = \tilde{\phi}^* \omega = \tilde{\phi}^{-1} d\tilde{\phi}$. Suppose $h$ is a metric on $\Sigma$ compatible with the complex structure. Let $z$ denote a local holomorphic coordinate on $\Sigma$. One finds that the map $\phi$ is harmonic if and only if $\nabla_{\overline{z}} \phi_z = 0$. Let us write
\begin{equation}
\tilde{\phi}^* \omega = Adz + Bd\overline{z} = (A^{\mathfrak{v}} + A^{\mathfrak{h}})dz + (B^{\mathfrak{v}} + B^{\mathfrak{h}})d\overline{z}.
\end{equation}
Then the equation for $\phi$ to be harmonic is
\begin{equation}\label{harmsurf1}
\omega^{\mathfrak{h}}(\nabla_{\overline{z}}\phi_z) = A^{\mathfrak{h}}_{\overline{z}} + \left[ B^{\mathfrak{v}} , A^{\mathfrak{h}} \right] + \frac{1}{2} \left[ B^{\mathfrak{h}} , A^{\mathfrak{h}} \right]^{\mathfrak{h}} = 0
\end{equation}
and equivalently we have the conjugate equation
\begin{equation}\label{harmsurf2}
\omega^{\mathfrak{h}}(\nabla_z \phi_{\overline{z}}) = B^{\mathfrak{h}}_z + \left[ A^{\mathfrak{v}} , B^{\mathfrak{h}} \right] + \frac{1}{2} \left[ A^{\mathfrak{h}} , B^{\mathfrak{h}} \right]^{\mathfrak{h}} = 0.
\end{equation}

\subsection{Relation between Higgs bundles and harmonic maps}\label{higgsharmonic}

We consider a certain class of harmonic maps from a Riemann surface into $G/K$ where $G$ is a complex simple Lie group and $K$ is a maximal compact subgroup. Following \cite{cor} we will show how such harmonic maps are related to Higgs bundles.\\

Let $\Sigma$ be a Riemann surface and $\tilde{\Sigma}$ the universal cover. Let $\theta : \pi_1(\Sigma) \to G$ be a representation of the fundamental group of $\Sigma$ in $G$. Consider a $\theta$-equivariant map $\psi : \tilde{\Sigma} \to G/K$, that is if we think of $\tilde{\Sigma}$ as a principal $\pi_1(\Sigma)$-bundle over $\Sigma$ then for each $\gamma \in \pi_1(\Sigma)$ we have $\psi(u\gamma) = \theta(\gamma)^{-1}\psi(u)$.

We can view $\psi$ as a reduction of structure of the principal $G$-bundle $P = \tilde{\Sigma} \times_\theta G$ to the principal $K$-subbundle $P_K = \{ \left[ (x , u ) \right] \in \tilde{\Sigma} \times_\theta G \, | \, u \in \psi(x) \}$ where $ \left[ (x , u ) \right] $ denotes the equivalence class of $(x,u)$ in $\tilde{\Sigma} \times_\theta G$. We can view this reduction of structure as equivalent to defining an anti-involution $\rho$ on the adjoint bundle $P \times_{{\rm Ad}} \mathfrak{g}$ and hence a corresponding Hermitian metric.

Locally we can take a lift $\tilde{\psi} : U \to G$ of $\psi$. Then $\tilde{\psi}$ is a local section of $P_K$. The flat connection over $\Sigma$ corresponding to the monodromy representation $\theta$ descends from the Maurer-Cartan form $\omega$ on $\tilde{\Sigma} \times G$. So in the local trivialisation given by $\tilde{\psi}$ the flat connection is $\tilde{\psi}^*\omega = \tilde{\psi}^{-1} d\tilde{\psi}$.

Let $k$ be the subalgebra of $\mathfrak{g}$ corresponding to $K$ and let $k^\perp$ be the orthogonal complement with respect to the Killing form. Then the compact antilinear involution $\rho$ corresponding to $K$ is given by the identity on $k$ and minus the identity on $k^\perp$. The decomposition $\mathfrak{g} = k \oplus k^\perp$ is $K$-invariant so upon restriction to the principal $K$-subbundle $P_K \subset P$ we have that the flat connection $\omega$ can be invariantly decomposed into $k$ and $k^\perp$-valued parts:
\begin{equation}
\omega = A + \phi.
\end{equation}
Now in the gauge corresponding to $\tilde{\psi}$ we have
\begin{equation}
\omega = \tilde{\psi}^{-1}d\tilde{\psi} = \alpha dz + \beta d\overline{z}.
\end{equation}
Moreover if we let $\alpha = \alpha^k + \alpha^\perp$, $\beta = \beta^k + \beta^\perp$ be the corresponding decompositions then
\begin{eqnarray}
A &=& \alpha^k dz + \beta^k d\overline{z}, \\
\phi &=& \alpha^\perp dz + \beta^\perp d\overline{z}.
\end{eqnarray}
Let us call $\alpha^\perp = \Phi$. Now since $\tilde{\psi}$ is a local lift of $\psi$ we have that $\tilde{\psi}$ corresponds to a local frame such that the anti-involution $\rho$ on the adjoint bundle matches the fixed anti-involution $\rho$ corresponding to $K \subset G$. Therefore $\beta^\perp = \Phi^* = -\rho(\Phi)$. The Maurer-Cartan equations, i.e. the fact that $\omega$ is a flat connection yields
\begin{eqnarray}
F_A + \frac{1}{2} \left[ \phi , \phi \right] &=& 0 \\
d\phi + \left[ A , \phi \right] &=& 0
\end{eqnarray}
where $F_A = dA + \frac{1}{2}\left[ A , A \right]$ is the curvature of the connection defined by $A$.\\

We now consider the condition for $\psi$ to be harmonic with respect to any metric compatible with the conformal structure on $\Sigma$. Recall equation (\ref{connformula2}) giving the Levi-Civita connection on $G/K$. Put $\omega = \omega^k + \omega^\perp$. Then
\begin{equation*}
\omega^\perp (\nabla_X Y) = X(\omega^\perp Y) + \left[ \omega^k X , \omega^\perp Y \right] + \frac{1}{2} \left[ \omega^\perp X , \omega^\perp Y \right]^\perp.
\end{equation*}
In the present case $\left[ k^\perp , k^\perp \right] \subset k$ so the last term vanishes. Put $Y = \partial_z$ and $X = \partial_{\overline{z}}$. So $\omega^k Y = \alpha^k$, $\omega^\perp Y = \Phi$, $\omega^k X = \beta^k$, $\omega^\perp X = \Phi^*$. The harmonic equation $\nabla_{\overline{z}} \phi_z = 0$ is then $0 = \Phi_{\overline{z}} + \left[ \beta^k , \Phi \right]$, that is
\begin{equation*}
(\nabla_A)_{\overline{z}} \Phi = 0
\end{equation*}
where $\nabla_A$ is the connection corresponding to $A$. Therefore we have one of the Higgs bundle equations from the Maurer-Cartan equations $F_A + \left[ \Phi , \Phi^* \right] = 0$ and the second of the Higgs bundle equations is equivalent to the map $\psi$ being harmonic.\\

To summarise we have found that given a representation of the fundamental group $\theta : \pi_1(\Sigma) \to G$, the corresponding flat connection $\nabla$ can be decomposed into $\nabla = \nabla_A + \Phi + \Phi^*$ satisfying the Higgs bundle equations if and only if there exists a $\theta$-equivariant map $\psi: \tilde{\Sigma} \to G/K$ which is harmonic. Conversely a solution of the Higgs bundle equations defines a flat connection with a monodromy representation $\theta$ and we get a $\theta$-equivariant harmonic map.

The natural question is then to which representations does there exist such a harmonic map and hence a corresponding Higgs bundle. The result of Donaldson \cite{don} and Corlette \cite{cor} is that a $\theta$-equivariant harmonic map exists if and only if the representation $\theta$ is {\em reductive}, that is if the induced representation of $\pi_1(\Sigma)$ on $\mathfrak{g}$ is a direct sum of irreducible representations. Moreover when such a map $\psi$ exists it is essentially unique. Any other such solution is of the form $g\psi$ where $g$ is in the centraliser of $\theta(\pi_1(\Sigma))$.

\section{The Hitchin component}\label{hitcomp}

Higgs bundles provide an interpretation for representations of the fundamental group of a surface into a simple Lie group. A particular explicit construction of Higgs bundles by Hitchin \cite{hit1} identifies a component of the space of representations into the split real form of a Lie group. This component is often called the Hitchin component. We consider within the Hitchin component those representations for which only the highest holomorphic differential is non-vanishing and show in this case the Higgs bundle equation reduces to a set of Toda equations.

\subsection{Principal three-dimensional subalgebras}\label{ptd}
Let $\mathfrak{g}$ be a complex simple Lie algebra of rank $l$, let $\mathfrak{h}$ be a Cartan subalgebra with $\Delta$, $\Delta^+$ and $\Pi$ denoting the root system, a system of positive roots and the corresponding simple roots, lastly fix a corresponding basis $\{h_\beta, e_\alpha,e_{-\alpha} | \; \beta \in \Pi, \; \alpha \in \Delta^+\}$. Kostant \cite{kost} defines a subalgebra (unique up to conjugacy) called the {\em principal $3$-dimensional subalgebra} of $\mathfrak{g}$. We construct the principal $3$-dimensional subalgebra as follows. Let
\begin{equation}
x = \dfrac{1}{2}\sum_{\alpha \in \Delta^+}h_\alpha
\end{equation}
then $x = \sum_{\alpha \in \Pi}r_\alpha h_\alpha$ for some positive half-integers $r_\alpha$. We use these to further define
\begin{equation}
e = \sum_{\alpha \in \Pi}\sqrt{r_\alpha}e_\alpha, \; \; \;
\tilde{e} = \sum_{\alpha \in \Pi}\sqrt{r_\alpha}e_{-\alpha}.
\end{equation}
Then we define $\mathfrak{s}$ as the linear span of $\{x,e,\tilde{e}\}$. We must verify that $\mathfrak{s}$ is a subalgebra:
\begin{lemp}\cite{Oni1}
For any $\beta \in \Pi$ we have $\beta(x)=1$.
\begin{proof}
Let $R_\beta$ denote the reflection in the hyperplane in $\mathfrak{h}^*$ orthogonal to $\beta$, namely
\begin{equation*}
R_\beta(\alpha) = \alpha - 2\frac{(\alpha,\beta)}{(\beta,\beta)}\beta.
\end{equation*}
The dual action of $R_\beta$ on $\mathfrak{h}$ is then
\begin{equation}\label{reflection}
R_\beta^t (h) = h-\beta(h)h_\beta.
\end{equation}
From this one verifies the relation $R_\beta^t(h_\alpha) = h_{R_\beta(\alpha)}$. We then have that
\begin{equation*}
R_\beta^t(x) = \dfrac{1}{2}\sum_{\alpha \in \Delta^+} h_{R_\beta(\alpha)}.
\end{equation*}
However we also have that for $\alpha \in \Delta^+$, $R_\beta(\alpha) \in \Delta^+$ except for $\alpha = \beta$ in which case $R_\beta(\beta) = -\beta$. It follows that $R_\beta^t(x) = x - h_\beta$, hence by (\ref{reflection}), $\beta(x)=1$.
\end{proof}
\end{lemp}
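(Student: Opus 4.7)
The plan is to exploit the symmetry of $\Delta^+$ under the simple Weyl reflection $R_\beta$. The idea is to compute $R_\beta^t(x)$ in two different ways and equate the results; one expression will contain $\beta(x)$ as a coefficient, and the other will be an explicit element of $\mathfrak{h}$.

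First I would record two preliminary identities. The reflection $R_\beta$ on $\mathfrak{h}^*$ is defined by $R_\beta(\alpha) = \alpha - 2(\alpha,\beta)(\beta,\beta)^{-1}\beta$, and its transpose on $\mathfrak{h}$ then takes the clean form $R_\beta^t(h) = h - \beta(h)\,h_\beta$, which is a routine dualisation. From this one checks the coroot-compatibility $R_\beta^t(h_\alpha) = h_{R_\beta(\alpha)}$ for every root $\alpha$; this is immediate from the definition of the coroot $h_\alpha$ together with the fact that $R_\beta$ is an isometry of $\mathfrak{h}^*$.

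Armed with these, I would apply $R_\beta^t$ to $x = \tfrac{1}{2}\sum_{\alpha \in \Delta^+}h_\alpha$ in two ways. On the one hand, the linear formula gives $R_\beta^t(x) = x - \beta(x)\,h_\beta$. On the other hand, distributing $R_\beta^t$ through the sum and using the coroot-compatibility turns it into $\tfrac{1}{2}\sum_{\alpha\in\Delta^+} h_{R_\beta(\alpha)}$. Here I would invoke the standard fact that a simple reflection $R_\beta$ permutes $\Delta^+\setminus\{\beta\}$ while sending $\beta\mapsto-\beta$; reindexing the sum then collapses it to $x - h_\beta$ (the only contribution that fails to match the original sum comes from the $\alpha=\beta$ term, which changes from $\tfrac{1}{2}h_\beta$ to $-\tfrac{1}{2}h_\beta$).

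Comparing the two evaluations gives $\beta(x)\,h_\beta = h_\beta$, and since $h_\beta \neq 0$ we conclude $\beta(x) = 1$. The only non-trivial ingredient is the permutation property of simple reflections on positive roots, which I would quote from standard references on root systems rather than reprove; everything else is a direct linear-algebraic manipulation, so I do not expect a serious obstacle.
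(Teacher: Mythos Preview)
Your proposal is correct and follows essentially the same approach as the paper: both compute $R_\beta^t(x)$ in two ways, once via the formula $R_\beta^t(h)=h-\beta(h)h_\beta$ and once by distributing through the sum using $R_\beta^t(h_\alpha)=h_{R_\beta(\alpha)}$ together with the permutation property of simple reflections on $\Delta^+\setminus\{\beta\}$, then compare to read off $\beta(x)=1$.
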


Given a root $\lambda \in \Delta$, let $\lambda = \sum_{\alpha \in \Pi}n_\alpha \alpha$. Then we define the {\em $\Pi$-height} of $\lambda$ to be the integer ${\rm height}(\lambda) = \sum_{\alpha \in \Pi}n_{\alpha}$. The above lemma shows that for $y \in \mathfrak{g}_\alpha$, we have $[x,y] = {\rm height}(\alpha)y$, that is $x$ is the grading element corresponding to the gradation of $\mathfrak{g}$ by height. We now deduce the following commutation relations for $\mathfrak{s}$:
\begin{equation}
[x,e] = e, \; \; \; [x,\tilde{e}] = -\tilde{e}, \; \; \; [e,\tilde{e}] = x.
\end{equation}
Thus $\mathfrak{s}$ is a copy of $\mathfrak{sl}(2,\mathbb{C})$.\\

We have that the element $e$ is regular, that is it has an $l$-dimensional centraliser spanned by elements $e_1, \dots , e_l$. Moreover on restriction to $\mathfrak{s}$ the adjoint representation decomposes into irreducible subspaces:
\begin{equation}\label{gdec}
\mathfrak{g} = \bigoplus_{i=1}^l V_i.
\end{equation}
We can take $e_1, \dots , e_l$ as highest weight elements of $V_1, \dots , V_l$ which shows there are indeed $l$ summands. Since $\mathfrak{s}$ itself must appear as one of the $V_i$, we take it to be $V_1$ so we may take $e_1 = e$. Let $m_1, \dots , m_l$ denote the exponents of $\mathfrak{g}$. These can be described as follows \cite{Oni2}: arrange the positive roots of $\mathfrak{g}$ into an array with the $k$-th row consisting of all roots of height $k$, filling in rows from right to left. Then the lengths of the columns from left to right are the exponents. In particular we see that if the highest root $\delta$ has height $M$ then $m_l = M$ is the largest exponent.

Returning to the decomposition of $\mathfrak{g}$ given by (\ref{gdec}), we have that the dimensions of $V_1, \dots , V_l$ are $(2m_1+1), \dots , (2m_l + 1)$, from which we may write
\begin{equation}\label{gdecomp}
\mathfrak{g} = \bigoplus_{i=1}^l {\rm S}^{2m_i}(V)
\end{equation}
where $V$ is the $2$-dimensional fundamental representation for $\mathfrak{s}$. Observe that $[x,e_l] = Me_l$ and thus $e_l$ is a highest weight vector.

We may also decompose $\mathfrak{g}$ according to the action of $x$, i.e. by height:
\begin{equation}
\mathfrak{g} = \bigoplus_{m= -M}^M \mathfrak{g}_m.
\end{equation}

\subsection{Higgs bundle construction}\label{hbc}
We now consider the construction of Higgs bundles in \cite{hit1}. Let $G$ denote the adjoint form of $\mathfrak{g}$. Let $\Sigma$ be a compact Riemann surface of genus $g > 1$ with $K$ the canonical bundle. There are homogeneous generators $p_1 , \dots , p_l$ for the invariant polynomials on $\mathfrak{g}$ such that for all elements $f \in \mathfrak{g}$ of the form
\begin{equation}
f = \tilde{e} + \alpha_1 e_1 + \dots + \alpha_l e_l,
\end{equation}
we have
\begin{equation}\label{poly}
p_i(f) = \alpha_i.
\end{equation}
We also have that $p_i$ has degree $m_i+1$.\\

Recall that if ${\bf M}$ is the moduli space of Higgs bundles $(E,\Phi)$ for the group $G$ then there is a map
\begin{equation}
p : {\bf M} \to \bigoplus_{i = i}^l H^0(\Sigma , K^{m_i+1})
\end{equation}
obtained by applying the invariant polynomials $p_1, \dots , p_l$ to $\Phi$. We will construct a section of this map.\\

Consider the bundle
\begin{equation}\label{E}
E = \bigoplus_{m=-M}^M \mathfrak{g}_m \otimes K^m.
\end{equation}
This is the adjoint bundle of $\mathfrak{g}$ associated to the principal $G$-bundle $P = P_1 \times_i G$ where $P_1$ is the holomorphic principal ${\rm SL}(2,\mathbb{C})$-bundle associated to $K^{1/2}\oplus K^{-1/2}$ (for a choice of $K^{1/2}$) and $i: {\rm SL}(2,\mathbb{C}) \to G$ is the inclusion corresponding to the principal $3$-dimensional subalgebra $\mathfrak{s}$. We see from (\ref{E}) that $E$ is independent of choice of $K^{1/2}$ and defines for us a holomorphic bundle with the structure of $\mathfrak{g}$.

Next we construct a Higgs field $\Phi$. Let $q_1, \dots , q_l$ be holomorphic differentials of degrees $m_1+1, \dots , m_l +1$. Then we may define $\Phi \in H^0(\Sigma , E \otimes K)$ as follows:
\begin{equation}
\Phi = \tilde{e} + q_1 e_1 + \dots + q_l e_l.
\end{equation}
Here, since $\tilde{e} \in \mathfrak{g}_{-1}$, we regard $\tilde{e}$ as a section of $(\mathfrak{g}_{-1} \otimes K^{-1}) \otimes K$ and similarly $q_ie_i$ can be considered as a section of $(\mathfrak{g}_{i} \otimes K^i ) \otimes K$ so that $\Phi$ is a well defined holomorphic section of $E \otimes K$. The Higgs bundles $(E,\Phi)$ constructed here are polystable and in fact correspond to smooth points of the moduli space ${\bf M}$ \cite{hit1}. We have from (\ref{poly}) that $p_i(\Phi) = q_i$ and it follows that $(q_1 , \dots , q_l ) \mapsto (E,\Phi)$ is our desired section.\\

So far we have constructed Higgs bundles with holonomy in $G$. We show that in fact the Higgs bundles constructed have holonomy in the split real form of $G$. We define an involutive automorphism $\sigma$ on $\mathfrak{g}$ as follows:
\begin{equation}
\sigma(e_i) = -e_i, \; \; \; \sigma(\tilde{e}) = -\tilde{e}.
\end{equation}
Since $\mathfrak{g}$ is obtained from $e_1, \dots , e_l$ by repeated application of ${\rm ad}_{\tilde{e}}$, we see that such a $\sigma$ must be unique. We state some properties of $\sigma$ \cite{hit1}; $\sigma$ exists, and defines a Cartan involution corresponding to the split real form of $\mathfrak{g}$. That is if $\rho$ is a compact anti-involution defining a Hermitian metric solving the Higgs bundle equations then in fact $\rho$ and $\sigma$ commute and $\lambda = \rho \sigma$ is an anti-involution for the split real form of $\mathfrak{g}$. Clearly our Higgs field satisfies $\sigma \Phi = -\Phi$. If $(\nabla_A , \Phi)$ satisfies the Higgs bundle equations then so does $(\sigma^*\nabla_A , \sigma^*\Phi)$ and hence so does $(\sigma^*\nabla_A , \Phi)$. Uniqueness of $\nabla_A$ now shows that $\sigma$ is preserved by $\nabla_A$. Note also that $\lambda \Phi = - \rho (\Phi) = \Phi^*$. It now follows that the flat connection $\nabla_A + \Phi + \Phi^*$ has holonomy in the split real form of $G$. Therefore we have constructed representations of the fundamental group of $\Sigma$ into the split real form.\\

Our section $s : \bigoplus_{i = i}^l H^0(\Sigma , K^{m_i+1}) \to {\bf M}$ takes values in the smooth points of ${\bf M}$ and identifies the vector space $V = \bigoplus_{i = i}^l H^0(\Sigma , K^{m_i+1})$ with a submanifold of ${\bf M}$. Moreover the map $s$ actually maps $V$ into the moduli space ${\bf M}^\lambda$ of flat $G^\lambda$-connections where $G^\lambda$ is the split real form of $G$. Around the smooth points ${\bf M}^\lambda$ is a manifold of dimension $(2g-2) \dim(G^\lambda)$. Moreover we find from the Riemann-Roch theorem that $\dim(V) = (2g-2)\dim(G^\lambda)$ also. Since the image $s(V) \subset {\bf M}^\lambda$ is open, closed and connected it defines a smooth component of the space of representations of the fundamental group in $G^\lambda$. We call this the {\em Hitchin component}.

\subsection{Cyclic Higgs bundles}\label{speccase}
We now consider a special case of this construction in which all but the highest differential are set to zero. We will call such a Higgs bundle {\em cyclic}. Thus our Higgs field is of the form

\begin{equation}\label{higgs}
\Phi = \tilde{e} + qe_l.
\end{equation}
Here $q \in {\rm H}^0(\Sigma,K^{M+1})$ is a holomorphic $(M+1)$-differential, where $M = m_l$.

We will also call an element $X \in \mathfrak{g}$ {\em cyclic} if it has the form
\begin{equation}
X = \sum_{\alpha \in \Pi \cup \{-\delta\}} c_\alpha e_{\alpha}
\end{equation}
where $-\delta$ is the lowest root and all $c_\alpha$ are non-zero. Thus our Higgs field is cyclic as an element of $\mathfrak{g}$ at all points where $q \ne 0$.\\

Solutions of the form (\ref{higgs}) have the property that they are fixed points in the moduli space of Higgs bundles of the action of certain roots of unity in $\mathbb{C}^*$. From the properties of the element $x$ we have
\begin{equation*}
[x,\Phi] = -\tilde{e} + Mqe_l.
\end{equation*}
Let us now define $g = {\rm exp}(2\pi i x/(M+1))$. It follows that
\begin{equation}
{\rm Ad}_g \Phi = {\rm exp}(2\pi i M/(M+1))\Phi = \omega\Phi
\end{equation}
where $\omega$ is a $(M+1)$-th root of unity. Now if $(\nabla_A,\Phi)$ is the solution to the Higgs bundle equation corresponding to $\Phi$ then we may gauge transform this solution by $g$ to get the equivalent solution $(g^*\nabla_A, \omega\Phi)$, we can then use the $U(1)$-action to obtain the solution $(g^*\nabla_A,\Phi)$. The uniqueness theorem for solutions to the Higgs bundle equations now implies that $A$ is gauge invariant under $g$, which in turn is equivalent to covariant constancy of $g$ with respect to the connection $\nabla_A$. Thus the action of $g$ and $\nabla_A$ on sections of $E$ commutes. Observe that the Cartan subalgebra $\mathfrak{h}$ is the unity eigenspace of ${\rm Ad}_g$ from which it follows that $\nabla_A$ preserves the subbundle of $E$ corresponding to $\mathfrak{h}$ and thus the connection form $A$ is $\mathfrak{h}$-valued.\\

We now seek an explicit form of the Higgs bundle equations for the special class of solutions (\ref{higgs}). For this we adopt the following point of view: given the holomorphic data $(E,\Phi)$, we seek a Hermitian metric $H$ on $E$ such that the Chern connection $A = H^{-1}\partial H$ satisfies the Higgs bundle equations:
\begin{equation}
F_A + [\Phi,\Phi^*] = 0
\end{equation}
where $F_A = \overline{\partial}(H^{-1}\partial H)$ is the curvature of $\nabla_A$.

More specifically, in our case a Hermitian metric is a reduction of the structure group of $E$ to the maximal compact subgroup $G^\rho$ of $G$. This is equivalent to finding an anti-linear involution $\rho : E \to E$ preserving the Lie algebra structure of the fibres, i.e. it is a choice of compact real form of $\mathfrak{g}$ on each fibre of $E$. The associated Hermitian form is then
\begin{equation}
H(u,v) = h_\rho(u,v) = -k(u,\rho(v))
\end{equation}
where $k$ is the Killing form on $\mathfrak{g}$. Recall that for $u \in \mathfrak{g}$, the adjoint of $u$ thought of as the endomorphism ${\rm ad}_u$ is given by $x^* = -\rho(x)$. Now since $G$ is the adjoint form of $\mathfrak{g}$ we can think of $G$ as a group of linear transformations in $\mathfrak{g}$. Corresponding to the real form $\rho$ on $\mathfrak{g}$ we may define a map $\rho : G \to G$ by $\rho(g) = (g^*)^{-1}$. Here the adjoint is with respect to the corresponding Hermitian form $H$. Note that the differential of $\rho$ at the identity is simply the anti-involution $\rho$ which explains why we denote both by $\rho$.\\

In order to write down an arbitrary such anti-involution $\rho$ we first define the following fixed anti-involution $\hat{\rho}$ of $\mathfrak{g}$:
\begin{equation}
\hat{\rho}(h_\alpha) = -h_\alpha; \; \; \; \hat{\rho}(e_\alpha) = -e_{-\alpha}; \; \; \; \hat{\rho}(e_{-\alpha}) = -e_\alpha.
\end{equation}
The fixed set of $\hat{\rho}$ defines a compact real form of $\mathfrak{g}$ and moreover any other compact real form  is conjugate to $\hat{\rho}$ by an inner automorphism.

We emphasize that $\rho$ is an anti-involution on the adjoint bundle $E$ while $\hat{\rho}$ is a fixed anti-involution on the Lie algebra $\mathfrak{g}$. Consider now a trivialisation of $E$ over an open subset $U$ which identifies the fibres of $E$ with $\mathfrak{g}$ preserving the Lie algebra structure. The anti-involution $\rho : E \to E$ defines on each fibre $E_x$ an anti-involution $\rho_x : E_x \to E_x$ and upon identifying $E_x$ with $\mathfrak{g}$, $\rho_x$ defines an anti-involution on $\mathfrak{g}$. Hence for each $x \in U$ there exists a $p(x) \in G$ such that $\rho_x = \alpha_x \hat{\rho} \alpha_x^{-1}$ where $\alpha_x = {\rm Ad}_{p(x)}$. Allowing $x$ to vary over $U$ we have a map $p : U \to G$ such that in the trivialisation over $U$, $\rho = {\rm Ad}_p \circ \hat{\rho} \circ {\rm Ad}_{p}^{-1}$. Thus
\begin{equation}\label{relate}
\rho = {\rm Ad}_h \circ \hat{\rho}
\end{equation}
where $h = p\hat{\rho}(p)^{-1}$. Equation (\ref{relate}) is a local formula relating $\rho$ to $\hat{\rho}$. Note also that ${\rm Ad}_h$ is a positive symmetric operator with respect to $\hat{\rho}$. The following lemma will show that we then have $h = {\rm exp}(2\Omega)$ for some $\Omega \in \mathfrak{g}$ with the property $\hat{\rho}(\Omega) = -\Omega$.\\

\begin{lemp}
Let $\mathfrak{g}$ be a complex Lie algebra with Hermitian form. If $h \in {\rm Aut}(\mathfrak{g})$ is positive and symmetric then $h = e^H$ where $H$ is symmetric and $H \in {\rm der}(\mathfrak{g})$ is a derivation of $\mathfrak{g}$.
\begin{proof}
We may decompose $\mathfrak{g}$ into the eigenspaces of $h$
\begin{equation*}
\mathfrak{g} = \bigoplus_\lambda \mathfrak{g}_\lambda.
\end{equation*}
Since $h$ is an automorphism we have $\left[ \mathfrak{g}_{\lambda_1} , \mathfrak{g}_{\lambda_2} \right] \subseteq \mathfrak{g}_{\lambda_1 \lambda_2}$. Now we may write $h = e^H$ where $H = {\rm log}(h)$ is symmetric. Further $H$ acts on $\mathfrak{g}_\lambda$ with eigenvalue ${\rm log}(\lambda)$. The fact that $H$ is a derivation follows immediately since ${\rm log}(\lambda_1\lambda_2) = {\rm log}(\lambda_1) + {\rm log}(\lambda_2)$.
\end{proof}
\end{lemp}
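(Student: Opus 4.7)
The plan is to diagonalise $h$ via the spectral theorem and then define $H = \log h$ using functional calculus, exploiting the fact that logarithm converts the multiplicative compatibility of $h$ with the bracket into the additive Leibniz rule.

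First I would observe that since $h$ is positive and symmetric with respect to the Hermitian form on $\mathfrak{g}$, the spectral theorem gives an orthogonal decomposition $\mathfrak{g} = \bigoplus_\lambda \mathfrak{g}_\lambda$ into eigenspaces with strictly positive real eigenvalues $\lambda$. Next, using the fact that $h$ is a Lie algebra automorphism, if $u \in \mathfrak{g}_{\lambda_1}$ and $v \in \mathfrak{g}_{\lambda_2}$ then
\begin{equation*}
h([u,v]) = [h(u),h(v)] = \lambda_1\lambda_2 [u,v],
\end{equation*}
so $[\mathfrak{g}_{\lambda_1},\mathfrak{g}_{\lambda_2}] \subseteq \mathfrak{g}_{\lambda_1\lambda_2}$ (with the convention that the summand is zero if $\lambda_1\lambda_2$ is not an eigenvalue).

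I would then define $H$ to be the operator acting as multiplication by the real scalar $\log \lambda$ on each eigenspace $\mathfrak{g}_\lambda$. This is well-defined because all eigenvalues are positive reals, and by construction $e^H = h$. Symmetry of $H$ is immediate from orthogonality of the eigenspace decomposition together with the reality of $\log \lambda$. To verify the derivation property, pick $u \in \mathfrak{g}_{\lambda_1}$, $v \in \mathfrak{g}_{\lambda_2}$ and compute
\begin{equation*}
H([u,v]) = \log(\lambda_1\lambda_2)\,[u,v] = \log(\lambda_1)[u,v] + \log(\lambda_2)[u,v] = [H(u),v] + [u,H(v)],
\end{equation*}
using the bracket relation above. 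Extending by linearity gives the Leibniz rule on all of $\mathfrak{g}$.

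There is no real obstacle here; the argument is essentially functional calculus, and the only subtlety is bookkeeping the relation $[\mathfrak{g}_{\lambda_1},\mathfrak{g}_{\lambda_2}] \subseteq \mathfrak{g}_{\lambda_1\lambda_2}$ correctly so that $\log$ converts the multiplicative statement into additivity. Positivity of $h$ is crucial — without it one could not unambiguously choose a real logarithm of the eigenvalues, and one would only obtain a derivation modulo $2\pi i$ ambiguities.
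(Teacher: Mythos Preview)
Your proof is correct and follows essentially the same approach as the paper: diagonalise $h$ via the spectral theorem, use the automorphism property to get $[\mathfrak{g}_{\lambda_1},\mathfrak{g}_{\lambda_2}] \subseteq \mathfrak{g}_{\lambda_1\lambda_2}$, define $H = \log h$ eigenspace-wise, and deduce the derivation property from additivity of the logarithm. Your version is slightly more explicit in spelling out the Leibniz computation and the role of positivity, but there is no substantive difference.
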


Consider how to find the Chern connection associated to the Hermitian metric $h_{\rho}$. In a local trivialisation where $\rho = {\rm Ad}_h \circ \hat{\rho}$ we have:
\begin{eqnarray*}
h_{\rho}(u,v) &=& -k(u,{\rm Ad}_{h}(\hat{\rho}(v))) \\
&=&  h_{\hat{\rho}}(u,{\rm Ad}_{h^{-1}} v).
\end{eqnarray*}
We can regard $h_{\hat{\rho}}$ as a fixed Hermitian form on $\mathfrak{g}$, hence we can think of the metric $h_\rho$ as being locally associated to the matrix $h^{-1} = {\rm exp}(-2{\rm ad}_\Omega)$. The connection form in this trivialisation is then $A = h \partial h^{-1}$.\\

We will need the following
\begin{lemp}
The anti-involution $\rho$ solving the Higgs bundle equations is unique
\begin{proof}
Any other solution is related by a $G$-valued gauge transformation $L: E \to E$ such that $L$ preserves the holomorphic structure and $\Phi$. We will argue that $L$ is the identity. It is a general fact \cite{hit2} that a gauge transformation that is holomorphic and commutes with $\Phi$ is in fact covariantly constant with respect to the unitary connection $\nabla_A$. Now as in (\ref{E}) we may write
\begin{equation}
E = \bigoplus_{m=-M}^M \mathfrak{g}_m \otimes K^m.
\end{equation}
But we have seen that $\nabla_A$ is $\mathfrak{h} = \mathfrak{g}_0$-valued, hence $\nabla_A$ preserves the subbundles $E_m = \mathfrak{g}_m \otimes K^m$. If we decompose $L$ into endomorphisms $L_{ij} : E_i \to E_j$ then each $L_{ij}$ must be covariantly constant. But since $E_i \otimes E_j^* = \mathfrak{g}_i \otimes \mathfrak{g}_j^* \otimes K^{i-j}$ there can only be a non-vanishing covariantly constant section if $i=j$ (since $\Sigma$ is assumed to have genus $g>1$). Therefore $L$ preserves $\mathfrak{g}_i$ for all $i$. Since $L$ preserves the Cartan subalgebra $\mathfrak{h} = \mathfrak{g}_0$ then $L$ is valued in the normaliser $N(T)$ of the maximal torus $T$. But $L$ also preserves $\mathfrak{g}_1 = \bigoplus_{\alpha \in \Pi} \mathfrak{g}_{\alpha}$. Therefore the induced action of $L$ on the roots of $\mathfrak{g}$ preserves the choice of simple roots, hence $L$ is valued in the maximal torus $T$.

We also have that $L$ commutes with $\Phi = \tilde{e} + qe_l$, in particular ${\rm Ad}_L \tilde{e} = \tilde{e}$. It follows that $L$ is the identity.
\end{proof}
\end{lemp}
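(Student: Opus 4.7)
The plan is to exploit two facts already in place: first, that $\nabla_A$ is $\mathfrak{h}$-valued (proved in Section \ref{speccase} using the $\mathbb{Z}/(M+1)$-symmetry generated by $g = \exp(2\pi i x/(M+1))$), and second, that a holomorphic gauge transformation commuting with $\Phi$ must be parallel for $\nabla_A$, a standard fact going back to \cite{hit2}.

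Given two anti-involutions $\rho, \rho'$ both solving the equations, I would relate them by a holomorphic $G$-gauge transformation $L : E \to E$ preserving the holomorphic structure and the Higgs field $\Phi$, and reduce the problem to showing $L$ is the identity. By the parallel-transport fact above, such an $L$ is $\nabla_A$-parallel. Decompose $E = \bigoplus_m \mathfrak{g}_m \otimes K^m$ into the graded pieces $E_m$; since $\nabla_A$ is $\mathfrak{h}$-valued it preserves each $E_m$, so the block components $L_{ij} : E_i \to E_j$ are parallel sections of $\mathfrak{g}_i \otimes \mathfrak{g}_j^* \otimes K^{i-j}$. For $i \neq j$ this bundle has nonzero degree on a surface of genus $g > 1$, ruling out nontrivial parallel sections, so $L$ must be block diagonal with respect to the grading.

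It remains to pin down $L$ inside the block-diagonal subgroup. Since $L$ preserves $\mathfrak{g}_0 = \mathfrak{h}$ and respects brackets, $L$ lies in the normaliser $N(T)$ of the maximal torus $T$. Because $L$ also preserves $\mathfrak{g}_1 = \bigoplus_{\alpha \in \Pi} \mathfrak{g}_\alpha$, the induced permutation of simple roots is trivial, so $L \in T$. Finally, $L$ commutes with $\Phi$ and hence with $\tilde{e} = \sum_{\alpha \in \Pi}\sqrt{r_\alpha}\,e_{-\alpha}$; since every simple negative root space appears here with a nonzero coefficient, a torus element centralising $\tilde{e}$ must act trivially on every simple root and thus equal the identity.

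The step I expect to be most delicate is the vanishing of the off-diagonal blocks $L_{ij}$: it leans simultaneously on the $\mathfrak{h}$-valuedness of $\nabla_A$ (without which the decomposition would fail to be $\nabla_A$-invariant) and on the genus hypothesis $g > 1$ (needed so that $K^{i-j}$ admits no parallel section for $i \neq j$). Once that block-diagonal reduction is in hand, the remaining group-theoretic steps are forced by the structure theory of the principal three-dimensional subalgebra.
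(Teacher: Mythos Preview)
Your proposal is correct and follows essentially the same route as the paper's proof: parallel transport of $L$ via \cite{hit2}, block-diagonality from the $\mathfrak{h}$-valuedness of $\nabla_A$ combined with the degree argument on $K^{i-j}$ for $g>1$, reduction $N(T)\to T$ via preservation of $\mathfrak{g}_1$, and finally triviality from ${\rm Ad}_L\tilde{e}=\tilde{e}$. Your identification of the off-diagonal vanishing as the most delicate step matches exactly where the paper leans on both the cyclic structure and the genus hypothesis.
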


We can now prove the following
\begin{prop}
The anti-involution $\rho$ locally has the form $\rho = {\rm Ad}_h \circ \hat{\rho}$ where $h = e^{2\Omega}$ and $\Omega$ is valued in $\mathfrak{h}$.
\begin{proof}
We have seen that the connection $A$ is valued in $\mathfrak{h}$ and that $\nabla_A$ preserves the element $g$. Note that $g$ commutes with the action of $x$ so that $g$ is a well defined global gauge transformation. Consider a new globally defined anti-involution given by $\mu = {\rm Ad}_g \circ \rho \circ {\rm Ad}_{g^{-1}} = {\rm Ad}_q \circ \rho$ where $q = g\rho(g)^{-1}$.

Now since both $g$ and $H$ are covariantly constant we see that $\mu$ yields a metric compatible with $A$. Thus the Chern connection corresponding to $\mu$ is also $A$. Now if we take the Higgs bundle equation for $\rho$:
\begin{equation*}
F_A + [\Phi , -\rho(\Phi)] = 0
\end{equation*}
and apply ${\rm Ad}_g$, noting that $g$ commutes with elements of $\mathfrak{h}$ we get
\begin{equation*}
F_A + [\omega\Phi, -{\rm Ad}_g \rho(\Phi)] = 0
\end{equation*}
but since $\omega{\rm Ad}_g \circ \rho (\Phi) = {\rm Ad}_g \circ \rho \circ {\rm Ad}_{g^{-1}} \Phi = \mu(\Phi)$ we get
\begin{equation*}
F_A + [\Phi,-\mu(\Phi)] = 0.
\end{equation*}
By uniqueness of solutions to the Higgs bundle equations we have that $\mu = \rho$, and so ${\rm Ad}_q$ is the identity. Thus $q$ lies in the centre of $G$ which is trivial since $G$ is the adjoint form, hence $\rho(g) = g$.\\

Locally we write $\rho = {\rm Ad}_h \circ \hat{\rho}$ where $h = e^{2\Omega}$. Then $g = \rho(g) = {\rm Ad}_h (\hat{\rho}(g)) = {\rm Ad}_h(g)$. Thus $g$ and $h$ commute. This implies that $h$ preserves the eigenspaces of $g$, in particular $h$ preserves the Cartan subalgebra $\mathfrak{h}$. Thus $h$ is valued in the normaliser $N(T)$. We will further argue that $h$ is valued in the maximal torus $T$.

Since $N(T)/T$ is a finite group (the Weyl group) there is some positive integer $a$ such that $h^a = e^{2a\Omega}$ is valued in $T$, say $h^a = e^{X}$ where $X$ is valued in $\mathfrak{h}$. However $h^a$ is a positive symmetric operator in the sense that $h^a = h^{a/2}\hat{\rho}(h^{-a/2}) = h^{a/2}(h^{a/2})^*$. Therefore $h^{2a} = e^{X - \hat{\rho}(X)}$ is also positive and symmetric. So $h^{2a}$ has a unique positive symmetric $2a$-th root which is $h$. Taking this root yields
\begin{equation*}
h = e^{(X - \hat{\rho}(X))/a}.
\end{equation*}
Replacing $2\Omega$ by $(X - \hat{\rho}(X))/a$ if necessary we have that $h = e^{2\Omega}$ is valued in $T$ and $\Omega$ is valued in $\mathfrak{h}$ with $\hat{\rho}(\Omega) = -\Omega$.
\end{proof}
\end{prop}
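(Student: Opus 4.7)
The plan is to exploit the symmetry generated by $g = \exp(2\pi i x/(M+1))$ together with the uniqueness result of the previous lemma, and then to push the conclusion from the normaliser down to the maximal torus using a positivity argument.

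First I would verify that the twisted anti-involution $\mu = {\rm Ad}_g \circ \rho \circ {\rm Ad}_{g^{-1}}$ again solves the Higgs bundle equations for the same pair $(E,\Phi)$. Since we already know that the connection form $A$ is $\mathfrak{h}$-valued and $g$ lies in the Cartan group, $g$ is covariantly constant and $F_A$ commutes with ${\rm Ad}_g$. Applying ${\rm Ad}_g$ to $F_A + [\Phi, -\rho(\Phi)] = 0$ and using ${\rm Ad}_g \Phi = \omega \Phi$ with $\omega^{M+1}=1$ produces $F_A + [\Phi, -\mu(\Phi)] = 0$. By the uniqueness lemma just proved, $\mu = \rho$, which means $q := g\rho(g)^{-1}$ is central in $G$; but $G$ is the adjoint form so $q=1$, i.e.\ $\rho(g) = g$.

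Next, writing $\rho = {\rm Ad}_h \circ \hat{\rho}$ locally with $h = e^{2\Omega}$, the identity $\rho(g) = g$ becomes ${\rm Ad}_h(g) = g$, so $h$ commutes with $g$. The unity eigenspace of ${\rm Ad}_g$ is exactly $\mathfrak{h}$, so ${\rm Ad}_h$ preserves $\mathfrak{h}$ and $h$ lies in the normaliser $N(T)$ of the maximal torus.

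The final step, and the only place where any real work is required, is to promote $h \in N(T)$ to $h \in T$. Since $N(T)/T$ is the finite Weyl group, some power $h^a$ lies in $T$, so $h^a = e^X$ with $X \in \mathfrak{h}$. Because $h$ is positive and symmetric with respect to $h_{\hat{\rho}}$, so is $h^{2a} = e^{X-\hat{\rho}(X)}$, and it therefore possesses a unique positive symmetric $2a$-th root, which must be $h$. This yields $h = e^{(X-\hat{\rho}(X))/a}$, and replacing $2\Omega$ by $(X-\hat{\rho}(X))/a$ we obtain $\Omega \in \mathfrak{h}$ with $\hat{\rho}(\Omega) = -\Omega$, as required. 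The main obstacle in the argument is precisely this extraction of a unique positive root; everything preceding it is a direct consequence of the cyclic symmetry of $\Phi$ combined with the uniqueness of the metric solving the Higgs bundle equations.
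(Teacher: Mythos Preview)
Your proposal is correct and follows essentially the same route as the paper's own proof: use the cyclic symmetry ${\rm Ad}_g\Phi = \omega\Phi$ together with covariant constancy of $g$ to show that the conjugated anti-involution $\mu = {\rm Ad}_g\circ\rho\circ{\rm Ad}_{g^{-1}}$ again solves the Higgs bundle equations, invoke the uniqueness lemma to deduce $\rho(g)=g$, conclude $h$ commutes with $g$ and hence lies in $N(T)$, and finally pass from $N(T)$ to $T$ via the unique positive symmetric $2a$-th root. The only point the paper spells out in slightly more detail is that $\mu$ yields a Hermitian metric whose Chern connection is again $\nabla_A$ (needed for the uniqueness argument to apply), but you have implicitly covered this by noting that $g$ is covariantly constant.
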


We need to understand the transition functions in order to form a global picture. On a trivialising open set $U_\alpha$ the adjoint bundle $E$ becomes $U_\alpha \times \mathfrak{g}$. Let $g_{\alpha \beta}$ be transition functions for the canonical bundle, so that a section $s$ of $K$ consists of local sections $s_\alpha$ such that $s_\alpha = g_{\alpha \beta}s_\beta$. Suppose that the $U_\alpha$ are simply connected so we have $g_{\alpha \beta} = e^{f_{\alpha \beta}}$ where $f_{\alpha \beta}$ is holomorphic. From $(\ref{E})$ we have that the transition functions for $E$ are ${\rm Ad}_{{g_{\alpha \beta}}^x} = {\rm Ad}_{e^{f_{\alpha \beta}x}}$. Note in particular that corresponding to the Cartan subalgebra $\mathfrak{h}$ is a trivial subbundle.\\

On the open set $U_\alpha$ the compact anti-involution $\rho$ is represented by an anti-involution $\rho_\alpha = {\rm Ad}_{h_\alpha} \circ \hat{\rho}$ where $h_\alpha = e^{2\Omega_\alpha}$. The corresponding Hermitian form is
\begin{eqnarray*}
-k(u , \rho_\alpha (v) ) &=& -k(u , {\rm Ad}_{e^{2\Omega_\alpha}} \hat{\rho}(v)) \\
&=& -k(u , \hat{\rho} ( {\rm Ad}_{e^{-2\Omega_\alpha}}v)) \\
&=& h_{\hat{\rho}}(u , {\rm Ad}_{e^{-2\Omega_\alpha}}v).
\end{eqnarray*}
Since the Hermitian forms for $\rho_\alpha$ and $\rho_\beta$ should agree on the overlap we can determine the transition behaviour for $h_\alpha$ and $\Omega_\alpha$:
\begin{eqnarray*}
h_{\hat{\rho}}(u_\alpha , {\rm Ad}_{h_\alpha^{-1}} v_\alpha) &=& h_\rho( {\rm Ad}_{{g_{\alpha \beta}}^x} u_\beta , {\rm Ad}_{h_\alpha^{-1}} ( {\rm Ad}_{{g_{\alpha \beta}}^x} v_\beta ) ) \\
&=& -k({\rm Ad}_{{g_{\alpha \beta}}^x} u_\beta , \hat{\rho} ({\rm Ad}_{h_\alpha^{-1}{g_{\alpha \beta}}^x} v_\beta ) ) \\
&=& -k( u_\beta , {\rm Ad}_{{g_{\alpha \beta}}^{-x}}\hat{\rho} ({\rm Ad}_{h_\alpha^{-1}{g_{\alpha \beta}}^x} v_\beta ) ) \\
&=& -k( u_\beta , \hat{\rho}({\rm Ad}_{{\overline{g}_{\alpha \beta}}^x h_\alpha^{-1}{g_{\alpha \beta}}^x} v_\beta ) ) \\
&=& h_{\hat{\rho}}( u_\beta ,({\rm Ad}_{{\overline{g}_{\alpha \beta}}^x h_\alpha^{-1}{g_{\alpha \beta}}^x} v_\beta ) ) \\
&=& h_{\hat{\rho}}(u_\beta , {\rm Ad}_{h_\beta^{-1}} v_\beta).
\end{eqnarray*}
So $h_\beta^{-1} = e^{\overline{f}_{\alpha \beta} x} h_\alpha^{-1} e^{f_{\alpha \beta}x}$ and hence
\begin{eqnarray}
h_\alpha &=& e^{f_{\alpha \beta}x} h_\beta e^{\overline{f}_{\alpha \beta} x} \\
2\Omega_\alpha &=& (f_{\alpha \beta} + \overline{f}_{\alpha \beta} )x + 2\Omega_\beta.
\end{eqnarray}

In particular the local transformation law for $h_\alpha$ can be rewritten
\begin{equation}
h_\alpha = e^{(f_{\alpha \beta} + \overline{f}_{\alpha \beta})x } h_\beta.
\end{equation}

Having established that $\Omega$ is $\mathfrak{h}$-valued, we have that $\Omega$ and $\partial \Omega$ commute and the connection is simply $A = -2\partial \Omega$. We have in turn that the curvature is $F_A = -2\overline{\partial}\partial \Omega$.

Next we determine $\Phi^* = -\rho(\Phi)$. Recall that $\Phi = \tilde{e} + qe_l$. Thus $\hat{\rho}(\Phi) = -e - \overline{q}\tilde{e_l}$ where $\tilde{e_l}$ is a lowest weight vector for $\mathfrak{g}$. Thus
\begin{eqnarray*}
\Phi^* &=& {\rm Ad}_h (e+ \overline{q}\tilde{e_l}) \\
&=& \sum_{\alpha \in \Pi} \sqrt{r_\alpha}e^{2\alpha(\Omega)}e_\alpha +\overline{q}e^{-2\delta(\Omega)}\tilde{e_l}.
\end{eqnarray*}
We can now determine the commutator $[\Phi,\Phi^*]$:
\begin{eqnarray*}
[\Phi,\Phi^*] &=& \left[ \sum_{\alpha \in \Pi}\sqrt{r_\alpha}e_{-\alpha} + qe_l ,\sum_{\alpha \in \Pi} \sqrt{r_\alpha}e^{2\alpha(\Omega)}e_\alpha +\overline{q}e^{-2\delta(\Omega)}\tilde{e_l} \right] \\
&=& -\sum_{\alpha \in \Pi}r_\alpha e^{2\alpha(\Omega)}h_\alpha -q\overline{q}e^{-2\delta(\Omega)}h_{-\delta}.
\end{eqnarray*}
Although it has been suppressed, this should really be multiplied by a $dz \wedge d\overline{z}$ term. The Higgs bundle equation for $\Omega$ now becomes:
\begin{equation}\label{toda}
-2\Omega_{z\overline{z}} + \sum_{\alpha \in \Pi}r_\alpha e^{2\alpha(\Omega)}h_\alpha +q\overline{q}e^{-2\delta(\Omega)}h_{-\delta} = 0.
\end{equation}

If $\mathfrak{g}$ is a simple Lie algebra of type $L_n$ then these are a version of the {\em affine Toda field equations} \cite{mik} for the Affine Dynkin diagram $L_n^{(1)}$ which we introduce more generally in Section \ref{sectoda}. Note that since $\Omega$ is $\mathfrak{h}$-valued we have $\rho(\Omega) = \hat{\rho}(\Omega) = -\Omega$ and similarly for $h_\alpha, h_{-\delta}$. Thus $\Omega$ is a real linear combination of the $h_\alpha$. However there is an additional constraint on $\Omega$ that we have yet to consider. By examining this condition we will find that in some cases the above equations will reduce to affine Toda field equations for a smaller affine Dynkin diagram.

\subsection{Reality}\label{reality}
The additional constraint on $\Omega$ that we have yet to consider relates to the reality properties of $\Omega$. Recall that we define an involutive automorphism $\sigma$ on $\mathfrak{g}$ by the properties
\begin{equation*}
\sigma(e_i) = -e_i, \; \; \; \sigma(\tilde{e}) = -\tilde{e}.
\end{equation*}
We also note that $\sigma$ commutes with $\hat{\rho}$ and $\hat{\lambda} = \sigma \hat{\rho} = \hat{\rho}\sigma$ is an anti-involution corresponding to the split real form of $\mathfrak{g}$. As before we have $\sigma(\Phi) = -\Phi$ and $\sigma$ is covariantly constant with respect to $\nabla_A$.\\

We shall argue that in fact $\sigma(\Omega) = \Omega $. First note that since $\sigma A = A$ we have $\sigma(\Omega_{z\overline{z}}) = \Omega_{z\overline{z}}$. Thus the remaining terms of the Toda equations (\ref{toda}) must also be fixed by $\sigma$. We also know \cite{hit1} that $\sigma = \phi \circ \hat{\nu}$ where $\phi$ is the inner automorphism corresponding to a rotation by $\pi$ in the principal $3$-dimensional subgroup and $\hat{\nu}$ is a lift of an automorphism $\nu$ of the Dynkin diagram for $\Pi$. In fact $\nu$ is trivial except for the simple Lie algebras of type $A_n,D_{2n+1},E_6$ for which it has order $2$.

In any case, $\phi$ is a conjugation by an element of $\mathfrak{h}$, so that $\sigma$ agrees with $\hat{\nu}$ on $\mathfrak{h}$, that is we have
\begin{equation}
\sigma({h_\alpha}) = h_{\nu(\alpha)}, \; \; \; \sigma(h_{-\delta}) = h_{-\delta}.
\end{equation}
We also have that $\sigma(x) = x$ from which it follows that $r_{\nu(\alpha)} = r_{\alpha}$. By invariance of the right hand side of (\ref{toda}) under $\sigma$ we see that $e^{2\alpha(\Omega)} = e^{2\nu(\alpha)(\Omega)}$, now since $\Omega$ is real with respect to the basis $\{h_\alpha\}$ we may take logarithms to conclude that $\alpha(\Omega) = \nu(\alpha)(\Omega)$ for all $\alpha \in \Pi$, that is $\sigma (\Omega) = \Omega$.\\

From the relation $\sigma(\Omega) = \Omega$ we have that $\rho$ and $\sigma$ commute and thus $\lambda = \sigma \rho = \rho \sigma$ is a real structure on $E$ conjugate to $\hat{\lambda}$, hence $\lambda$ defines a reduction of structure to the split real form $G^\lambda$ of $G$.\\

Now we have that $A$ is invariant under $\rho$ and $\sigma$, hence $\lambda$ also. Therefore $\nabla_A$ has holonomy in $K = G^\rho \cap G^\lambda$, the maximal compact subgroup of $G^\lambda$, in fact $A$ takes values in a maximal torus $T$ of $K$.

Let $\mathfrak{t}$ denote the Lie algebra of $T$, that is $\mathfrak{t} = \mathfrak{h}^\lambda$. The inclusion $i: \mathfrak{t} \to \mathfrak{h}$ induces the restriction map $r: \mathfrak{h}^* \to \mathfrak{t}^*$. Since $\Omega$ is $\mathfrak{t}$-valued we wish to reinterpret the Toda field equations in terms of the restricted roots on $\mathfrak{t}^*$. Intuitively speaking we are dividing out the action of $\nu$.\\

We can realise $\mathfrak{t}^*$ as a subspace of $\mathfrak{h}^*$. In fact the kernel of the restriction map is the $-1$-eigenspace of $\hat{\nu}^t$ so that $\mathfrak{t}^*$ identifies with the $+1$ eigenspace. Under this identification the restriction map becomes the orthogonal projection map
\begin{equation*}
r(\alpha) = \tfrac{1}{2}(\alpha + \hat{\nu}^t(\alpha)).
\end{equation*}
For any $\beta \in \mathfrak{t}^*$ define the dual vector $\tilde{h}_\beta$ by the usual formula
\begin{equation*}
\alpha(\tilde{h}_\beta) = 2\frac{(\alpha,\beta)}{(\beta,\beta)}
\end{equation*}
for all $\alpha \in \mathfrak{t}^*$. Then, for possibly different constants $\tilde{r}_\beta$ the Toda equation (\ref{toda}) reduces to:
\begin{equation}\label{toda2}
-2\Omega_{z\overline{z}} + \sum_{\beta \in r(\Pi)}\tilde{r}_\beta e^{2\beta(\Omega)}\tilde{h}_\beta + q\overline{q}e^{-2\delta(\Omega)}\tilde{h}_{-\delta} = 0.
\end{equation}
The elements of $r(\Pi)$ together with $-\delta = -r(\delta)$ correspond to an affine Dynkin diagram as explained in Section \ref{sectoda}. That is we have a system of vectors in a Euclidean vector space $\mathfrak{t}^*$ whose Dynkin diagram is an affine Dynkin diagram. Equation (\ref{toda2}) is then the affine Toda field equation for the corresponding affine Dynkin diagram.

All that remains is to actually determine which affine root system they correspond to. If $\nu$ is trivial then $\mathfrak{t} = \mathfrak{h}$ and the affine diagram is just the extended Dynkin diagram $L_n^{(1)}$ obtained by adding to the root system $L_n$ the lowest weight $-\delta$. The remaining cases follow from a straightforward calculation and are as follows: for $\mathfrak{g}$ of type $A_{2n},A_{2n-1},D_{2n+1},E_6$, the corresponding affine diagrams are $A_{2n}^{(2)},C_n^{(1)},B_{2n}^{(1)},F_4^{(1)}$ respectively.

\section{The affine Toda equations}\label{sectoda}
We have already encountered a class of equations that we termed affine Toda equations. The Toda equations have been extensively studied from the point of view of integrable systems \cite{mik}, \cite{man}, \cite{oli}, \cite{lez} as well as their relation to minimal surfaces and harmonic maps \cite{bolw} \cite{dol1}, \cite{bol}, \cite{dol2}, \cite{dol3}. This should not be surprising given the link between Higgs bundles and harmonic maps. However our treatment of the affine Toda equations allows for more general real forms of the equations than are usually considered. We also review the links to minimal surfaces and harmonic maps.\\

Let $A$ be an $n \times n$ indecomposable generalised Cartan matrix of affine type. We label the rows and columns by $0,1, \cdots , l$, where $l = n-1$. We arrange so that the $l \times l$ matrix $A^0$ formed by removing row $0$ and column $0$ is the Cartan matrix for the root system of a complex simple Lie algebra. So there is a real $l$-dimensional Euclidean vector space $(\mathfrak{h},\langle \, , \, \rangle)$ with basis $\{h_i\}$, $i = 1, \ldots , l$ such that if we define a corresponding basis $\{\alpha_i \}$, $i = 1, \ldots , l$ of $\mathfrak{h}^*$ by $\alpha_i = 2h_i/\langle h_i , h_i \rangle $ then $A_{ij} = \alpha_j(h_i)$ for $i,j = 1, \ldots , l$.

Moreover \cite{carter} there exists positive integers $a_0,a_1, \ldots , a_l$ with no common factor such that $A (a_0 , \ldots , a_l )^t = 0$. Similarly there exists positive integers $c_0, c_1 , \ldots , c_l$ with no common factor such that $(c_0 , \ldots , c_l)A = 0$. Let us define $h_0 \in \mathfrak{h}$ and $\alpha_0 \in \mathfrak{h}^*$ by
\begin{eqnarray}
h_0 &=& -\sum_{i=1}^l c_i h_i, \\
\alpha_0 &=& -\sum_{i=1}^l a_i \alpha_i.
\end{eqnarray}
Then it follows that
\begin{equation}
\alpha_i (h_j) = A_{ji}, \, i,j = 0,1,\ldots , l.
\end{equation}

Now we may define the affine Toda equations
\begin{defn}
Given an $n \times n$ indecomposable generalised Cartan matrix of affine type let $\mathfrak{h}$, $\{h_i\}$, $\{\alpha_i\}$ be as above. Let $\Sigma$ be a Riemann surface. A map $\Omega : \Sigma \to \mathfrak{h} \otimes \mathbb{C}$ is said to satisfy the {\em affine Toda equations} if it is a solution to
\begin{equation}
2\Omega_{z\overline{z}} = \sum_{i=0}^l k_i e^{2\alpha_i(\Omega)}h_i,
\end{equation}
where $k_i \in \mathbb{C}$ are constants, $k_i \ne 0, \, i = 0, \ldots , l$.
\end{defn}
Typically the affine Toda equations arise with reality conditions on $\Omega$. If however this is not the case then we can replace $\Omega$ by $\Omega + u$ where the constant $u \in \mathfrak{h} \otimes \mathbb{C}$ satisfies $e^{-2\alpha_i(u)} = k_i$ for $i=1, \ldots , l$. This new $\Omega$ then satisfies
\begin{equation}
2\Omega_{z\overline{z}} = \sum_{i=1}^l e^{2\alpha_i(\Omega)}h_i + qe^{2\alpha_0(\Omega)}h_0
\end{equation}
for some constant $q$. However when reality conditions are imposed we might only be able to reduce the $k_i$ to $\pm 1$.\\

Let us now consider a reality condition that will arise in connection with $\tau$-primitive maps. Let $\mu$ be the anti-linear extension to $\mathfrak{h} \otimes \mathbb{C}$ of an involutive isometry on $\mathfrak{h}$. Further assume that there is an involutive permutation $\sigma$ of $\{0,1,\ldots,l\}$ such that $\mu(h_i) = -h_{\sigma(i)}$. We can now impose the following reality condition on solutions $\Omega$ of the affine Toda equations:
\begin{equation}
\mu(\Omega) = -\Omega.
\end{equation}
For consistency this requires the coefficients $k_i$ satisfy $k_{\sigma(i)} = \overline{k_i}$.

If $\sigma(i) = i$ and $i \ne 0$ then $k_i$ is real and by a suitable replacement of $\Omega$ by $\Omega + u$ we may replace $k_i$ by $\pm 1$. On the other hand in the case $\sigma(i) = j$, with $i \ne j$ and $i,j \ne 0$ then a suitable replacement of $\Omega$ allows us to set $k_i = k_j = 1$. Therefore aside from $k_0$ and $k_{\sigma(0)}$ we may set all $k_i$ equal to $\pm 1$.

\subsection{Primitive maps and the affine Toda equations}\label{primandtoda}
We extend the notion of $\tau$-primitive maps defined in \cite{bol} and show they are essentially equivalent to the affine Toda equations.\\

Let $G$ be a connected complex semisimple Lie group with Lie algebra $\mathfrak{g}$. Let $\mathfrak{h}$ be a Cartan subalgebra and $T$ a corresponding maximal torus. As in Section \ref{speccase} let $g = e^{2\pi i x/(M+1)}$ and let $\tau = {\rm Ad}_g$ be the corresponding automorphism of $G$. Then $\tau$ has order $M+1$ and fixes $T$. Let $\mathfrak{g}_m$ denote the eigenspace of $\mathfrak{g}$ on which $\tau$ has eigenvalue $e^{2\pi i m/(M+1)}$. Let us denote by $W$ the orthogonal complement of $\mathfrak{g}_0$ in $\mathfrak{g}$ with respect to the Killing form. Then
\begin{equation}
W = \bigoplus_{m = 1}^M \mathfrak{g}_m.
\end{equation}
and $W$ is preserved by the the adjoint action of $T$. Using the Maurer-Cartan form we have an isomorphism $T(G/T) = G \times_T W$ and $G/T$ inherits a metric from the Killing form on $W$. Hence $\tau$ is also as an isometry on $G/T$. Furthermore the decomposition of $W$ leads to a corresponding decomposition for $T(G/T)$
\begin{equation}
T(G/T) = \bigoplus_{m = 1}^M E_m
\end{equation}
where $E_m = G \times_T \mathfrak{g}_m$.\\

Let $G^\mu$ be the connected subgroup of $G$ generated by the exponents of the fixed points $\mathfrak{g}^\mu$ of an antilinear involution $\mu$ on $\mathfrak{g}$. We say that $\mu$ is {\em compatible} with $\tau$ if $\tau$ preserves $\mathfrak{g}^\mu$ or equivalently if $\tau$ and $\mu$ commute.

Suppose that $\mu$ is a compatible real form. We note that $\mu$ maps $\mathfrak{g}_i$ to $\mathfrak{g}_{-i}$ and in particular $\mu$ preserves $\mathfrak{g}_0$. Let $\mathfrak{g}_0^\mu$ denote the fixed point set of $\mu$ acting on $\mathfrak{g}_0$. The corresponding subgroup $T^\mu$ is a maximal connected abelian subgroup of $G^\mu$. So $\tau$ also acts on $G^\mu/T^\mu$ and the tangent space decomposition now takes the form
\begin{equation}
T(G^\mu/T^\mu) \otimes \mathbb{C} = \bigoplus_{m=1}^M E_m.
\end{equation}

\begin{defn}
Let $\Sigma$ be a connected Riemann surface. A map $\phi : \Sigma \to G^\mu /T^\mu$ is called {\em $\tau$-primitive} if $\phi_*(T^{1,0}\Sigma) \subseteq E_1$ and is cyclic in the sense of Section \ref{speccase} for at least one value of $p \in \Sigma$.
\end{defn}

\begin{prop}
Every $\tau$-primitive map is harmonic.
\begin{proof}
Suppose $\phi : \Sigma \to G^\mu/T^\mu$ is a $\tau$-primitive map. Locally we can find a lift $\tilde{\phi}: U \to G^\mu$ where $U$ is an open subset of $\Sigma$. Let $z$ be a local holomorphic coordinate on $U$. The $\tau$-primitive condition is equivalent to
\begin{equation}
\tilde{\phi}^{-1} \partial_z \tilde{\phi} \in \mathfrak{g}_0 \oplus \mathfrak{g}_1.
\end{equation}
Let us write $\tilde{\phi}^{-1} \partial_z \tilde{\phi} = A_0 + A_1$ with $A_i \in \mathfrak{g}_i$ for $i = 0,1$. Then we also have
\begin{equation}
\tilde{\phi}^{-1} \partial_{\overline{z}} \tilde{\phi} = \mu(A_0) + \mu(A_1) \in \mathfrak{g}_0 \oplus \mathfrak{g}_{-1}.
\end{equation}
Now the Maurer-Cartan equation for $\tilde{\phi}^{-1} d\tilde{\phi}$ gives
\begin{eqnarray}
\mu(A_0)_z - (A_0)_{\overline{z}} + \left[ A_1 , \mu(A_1) \right] &=& 0 \\
-(A_1)_{\overline{z}} + \left[ A_1 , \mu(A_0) \right] &=& 0 \\
\mu(A_1)_z + \left[ A_0 , \mu(A_1) \right] &=& 0.
\end{eqnarray}

Now let us consider the harmonic map equations (\ref{harmsurf1}), (\ref{harmsurf2}) in the case of a map $\phi: \Sigma \to G^\mu/V$ into a homogeneous space such that $V = T^\mu$ is a torus.

In this case $A^{\mathfrak{v}},B^{\mathfrak{v}} \in \mathfrak{g}_0 \otimes \mathbb{C}$, $A^{\mathfrak{h}} \in \mathfrak{g}_1$, $B^{\mathfrak{h}} \in \mathfrak{g}_{-1}$ and we have $\left[ B^{\mathfrak{h}} , A^{\mathfrak{h}} \right]^{\mathfrak{h}} = 0$ since $\left[ \mathfrak{g}_1 , \mathfrak{g}_{-1} \right] \subset \mathfrak{g}_0$. The harmonic map equations are
\begin{eqnarray*}
A_{\overline{z}}^{\mathfrak{h}} + \left[ B^{\mathfrak{v}} , A^{\mathfrak{h}} \right] &=& 0, \\
B_z^{\mathfrak{h}} + \left[ A^{\mathfrak{v}} , B^{\mathfrak{h}} \right] &=& 0.
\end{eqnarray*}
If we put $A^{\mathfrak{v}} = A_0$, $A^{\mathfrak{h}} = A_1$, $B^{\mathfrak{v}} = \mu(A_0)$, $B^{\mathfrak{h}} = \mu(A_1)$ then the above equations are clearly seen to be just be two of the Maurer-Cartan equations for a primitive map. Therefore primitive maps are harmonic.
\end{proof}
\end{prop}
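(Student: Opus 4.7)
The plan is to reduce the harmonic map equations for $\phi$ to the Maurer--Cartan equations of a local lift, using the $\tau$-primitive hypothesis together with the reality condition imposed by $\mu$. First I would choose an open $U\subset\Sigma$ with a local holomorphic coordinate $z$, pick a lift $\tilde{\phi}:U\to G^{\mu}$, and examine $\omega=\tilde{\phi}^{-1}d\tilde{\phi}$. The $\tau$-primitive assumption means $\phi_{*}(\partial_{z})\in E_{1}$, so $\tilde{\phi}^{-1}\partial_{z}\tilde{\phi}=A_{0}+A_{1}$ with $A_{i}\in\mathfrak{g}_{i}$; since $\tilde{\phi}$ is $G^{\mu}$-valued, applying $\mu$ forces $\tilde{\phi}^{-1}\partial_{\overline{z}}\tilde{\phi}=\mu(A_{0})+\mu(A_{1})$ with $\mu(A_{1})\in\mathfrak{g}_{-1}$.

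Next I would expand the flatness identity $d\omega+\tfrac{1}{2}[\omega,\omega]=0$ and split it according to the grading $\mathfrak{g}=\bigoplus_{m}\mathfrak{g}_{m}$, using the bracket rule $[\mathfrak{g}_{i},\mathfrak{g}_{j}]\subseteq\mathfrak{g}_{i+j}$ modulo $M+1$. The components of weights $1$ and $-1$ yield the two identities $(A_{1})_{\overline{z}}=[A_{1},\mu(A_{0})]$ and its $\mu$-conjugate $\mu(A_{1})_{z}=-[A_{0},\mu(A_{1})]$, while the weight $0$ component is a separate equation that does not enter the argument.

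Finally I would compare with the abstract harmonic map equations $(\ref{harmsurf1})$ and $(\ref{harmsurf2})$ for a map into $G^{\mu}/T^{\mu}$. Since $V=T^{\mu}$ is a torus and $\mathfrak{g}_{0}=\mathfrak{h}$ complexifies the Lie algebra $\mathfrak{v}$ of $T^{\mu}$, the identification $A^{\mathfrak{v}}=A_{0}$, $A^{\mathfrak{h}}=A_{1}$, $B^{\mathfrak{v}}=\mu(A_{0})$, $B^{\mathfrak{h}}=\mu(A_{1})$ matches the two sets of equations term by term, \emph{provided} the bracket contribution $\tfrac{1}{2}[B^{\mathfrak{h}},A^{\mathfrak{h}}]^{\mathfrak{h}}=\tfrac{1}{2}[\mu(A_{1}),A_{1}]^{\mathfrak{h}}$ vanishes. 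This is the single substantive point: because $[\mathfrak{g}_{1},\mathfrak{g}_{-1}]\subset\mathfrak{g}_{0}=\mathfrak{v}\otimes\mathbb{C}$, the bracket lies entirely in the vertical subalgebra and its horizontal projection is zero. Once this structural observation is in place the proof reduces to matching two already-written systems of equations, with no analytic work remaining; I expect the sign conventions in the Maurer--Cartan expansion and in $(\ref{harmsurf1})$--$(\ref{harmsurf2})$ to be the only mildly fiddly part.
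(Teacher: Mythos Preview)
Your proposal is correct and follows essentially the same route as the paper: local lift, grading decomposition of $\tilde{\phi}^{-1}d\tilde{\phi}$ via the $\tau$-primitive and reality conditions, Maurer--Cartan expansion by homogeneous degree, and identification with the harmonic map equations \eqref{harmsurf1}--\eqref{harmsurf2} using the key fact that $[\mathfrak{g}_{1},\mathfrak{g}_{-1}]\subset\mathfrak{g}_{0}$ kills the $\tfrac{1}{2}[B^{\mathfrak{h}},A^{\mathfrak{h}}]^{\mathfrak{h}}$ term. Your emphasis on this bracket vanishing as the single substantive point is exactly right, and the only remaining work is indeed the bookkeeping of signs.
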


\subsection{Toda framing}\label{todafr}
We follow \cite{bol} in introducing the notion of a Toda framing, which will lead to the affine Toda equations. Let $p_1, \dots , p_l$ be homogeneous generators of the ring of invariant polynomials with $p_l$ having the highest degree $M+1$. If $\phi : \Sigma \to G^\mu/T^\mu$ is $\tau$-primitive then we define a $(M+1,0)$-differential $q$ on $\Sigma$ as
\begin{equation}
q = p_l( \partial \phi , \dots , \partial \phi ).
\end{equation}

\begin{lemp}
The differential $q$ is holomorphic.
\begin{proof}
Let $\nabla$ be the Levi-Civita connection on $G^\mu/T^\mu$. Since $p_l$ is an invariant polynomial it is parallel. Thus
\begin{equation}
\partial_{\overline{z}} p_l( \phi_z , \dots , \phi_z ) = (M+1) p_l ( \nabla_{\overline{z}} \phi_z , \phi_z , \dots , \phi_z ).
\end{equation}
But since $\phi$ is harmonic, this vanishes.
\end{proof}
\end{lemp}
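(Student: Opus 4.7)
The plan is to establish the statement by a direct Maurer--Cartan computation, which bypasses the need to verify parallelism of $p_l$ as a tensor on $G^\mu/T^\mu$. Locally lift $\phi$ to $\tilde{\phi} : U \to G^\mu$ and decompose the pulled-back Maurer--Cartan form by $\tau$-eigenspaces as in the preceding proposition: $\tilde{\phi}^{-1}\partial_z\tilde{\phi} = A_0 + A_1$ with $A_0 \in \mathfrak{g}_0 \otimes \mathbb{C}$, $A_1 \in \mathfrak{g}_1$, and by the reality condition $\tilde{\phi}^{-1}\partial_{\overline{z}}\tilde{\phi} = \mu(A_0) + \mu(A_1)$. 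Under the canonical identification $T(G^\mu/T^\mu) \otimes \mathbb{C} = \bigoplus_m E_m$, the component of $\phi_* \partial_z$ in $E_1$ is represented by $A_1$, and the $\tau$-primitive condition forces the other components to vanish. Hence $q = p_l(A_1, \ldots, A_1)$, where $p_l$ is viewed as a symmetric $(M+1)$-linear form on $\mathfrak{g}$.

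The calculation then proceeds in two steps. First, by multilinearity and total symmetry of $p_l$,
\begin{equation*}
\partial_{\overline{z}} q \;=\; (M+1)\, p_l\!\left( (A_1)_{\overline{z}},\, A_1,\, \ldots,\, A_1 \right).
\end{equation*}
Second, one of the Maurer--Cartan equations for $\tilde{\phi}^{-1}d\tilde{\phi}$ decomposed into $\tau$-eigenspaces (already derived above in the proof that $\tau$-primitive maps are harmonic) reads $(A_1)_{\overline{z}} = [A_1, \mu(A_0)]$. Substituting,
\begin{equation*}
\partial_{\overline{z}} q \;=\; (M+1)\, p_l\!\left( [A_1, \mu(A_0)],\, A_1,\, \ldots,\, A_1 \right).
\end{equation*}

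Ad-invariance of $p_l$ finishes the argument: differentiating $p_l(\mathrm{Ad}_{\exp(tX)}Y, \ldots, \mathrm{Ad}_{\exp(tX)}Y) = p_l(Y,\ldots,Y)$ at $t=0$ and using symmetry yields $p_l([X,Y], Y, \ldots, Y) = 0$ for all $X, Y \in \mathfrak{g}$. Applying this with $X = \mu(A_0)$ and $Y = A_1$ kills the right-hand side, so $\partial_{\overline{z}} q = 0$ and $q$ is holomorphic. There is no genuine obstacle here; the only point that requires care is the identification of $\phi_*\partial_z$ with the $\mathfrak{g}_1$-component $A_1$ of the Maurer--Cartan form, which is exactly the content of the $\tau$-primitive hypothesis. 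The author's approach, phrased in terms of parallelism of $p_l$ with respect to the Levi-Civita connection and harmonicity $\nabla_{\overline{z}}\phi_z = 0$, is a more geometric repackaging of the same computation.
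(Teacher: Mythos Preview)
Your proof is correct. It is essentially the same computation as the paper's, but unwound into its algebraic constituents rather than packaged geometrically. The paper appeals to two facts: (i) $p_l$ is parallel on $G^\mu/T^\mu$, and (ii) $\nabla_{\overline z}\phi_z = 0$. In the Maurer--Cartan frame, (ii) is exactly the equation $(A_1)_{\overline z} = [A_1,\mu(A_0)]$ that you use, while (i) amounts to the Ad-invariance identity $p_l([X,Y],Y,\ldots,Y)=0$ that absorbs the bracket correction into the covariant derivative. So where the paper writes $\partial_{\overline z}q = (M+1)\,p_l(\nabla_{\overline z}\phi_z,\phi_z,\ldots,\phi_z)$ and kills the whole argument at once by harmonicity, you write $\partial_{\overline z}q = (M+1)\,p_l((A_1)_{\overline z},A_1,\ldots,A_1)$, substitute the Maurer--Cartan equation, and kill the resulting bracket by Ad-invariance. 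Your version has the advantage of being self-contained: it does not require the reader to accept or recall that invariant polynomials descend to parallel tensors for the Levi--Civita connection on a naturally reductive homogeneous space. The paper's version is terser and more conceptual once that fact is in hand.
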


Now if $\phi$ is $\tau$-primitive then by definition $\phi_z$ must be cyclic at at least one point, so the corresponding holomorphic differential $q$ does not identically vanish. So $q$ vanishes only at isolated points and $\phi_z$ is cyclic except at isolated points. As before let $\tilde{\phi}: U \to G^\mu$ be a local lift of $\phi$ and write $\tilde{\phi}^{-1} \partial_z \tilde{\phi} = A_0 + A_1$. Let us define
\begin{equation}
\Phi = e + q e_{-\delta}.
\end{equation}
We take $p_l$ as in (\ref{poly}) so that $p_l(\Phi) = q$. Away from the zeros of $q$, $\Phi$ is cyclic and it follows that locally there exists a map $\Omega : U \to \mathfrak{g}_0$ such that
\begin{equation}\label{omega3}
{\rm Ad}_{e^\Omega}(\Phi) = A_1.
\end{equation}
If we change the lift $\tilde{\phi}$ we may further assume that $\mu(\Omega) = -\Omega$. A lift $\tilde{\phi} : U \to G^\mu$ with this property is called a {\em Toda framing} in \cite{bol}. Now differentiating (\ref{omega3}) gives
\begin{equation}
(A_1)_{\overline{z}} = \left[ \Omega_{\overline{z}} , A_1 \right].
\end{equation}
Combined with the Maurer-Cartan equation we find
\begin{equation}
\left[ \Omega_{\overline{z}} + \mu(A_0), A_1 \right] = 0
\end{equation}
which is only possible if $\mu(A_0) = -\Omega_{\overline{z}}$, that is
\begin{equation}
A_0 = \Omega_z.
\end{equation}
Combining with the first Maurer-Cartan equation now yields
\begin{equation}\label{pretoda}
2\Omega_{z\overline{z}} = \left[ {\rm Ad}_{e^{2\Omega}}(\Phi) , \mu(\Phi) \right].
\end{equation}
Now if we write $e = \sum_{i = 1}^l t_i e_i$ for some positive constants $t_i$ and denote $e_{-\delta}$ by $e_0$. Then
\begin{equation}
\Phi = \sum_{i=1}^l t_i e_i + qe_0.
\end{equation}
Now if we denote $-\delta$ by $\alpha_0$ then
\begin{equation}
{\rm Ad}_{e^{2\Omega}}(\Phi) = \sum_{i=1}^l t_i e^{2\alpha_i(\Omega)} e_i + qe^{2\alpha_0(\Omega)}e_0.
\end{equation}
To work out $\mu(\Phi)$ we need to consider the action of $\mu$ on $\mathfrak{h} = \mathfrak{g}_0$. We define an antilinear action $\mu^*$ of $\mu$ on $\mathfrak{h}^* \otimes \mathbb{C}$ by
\begin{equation}
\mu^*(\alpha)(X) = \alpha( \mu(X)),
\end{equation}
where $\alpha \in \mathfrak{h}^* \otimes \mathbb{C}$, $X \in \mathfrak{h}$. Now if $h \in \mathfrak{h}$ and $\alpha \in \mathfrak{h}^*$ is a root then there exists a non-zero $X \in \mathfrak{g}$ with $\left[ h , X \right] = \alpha(h)X$. If we replace $h$ by $\mu(h)$ and apply $\mu$ we then find $\left[ h , \mu(X) \right] = \mu^*(\alpha) \mu(X)$, since $\alpha(h)$ is real. Thus $\mu^*$ acts as an involution on the roots of $\mathfrak{g}$. In particular $\mu$ preserves $\mathfrak{h}$ and $\mathfrak{h}^*$. It is also clear that $\mu$ preserves the Killing form on $\mathfrak{h}$. Finally since $\mu(\mathfrak{g}_i) = \mathfrak{g}_{-i}$ we have that $\mu^*$ sends $\{\alpha_0 , \alpha_1 , \ldots \alpha_l \}$ to $\{-\alpha_0 , -\alpha_1 , \ldots -\alpha_l \}$. Therefore there is an involutive permutation $\sigma$ of $\{0,1,\ldots , l\}$ such that
\begin{equation}
\mu^*(\alpha_i) = -\alpha_{\sigma(i)}, \; \; \;
\mu(h_i) = -h_{\sigma(i)}.
\end{equation}
Now since $\mu$ sends the root space of $\alpha_i$ to the root space of $\mu^*(\alpha_i)$ we further deduce that
\begin{equation}
\mu(e_i) = s_i f_{\sigma(i)}, \; \; \;
\mu(f_i) = \overline{s_{\sigma(i)}}\,^{-1} e_{\sigma(i)}.
\end{equation}
For some complex constants $s_i \ne 0$.
Now applying $\mu$ to $\left[e_i , f_i \right] = h_i$ we also find $s_{\sigma(i)} = \overline{s_i}$. We can now determine $\mu(\Phi)$:
\begin{equation}
\mu(\Phi) = \sum_{i=1}^l \overline{t_i} s_i f_{\sigma(i)} + \overline{q}s_0 f_0.
\end{equation}
Therefore equation (\ref{pretoda}) becomes
\begin{equation}\label{todaprimitive}
2\Omega_{z\overline{z}} = \sum_{i=0}^l t_i \overline{t_{\sigma(i)}}s_{\sigma(i)} e^{2\alpha_i(\Omega)}h_i
\end{equation}
where $t_0 = q$. Away from the zeros of $q$ we can find a local holomorphic coordinate such that $q$ is constant. Then this equation is an affine Toda equation for the extended Dynkin diagram of $\mathfrak{g}$ with reality condition $\mu(\Omega) = -\Omega$. Conversely a solution of (\ref{todaprimitive}), regardless of whether $q$ vanishes or not defines a solution of the Maurer-Cartan equations and hence a $\tau$-primitive map $\phi : \tilde{\Sigma} \to G^\mu / T^\mu$ where $\tilde{\Sigma}$ is the universal cover of $\Sigma$.

\subsection{Relation to Higgs bundles}\label{rthb}
Here we show that for cyclic Higgs bundles the harmonic primitive map is closely related to the harmonic map corresponding to the Higgs bundle.\\

Suppose we have a cyclic Higgs bundle. Then in the notation of Section \ref{speccase} we may locally define the following $\mathfrak{g}$-valued $1$-forms:
\begin{equation}
\mathcal{B}_\alpha = -\partial\Omega_\alpha + {\rm Ad}_{e^{-\Omega_\alpha}}(\Phi_\alpha) + \overline{\partial}\Omega_\alpha - {\rm Ad}_{e^{\Omega_\alpha}}(\hat{\rho}\Phi_\alpha).
\end{equation}
We have that $\hat{\tau}(\mathcal{B}_\alpha) = \mathcal{B}_\alpha$ and that $d\mathcal{B}_\alpha + \tfrac{1}{2}[\mathcal{B}_\alpha , \mathcal{B}_\alpha ] = 0$. Hence locally there exist maps $\mathcal{D}_\alpha : U_\alpha \to G^{\hat{\tau}}$ into the split real form of $G$ such that $\mathcal{D}_\alpha^{-1} d\mathcal{D}_\alpha = \mathcal{B}_\alpha$. The maps $\mathcal{D}_\alpha$ are defined up to left multiplication by a constant element of $G^{\hat{\tau}}$.

Based on the transformation laws for $\Omega_\alpha$ and $\Phi_\alpha$, we can show that if $\mathcal{D}_\alpha$ is a solution to $\mathcal{D}_\alpha^{-1} d\mathcal{D}_\alpha = \mathcal{B}_\alpha$ then on the intersection $U_\alpha \cap U_\beta$, $\, \mathcal{D}_\alpha e^{\tfrac{1}{2}(\overline{f}_{\alpha \beta} - f_{\alpha \beta})x}$ is a solution to $\mathcal{D}_\beta^{-1} d\mathcal{D}_\beta = \mathcal{B}_\beta$. Therefore if we start with a fixed $U_\alpha$ and fixed $\mathcal{D}_\alpha$, we can choose the $\mathcal{D}_\beta$ on neighboring open sets $U_\beta$ such that on the overlap $\mathcal{D}_\beta = \mathcal{D}_\alpha e^{\tfrac{1}{2}(\overline{f}_{\alpha \beta} - f_{\alpha \beta})x}$. We can continue on in this fashion, though if we continue around a loop we need not return to the original $\mathcal{D}_\alpha$. This means that we get a map
\begin{equation}
\psi : \tilde{\Sigma} \to G^{\hat{\tau}}/(T^{\hat{\tau}} \cap K)
\end{equation}
where $\tilde{\Sigma}$ is the universal cover of $\Sigma$ and $T^{\hat{\tau}}$ is the maximal abelian subgroup of $G^{\hat{\tau}}$ corresponding to $\mathfrak{h}^{\hat{\tau}}$.\\

Note that the connection form $\mathcal{B}_\alpha$ is gauge equivalent to the connection forms
\begin{equation}
\mathcal{A}_\alpha = -2\partial \Omega_\alpha + \Phi_\alpha - {\rm Ad}_{e^{2\Omega_\alpha}}(\hat{\rho}\Phi_\alpha)
\end{equation}
under a gauge change $e^{\Omega_\alpha}$. The $\mathcal{A}_\alpha$ are connection forms for the flat connection corresponding to the Higgs bundle. Note that the Hermitian connection $\nabla_A$ in this gauge is $-2\partial \Omega_\alpha$. If we gauge transform this into the gauge corresponding to the $\mathcal{B}_\alpha$ then it becomes $-\partial\Omega_\alpha + \overline{\partial}\Omega_\alpha$. This Lie algebra valued $1$-form is valued in the fixed compact real form $C$ of $G^{\hat{\tau}}$ defined by $\hat{\rho}$. This shows that the map $\psi : \tilde{\Sigma} \to G^{\hat{\tau}}/(T^{\hat{\tau}} \cap K)$ projects to the map $\pi(\psi) : \tilde{\Sigma} \to G^{\hat{\tau}}/C$ which defines the Hermitian metric satisfying the Higgs bundle equations. That is, the map $\psi$ projects to the harmonic map corresponding to the Higgs bundle. If we instead project from $\psi$ to $p(\psi) : \tilde{\Sigma} \to G^{\hat{\tau}}/T^{\hat{\tau}}$ we see that this map is $\tau$-primitive and hence harmonic. In fact it is straightforward to show that $\psi$ itself is harmonic. Thus $\psi$ is a harmonic mutual lift of the harmonic map corresponding to a Higgs bundle and a $\tau$-primitive map.

\section{Minimal surfaces and affine Toda equations}\label{msaate}
The affine Toda equations are well known to be related to certain cases of minimal surfaces \cite{dol1}, \cite{bol}. The relationship is established through what is known as the harmonic sequence \cite{cal}, \cite{eew}, \cite{hul}. We extend the notion of harmonic sequences to arbitrary signature. Furthermore the harmonic sequence can be related easily to the Higgs bundle picture. This shows how our restricted class of Higgs bundles can be interpreted as a minimal surface into a quadric.

\subsection{Minimal surfaces in quadrics}\label{secquadrics}

We will develop some general theory of minimal surfaces in a quadric. For the most part this is a straightforward generalisation of the case of minimal surfaces in a sphere \cite{cal}. See also \cite{hul} for the case of De Sitter space. This leads us to define superminimal and superconformal maps which are solutions of certain real forms of the Toda field equations \cite{bol}.\\

Let $\mathbb{R}^{p,q}$ denote the $p+q$-dimensional vector space with bilinear form $\langle \; , \; \rangle$ of signature $(p,q)$ and let $Q_{h_0} = \{x \in \mathbb{R}^{p,q} \,| \,\langle x , x \rangle = h_0 \}$ where $h_0 = \pm 1$. Let $\Sigma$ be a connected oriented surface. An immersion $\phi : \Sigma \to Q_{h_0} \subset \mathbb{R}^{p,q}$ that sends each tangent space of $\Sigma$ to a positive definite subspace of $\mathbb{R}^{p,q}$ induces a metric and hence a conformal structure on $\Sigma$. These give $\Sigma$ the structure of a Riemann surface with compatible metric. The map $\phi$ is then a minimal immersion if and only if it is harmonic.

Let us regard $\phi$ as a vector valued function on $\Sigma$ such that $\langle \phi , \phi \rangle = h_0$. Let $D$ denote the trivial connection on $\mathbb{R}^{p,q}$ and $\nabla$ the induced Levi-Civita connection on $Q$. Let $z$ be a local holomorphic coordinate on $\Sigma$. We use $z$ and $\overline{z}$ subscripts to denote partial differentiation with respect to $\tfrac{\partial}{\partial z}$ and $\tfrac{\partial}{\partial \overline{z}}$. The harmonic equation for $\phi$ is
\begin{equation}
\nabla_{\overline{z}}{\phi_z} = 0.
\end{equation}
However we also have that $D = \nabla + \Pi$ where $\Pi$ is the second fundamental form which is valued in the normal bundle of $Q$, since $Q$ is given the induced metric. Therefore the harmonic equation for $\phi$ reduces to
\begin{equation}
\phi_{z\overline{z}} = \lambda \phi
\end{equation}
for some function $\lambda$. In fact we can show $\lambda = -\langle \phi_z , \phi_{\overline{z}} \rangle/h_0$. Note that since $\langle \phi , \phi \rangle = h_0$ we have $\langle \phi , \phi_z \rangle = \langle \phi , \phi_{\overline{z}} \rangle = 0$ and since $\phi$ is a Riemannian immersion $\langle \phi_z , \phi_z \rangle = \langle \phi_{\overline{z}} , \phi_{\overline{z}} \rangle = 0$ and $\langle \phi_z , \phi_{\overline{z}} \rangle = h_1$ defines the induced metric on $\Sigma$.\\

Following the definite signature case \cite{bol} we will inductively construct a sequence $\phi_0,\phi_1,\phi_2, \dots $. We define $\phi_0 = \phi$, $\phi_1 = \phi_z$ and we define $\phi_{i+1}$ from $\phi_i$ under the assumption that $h_i = \langle \phi_i , \overline{\phi_i} \rangle$ is non-vanishing by
\begin{equation}
\phi_{i+1} = (\phi_i)_z - \frac{\langle (\phi_i)_z , \overline{\phi_i} \rangle}{h_i}\phi_i.
\end{equation}
So $\phi_{i+1}$ is the projection of $(\phi_i)_z$ onto the orthogonal complement of $\phi_i$ with respect to the Hermitian form $h(X,Y) = \langle X , \overline{Y} \rangle$. Note that unlike the definite signature case it may be that $h_i$ vanishes even if $\phi_i \ne 0$.\\

Let $r \ge 1$ be an integer. Following \cite{bur}, \cite{hul} we say $\phi$ has {\em isotropy of order $r$} if we can construct the sequence $\phi_0,\phi_1, \dots , \phi_r, \phi_{r+1}$ (so $h_1,\dots, h_r$ are non-vanishing) and $\langle \phi_i , \phi_i \rangle = 0$ for $1\le i \le r$. Since the $h_i$ are non-vanishing we may write $h_i = \epsilon_i H_i$ where $H_i = e^{2w_i}$ and $\epsilon_i = \pm 1$. We then have
\begin{prop}
Let $\phi$ be a minimal surface with isotropy of order $r \ge 1$. We have the following
\begin{itemize}
\item{The sequence $\overline{\phi_r}, \dots , \overline{\phi_1} , \phi_0 , \phi_1, \dots , \phi_r$ is pairwise orthogonal with respect to the Hermitian form,}
\item{$\phi_{r+1}$ and $\overline{\phi_{r+1}}$ are orthogonal to $\overline{\phi_r}, \dots , \overline{\phi_1} , \phi_0 , \phi_1, \dots , \phi_r$,}
\item{$\langle \phi_{r+1} , \phi_{r+1} \rangle$ defines a holomorphic degree $2r+2$-differential on $\Sigma$,}
\item{$(\phi_i)_z = \phi_{i+1} + 2(w_i)_z \phi_i$, for $1 \le i \le r$,}
\item{$(\phi_i)_{\overline{z}} = -(h_{i}/h_{i-1}) \phi_{i-1}$, for $1 \le i \le r+1$,}
\item{$2(w_i)_{z\overline{z}} = h_{i+1}/h_i - h_i/h_{i-1}$, for $1 \le i \le r$.}
\end{itemize}
\begin{proof}
These are all reasonably straightforward verifications nearly identical to the definite signature case.
\end{proof}
\end{prop}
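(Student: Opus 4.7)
The plan is to prove all six bullets jointly by induction on $i$, using that once the recursion $(\phi_i)_{\overline{z}} = -(h_i/h_{i-1})\phi_{i-1}$ is in hand it makes $\overline{z}$-differentiation transparent, so each subsequent orthogonality can be obtained by integration by parts in $z$. The base case $i=1$ is immediate: the harmonic equation derived just above the proposition gives $\phi_{z\overline{z}} = -(h_1/h_0)\phi_0$, which is the fifth bullet at $i=1$, and $\langle\phi_1,\phi_0\rangle = \tfrac12\partial_z\langle\phi_0,\phi_0\rangle = 0$ since $\langle\phi,\phi\rangle = h_0$ is constant; conformality gives $\langle\phi_1,\phi_1\rangle=0$. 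The fact that $h_i=\langle\phi_i,\overline{\phi_i}\rangle$ is real (from symmetry of the bilinear form) allows me to write $h_i = \epsilon_i e^{2w_i}$ with $w_i$ real.

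For the inductive step, assume everything for $\phi_0,\dots,\phi_i$. The fourth bullet follows from the definition $\phi_{i+1} = (\phi_i)_z - \tfrac{\langle(\phi_i)_z,\overline{\phi_i}\rangle}{h_i}\phi_i$ once I show $\langle(\phi_i)_z,\overline{\phi_i}\rangle = (h_i)_z = 2(w_i)_z h_i$: differentiating $h_i = \langle\phi_i,\overline{\phi_i}\rangle$ in $z$ and using the inductive fifth bullet for $\phi_i$ together with $\langle\phi_i,\overline{\phi_{i-1}}\rangle = 0$ eliminates the conjugate term. Orthogonality of $\phi_{i+1}$ to previous $\phi_j,\overline{\phi_j}$ is then reduced to orthogonalities among $\phi_0,\dots,\phi_i$: for $j<i$, $\langle\phi_{i+1},\phi_j\rangle = \langle(\phi_i)_z,\phi_j\rangle = \partial_z\langle\phi_i,\phi_j\rangle - \langle\phi_i,(\phi_j)_z\rangle$, which vanishes on expanding $(\phi_j)_z$ via the fourth bullet; similarly $\langle\phi_{i+1},\overline{\phi_j}\rangle$ is dispatched using $\overline{(\phi_j)_{\overline{z}}} = -(h_j/h_{j-1})\overline{\phi_{j-1}}$.

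The recursion for $(\phi_{i+1})_{\overline{z}}$ and the Toda equation emerge together. Differentiating $\phi_{i+1} = (\phi_i)_z - 2(w_i)_z\phi_i$ in $\overline{z}$ and substituting the inductive recursions gives, after the $\phi_{i-1}$ terms collapse via $\partial_z(h_i/h_{i-1}) = 2(h_i/h_{i-1})((w_i)_z-(w_{i-1})_z)$, the clean identity $(\phi_{i+1})_{\overline{z}} = -\bigl(h_i/h_{i-1} + 2(w_i)_{z\overline{z}}\bigr)\phi_i$. Computing $h_{i+1}$ directly (using $w_i$ real, $\langle\phi_{i+1},\overline{\phi_i}\rangle=0$ and one integration by parts in $z$) shows $h_{i+1}/h_i = h_i/h_{i-1} + 2(w_i)_{z\overline{z}}$, which both establishes the sixth bullet and identifies the coefficient in the fifth bullet. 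Holomorphicity of $\langle\phi_{r+1},\phi_{r+1}\rangle$ then follows from $\partial_{\overline{z}}\langle\phi_{r+1},\phi_{r+1}\rangle = -2(h_{r+1}/h_r)\langle\phi_r,\phi_{r+1}\rangle$ and $\langle\phi_r,\phi_{r+1}\rangle = \tfrac12\partial_z\langle\phi_r,\phi_r\rangle = 0$, which is precisely where the isotropy hypothesis $\langle\phi_r,\phi_r\rangle = 0$ is used. The bidegree $(2r+2,0)$ is read off the transformation $\phi_i^{(z)} = (w_z)^i \phi_i^{(w)} + (\text{lower order})$ under change of holomorphic coordinate, where the lower order terms lie along $\phi_0,\dots,\phi_{i-1}$ and drop out in the inner product by orthogonality.

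The main obstacle is organizational: because the fifth bullet at level $i{+}1$ and the sixth bullet at level $i$ come out of the same computation of $(\phi_{i+1})_{\overline{z}}$, the induction has to be set up so both are fed back in at the next step, and the bilinear orthogonalities $\langle\phi_i,\phi_j\rangle = 0$ must be tracked alongside the Hermitian ones since they are needed to prove holomorphicity. The indefinite signature adds no essential difficulty beyond carrying the signs $\epsilon_i$, because the isotropy hypothesis guarantees each $h_i$ is non-vanishing on the surface and the potential pathology of a null $\phi_i$ with $h_i=0$ is excluded exactly for $1 \le i \le r$.
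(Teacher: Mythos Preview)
Your proposal is correct and is precisely the standard inductive verification the paper alludes to when it says ``nearly identical to the definite signature case'' (cf.\ \cite{cal}, \cite{bol}); the paper gives no details beyond that, and your argument supplies them faithfully. Your observation that the fifth bullet at level $i{+}1$ and the sixth at level $i$ emerge from the same computation of $(\phi_{i+1})_{\overline{z}}$, together with the identification of the coefficient via pairing against $\overline{\phi_i}$, is exactly how the definite case is done.
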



\begin{defn}

Let $\phi:\Sigma \to Q_{h_0} \subset \mathbb{R}^{p,q}$ be a minimal surface. Suppose $\phi$ has isotropy of order $r$ and that $p+q = 2r+2$ or $2r+3$. We say $\phi$ is {\em superminimal} \cite{bur}, \cite{bry1} if $h_{r+1}=0$ and {\em superconformal} \cite{bol}, \cite{bur} if $h_{r+1} \ne 0$.
\end{defn}

We now consider a superconformal or superminimal surface $\phi: \Sigma \to Q_{h_0} \subset \mathbb{R}^{p,q}$ in the case $p+q$ is even. So $\phi$ has isotropy of order $r$ such that $p+q = 2r+2$. Now $\overline{\phi_r}, \cdots , \overline{\phi_1} , \phi_0 , \phi_1, \cdots , \phi_r$ span a real codimension $1$ subspace which is not null, therefore there is a real vector valued function $\tilde{\phi}$ with $\langle \tilde{\phi} , \tilde{\phi} \rangle = \epsilon = \pm 1$ and complex valued function $q$ such that $\phi_{r+1} = q\tilde{\phi}$ and $\overline{\phi_{r+1}} = \overline{q}\tilde{\phi}$. Thus $h_{r+1} = \epsilon q \overline{q}$ and $\langle \phi_{r+1} , \phi_{r+1} \rangle = \epsilon q^2$ is holomorphic. Hence $q(dz)^{r+1}$ is a holomorphic $(r+1)$-differential. We then have the following set of equations:
\begin{equation} \label{signedtoda1}
\begin{aligned}
2(w_i)_{z\overline{z}} &= h_{i+1}/h_i - h_i/h_{i-1}, \; \; \; 1 \le i \le r-1, \\
2(w_r)_{z\overline{z}} &= \epsilon q\overline{q}/h_r - h_r/h_{r-1}.
\end{aligned}
\end{equation}
Notice that the superminimal case where $\phi_{r+1}=0$ corresponds to setting $q=0$. We can also write the equations directly in terms of the $w_i$. Let $\mu_i = \epsilon_i\epsilon_{i+1}$ for $1\le i \le r$ and $\mu_{r+1} = \epsilon \epsilon_{r}$. Then
\begin{equation}
\begin{aligned}
2(w_i)_{z\overline{z}} &= \mu_{i+1}e^{2w_{i+1} - 2w_i} -\mu_i e^{2w_i - 2w_{i-1}}, \; \; \; 1 \le i \le r-1, \\
2(w_r)_{z\overline{z}} &= \mu_{r+1}q\overline{q}e^{-2w_r} -\mu_r e^{2w_r - 2w_{r-1}}.
\end{aligned}
\end{equation}
We find this is a set of affine Toda equations for the affine Dynkin diagram $D^{(2)}_{r+1}$. The different choices of signs $\mu_i = \pm 1$ determine different real forms of the Toda equations.\\

Let us now consider the alternate case of a superconformal or superminimal surface when $p+q$ is odd. So in this case $\phi: \Sigma \to Q_{h_0} \subset \mathbb{R}^{p,q}$ has isotropy of order $r$ and $p+q = 2r+3$. We have that $\overline{\phi_r} , \cdots , \phi_0 , \cdots , \phi_r$ spans $2r+1$ dimensions and $\phi_{r+1}$, $\overline{\phi_{r+1}}$ lie in the $2$-dimensional orthogonal complement. If $\phi_{r+1}$ and $\overline{\phi_{r+1}}$ are linearly dependent then they span a real subspace and we are in the $2r+2$ dimensional situation. Therefore we assume that $\phi_{r+1}$ and $\overline{\phi_{r+1}}$ are linearly independent, at least on some open subset.

There are two cases to consider according to whether the $2$-dimensional space $W$ spanned by $\phi_{r+1}$ and $\overline{\phi_{r+1}}$ is definite or indefinite. We will consider the definite case, the indefinite being similar. Therefore we can locally find a vector valued function $w$ such that $w$ and $\overline{w}$ are a complex basis for $W$ and such that $\langle w , w \rangle = 0$, $\langle w , \overline{w} \rangle = 2\epsilon$ where $\epsilon = \pm 1$ according to whether $W$ is positive or negative definite. We may write
\begin{equation}
\phi_{r+1} = \frac{1}{2}Aw + \frac{1}{2}B\overline{w}
\end{equation}
for two functions $A,B$. Now $\langle \phi_{r+1} , \phi_{r+1} \rangle = \epsilon AB$ is a holomophic differential $q = \epsilon AB$. Aside from the isolated zeros of $q$ we have $A$ and $B$ are non-vanishing. Further we can redefine $w$ so that $A$ is real valued and positive. Set $A = e^{2\eta}$. Now $h_{r+1} = \epsilon /2 (A\overline{A} + B\overline{B}) = \epsilon /2 (e^{2\eta} + q\overline{q} e^{-2\eta})$. Now it remains to find the equation for $(\phi_{r+1})_z$. If we differentiate the orthogonality relations for $\phi_{r+1}$ we find that $(\phi_{r+1})_z$ must be a linear combination of $\overline{\phi_r}$, $\overline{\phi_{r+1}}$ and $\phi_{r+1}$ say
\begin{equation}
(\phi_{r+1})_z = \alpha \overline{\phi_r} + \mu \phi_{r+1} + \lambda \overline{\phi_{r+1}}.
\end{equation}
Furthermore differentiating the orthogonality relation $\langle \phi_{r+1} , \phi_r \rangle = 0$ gives $\alpha = -q/h_r$. Similarly differentiating $\langle \phi_{r+1} , \phi_{r+1} \rangle = q$ and $\langle \phi_{r+1} , \overline{\phi_{r+1}} \rangle = h_{r+1}$ determines $\mu$ and $\lambda$. For simplicity we will use local coordinates $z$ in which the holomorphic differential is $(dz)^{2r+2}$, hence we may set $q=1$. With some care one may put $q$ back into the equations later. We now have
\begin{equation}
\begin{aligned}
\mu + h_{r+1}\lambda &= 0 \\
h_{r+1}\mu + \lambda &= (h_{r+1})_z.
\end{aligned}
\end{equation}
We may also find the equation for $\eta_{z\overline{z}}$. This equation is found by equating the $\phi_{r+1}$ components of $(\phi_{r+1})_{z\overline{z}}$ evaluated in two ways. This gives
\begin{equation}
-h_{r+1}/h_r = \mu_{\overline{z}} + \lambda \overline{\lambda}.
\end{equation}
This simplifies to
\begin{equation}
2\eta_{z\overline{z}} = -\frac{\epsilon \, {\rm sinh}(2\eta)}{h_r}.
\end{equation}

Now we can collect the equations for $h_1, \cdots , h_r , \eta$. At the same time we will reintroduce the holomorphic differential $q$.
\begin{equation}\label{signedtoda3}
\begin{aligned}
2(w_i)_{z\overline{z}} &= h_{i+1}/h_i - h_i/h_{i-1}, \; \; \; 1 \le i \le r-1, \\
2(w_r)_{z\overline{z}} &= \frac{\epsilon}{2h_r} \left( e^{2\eta} + q\overline{q}e^{-2\eta} \right) -h_r/h_{r-1}, \\
2(\eta)_{z\overline{z}} &= -\frac{\epsilon}{2h_r} \left( e^{2\eta} - q\overline{q}e^{-2\eta} \right).
\end{aligned}
\end{equation}
These are a set of affine Toda equations for the affine Dynkin diagram $B^{(1)}_{r+1}$.\\

Similarly if one considers the case when the space $W$ has signature $(1,1)$ one finds the following equations
\begin{equation}\label{signedtoda6}
\begin{aligned}
2(w_i)_{z\overline{z}} &= h_{i+1}/h_i - h_i/h_{i-1}, \; \; \; 1 \le i \le r-1,  \\
2(w_r)_{z\overline{z}} &= \frac{\left( q e^{i\eta} + \overline{q}e^{-i\eta} \right)}{2h_r} -h_r/h_{r-1}, \\
2(\eta)_{z\overline{z}} &= -\frac{\left( q e^{i\eta} - \overline{q}e^{-i\eta} \right)}{2ih_r}.
\end{aligned}
\end{equation}
This is also a set of affine Toda equations for $B^{(1)}_{r+1}$ but with a different reality condition.

\subsection{Flat connection description}\label{fcd}

We will show that conversely a solution of the Toda equations for a minimal surface into a quadric (\ref{signedtoda1}), (\ref{signedtoda3}) or (\ref{signedtoda6}) on a surface $\Sigma$ defines a minimal surface $\phi : \tilde{\Sigma} \to Q_{h_0}$ from the universal cover of $\Sigma$. Indeed we show a solution of the equations defines a flat connection on a $\mathbb{R}^{p,q}$-bundle over $\Sigma$ with a distinguished section $\phi$. Identifying fibres through parallel translation, $\phi$ corresponds to a minimal immersion. That such a flat connection description exists is a well known aspect of integrability \cite{mik}.\\

Let us consider the case with $p+q$ even using the notation of the previous section. The idea is to use the harmonic sequence to build a vector bundle with flat connection. First we define a holomorphic vector bundle $E$
\begin{equation}
E = L_{-r} \oplus \dots \oplus L_{r+1}
\end{equation}
where
\begin{equation}
L_i = K^{-1} \; \; \; i \ne r+1, \; \; \; L_{r+1} = 1.
\end{equation}
With respect to a local holomorphic coordinate $z$ we define a frame $\{e_{-r} , \dots , e_{r+1} \}$ for $E$ as follows
\begin{equation}
e_i = (\partial_z)^i \; \; \; i \ne r+1, \; \; \; e_{r+1} = 1.
\end{equation}
The idea is that $e_0, \dots , e_r$ corresponds to a harmonic sequence $\phi_0, \dots , \phi_r$. So far $E$ has structure group ${\rm SL}(p+q, \mathbb{C})$. We define an antilinear involution $\lambda$ on $E$ reducing the structure group to ${\rm SL}(p+q,\mathbb{R})$ as follows
\begin{equation}
\lambda(e_0)=e_0, \; \; \; \lambda(e_i) = h_i e_{-i} \; \; \; 1 \le i \le r, \; \; \; \lambda(e_{r+1}) = e_{r+1}
\end{equation}
and this determines $\lambda$. We also use $\lambda$ to conjugate endomorphisms by $\overline{T}(x) = \overline{ T(\overline{x})}$. Next we define a non-degenerate bilinear form $\langle \, , \, \rangle$ on $E$
\begin{equation}
\begin{aligned}
\langle e_i , e_j \rangle &= 0 \; \; \; i \ne -j, \\
\langle e_i , e_{-i} \rangle &= 1,
\end{aligned}
\; \; \;
\begin{aligned}
\langle e_0 , e_0 \rangle &= h_0, \\
\langle e_{r+1} , e_{r+1} \rangle &= \epsilon.
\end{aligned}
\end{equation}
This defines a further reduction of structure to ${\rm SO}(p,q)$. We also have a signature $(p,q)$ Hermitian form $h(x,y) = \langle x , \lambda(y) \rangle$. Let $\nabla_A = d + Adz$ denote the associated Chern connection. Then $A$ is given by
\begin{equation}
A(e_i) = 2(w_i)_z e_i
\end{equation}
where $h_i = \epsilon_i e^{2w_i}$, $\epsilon_i = \pm 1$ for $i = 1, \dots , r$ and we put $w_0 = w_{r+1} = 0$.

Now define a $K$-valued endomorphism $\Phi dz$ by
\begin{equation}
\begin{aligned}
\Phi (e_i) &= e_{i+1} \; \; \; 0 \le i \le r-1, \\
\Phi (e_{-i}) &= -e_{-i+1} \; \; \; 2 \le i \le r,
\end{aligned}
\; \; \;
\begin{aligned}
\Phi (e_r) &= qe_{r+1}, & \Phi (e_{r+1}) &= -\epsilon q e_{-r}, \\
\Phi (e_{-1}) &= -h_0 e_0. & &
\end{aligned}
\end{equation}
We see that $\Phi$ is holomorphic. We now see that the Toda equations (\ref{signedtoda1}) are locally equivalent to
\begin{equation}
F_A + \left[ \Phi , \lambda(\Phi) \right] = 0
\end{equation}
where $F_A$ is the curvature of $\nabla_A$. This is also equivalent to flatness of the ${\rm SO}(p,q)$-connection
\begin{equation}
\nabla = \nabla_A + \Phi + \lambda(\Phi).
\end{equation}
If we consider the $h_i$ to transform as $(i,i)$-forms for $i = -r, \dots , r$ then we also have a global correspondence. Now suppose we have a solution of the equations. On the universal cover $\tilde{\Sigma}$ we can trivialise the flat ${\rm SO}(p,q)$-connection. The bundle $E$ has a global section $\phi = e_0$. Note that $\phi$ is real and $\langle \phi , \phi \rangle = h_0$. Thus on the universal cover $\phi$ defines a developing map
\begin{equation}\label{higgsy}
\phi : \tilde{\Sigma} \to \mathbb{R}^{p,q}
\end{equation}
which maps into the quadric $Q_{h_0}$ of vectors $v$ with $\langle v , v \rangle = h_0$. If $\nabla$ has monodromy representation $\rho : \pi_1(\Sigma) \to {\rm SO}(p,q) $ then $\phi$ is $\rho$-equivariant. Moreover we see that $\nabla_z \phi = e_1$ and $\nabla_{\overline{z}}\nabla_z \phi = -h_1/h_0 \phi$. Hence $\phi$ is a minimal immersion. Moreover we can take the harmonic sequence defined by $\phi$ which gives us back the solution to the Toda equations.\\

We now show that under certain circumstances equation (\ref{higgsy}) is a special case of the Higgs bundle equations. Define an involution $\sigma : E \to E$ by
\begin{equation}
\sigma(e_i) = \epsilon_i e_i
\end{equation}
where we have put $\epsilon_{-i} = \epsilon_i$. We let $\sigma$ act on ${\rm End}(E)$ act in the natural way. Then $\sigma$ commutes with $\lambda$ and $\rho = \sigma \lambda$ defines a compact real form of ${\rm SO}(p+q, \mathbb{C})$. Now we can interpret (\ref{higgsy}) as a special solution of the ${\rm SO}(p+q)$ Higgs bundle equations provided that $\sigma(\Phi) = -\Phi$. This in turn is true if and only if
\begin{equation}
\epsilon_{i+1} = -\epsilon_i, \; \; \; -r \le i \le r.
\end{equation}
In this case the flat connection has holonomy ${\rm SO}(r+1,r+1)$. Note that if we wish for the minimal immersion to induce a positive definite (as opposed to a negative definite) metric on $\Sigma$ then we must put $\epsilon_0 = -1$. It is also clear from the form of $(E,\Phi)$ that it is a Higgs bundle in the Hitchin component. We also note that the holomorphic $(r+1)$-differential $q$ corresponds to the Pfaffian of $\Phi$. This is therefore not a cyclic Higgs bundle (as the Pfaffian is not the highest differential), but it does share many similarities with the cyclic case. \\

We have a similar construction in the case $p+q$ is odd. We consider only the case corresponding to equations (\ref{signedtoda3}) the other case being similar. We have that $p+q = 2r+3$. Define a holomorphic bundle
\begin{equation}
E = L_{-r-1} \oplus \dots \oplus L_0 \oplus \dots \oplus L_{r+1}
\end{equation}
where $L_i = K^{-i}$. Define a frame $\{e_{-r-1} , \dots , e_{r+1} \}$ of $E$ where $e_i = (\partial_z)^i$. Define an anti-linear involution $\lambda$ by the conditions
\begin{equation}
\lambda(e_0) = e_0, \; \; \; \lambda(e_i) = e_{-i}h_i \; \; \; 1 \le i \le r, \; \; \; \lambda(e_{r+1}) = \epsilon e^{4\eta}e_{-r-1}
\end{equation}
and an inner product by
\begin{equation}
\langle e_i , e_j \rangle = 0 \; \; \; i \ne -j, \; \; \; \langle e_0 , e_0 \rangle = 0, \; \; \; \langle e_i , e_{-i} \rangle = 1 \; \; \; i \ne 0.
\end{equation}
As before we have a corresponding indefinite Hermitian form $h(x,y) = \langle x , \lambda(y) \rangle$ and we let $\nabla_A$ be the corresponding Chern connection. Define a section $\Phi$ of ${\rm End}(E)\otimes K$ by
\begin{equation}
\begin{aligned}
\Phi(e_i) &= e_{i+1} \; \; \; 0 \le i \le r-1, & \Phi(e_r) &= \frac{1}{\sqrt{2}}( e_{r+1} + q e_{-r-1} ), \\
\Phi(e_{r+1}) &= -\frac{1}{\sqrt{2}}qe_{-r}, & \Phi(e_{-r-1}) &= -\frac{1}{\sqrt{2}}e_{-r}, \\
\Phi(e_{-i}) &= -e_{-i+1} \; \; \; 2 \le i \le r, & \Phi(e_{-1}) &= -h_0 e_0.
\end{aligned}
\end{equation}
As before the Toda equations are equivalent to $\nabla_A + \Phi + \lambda(\Phi)$ being flat. We can again relate this to Higgs bundles in a certain special case. As before define $\sigma : E \to E$ by
\begin{equation}
\sigma(e_i) = \epsilon_i e_i
\end{equation}
where $\epsilon_{r+1} = \epsilon_{-r-1} = \epsilon$. Then $(\nabla_A , \Phi)$ is seen to solve the Higgs bundle equations provided that $\sigma(\Phi) = -\Phi$. This holds if and only if
\begin{equation}
\epsilon_{i+1} = -\epsilon_i, \; \; \; -r-1 \le i \le r.
\end{equation}
In this case it is again clear that the constructed Higgs bundle belongs to the Hitchin component for ${\rm SO}(r+1,r)$ and moreover is a cyclic Higgs bundle. Therefore we can use existence of solutions to the Higgs bundle equation to show the existence of the corresponding minimal immersions.\\

In order to find specific solutions we can consider solutions that are invariant under a translational symmetry. Suppose we have a local coordinate $z = x + iy$ such that $\nabla_A$ and $\Phi$ are $y$-invariant. If we write $\nabla_A = d + Ady$, $\Phi = 1/2(\phi_1 + i\phi_2)dz$ and $\lambda(\Phi) = 1/2(\phi_1 - i\phi_2)d\overline{z}$ such that $A,\phi_1,\phi_2$ are $y$-independent then the equations reduce to
\begin{equation}
\begin{aligned}
\partial_x A &= \left[ \phi_1 , \phi_2 \right] \\
\partial_x \phi_1 &= \left[ A , \phi_2 \right] \\
\partial_x \phi_2 &= \left[ \phi_1 , A \right].
\end{aligned}
\end{equation}
These equations are a different real form of Nahm's equations sometimes called Schmid's equations from their appearance in \cite{sch}. These can be put into Lax form using a spectral parameter $\zeta$. Let $\alpha = (\phi_1 + i \phi_2) + 2\zeta A + \zeta^2(\phi_1 - i\phi_2)$ and $\beta = iA + i\zeta(\phi_1 - i\phi_2)$. Then the Lax equation is
\begin{equation}
\partial_x \alpha = \left[ \alpha , \beta \right].
\end{equation}
It follows that the spectral curve ${\rm det}(\eta - \alpha(\zeta , x)) = 0$ is independent of $x$ and can be thought of as a curve in the total space of $\mathcal{O}(2) = T\mathbb{CP}^1$. By studying the spectral curve one can solve the equations using standard techniques \cite{twist}.



\chapter{Geometric structures from the Hitchin component}\label{chap3}
Given the split real form $G^\tau$ of the adjoint form of a simple Lie group $G$ we have seen that there is a distinguished component $\mathcal{H}(G^\tau)$ of the space of representations of the fundamental group of a compact oriented surface $\Sigma$ of genus $g > 1$ called the Hitchin component. Our aim will be to attempt to interpret the Hitchin component as a moduli space of geometric structures on $\Sigma$. As motivation we note that in the simplest case of $G^\tau = {\rm PSL}(2,\mathbb{R})$ the Hitchin component corresponds to the Fuchsian representations of the fundamental group, hence in this case the Hitchin component identifies naturally with Teichm\"uller space, which is the moduli space of hyperbolic metrics modulo diffeomorphisms isotopic to the identity.

The techniques we use will allow us to understand the Hitchin component when $G^\tau$ has rank $2$. In this case all such representations are described by just two holomorphic differentials, one being quadratic. Based on a result of Labourie \cite{lab} we find that as long as we let the conformal structure on the surface vary it suffices to consider only representations where the quadratic differential vanishes. In the rank $2$ case this leaves just the highest differential non-zero which means we have a  cyclic Higgs bundle so we can use the results we have established for such Higgs bundles.

\section{Quadratic differentials and energy}\label{qdae}
Representations in the Hitchin component are described by a series of holomorphic differentials. The lowest is always a quadratic differential. We introduce the energy functional and explain its relation to the quadratic differential. We recall a conjecture related to the energy functional and quadratic differential and discuss the result of Labourie that provides a partial proof of the conjecture.

\subsection{Holomorphic quadratic differentials}\label{holoqd}
In the Higgs bundle description of the Hitchin component we first choose a conformal structure on $\Sigma$, then the Hitchin component can be identified with the vector space
\begin{equation}\label{hitcomp}
\bigoplus_{i=1}^l H^0(\Sigma , K^{m_i+1})
\end{equation}
where $m_1 \le \dots \le m_l$ are the exponents of $G$. In the case $G^\tau = {\rm PSL}(2,\mathbb{R})$ this is the space of holomorphic quadratic differentials $H^0(\Sigma,K^2)$, which by Serre duality is dual to $H^1(\Sigma , K^*)$ which naturally identifies with the space of deformations of complex structure on $\Sigma$.

More generally for any simple Lie group we have $m_1 = 1$ so that there is always a space of holomorphic quadratic differentials in (\ref{hitcomp}). In fact the holomorphic differential can generally be described as
\begin{equation}
q_2 = k(\Phi , \Phi )
\end{equation}
where $\Phi$ is a Higgs field and $k$ is the Killing form on the adjoint bundle. Suppose we have a representation $\theta : \tilde{\Sigma} \to G^\tau$ in the Hitchin component for $G^\tau$. Choose a conformal structure on $\Sigma$ so that there is a corresponding Higgs bundle $(E,\Phi)$. Let $K$ be a maximal compact subgroup of $G^\tau$ with Lie algebra $k \subset \mathfrak{g}^\tau$. Give $G^\tau/K$ the metric $h$ induced by the Killing form on the orthogonal component $k^\perp$ of $k$. We know there is a $\theta$-equivariant harmonic map $\psi : \tilde{\Sigma} \to G^\tau/K$. Moreover if we take a local lift $\tilde{\psi}$ of $\psi$ then $\Phi$ is the component of $\tilde{\psi}^{-1}\partial\tilde{\psi}$ in $k^\perp$. It now follows that
\begin{equation}
(\psi^* h)^{2,0} = k(\Phi , \Phi ) = q_2.
\end{equation}
Thus the holomorphic quadratic differential $q_2$ vanishes if and only if the map $\psi$ is conformal. This leads us to a particular form of a conjecture of Goldman and Wentworth \cite{gol} and of Labourie \cite{lab}:

\begin{conj}\label{conj1}
Given a representation $\theta$ in the Hitchin component there is a unique conformal structure on $\Sigma$ such that the Higgs bundle $(E,\Phi)$ associated with $\theta$ satisfies $q_2 = k(\Phi , \Phi) = 0$.
\end{conj}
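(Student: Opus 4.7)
The plan is to recast the conjecture as a statement about critical points of an energy functional on Teichm\"uller space, and then invoke the existence/uniqueness results available for Hitchin representations.

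First I would fix a Hitchin representation $\theta : \pi_1(\Sigma) \to G^\tau$ and let the conformal structure $[g] \in \mathcal{T}(\Sigma)$ vary. By the Corlette--Donaldson theorem reviewed in Section \ref{higgsharmonic}, for each $[g]$ there exists a $\theta$-equivariant harmonic map $\psi_{[g]} : \tilde{\Sigma} \to G^\tau/K$, unique up to the centraliser of $\theta$. Its energy
\begin{equation*}
E_\theta([g]) \;=\; \tfrac{1}{2}\int_\Sigma |d\psi_{[g]}|^2 \, \mathrm{dvol}_g
\end{equation*}
depends only on $[g]$ and gives a smooth function $E_\theta : \mathcal{T}(\Sigma) \to \mathbb{R}$. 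A standard first-variation computation (going back to Sampson and Wolf) shows that the derivative of $E_\theta$ at $[g]$ along a Beltrami deformation $\mu$ is the real pairing of $\mu$ with the Hopf differential $\psi_{[g]}^* h^{2,0}$. But we have already identified $\psi^*h^{2,0}$ with $q_2 = k(\Phi,\Phi)$ in Section \ref{holoqd}, so $[g]$ is a critical point of $E_\theta$ precisely when the holomorphic quadratic differential associated to the Higgs bundle vanishes. The conjecture is thus equivalent to the assertion that $E_\theta$ has a unique critical point on $\mathcal{T}(\Sigma)$.

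Next I would obtain existence by establishing properness of $E_\theta$. Every Hitchin representation is Anosov and hence well-displacing, so translation lengths of $\theta(\gamma)$ in $G^\tau/K$ are bounded below by a positive multiple of the word length of $\gamma$. Combined with the Mumford compactness criterion for Teichm\"uller space, this forces $E_\theta([g]) \to \infty$ whenever $[g]$ leaves a compact set: a short closed geodesic on $\Sigma$ must be mapped by $\psi$ to a loop of bounded length, and a uniform lower bound on its length in $G^\tau/K$ forces the Dirichlet energy on a thin collar to blow up. Properness delivers at least one minimum of $E_\theta$, hence at least one conformal structure with $q_2 = 0$.

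The hard part is uniqueness, and this is where the rank $2$ hypothesis enters through Labourie's result \cite{lab}. In rank $2$ one proves that every critical point of $E_\theta$ is in fact a local minimum; together with properness and the contractibility of $\mathcal{T}(\Sigma)$, a mountain-pass / Morse-theoretic argument then rules out multiple minima and hence any further critical points. The main obstacle is that this local-minimum property is tied to the specific structure of the rank $2$ Higgs bundle, since one needs detailed control of the second variation of $E_\theta$ in terms of the Higgs field, and in higher rank one cannot at present exclude saddle critical points; no global convexity of $E_\theta$ (for instance along Weil--Petersson geodesics) is known in general, which is why Conjecture \ref{conj1} remains open beyond rank $2$.
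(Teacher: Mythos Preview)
The statement you are attempting to prove is labelled a \emph{conjecture} in the paper, and the paper does not claim to prove it. What the paper actually establishes is much weaker: it reformulates the conjecture in terms of critical points of the energy functional on Teichm\"uller space (exactly as you do in your first two paragraphs), then cites Labourie's properness result for ${\rm PSL}(n,\mathbb{R})$ and extends it via principal embeddings to types $A_n$, $B_n$, $C_n$, $G_2$. This yields only the \emph{existence} half of the conjecture for those types; the paper says explicitly that ``it does not establish uniqueness of the conformal structure''. The sole case in which the paper verifies the full conjecture is ${\rm PSL}(3,\mathbb{R})$, and there the argument is not via the energy functional at all but via the bijection with hyperbolic affine spheres in Section~\ref{affsph}.

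Your existence argument is fine in spirit and close to the paper's, though you invoke the Anosov/well-displacing property rather than the paper's route of pulling back Labourie's ${\rm PSL}(n,\mathbb{R})$ properness along principal inclusions. The genuine gap is your uniqueness paragraph. You assert that ``in rank $2$ one proves that every critical point of $E_\theta$ is in fact a local minimum'' and attribute this to \cite{lab}, but that paper contains no such result; it proves properness, nothing about the Hessian at critical points. The mountain-pass argument you sketch therefore rests on an unproven claim. At the time of writing of this thesis the uniqueness question was open even in rank $2$ (apart from the ${\rm PSL}(3,\mathbb{R})$ affine-sphere argument), which is precisely why the statement is recorded as a conjecture rather than a theorem. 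Your proposal does not close that gap; it relocates it.
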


If this conjecture is true then it provides an alternative description of the Hitchin component, namely as a vector bundle over Teichm\"uller space $V \to \mathcal{T}$ with fibres $V_p = \oplus_{i=2}^l H^0(\Sigma , K^{m_i+1})$ consisting of differentials which are holomorphic with respect to the complex structure corresponding to $p \in \mathcal{T}$.

The advantage of such a description is that we need only consider Higgs bundles $(E,\Phi)$ where the holomorphic quadratic differential vanishes simplifying the task of interpreting the Hitchin component. In particular in the rank $2$ case we need only assume that the highest differential is non-zero. Therefore we can use the affine Toda description of Section \ref{speccase} to provide a complete description of the component.

\subsection{The energy functional on Teichm\"uller space}\label{energy}
We will introduce the energy functional \cite{don}, \cite{lab} and explain its relation to the quadratic differential. This will lead to a second form of Conjecture \ref{conj1} in terms of the energy functional. We then examine the extent to which the conjecture is known to hold.\\

Let $\theta$ be a representation of the fundamental group of $\Sigma$ into a semisimple Lie group $G$ and let $K$ be a maximal compact subgroup. Let $\psi : \tilde{\Sigma} \to G/K$ be a $\theta$-equivariant map, not necessarily harmonic. Then $\psi$ defines a reduction of structure of the principal $G$-bundle $P = \tilde{\Sigma} \times_\theta G$ to a principal $K$-bundle $P_K$ and an anti-involution $\rho$ on the adjoint bundle $P \times_\theta \mathfrak{g}$. Let $k$ be the Lie algebra of $K$ and $k^\perp$ the orthogonal component with respect to the Killing form. One finds that the pull-back bundle $\psi^*(T(G/K))$ is naturally identified with $\pi^*(P_K) \times_K k$ where $\pi$ is the projection $\pi : \tilde{\Sigma} \to \Sigma$. Thus $d\psi = \phi$ can be regarded as a $1$-form valued section of the adjoint bundle which is self-adjoint in the sense that $\rho(\phi) = -\phi$.\\

Choose a conformal structure on $\Sigma$. This is equivalent to defining a Hodge star operator $* : T^*\Sigma \to T^*\Sigma$ on the space of $1$-forms. We define the {\em energy} $E_\theta(\psi , *)$ of $\psi$ with respect to $*$ by
\begin{equation}
E_\theta(\psi , *) = \int_\Sigma k( \phi \wedge * \phi)
\end{equation}
where $k$ is the Killing form on the adjoint bundle. If we fix the conformal structure $*$ then the stationary points of $E_\theta$ correspond to the map $\psi$ being harmonic. Let us suppose that the representation $\theta$ is reductive. So from Corlette \cite{cor} we know that such a harmonic map exists and is unique up to an isometry of $G/K$. Therefore if we impose the constraint that $\psi$ be harmonic it follows that $E_\theta(\psi , *)$ depends only on $\theta$ and $*$. Keeping $\theta$ fixed we have that $E_\theta$ depends only on the conformal structure $*$. We write $E_\theta(*)$ for $E_\theta(\psi , *)$ where $\psi$ is any $\theta$-equivariant harmonic map.\\

Suppose now that $f : \Sigma \to \Sigma$ is a diffeomorphism isotopic to the identity. Let $\psi : \tilde{\Sigma} \to G/K$ be a $\theta$-equivariant harmonic map with respect to a conformal structure $*$ on $\Sigma$. Then $\psi \circ f$ is harmonic with respect to the conformal structure $f^{-1}(*)$ and since $f$ is isotopic to the identity we see that $\psi \circ f$ is $\theta_1$-equivariant where $\theta_1$ is a representation conjugate to $\theta$. Applying a left translation by a fixed $g \in G$ we have that $g \psi \circ f$ is a $\theta$-equivariant map harmonic with respect to $f^{-1}(*)$. Therefore it follows that $E_\theta(*)$ depends only on the diffeotopy class of $*$. Therefore $E_\theta$ can be viewed as a map on Teichm\"uller space.\\

It is well known that the stationary points of the energy functional where both the function and conformal structure are allowed to vary correspond to conformal harmonic maps. Therefore the stationary points of $E_\theta$ as a function on Teichm\"uller space are precisely the conformal structures for which the holomorphic quadratic differential of the associated Higgs bundle vanishes. Therefore Conjecture \ref{conj1} is equivalent to the following
\begin{conj}
For any representation $\theta$ in the Hitchin component there exists a unique stationary point of the energy function $E_\theta$ on Teichm\"uller space.
\end{conj}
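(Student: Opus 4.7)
The natural plan is to treat existence and uniqueness of the critical point separately, exploiting the special nature of representations in the Hitchin component. For existence, the strategy is to show that $E_\theta \colon \mathcal{T}(\Sigma) \to \mathbb{R}$ is proper and bounded below, so that it attains its infimum, which is automatically a critical point by smoothness. For uniqueness, one would attempt to show that $E_\theta$ is strictly convex along a suitable family of paths in $\mathcal{T}(\Sigma)$ joining any two critical points, forcing all critical points to coincide.

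First I would address existence. The essential input is that representations in the Hitchin component are not arbitrary reductive representations: following Labourie, they are Anosov, which in particular makes them well-displacing. One can then bound the energy of any $\theta$-equivariant harmonic map from below in terms of the translation length spectrum of $\theta$ evaluated on geodesics of a hyperbolic metric representing the conformal class. As the point in $\mathcal{T}(\Sigma)$ tends to the boundary, some simple closed geodesic shrinks, and the well-displacing property forces the corresponding energy contribution to blow up. Thus $E_\theta$ is proper, and since it is smooth and bounded below on the smooth (indeed contractible) manifold $\mathcal{T}(\Sigma)$ it attains its infimum, producing at least one conformal structure at which $q_2$ vanishes.

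Next I would attempt uniqueness via a second variation argument. One linearises the harmonic map equation at a critical point $[\sigma_0]$ and computes the Hessian of $E_\theta$ on $T_{[\sigma_0]}\mathcal{T}(\Sigma)$, identified with a space of holomorphic quadratic differentials via Teichm\"uller theory. The coupling to the flat connection $\nabla_A + \Phi + \Phi^*$ of the associated Higgs bundle makes the Hessian an operator of Laplace type plus a zero-order curvature term. If one could show this operator is positive definite at every critical point, then all critical points would be non-degenerate local minima, and combined with properness on the contractible space $\mathcal{T}(\Sigma)$ this would force uniqueness. In the rank $2$ case one might try to obtain the convexity more directly by using the cyclic description and the associated affine Toda equations of Section \ref{speccase}, where the scalar nature of the unknown $\Omega$ makes maximum principle arguments more tractable.

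The hard part will be uniqueness. Positivity of the Hessian of $E_\theta$ reduces to a non-trivial spectral estimate for an elliptic operator built from the harmonic map and the Higgs field, and in general this is not known; it is precisely here that Labourie's results stop short of the full conjecture and only existence can be extracted. A satisfactory proof of uniqueness appears to require either a genuinely new convexity property of the energy along Weil--Petersson or Teichm\"uller geodesics, or else a specifically rank $2$ argument exploiting the Toda structure. I would expect progress in the rank $2$ setting of this paper to be possible, while the general case remains beyond the reach of the techniques assembled so far.
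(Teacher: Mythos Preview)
The statement you are attempting to prove is explicitly labelled a \emph{conjecture} in the paper and is not proved there. The paper offers no proof; it only records partial progress toward the existence half, citing Labourie's result that for ${\rm PSL}(n,\mathbb{R})$ the energy functional $E_\theta$ is proper on Teichm\"uller space, and then observes that this properness transfers to the Hitchin components for $B_n$, $C_n$ and $G_2$ via the principal three-dimensional embedding. From properness the paper deduces existence of a minimum (Proposition in Section~\ref{energy}), but it states plainly that ``it does not establish uniqueness of the conformal structure.'' So there is no ``paper's own proof'' to compare against.

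Your proposal is an accurate reflection of the state of affairs rather than a proof. Your existence argument via the Anosov/well-displacing property is essentially Labourie's argument, which the paper cites rather than reproduces. Your uniqueness strategy --- positivity of the Hessian of $E_\theta$ at every critical point --- is exactly the refined Conjecture~3.3 that the paper formulates immediately after the statement in question, and you are right that this remains open in general. Your closing paragraph correctly diagnoses the gap: the spectral positivity needed for the second-variation argument is not available, and the paper does not supply it either. In short, your write-up is a fair summary of what is known and what is missing, but it is not a proof of the conjecture, and neither is anything in the paper.
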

In fact it is shown in Tromba \cite{tro} that in the case of the Hitchin component for ${\rm PSL}(2,\mathbb{R})$ the energy functions are proper and have a unique minimum. This proves the conjecture in the case of ${\rm PSL}(2,\mathbb{R})$ and suggests a more refined conjecture:
\begin{conj}
For a representation in any Hitchin component the associated energy function is proper and all stationary points are non-degenerate minima.
\end{conj}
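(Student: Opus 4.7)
The plan decomposes into two independent pieces: properness, and non-degeneracy of every critical point as a minimum. Once both are in place, classical Morse theory on the contractible manifold $\mathcal{T}$ forces a unique critical point, which additionally recovers the uniqueness conjecture stated just above.

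For properness I would exploit that representations in the Hitchin component are discrete, faithful, and in fact \emph{well-displacing}: for every non-trivial $\gamma \in \pi_1(\Sigma)$ the translation length $\ell_{G^\tau/K}(\theta(\gamma))$ is uniformly bounded below by a positive multiple of the hyperbolic length of $\gamma$, with a universal additive error. Suppose $[*_n]$ escapes every compact in $\mathcal{T}$ while $E_\theta(*_n)$ remains bounded. By Mumford compactness some simple closed curve $\gamma$ has $*_n$-hyperbolic length $\ell_n(\gamma)\to 0$, so the corresponding Margulis collar is conformally a flat cylinder of circumference $2\pi$ and height $H_n\sim\pi/\ell_n(\gamma)\to\infty$. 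Applying Cauchy-Schwarz to the $\theta$-equivariant harmonic map $\psi_n$ along each horizontal circle in the collar, one gets
\begin{equation*}
E_\theta(\psi_n,*_n) \;\geq\; \frac{H_n \, \ell_{G^\tau/K}(\theta(\gamma))^{2}}{2\pi} \;\longrightarrow\; \infty,
\end{equation*}
a contradiction. This is the direct higher-rank analogue of the argument used by Tromba in the Fuchsian case.

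For non-degeneracy, I would compute the Hessian of $E_\theta$ at a critical conformal class $[*_0]$ directly from the Higgs bundle description. A tangent vector to $\mathcal{T}$ at $[*_0]$ is represented by a harmonic Beltrami differential $\mu$, dual under the Serre pairing to $H^0(\Sigma,K^2)$. Differentiating the identification $q_2(*) = k(\Phi_*,\Phi_*)$ along a family $*_t=*_0+t\mu$, using the linearised Hitchin equations to express $\partial_t \Phi_t$ as the unique $L^2$ solution modulo gauge, one obtains a formula of the form
\begin{equation*}
\mathrm{Hess}_{[*_0]} E_\theta(\mu,\mu) \;=\; 2\,\mathrm{Re}\,\langle\mu,\dot q_2\rangle \;=\; \|L_\theta \mu\|^{2},
\end{equation*}
for a bounded linear operator $L_\theta$ on the space of harmonic Beltrami differentials. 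Non-degeneracy reduces to injectivity of $L_\theta$, which follows because no non-trivial Beltrami differential can preserve the isomorphism class of a stable Higgs bundle; this is precisely the step where the Hitchin (rather than merely reductive) nature of $\theta$ enters essentially.

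The main obstacle is establishing the \emph{positivity} of the Hessian beyond the $\mathrm{PSL}(2,\mathbb{R})$ case, where it follows from Tromba's formula together with positivity of the Weil-Petersson metric. In the rank $2$ setting of this thesis the cyclic Higgs bundle picture together with the explicit Toda reduction of Section \ref{speccase} should allow one to write the Hessian in a manifestly positive form, at least at points of the Hitchin section where only the highest differential is non-vanishing. The general case, in which cross-contributions from the intermediate differentials $q_{m_i+1}$ must be controlled, appears to require substantially more work, and is where I expect the conjecture to be hardest.
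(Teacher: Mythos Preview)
The statement you are attempting to prove is a \emph{conjecture} in the paper, not a theorem; the paper offers no proof. What the paper does establish is only the properness half, and only for Lie algebras of type $A_n$, $B_n$, $C_n$, $G_2$: it quotes Labourie's properness result for ${\rm PSL}(n,\mathbb{R})$ and then observes that whenever a principal three-dimensional subalgebra of $\mathfrak{g}$ is also principal in some $\mathfrak{sl}(n,\mathbb{C})$, the Hitchin component for $\mathfrak{g}$ sits inside that for ${\rm PSL}(n,\mathbb{R})$ with the same energy functional, so properness is inherited. Your properness sketch via well-displacing and the collar estimate is essentially Labourie's argument, and is fine as far as it goes, but you have assumed well-displacing for an arbitrary split real group, which is not available in the paper and is itself non-trivial.

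The genuine gap is in your non-degeneracy argument. The displayed identity $\mathrm{Hess}_{[*_0]} E_\theta(\mu,\mu) = \|L_\theta \mu\|^{2}$ is asserted, not derived, and in fact the paper explicitly flags (in its chapter on further questions) that when one actually differentiates through the linearised Hitchin equations, the Hessian acquires additional terms coming from the gauge-fixing of $\partial_t \Phi_t$; these terms are not of fixed sign, and the paper states that controlling them would require ``sharp estimates'' which it was unable to obtain. Your claim that injectivity of $L_\theta$ follows because ``no non-trivial Beltrami differential can preserve the isomorphism class of a stable Higgs bundle'' conflates two different linearisations: a Beltrami differential changes the complex structure on $\Sigma$, not merely the Higgs pair, and stability of $(E,\Phi)$ for a fixed complex structure says nothing about whether the \emph{conformal} deformation is detected by $\dot q_2$. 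You yourself concede in the final paragraph that positivity is the hard part; that concession is correct, and the preceding display should be regarded as the desired conclusion rather than an established formula.
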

Note that this implies the previous conjecture. Indeed since the energy function is positive then if it is also proper it must have a minimum. But if all stationary points are non-degenerate minima then there can be at most one minima.\\

Labourie \cite{lab} shows that in the case of the Hitchin components for ${\rm PSL}(n,\mathbb{R})$, the energy functions are indeed proper. We now argue that this implies the same result for at least some of the other Hitchin components. Suppose $\mathfrak{g}$ is a complex simple subalgebra of $\mathfrak{sl}(n,\mathbb{C})$ with the property that there is a principal $3$-dimensional subalgebra $\iota : \mathfrak{sl}(2,\mathbb{C}) \to \mathfrak{g}$ such that the composition $\mathfrak{sl}(2,\mathbb{C}) \to \mathfrak{sl}(n,\mathbb{C})$ is also a principal $3$-dimensional subalgebra. In this case the Hitchin component for the split real form of $\mathfrak{g}$ identifies as a subspace of the Hitchin component for ${\rm PSL}(n,\mathbb{R})$. This subspace is characterised by the vanishing of a subset of the holomorphic differentials. It is clear that for these representations the energy function is the same as the corresponding energy function for ${\rm PSL}(n,\mathbb{R})$.

Now one can readily show that above situation occurs at the very least for $\mathfrak{so}(2n+1,\mathbb{C}) \subset \mathfrak{sl}(2n+1,\mathbb{C})$, $\mathfrak{sp}(2n,\mathbb{C}) \subset \mathfrak{sl}(2n , \mathbb{C})$ and $\mathfrak{g}_2 \subset \mathfrak{sl}(7,\mathbb{C})$.

Properness of the energy function implies at least in these cases of the existence of a conformal structure such that the corresponding holomorphic quadratic differential vanishes, however it does not establish uniqueness of the conformal structure. To summarise we have
\begin{prop}
For a representation in the Hitchin component associated to a Lie algebra of type $A_n, B_n ,C_n $ or $ G_2$ there exists a conformal structure for which the associated Higgs bundle has vanishing quadratic differential.
\end{prop}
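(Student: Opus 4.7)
The plan is to separate the $A_n$ case (which is directly Labourie's theorem) from the $B_n$, $C_n$, $G_2$ cases, which I would reduce to the $A_n$ case via the inclusions $\mathfrak{so}(2n+1,\mathbb{C}) \subset \mathfrak{sl}(2n+1,\mathbb{C})$, $\mathfrak{sp}(2n,\mathbb{C}) \subset \mathfrak{sl}(2n,\mathbb{C})$ and $\mathfrak{g}_2 \subset \mathfrak{sl}(7,\mathbb{C})$ arising from the smallest nontrivial representation in each case. The overall strategy is to pass properness of the energy function from the ambient ${\rm PSL}(n,\mathbb{R})$ Hitchin component down to the subcomponent, extract a critical point, and use the characterisation of critical points of $E_\theta$ established in Section \ref{energy} as precisely the conformal structures where $q_2$ vanishes.

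First I would check the compatibility of principal three-dimensional subalgebras in each case. For $\iota : \mathfrak{g} \hookrightarrow \mathfrak{sl}(n,\mathbb{C})$ one needs the composite $\mathfrak{sl}(2,\mathbb{C}) \to \mathfrak{g} \to \mathfrak{sl}(n,\mathbb{C})$ of the principal embedding with $\iota$ to again be a principal embedding. Equivalently, the defining $n$-dimensional representation must restrict to the irreducible $n$-dimensional representation of the principal $\mathfrak{sl}(2,\mathbb{C}) \subset \mathfrak{g}$. For $\mathfrak{so}(2n+1,\mathbb{C})$ and $\mathfrak{sp}(2n,\mathbb{C})$ this is standard (the standard representation restricts to the unique irreducible of that dimension), and for $\mathfrak{g}_2$ the seven-dimensional representation is well known to remain irreducible under the principal $\mathfrak{sl}(2,\mathbb{C})$ since the exponents of $\mathfrak{g}_2$ are $1,5$, forcing a single Jordan block.

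Given this, the construction of Section \ref{hbc} shows that the Hitchin component $\mathcal{H}(G^\tau)$ embeds into $\mathcal{H}({\rm PSL}(n,\mathbb{R}))$ as the subspace where the holomorphic differentials of degrees not among $\{m_i+1\}$ vanish. If $\theta$ lies in $\mathcal{H}(G^\tau)$ then, viewed in $\mathcal{H}({\rm PSL}(n,\mathbb{R}))$, the same $\theta$-equivariant harmonic map $\psi : \tilde{\Sigma} \to G^\tau/K$ composed with the totally geodesic inclusion $G^\tau/K \hookrightarrow {\rm PSL}(n,\mathbb{R})/{\rm PSO}(n)$ is still harmonic, and by the uniqueness part of Corlette's theorem \cite{cor} this must agree, up to a constant, with the harmonic map computing the ambient energy. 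Thus the two energy functions on Teichm\"uller space agree up to a positive multiplicative constant determined by the ratio of the Killing forms.

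Now I would invoke Labourie's theorem \cite{lab}: the energy function on Teichm\"uller space for any representation in $\mathcal{H}({\rm PSL}(n,\mathbb{R}))$ is proper. By the previous paragraph, the energy function $E_\theta$ is therefore proper on Teichm\"uller space for every $\theta$ in the Hitchin component of type $A_n$, $B_n$, $C_n$ or $G_2$. Since $E_\theta$ is continuous, nonnegative and proper, it attains a minimum at some conformal structure $*_0$; at such a minimum it is stationary, and by the discussion at the end of Section \ref{energy} the associated harmonic map is conformal, equivalently the holomorphic quadratic differential $q_2 = k(\Phi,\Phi)$ of the corresponding Higgs bundle vanishes. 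The main obstacle in making this rigorous is checking, cleanly, that the compatibility of principal $\mathfrak{sl}(2,\mathbb{C})$'s in each of the three cases really does identify the two harmonic maps (and not merely their homotopy classes), so that the two energy functionals coincide; once that is in place, the argument reduces to a one-line application of Labourie's properness result. Note also that this argument produces only existence and not uniqueness of $*_0$, as properness plus nonnegativity yields only a minimum and provides no control over other critical points.
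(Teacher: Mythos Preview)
Your proposal is correct and follows essentially the same route as the paper: reduce the $B_n$, $C_n$, $G_2$ cases to Labourie's properness result for ${\rm PSL}(n,\mathbb{R})$ via the standard inclusions compatible with principal $\mathfrak{sl}(2,\mathbb{C})$'s, then extract a minimum of the proper energy functional and identify it as a conformal structure with $q_2=0$. If anything you are more careful than the paper, which simply asserts that the two energy functions coincide, whereas you spell out the mechanism via the totally geodesic inclusion of symmetric spaces and Corlette's uniqueness.
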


\section{Construction of cyclic representations}\label{cocr}
We are going to consider Higgs bundles for the Hitchin component such that only the highest differential is non-zero which we call cyclic representations. We will outline the general procedure from Section \ref{speccase} to be followed here.\\

Let $\Sigma$ be a Riemann surface with canonical bundle $K$. Let $\mathfrak{g}$ be a complex simple Lie algebra of rank $l$ with exponents $m_1 \le  \dots \le m_l$. Associated to each holomorphic $l+1$-differential $q$ there is a Higgs bundle $(E,\Phi)$ in the Hitchin component for $\mathfrak{g}$ with only the highest differential non-vanishing. Let $i : \mathfrak{sl}(2,\mathbb{C}) \to \mathfrak{g}$ be a principal three dimensional subalgebra and let $x,e,\tilde{e}$ be a basis for $\mathfrak{sl}(2,\mathbb{C})$ with commutation relations
\begin{equation}
\left[ x , e \right] = e, \; \; \; \left[ x , \tilde{e} \right] = -\tilde{e}, \; \; \; \left[ e , \tilde{e} \right] = x.
\end{equation}
Let $e_l$ be a highest weight vector for $\mathfrak{g}$ satisfying $\left[ x , e_l \right] = m_l e_l$.
The holomorphic bundle $E$ and Higgs field $\Phi$ are given by
\begin{eqnarray}
E &=& \bigoplus_{i=1}^l \left( K^{m_i} \oplus K^{m_i - 1} \oplus \dots \oplus K^{-m_i} \right)\\
\Phi &=& \tilde{e} + q e_l
\end{eqnarray}
In fact there is a holomorphic ${\rm SL}(2,\mathbb{C})$-bundle $P$ such that $E$ is the associated bundle $E = P \times_i \mathfrak{g}$. Choose a holomorphic square root $K^{1/2}$ of $K$. Then we may take $P$ to be the ${\rm SL}(2,\mathbb{C})$ frame bundle of $K^{1/2} \oplus K^{-1/2}$.\\

Given a complex representation $\xi : G \to {\rm GL}(V)$ we may take the associated bundle
\begin{equation}
\mathcal{V} = P \times_{\xi \circ i} V.
\end{equation}
We can then regard $\xi(\Phi)$ as a $K$-valued endomorphism of $\mathcal{V}$. Provided that $\xi$ is faithful it is sufficient to consider the Higgs bundle $(\mathcal{V} , \xi(\Phi))$. We will simply write $\Phi$ for $\xi(\Phi)$. The advantage of this is that the bundle $\mathcal{V}$ will provide a simpler description of the Higgs bundle.

For example in the case of $\mathfrak{g} = \mathfrak{sl}(2n+1,\mathbb{C})$ we may take $\xi$ to be the fundamental representation in which case
\begin{equation}
\mathcal{V} = K^n \oplus K^{n-1} \oplus \dots \oplus K^{-n}.
\end{equation}
This is also the case for the fundamental representation of $\mathfrak{so}(2n+1,\mathbb{C}) \subset \mathfrak{sl}(2n+1,\mathbb{C})$ and for $\mathfrak{g}_2 \subset \mathfrak{so}(7,\mathbb{C})$. On the other hand the fundamental representation for $\mathfrak{sl}(2n,\mathbb{C})$ yields
\begin{equation}
\mathcal{V} = K^{n/2} \oplus K^{n/2 - 1} \oplus \dots \oplus K^{-n/2}
\end{equation}
and this likewise applies to the fundamental representation of $\mathfrak{sp}(2n,\mathbb{C}) \subset \mathfrak{sl}(2n,\mathbb{C})$.\\

Now in $\mathfrak{g}$ we define commuting anti-involutions $\hat{\rho},\hat{\lambda}$ and Cartan involution $\sigma = \hat{\rho}\hat{\lambda}$ where $\hat{\rho}$ defines a compact real form and $\hat{\lambda}$ defines a split real form. Moreover we have
\begin{equation}\label{fixedanti}
\begin{aligned}
\hat{\rho}(x) &= -x, & \hat{\rho}(e) &= -\tilde{e}, & \hat{\rho}(\tilde{e}) &= -e, \\
\hat{\lambda}(x) &= -x, & \hat{\lambda}(e) &= \tilde{e}, & \hat{\lambda}(\tilde{e}) &= e.
\end{aligned}
\end{equation}

Now on the adjoint bundle $E$ there will be corresponding anti-involutions $\rho,\lambda$ and Cartan involution $\sigma$. The relation is that over any local trivialisation of $E$ there is a $\mathfrak{h}$-valued function $\Omega$, where $\mathfrak{h}$ is the centraliser of $x$ in $\mathfrak{g}$ such that
\begin{equation}
\begin{aligned}
\rho &= {\rm Ad}_{e^{2\Omega}} \circ \hat{\rho}, & \hat{\rho}(\Omega) &= -\Omega, \\
\lambda &= {\rm Ad}_{e^{2\Omega}} \circ \hat{\lambda}, & \hat{\lambda}(\Omega) &= -\Omega.
\end{aligned}
\end{equation}
and $\sigma$ is unchanged.

Let $k$ be the Killing form for $\mathfrak{g}$ then the Hermitian metric $h_\rho$ on $E$ solving the Higgs bundle equations is given by
\begin{equation}
h_\rho (x,y) = -k(x , \rho(y)).
\end{equation}
We let $\nabla_A$ be the Chern connection associated to $h_\rho$ and let the curvature be $F_A$. Then the Higgs bundle equation is
\begin{equation}
F_A - \left[ \Phi , \rho(\Phi) \right] = 0.
\end{equation}

In the cases we will consider $\rho$ and $\lambda$ have a more direct interpretation in terms of the bundle $\mathcal{V}$. In the case of $\rho$ there will be a Hermitian form $h$ on $\mathcal{V}$ such that if we consider $E$ as a subbundle of ${\rm End}(\mathcal{V})$ then for $A \in E$ we have $\rho(A) = -A^*$ where $A^*$ is the adjoint of $A$ with respect to $h$.

In the case of $\lambda$ there will be an anti-involution on $\mathcal{V}$ which we also denote by $\lambda$ with the property that for $A$ a section of $E$ and $v$ a section of $\mathcal{V}$ we have $\lambda (Av) = \lambda(A) \lambda(v)$. We can think of $\lambda$ as defining a conjugation on $\mathcal{V}$ so $\mathcal{V}$ has a real form $\mathcal{V}^\lambda$ which is preserved by the flat connection $\nabla = \nabla_A + \Phi - \rho(\Phi)$.

\section{Uniformising representations}\label{unif}
We first consider Higgs bundles in the Hitchin component for ${\rm PSL}(2,\mathbb{R})$. Not only is this useful as a warm-up for the following sections but we will need some of the results here for later use in Section \ref{cfps}. A study of these representations is also found in \cite{hit2} where it is shown how this component corresponds to Teichm\"uller space.\\

In the case of ${\rm PSL}(2,\mathbb{R})$ representations we can describe the Hitchin component as follows. The Lie algebra is $\mathfrak{sl}(2,\mathbb{R}) = \langle x , e, \tilde{e} \rangle$ and we suppose that $X,Y$ is a basis for $\mathbb{R}^2$ so that
\begin{equation}
\begin{aligned}
xX &= \frac{1}{2}X, & eX &= 0, & \tilde{e}X &= \frac{1}{\sqrt{2}}Y, \\
xY &= -\frac{1}{2}Y, & eY &= \frac{1}{\sqrt{2}}X, & \tilde{e}Y &= 0.
\end{aligned}
\end{equation}
Choose a conformal structure on $\Sigma$ and a holomorphic quadratic differential $q$. To this data we associate a Higgs bundle $(E,\Phi)$ in the Hitchin component for ${\rm PSL}(2,\mathbb{R})$. In this case $E$ is the adjoint bundle of the following rank $2$ bundle
\begin{equation}
W = K^{1/2} \oplus K^{-1/2}
\end{equation}
where $K^{1/2}$ is a choice of holomorphic square root of the canonical bundle $K$. The Higgs field is
\begin{equation}
\Phi = \tilde{e} + qe.
\end{equation}
Fix an anti-involution $\hat{\rho}$ and an anti-involution $\hat{\lambda} = \hat{\rho} \sigma$ for the split real form as defined by (\ref{fixedanti}) and we have an associated anti-involution $\hat{\lambda}$ on the representation $\mathbb{R}^2$ given by
\begin{equation}
\hat{\lambda}X = Y, \; \; \; \hat{\lambda}Y = X
\end{equation}
with the property that for any endomorphism $A$ and vector $v$, $\hat{\lambda}(Av) = \hat{\lambda}(A)(\hat{\lambda}v)$.\\

Now the Hermitian metric solving the Higgs bundle equations is associated to an anti-involution $\rho$ which locally has the form $\rho = {\rm Ad}_{e^{2\Omega}} \circ \hat{\rho}$ where
\begin{equation}
\Omega = vx
\end{equation}
where if we set $H = e^{2v}$ then $H$ is a $(1/2,1/2)$-form. We find that
\begin{equation}
-\rho(\Phi) = H^2 e + H^{-2}\overline{q} \tilde{e}.
\end{equation}
The connection $\nabla = \nabla_A + \Phi - \rho(\Phi)$ is then
\begin{equation}
\nabla = d -4\partial (v)x + (H^2+q)e  + (1 + H^{-2}\overline{q})\tilde{e}.
\end{equation}
The Higgs bundle equation is
\begin{equation}
4v_{z\overline{z}} = H^2 - q\overline{q}H^{-2}.
\end{equation}
Solutions to this equation correspond to hyperbolic metrics \cite{hit2} and from this one has a correspondence between holomorphic quadratic differentials and Teichm\"uller space.

We will be interested in the case where $q=0$. The equation reduces to $4v_{z\overline{z}} = H^2$ which says that the metric $H^2$ has curvature $-2$. On a compact Riemann surface of genus $g>1$ we can find identify the universal cover $\tilde{\Sigma}$ of $\Sigma$ with the upper half plane $\mathbb{H}^2$ such that
\begin{equation}
H^2 = \frac{dx^2 + dy^2}{2y^2}.
\end{equation}
We seek the real covariant constant sections of $\nabla$ (on the universal cover). Note that the real structure preserved by $\nabla$ on the rank $2$ bundle $W$ is given by
\begin{equation}
\lambda(dz^{1/2}) = H^{-1}dz^{-1/2}, \; \; \; \lambda(dz^{-1/2}) = H dz^{1/2}.
\end{equation}
The real covariant constant sections are $s = s_1dz^{1/2} + s_2 dz^{-1/2}$ where
\begin{equation}
s_2 = (az + b)e^{i\pi/4} dz^{-1/2}, \; \; \; s_1 = \frac{\overline{s_2}}{\sqrt{2}y}.
\end{equation}
This provides the trivialisation for $W$ and $\nabla$ over $\tilde{\Sigma}$.

\section{Convex $\mathbb{RP}^2$-structures}\label{crp2s}
We now turn to the Hitchin component for ${\rm PSL}(3,\mathbb{R})$. In this case the Hitchin component is known to be the moduli space of convex $\mathbb{RP}^2$-structures on the surface \cite{choigold}, \cite{chgo}and these are also known to correspond to hyperbolic affine spheres in $\mathbb{R}^3$ invariant under a representation of the fundamental group. Using cyclic Higgs bundles we can obtain the affine and projective structures from the Higgs bundle in a direct manner. Although these results are already known our method provides an alternative proof for existence of the affine and projective structures associated to such a representation.


\subsection{Flat projective structures}\label{fps}
We introduce the notion of an $\mathbb{RP}^n$-structure and then explain how this is the flat case of the more general notion of a projective structure. Although we are only interested in the flat projective structures the general case which also provides a framework that will be useful even in the flat case. Some references for projective structures are \cite{ovs}, \cite{gol2}.\\

An $\mathbb{RP}^n$-manifold is roughly speaking an $n$-manifold built out of open patches of projective space glued together by projective transformations. More precisely an atlas $\{ U_\alpha , \phi_\alpha \}$ for an $n$-manifold $M$ consisting of an open cover $\{ U_\alpha \}$ and coordinate charts $\phi_\alpha : U_\alpha \to \mathbb{R}^n \subset \mathbb{RP}^n$ is called a projective atlas if the transition maps  $\phi_\beta \circ \phi_\alpha^{-1}$ are restrictions of projective transformations $\mathbb{RP}^n \to \mathbb{RP}^n$. Two projective atlases are called equivalent if their union is also a projective atlas. An {\em $\mathbb{RP}^n$-structure} on $M$ is then an equivalence class of a projective atlas on $M$ and if $M$ has an $\mathbb{RP}^n$-structure then $M$ is called an {\em $\mathbb{RP}^n$-manifold}. The diffeomorphism group of $M$ clearly acts on the set of $\mathbb{RP}^n$-structures on $M$.\\

Given an $\mathbb{RP}^n$-manifold $M$ let $\tilde{M}$ be the universal cover of $M$. Take a point $x \in M$ and let $(U , \phi )$ be a coordinate chart containing $x$ so $\phi$ is a map $\phi : U \to \mathbb{RP}^n$. Now take any curve $\gamma : [0,1] \to M$ in $M$ starting at $x$ and finishing at another point $y \in M$. We can find $0 = t_0 < t_1 < \dots < t_{k-1} < t_k = 1$ partitioning $[0,1]$ such that for $i = 1, \dots , k$, $\gamma([t_i-1 , t_i])$ lies in an open subset $U_i$ where $(U_i , \phi_i)$ is a compatible coordinate chart. By suitably choosing the pairs $(U_i, \phi_i)$ we can arrange so that $\phi_i$ and $\phi_{i+1}$ agree on $U_i \cap U_{i+1}$. To the curve $\gamma$ we then assign the point $\phi_k(y) \in \mathbb{RP}^n$. One shows that the element in $\mathbb{RP}^n$ associated to a curve $\gamma$ depends only on the homotopy class of $\gamma$. Therefore we have a map
\begin{equation}
{\rm dev} : \tilde{M} \to \mathbb{RP}^n
\end{equation}
called a {\em developing map}. The developing map is not unique since it depends on the choice of the initial chart $(U , \phi )$ but all developing maps constructed in this way differ by the action of a projective transformation on $\mathbb{RP}^n$, that is have the form $g \, {\rm dev} $ where $g \in {\rm PGL}(n+1 , \mathbb{R})$. Note that ${\rm dev}$ determines the $\mathbb{RP}^n$-structure on $M$, in fact by suitably restricting the domain of ${\rm dev}$ to coordinate charts we get a compatible projective atlas. Another property of the developing map and most significant for our purposes is that there is an associated monodromy representation. That is there is a representation $\theta : \pi_1(M) \to {\rm PGL}(n+1,\mathbb{R})$ such that ${\rm dev}$ is $\theta$-equivariant, that is
\begin{equation}
{\rm dev} ( m\gamma) = \theta(\gamma)^{-1} {\rm dev}(m)
\end{equation}
where $m \in \tilde{M}$ and $\gamma \in \pi_1(M)$. Note that changing the developing map by an overall projective transformation changes the monodromy representation by conjugation.\\

There is an alternative description of $\mathbb{RP}^n$-structures such that the monodromy representation of the developing map is the monodromy of a flat connection on a rank $n+1$-bundle over $M$. To see this we first introduce the more general notion of a projective structure.\\

Just as the projective geometry of $\mathbb{RP}^n$ is concerned with lines and points, projective differential geometry is concerned with the study of geodesics of an affine connection. Projective structures are an example of the parabolic geometries studied in Chapter \ref{chap4}, see also \cite{kobayashi}, \cite{gol2}, \cite{ovs}. Given an affine connection $\nabla$ on a manifold $M$ the parameterised geodesics are the curves $p(t)$ in $M$ defined by the equation
\begin{equation*}
\nabla_{\dot{p(t)}}\dot{p(t)} = 0
\end{equation*}
or if we allow for arbitrary parameterisations the unparametrised geodesic equation is
\begin{equation*}
\nabla_{\dot{p(t)}}\dot{p(t)} = f(t)\dot{p(t)}
\end{equation*}
for some function $f(t)$. We will consider only torsion free connections since torsion does not enter into the geodesic equation.\\

We say that two connections $\nabla$, $\tilde{\nabla}$ are {\em projectively equivalent} if there is a $1$-form $\lambda$ such that for all vector fields $X,Y$ we have
\begin{equation}
\tilde{\nabla}_X Y = \nabla_X Y + \lambda(X)Y + \lambda(Y)X.
\end{equation}
We have that two torsion free connections have the same (unparametrised) geodesics if and only if they are projectively equivalent. Given this, we define a {\em projective structure} on $M$ to be an equivalence class of projectively equivalent torsion free affine connections on $M$. An affine connection within this equivalence class is called a {\em preferred connection} or {\em Weyl connection}. A projective structure is {\em flat} if each point of $M$ has a neighborhood over which there is a flat preferred connection for the projective structure.

A partition of unity argument shows that for an $\mathbb{RP}^n$-manifold exists a projectively flat affine connection $\nabla$ on $M$ such that locally the geodesics of $\nabla$ correspond to straight lines in $\mathbb{RP}^n$ in any projective coordinate chart. That is $\mathbb{RP}^n$-structures are flat projective structures. Conversely a flat projective structure defines local flat coordinates. One checks that this defines an $\mathbb{RP}^n$-structure so a flat projective structure is precisely an $\mathbb{RP}^n$-structure.\\

Rather than work with a family of equivalent connections we can construct a unique connection on a vector bundle which equivalently describes the geometry. We will assume for simplicity that $M$ is an oriented $n$-manifold with projective structure defined by an affine connection $\nabla$. Since $M$ is oriented we may define the line bundle $L$ such that $L^{-n} = \wedge^n T^*M$. For any bundle $E$ we then define $E \left[ k \right] = E \otimes L^k$ where $k$ may be any real number. For any bundle associated to the tangent bundle by a representation of ${GL}(n,\mathbb{R})$ we let $\nabla$ denote the inherited connection.

Define the {\em tractor bundle} $\mathcal{T}$ to be the ${\rm SL}(n+1,\mathbb{R})$-bundle
\begin{equation}
\mathcal{T} = TM \left[ \mu \right] \oplus L^\mu
\end{equation}
where $\mu = -n/(n+1)$. The tractor bundle also has a natural ${\rm SL}(n+1,\mathbb{R})$-connection $\tilde{\nabla}$ called the {\em tractor connection} for projective geometry. It is defined as follows
\begin{equation}\label{projtractor}
\tilde{\nabla}_X \left[ \begin{matrix} Y \\ s \end{matrix} \right] = \left[ \begin{matrix} \nabla_X Y + Xs \\ \nabla_X s + \mathbf{P}(X,Y) \end{matrix} \right]
\end{equation}
where $\mathbf{P}$ is the tensor
\begin{equation}
\mathbf{P}(X,Y) = -\frac{n}{n^2-1}{\rm Ric}(X,Y) - \frac{1}{n^2-1}{\rm Ric}(Y,X)
\end{equation}
and ${\rm Ric}$ is the Ricci tensor ${\rm tr}(Z \to R(Z,X)Y)$ for $\nabla$, where $R$ is the curvature of $\nabla$.\\

From a representative $\nabla$ for the projective structure on $M$ we construct the pair $(\mathcal{T},\tilde{\nabla})$ consisting of a bundle and connection. If we take a different connection $\nabla^1$ on $M$ which is projectively equivalent to $\nabla$ then we produce an isomorphic pair $(\mathcal{T},\tilde{\nabla}^1)$, i.e. the tractor connection changes by an ${\rm SL}(n+1,\mathbb{R})$ gauge transformation preserving $L^\mu \subset \mathcal{T}$. In this sense the tractor connection is uniquely constructed from the projective structure. Furthermore the line bundle $L^\mu \subset{\mathcal{T}}$ is independent of the choice of affine connection.

A key result for us is that the projective structure on $M$ is flat if and only if the associated tractor connection is flat as a connection. Let us examine the flat case further. On the universal cover $\tilde{M}$ of $M$ we may trivialise the tractor bundle $\mathcal{T} \simeq \tilde{M} \times \mathbb{R}^{n+1}$ such that the tractor connection becomes the trivial connection. The line bundle $L^\mu \subset \mathcal{T}$ then defines a map (defined up to the action of ${\rm SL}(n+1,\mathbb{R})$)
\begin{equation}
{\rm dev} : \tilde{M} \to \mathbb{RP}^n
\end{equation}
which is in fact the developing map. From equation (\ref{projtractor}) we verify that the development map is an immersion. Moreover if $\theta : \pi_1(M) \to {\rm SL}(n+1,\mathbb{R})$ is the monodromy of the tractor connection then $\theta$ is clearly also the monodromy of the developing map.\\

We would like a way of knowing when a flat connection is the tractor connection for a flat projective structure. This leads us to the following notion of a non-degenerate section:
\begin{defn}
Let $E$ be a rank $n+1$ bundle with connection $\tilde{\nabla}$ on an $n$-dimensional manifold. A non-vanishing section $s$ spanning a line subbundle $\mathcal{L}$ of $E$ is called {\em non-degenerate} or {\em generic} if the map $TM \to E/\mathcal{L}$ given by
\begin{equation}
X \to \tilde{\nabla}_X s \; ({\rm mod} \mathcal{L} )
\end{equation}
is an isomorphism. Similarly we say that a line subbundle $\mathcal{L} \subset E$ is generic if any non-vanishing local section of $\mathcal{L}$ is generic.
\end{defn}

Now suppose we have a rank $n+1$ bundle $E$ with flat connection $\tilde{\nabla}$ and generic line subbundle $\mathcal{L} \subset E$. As before we may trivialise $E$ on the universal cover $\tilde{M}$ of $M$ and the generic line bundle $\mathcal{L}$ defines a development map ${\rm dev} : \tilde{M} \to \mathbb{RP}^n$ which is $\theta$-equivariant where $\theta$ is the monodromy of the connection. The fact that $\mathcal{L}$ is generic is then equivalent to ${\rm dev}$ being an immersion so when $\mathcal{L}$ is generic this defines a flat projective structure.

We can further identify $E$ as a tractor bundle over $M$ and find an affine connection on $M$ representing the projective structure as follows. Choose any splitting $E = W \oplus \mathcal{L}$. If $E$ is an ${\rm SL}(n+1,\mathbb{R})$ bundle then we also find $\mathcal{L} = L^\mu = (\wedge^n T^*M)^{1/(n+1)}$. The non-degeneracy of $\mathcal{L}$ gives an isomorphism $e :TM \otimes \mathcal{L} \to W$. Now by projection $\tilde{\nabla}$ defines connections $\nabla^\mathcal{L}$ and $\nabla^W$ on $\mathcal{L}$ and $W$ respectively. Therefore we can use the isomorphism $TM \simeq W \otimes \mathcal{L}^*$ to define an affine connection $\nabla$. Let $s$ be a section of $\mathcal{L}$ and let $p : E \to W$ be the projection onto $W$. Then we define $\nabla$ as follows
\begin{equation}
\tilde{\nabla}_{(\nabla_X Y)} s = \tilde{\nabla}_X (p(\tilde{\nabla}_Y s)) - \tilde{\nabla}_Y \nabla^\mathcal{L}_X s \; ({\rm mod} \mathcal{L} ).
\end{equation}
Note that changing the section $s$ or choosing a different splitting of $E$ will result in projective changes in $\nabla$. It follows that $(E,\tilde{\nabla})$ is the tractor bundle and tractor connection for the projective structure defined by $\nabla$. If we can find a section $s$ of $\mathcal{L}$ that is covariantly constant with respect to $\nabla^\mathcal{L}$ then the formula for the induced affine connection simplifies to
\begin{equation}
\tilde{\nabla}_{(\nabla_X Y)} s = \tilde{\nabla}_X (\tilde{\nabla}_Y s) \; ({\rm mod} \mathcal{L} ).
\end{equation}

\subsection{Convex projective structures}\label{convex}
An $\mathbb{RP}^2$-structure is called {\em convex} if the developing map ${\rm dev} : \tilde{\Sigma} \to \mathbb{RP}^2$ is an embedding of the universal cover $\tilde{\Sigma}$ onto a convex subset $\Omega$ contained in some affine patch. See \cite{gold} for an introduction. In such a case the fundamental group $\pi_1(\Sigma)$ identifies with a discrete subgroup $\Gamma \subset {\rm PSL}(3,\mathbb{R})$ which acts freely and properly on $\Omega$, hence $\Sigma$ identifies with $\Omega/\Gamma$. From the Poincar\'e disc model we see that a uniformising representation is an example of a convex $\mathbb{RP}^2$-structure.\\

We define the moduli space $C(\Sigma)$ of convex $\mathbb{RP}^2$-structures on $\Sigma$ to be the set of all equivalence classes of convex projective $\mathbb{RP}^2$-structures on $\Sigma$ where two structures are considered equivalent if they are related by a diffeomorphism isotopic to the identity. The space has a natural topology and Goldman \cite{gold} shows the monodromy map
\begin{equation}
{\rm hol} : C(\Sigma) \to {\rm Hom}(\pi_1(\Sigma) , {\rm PSL}(3,\mathbb{R})) / {\rm PSL}(3,\mathbb{R})
\end{equation}
is an embedding onto an open, Hausdorff subspace homeomorphic to a Euclidean space of dimension $16g-16$. Since $C(\Sigma)$ is connected it is contained within a component of the space of representations. But since the uniformising representations yield convex $\mathbb{RP}^2$-structures this component must be the Hitchin component. Therefore the monodromy map identifies $C(\Sigma)$ with an open subspace of the Hitchin component for ${\rm PSL}(3,\mathbb{R})$. Naturally one might expect $C(\Sigma)$ to be the whole component and this was indeed proved by Choi and Goldman \cite{choigold}.\\

Our contribution will be to show how one can obtain the convex $\mathbb{RP}^2$-structures directly from the Higgs bundle construction of the Hitchin component. In doing so the connection between convex projective structures and affine spheres will naturally emerge.\\

Recall the procedure of Section \ref{hitcomp} for constructing Higgs bundles in the Hitchin component. We are interested in the $\mathfrak{sl}(3,\mathbb{C})$ case. A representation in this component is described by a quadratic and cubic differential. We use the result of Labourie that we can choose a conformal structure on $\Sigma$ so that the associated quadratic differential vanishes. This leaves a holomorphic cubic differential.\\

Let $V$ be a complex $3$-dimensional vector space. Let $\hat{\rho}$ be the anti-involution $\hat{\rho}(A) = -\overline{A}^t$ for a compact real form and $\hat{\lambda}$ the anti-involution $\hat{\lambda}(A) = H\overline{A}H$ for a split real form where
\begin{equation}
H = \left[ \begin{matrix} 0 & 0 & 1 \\ 0 & 1 & 0 \\ 1 & 0 & 0 \end{matrix} \right].
\end{equation}
Moreover we have an anti-involution also denoted $\hat{\lambda}$ on $V$ given by $\hat{\lambda} v = H\overline{v}$. The split real form for $\hat{\lambda}$ preserves the fixed point subspace of $\hat{\lambda}$ on $V$.\\

We have a principal $3$-dimensional subalgebra $\mathfrak{s} = \langle x , e , \tilde{e} \rangle$ where
\begin{equation}
x = \left[ \begin{matrix} 1 & 0 & 0 \\ 0 & 0 & 0 \\ 0 & 0 & -1 \end{matrix} \right], \; e = \left[ \begin{matrix} 0 & 1 & 0 \\ 0 & 0 & 1 \\ 0 & 0 & 0 \end{matrix} \right], \; \tilde{e} = \left[ \begin{matrix} 0 & 0 & 0 \\ 1 & 0 & 0 \\ 0 & 1 & 0 \end{matrix} \right].
\end{equation}
Let $\mathcal{W}$ be the associated bundle on $\Sigma$, so from the form of $x$ we have $\mathcal{W} = K \oplus 1 \oplus K^{-1}$. Let $q$ be a holomorphic cubic differential. The corresponding Higgs field $\Phi$ is then
\begin{equation}
\Phi = \left[ \begin{matrix} 0 & 0 & q \\ 1 & 0 & 0 \\ 0 & 1 & 0 \end{matrix} \right].
\end{equation}
Let $h = e^{2\Omega}$ be the corresponding Hermitian metric on $\mathcal{W}$. Then
\begin{equation}
h = \left[ \begin{matrix} h_1 & 0 & 0 \\ 0 & h_1^{-1}h_2^{-1} & 0 \\ 0 & 0 & h_2 \end{matrix} \right], \; \Omega = \left[ \begin{matrix} w_1 & 0 & 0 \\ 0 & -w_1 - w_2 & 0 \\ 0 & 0 & w_2 \end{matrix} \right]
\end{equation}
but since $\hat{\rho}\Omega = \hat{\lambda}\Omega = -\Omega$ we find that $w_2 = -w_1$ so way may write
\begin{equation}
h = \left[ \begin{matrix} h & 0 & 0 \\ 0 & 1 & 0 \\ 0 & 0 & h^{-1} \end{matrix} \right], \; \Omega = \left[ \begin{matrix} w & 0 & 0 \\ 0 & 0 & 0 \\ 0 & 0 & -w \end{matrix} \right].
\end{equation}
We know that $h$ transforms as a $(1,1)$-form, hence it can be regarded as a metric on $\Sigma$.\\

The compact and split anti-involutions on the adjoint bundle are $\rho = {\rm Ad}_h \circ \hat{\rho}$, $\lambda = {\rm Ad}_h \circ \hat{\lambda}$. Moreover, there corresponds to $\lambda$ a real structure on $\mathcal{W}$ given by $\lambda v = h H \overline{v}$. For $v = [A,B,C]^t$ we have $\lambda v = [h\overline{C} , \overline{B} , h^{-1} \overline{A} ]^t$. The associated flat connection preserves the real subbundle $\mathcal{W}^\lambda$ of fixed points of $\lambda$.

We may now determine $\Phi^* = -\rho \Phi$ and the connection form $\mathcal{A} = -2\partial \Omega + \Phi + \Phi^*$:
\begin{equation}
\Phi^* = \left[ \begin{matrix} 0 & h & 0 \\ 0 & 0 & h \\ h^{-2}\overline{q} & 0 & 0 \end{matrix} \right], \; \mathcal{A} = \left[ \begin{matrix} -2\partial w & h & q \\ 1 & 0 & h \\ h^{-2}\overline{q} & 1 & 2\partial w \end{matrix} \right].
\end{equation}
\\

To obtain a projective structure on $\Sigma$ from this data we also need to choose a generic real line subbundle. An obvious candidate is the real subspace of the trivial factor of $\mathcal{W} = K \oplus 1 \oplus K^{-1}$. Let $s$ denote the section $s(x) = [0,1,0]^t$, $L = 1^\lambda$ the real line bundle spanned by $s$. We have
\begin{equation}
\nabla s = \left[ \begin{matrix} h \\ 0 \\ 1 \end{matrix} \right] = \left[ \begin{matrix} h'dz \otimes d\overline{z} \\ 0 \\ dz^{-1} \otimes dz \end{matrix} \right]
\end{equation}
where $h = h' dz \otimes d\overline{z}$. This proves $s$ is generic and hence we have a projective structure on $\Sigma$.\\

So far we have not shown the projective structure is convex. For this we will need to examine the projective structure in more detail.\\

Our flat connection on a rank $3$ bundle together with a generic global section $s$ not only defines a development map $\tilde{\Sigma} \to \mathbb{RP}^2$ but also a lift $\tilde{\Sigma} \to \mathbb{R}^3$. This gives $\tilde{\Sigma}$ the structure of an immersed surface in $\mathbb{R}^3$. There is an induced affine connection on $\tilde{\Sigma}$ representing the projective structure. In fact the isomorphism $T\Sigma \to \mathcal{W}^\lambda / L$ given by $X \to \nabla_X s \, ( {\rm mod} \, L)$ induces an affine connection $\nabla^a$ from $\nabla$. It is defined by the relation
\begin{equation}
\nabla_{(\nabla^a_X Y)} s = \nabla_X \nabla_Y s \, ( {\rm mod} \, L).
\end{equation}
Let $X = u\tfrac{\partial}{\partial z} + \overline{u}\tfrac{\partial}{\partial \overline{z}}$ be a vector field on $\Sigma$. We have
\begin{equation}
\nabla \nabla_X s = \left[ \begin{matrix} (d(h'u) - 2(\partial w)h'\overline{u} + q'u dz) \otimes dz \\ 0 \\ (du + \overline{uq'}/h' d\overline{z} + 2(\partial w) u ) \otimes dz^{-1} \end{matrix} \right] \, ( {\rm mod} \, L)
\end{equation}
where $q = q'(dz)^3$. It follows, writing tangent vectors as column vectors $X = [u , \overline{u} ]^t$ that
\begin{equation}
\nabla^a \left[ \begin{matrix} u \\ \overline{u} \end{matrix} \right] = \left[ \begin{matrix} du + 2(\partial w)u + \overline{uq'}/h' d\overline{z} \\ d\overline{u} + 2\overline{u}dw - 2(\partial w)\overline{u} + q'u/h' dz \end{matrix} \right].
\end{equation}
Therefore the connection form $\mathcal{A}^a$ for $\nabla^a$ is
\begin{equation}
\mathcal{A}^a = \left[ \begin{matrix} 2\partial w & \overline{q}/h \\ q/h & 2\overline{\partial}w \end{matrix} \right].
\end{equation}

The Higgs bundle equation $-2\overline{\partial}\partial \Omega + [\Phi , \Phi^*] = 0$ becomes the following equation for $h$:
\begin{equation}
2\partial \overline{\partial} w + q \overline{q} e^{-4w} - e^{2w} = 0.
\end{equation}
This real form of the $\mathfrak{sl}(3,\mathbb{C})$ affine Toda equation is known as the Tzitzeica equation \cite{lof}. We will see in Section \ref{affsph} that this structure is precisely the requirement for ${\rm dev} : \tilde{\Sigma} \to \mathbb{R}^3$ to be an immersed hyperbolic affine sphere. Moreover $2h$ is the affine metric and $\nabla^a$ is the Blaschke connection \cite{lof}. Since the metric $2h$ on $\tilde{\Sigma}$ is complete it will follow from the results in Section \ref{affsph} that $\tilde{\Sigma}$ is in fact a properly embedded submanifold of $\mathbb{R}^3$ and under the projection $\mathbb{R}^3 \to \mathbb{RP}^2$ maps bijectively with a convex subset $\Omega \subset \mathbb{R}^2 \subset \mathbb{RP}^2$ contained in an affine subspace. Hence the projective structure constructed is indeed convex.

\subsection{Affine spheres}\label{affsph}
We provide a brief review on affine differential geometry and affine spheres leading to the connection between hyperbolic affine spheres in $\mathbb{R}^3$ and convex $\mathbb{RP}^2$-structures. Some references for affine differential geometry and affine spheres are \cite{nomsa}, \cite{lof}.\\

Affine differential geometry concerns the properties of hypersurfaces in $\mathbb{R}^{n+1}$ invariant under the action of the special affine group ${\rm SL}(n+1,\mathbb{R}) \ltimes \mathbb{R}^{n+1}$. Let $D$ denote the trivial affine connection on $\mathbb{R}^{n+1}$. Given an immersed hypersurface $f : M \to \mathbb{R}^{n+1}$ and a transverse vector field $\xi$ on $M$, the decomposition $f^*(T\mathbb{R}^{n+1}) = TM \oplus \mathbb{R}\xi$ induces a torsion free affine connection $\nabla$ on $M$ with the properties
\begin{eqnarray}
D_X Y &=& \nabla_X Y + h(X,Y)\xi \label{affconn1} \\
D_X \xi &=& -S(X) + \tau(X)\xi \label{affconn2}
\end{eqnarray}
for any two vector fields $X,Y$ on $M$. Here $h$ is a symmetric bilinear form, $S$ an endomorphism of $TM$ and $\tau$ a $1$-form on $M$. We will restrict attention to convex hypersurfaces, which amounts to assuming that $h$ is positive definite and that $\xi$ points to the convex side. In this case $h$ defines a metric on $M$.\\

For a convex hypersurface the {\em affine normal} is the unique transverse vector field $\xi$ with the properties that $\xi$ points to the convex side, $\tau  = 0$ and ${\rm dvol}_h = \iota_\xi \omega$ where $\omega$ is the standard volume form on $\mathbb{R}^{n+1}$. From now on we always take the affine normal so that (\ref{affconn1})-(\ref{affconn2}) reduce to
\begin{eqnarray}
D_X Y &=& \nabla_X Y + h(X,Y)\xi \\
D_X \xi &=& -S(X).
\end{eqnarray}
In this case $h$ is called the {\em affine metric}, $\nabla$ the {\em Blaschke connection} and $S$ the {\em shape operator}. Since $h$ is non-degenerate we also have the Levi-Civita connection $\nabla^h$ for $h$ on $M$. The difference $C = \nabla^h - \nabla$ is called the {\em Pick form}.\\

An {\em affine sphere} is a convex hypersurface $f : M \to \mathbb{R}^{n+1}$ such that the shape operator has the form $S = \lambda I$ where $I$ is the identity operator and $\lambda$ is a constant. We say that $M$ is an {\em elliptic affine sphere} if $\lambda > 0$, {\em parabolic} if $\lambda = 0$ and {\em hyperbolic} if $\lambda < 0$. We see that $\lambda f + \xi$ is constant along $M$, so in the parabolic case $\xi$ is constant along $M$ while in the other cases we may translate $M$ so that $\xi = -\lambda f$. Moreover we can rescale so that $\lambda = 1$ in the elliptic case and $\lambda = -1$ in the hyperbolic case.\\

The key result that ties affine spheres to convex projective structures is the following \cite{lof}
\begin{prop}
Given a convex bounded domain $\Omega \subset \mathbb{R}^n$ where $\mathbb{R}^n \subset \mathbb{R}^{n+1}$ is the affine plane $x_{n+1} = 1$, there is a unique properly embedded hyperbolic affine sphere with $\lambda = -1$ and centre $0$ asymptotic to the boundary of the cone $\{ t\Omega \, |  \, t>0 \}$. Any immersed hyperbolic affine sphere is properly embedded if and only of the affine metric is complete. Any such affine sphere $M$ is asymptotic to the boundary of the cone given by the convex hull of $M$ and $0$.
\end{prop}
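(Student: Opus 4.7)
The plan is to reduce the problem to a Monge--Amp\`ere equation on $\Omega$ and then invoke Cheng--Yau's existence and uniqueness theorem. First I would parametrise a putative affine sphere asymptotic to the cone $\mathcal{C} = \{t\Omega : t > 0\}$ as a radial graph, i.e.\ write points of $M$ in the form $f(x) = \phi(x)(x,1)$ for $x \in \Omega$ and some positive function $\phi$, or equivalently realise $M$ as a level set $\{F = \text{const}\}$ of a convex function $F$ on $\mathcal{C}$ that is homogeneous of an appropriate degree and blows up at $\partial\mathcal{C}$. In either formulation the affine-sphere condition (namely that the affine normal $\xi$ equals $-f$, i.e.\ $\lambda = -1$ with centre at the origin) translates into a PDE for a single convex function $u$ on $\Omega$.

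A standard calculation, using the formula ${\rm dvol}_h = \iota_\xi \omega$ for the affine normal together with $\xi = -f$, shows that this PDE is the real Monge--Amp\`ere equation
\begin{equation*}
\det D^2 u \;=\; \left(-\tfrac{1}{u}\right)^{n+2} \qquad \text{on } \Omega,
\end{equation*}
with $u$ strictly convex, $u<0$ on $\Omega$, and $u \to 0$ at $\partial \Omega$. The affine sphere is then recovered from $u$ (for example, as the graph of $-1/u$ over $\Omega$ up to an affine change, the asymptotic condition at $\partial\Omega$ being equivalent to the cone being the asymptotic cone of $M$). The next step is to invoke the theorem of Cheng and Yau that this boundary value problem admits a unique strictly convex solution $u \in C^\infty(\Omega) \cap C^0(\overline{\Omega})$ for any bounded convex $\Omega$; this produces the required properly embedded affine sphere and gives its uniqueness.

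For the second and third sentences of the statement, I would argue as follows. If $M$ is an immersed hyperbolic affine sphere with centre $0$ whose affine metric is complete, consider the convex hull of $M \cup \{0\}$ and its associated open cone $\mathcal{C}$; projecting onto a hyperplane one obtains a bounded convex $\Omega$, and the Cheng--Yau solution above produces a properly embedded affine sphere $M'$ asymptotic to $\partial\mathcal{C}$. A uniqueness argument, comparing $M$ to $M'$ via the maximum principle on the Monge--Amp\`ere equation (or equivalently on the Tzitz\'eica-type equation arising from the affine metric), forces $M = M'$, so $M$ is properly embedded and asymptotic to $\partial\mathcal{C}$. Conversely, a properly embedded hyperbolic affine sphere asymptotic to a cone is complete because the solution $u$ and its Hessian have the correct controlled blow-up at $\partial\Omega$, which makes the induced affine metric metrically complete.

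The hard part is the Cheng--Yau existence and uniqueness theorem for the singular Monge--Amp\`ere boundary value problem; this requires the continuity method together with delicate interior and boundary a priori estimates, and is the only genuinely nontrivial analytic input. Everything else (the translation into the PDE, the interpretation of completeness in terms of boundary behaviour, and the rigidity argument comparing two affine spheres with the same asymptotic cone) is formal once this PDE result is in hand, so I would simply cite Cheng--Yau for that step rather than reproving it.
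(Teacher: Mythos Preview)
The paper does not actually prove this proposition: it is stated as a known result and attributed to \cite{lof} (Loftin), who in turn relies on the Cheng--Yau theory. So there is no ``paper's own proof'' to compare against; the thesis simply quotes the result and uses it as a black box to conclude that the affine spheres it constructs are properly embedded and project to convex domains.

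Your sketch is essentially the standard route to this result in the literature. The reduction to the singular Monge--Amp\`ere problem $\det D^2 u = (-1/u)^{n+2}$ on $\Omega$ with zero boundary data, together with Cheng--Yau existence and uniqueness, is exactly how the first sentence is established. For the second and third sentences your outline is in the right spirit, but you should be aware that the equivalence between completeness of the affine metric and proper embedding is genuinely delicate: the hard direction (complete implies properly embedded and asymptotic to a cone) is not just a maximum-principle comparison but requires the full strength of the Cheng--Yau estimates (and historically also work of Calabi on the structure of complete affine spheres). So ``everything else is formal once this PDE result is in hand'' somewhat understates the analytic content of that part; if you were writing this up you would want to cite Cheng--Yau and Calabi for those steps rather than suggesting they follow by a routine argument.
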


We saw in Section \ref{convex} how for a compact Riemann surface $\Sigma$ of genus $g>1$, a choice of complex structure and cubic holomorphic differential gives rise to an immersed affine sphere $\tilde{\Sigma} \to \mathbb{R}^3$. Moreover the affine metric on $\tilde{\Sigma}$ is complete so that $\tilde{\Sigma}$ is in fact properly embedded and thus gives rise to a convex projective structure.

Conversely a convex $\mathbb{RP}^2$-structure arises from a convex subset $\Omega \subset \mathbb{RP}^2$ which is acted upon by a subgroup of $\Gamma \subset {\rm SL}(3,\mathbb{R})$. Since there is a unique hyperbolic affine sphere $M \to \mathbb{R}^3$ asymptotic to $\Omega$ it follows that $\Gamma$ acts on $M$ and the quotient $M / \Gamma$ identifies with $\Sigma$. Now the affine metric on $M$ descends to $\Sigma$ defining a conformal structure. The affine connection likewise descends, in particular the Pick form is defined on $\Sigma$ and identifies with the holomorphic cubic differential of Section \ref{convex}.\\

The correspondences just described are mutually inverse so we have constructed a bijection between pairs $(*,q)$ consisting of a complex structure on $\Sigma$ and a holomorphic cubic differential and the convex $\mathbb{RP}^2$-structures on $\Sigma$. Recall the result of Choi and Goldman that the moduli space of convex $\mathbb{RP}^2$-structures is the Hitchin component for ${\rm PSL}(3,\mathbb{R})$. Therefore we have shown in the case of ${\rm PSL}(3,\mathbb{R})$ that for each representation in the Hitchin component there is a {\em unique} conformal structure for which the associated holomorphic quadratic differential vanishes.

\section{Hitchin component for ${\rm PSp}(4,\mathbb{R})$}\label{hcfpsp}
We now turn our attention to the Hitchin component for ${\rm PSp}(4,\mathbb{R})$. In this case the Hitchin component is seen to identify with a subspace of the Hitchin component for ${\rm PSL}(4,\mathbb{R})$ which was studied by Guichard and Wienhard \cite{gui} in which it was shown that representations in this component correspond to {\em convex-foliated projective structures}, a class of projective structure on the unit tangent bundle of the surface. Given a compact Riemann surface $\Sigma$ the unit tangent bundle $M$ is the bundle of unit tangent vectors with respect to any Riemannian metric on $\Sigma$. Within this space the projective structures corresponding to ${\rm PSp}(4,\mathbb{R})$ are precisely those with a contact structure which in local projective coordinates matches the canonical contact structure on $\mathbb{RP}^3$.\\

In Section \ref{proj} we use Higgs bundle techniques to construct projective structures on the unit tangent bundle $M$ with contact structures. We will then argue in Section \ref{cfps} that our construction fills out the space of projective structures (with contact structure) of Guichard and Wienhard. Finally in Section \ref{rtlcams} we relate our projective structures to line congruences and minimal surfaces.\\

\subsection{Projective structures for ${\rm PSp}(4,\mathbb{R})$}\label{proj}
We begin by describing the complex Lie algebra $\mathfrak{sp}(4,\mathbb{C})$. Let $W$ be a complex $4$-dimensional vector space with symplectic form $\omega(u,v) = u^t \omega v$ where $\omega$ is the matrix
\begin{equation}
\omega = \left[ \begin{matrix} 0 & 0 & 0 & 1 \\ 0 & 0 & -1 & 0 \\ 0 & 1 & 0 & 0 \\ -1 & 0 & 0 & 0 \end{matrix} \right]
\end{equation}
then $\mathfrak{sp}(4,\mathbb{C})$ is the subalgebra of $\mathfrak{gl}(W)$ of endomorphisms $A$ such that $A^t\omega + \omega A =0$. We fix compact and split real structures $\hat{\rho}$ and $\hat{\tau}$ as follows: $\hat{\rho}(A) = -\overline{A}^t$, $\hat{\tau}(A) = H\overline{A}H$, where
\begin{equation}
H = \left[ \begin{matrix} 0 & 0 & 0 & 1 \\ 0 & 0 & 1 & 0 \\ 0 & 1 & 0 & 0 \\ 1 & 0 & 0 & 0 \end{matrix} \right].
\end{equation}
Moreover corresponding to $\hat{\tau}$ is an anti-involution on $W$ also denoted $\hat{\tau}$. It is given by $\hat{\tau}v = H\overline{v}$ and the fixed point subspace defines the real $4$-dimensional subspace which the split real form preserves.

We can choose for the principal $3$-dimensional subgroup $\mathfrak{s} = \langle x, e, \tilde{e} \rangle$, where
\begin{equation}
\begin{aligned}
x &= \left[ \begin{matrix} 3/2 & 0 & 0 & 0 \\ 0 & 1/2 & 0 & 0 \\ 0 & 0 & -1/2 & 0 \\ 0 & 0 & 0 & -3/2 \end{matrix} \right], &
e &= \left[ \begin{matrix} 0 & \sqrt{3/2} & 0 & 0 \\ 0 & 0 & \sqrt{2} & 0 \\ 0 & 0 & 0 & \sqrt{3/2} \\ 0 & 0 & 0 & 0 \end{matrix} \right], \\
\tilde{e} &= \left[ \begin{matrix} 0 & 0 & 0 & 0 \\ \sqrt{3/2} & 0 & 0 & 0 \\ 0 & \sqrt{2} & 0 & 0 \\ 0 & 0 & \sqrt{3/2} & 0 \end{matrix} \right]. & &
\end{aligned}
\end{equation}

Now recall the construction of the Higgs bundle $(E,\Phi)$ for the Hitchin component. Choose a spin structure $K^{1/2}$. The bundle $E$ is the ${\rm PSp}(4,\mathbb{C})$-bundle associated to the ${\rm SL}(2,\mathbb{C})$-bundle $K^{1/2}\oplus K^{-1/2}$ under the inclusion of the principal $3$-dimensional subalgebra. However we can lift the structure group to ${\rm Sp}(4,\mathbb{C})$ so that $E = \mathfrak{sp}(\mathcal{W})$ where $\mathcal{W}$ is the rank $4$-symplectic vector bundle $\mathcal{W} = K^{3/2} \oplus K^{1/2} \oplus K^{-1/2} \oplus K^{-3/2}$. Let $q \in {\rm H}^0(\Sigma, K^4)$ be a holomorphic quartic differential. The associated Higgs field is
\begin{equation}
\Phi = \left[ \begin{matrix} 0 & 0 & 0 & q \\ \sqrt{3/2} & 0 & 0 & 0 \\ 0 & \sqrt{2} & 0 & 0 \\ 0 & 0 & \sqrt{3/2} & 0 \end{matrix} \right].
\end{equation}
Let $h = e^{2\Omega}$ be the associated Hermitian structure:
\begin{equation}
h = \left[ \begin{matrix} h & 0 & 0 & 0 \\ 0 & k^{-1} & 0 & 0 \\ 0 & 0 & k & 0 \\ 0 & 0 & 0 & h^{-1} \end{matrix} \right], \;
\Omega = \left[ \begin{matrix} a & 0 & 0 & 0 \\ 0 & -b & 0 & 0 \\ 0 & 0 & b & 0 \\ 0 & 0 & 0 & -a \end{matrix} \right].
\end{equation}
Then the compact and split anti-involutions for the adjoint bundle are $\rho = {\rm Ad}_h \circ \hat{\rho}$, $\tau = {\rm Ad}_h \circ \hat{\tau}$. The real real rank $4$ subbundle $\mathcal{W}^\tau$ preserved by $\nabla$ is the fixed point set of the anti-involution $\tau v = h \hat{\tau} v$. Now we may calculate $\Phi^* = -\rho \Phi$ and the connection form $\mathcal{A} = -2\partial \Omega + \Phi + \Phi^*$:
\begin{eqnarray}
\Phi^* &=& \left[ \begin{matrix} 0 & \sqrt{3/2}hk & 0 & 0 \\ 0 & 0 & \sqrt{2}/k^2 & 0 \\ 0 & 0 & 0 & \sqrt{3/2}kh \\ \overline{q}/h^2 & 0 & 0 & 0 \end{matrix} \right], \\
\mathcal{A} &=& \left[ \begin{matrix} -2\partial a & \sqrt{3/2}hk & 0 & q \\ \sqrt{3/2} & 2\partial b & \sqrt{2}/k^2 & 0 \\ 0 & \sqrt{2} & -2\partial b & \sqrt{3/2}kh \\ \overline{q}/h^2 & 0 & \sqrt{3/2} & 2\partial a \end{matrix} \right].
\end{eqnarray}

Let $\pi : U\Sigma \to \Sigma$ be the unit tangent bundle over $\Sigma$. We may pull back $\mathcal{W}$ and $\nabla$ to $U\Sigma$. In particular, $\pi^*\mathcal{W}^\tau$ is a real rank $4$ bundle over the $3$-dimensional space $U\Sigma$ with flat connection $\pi^*\nabla$. We describe a canonical real non-trivial line subbundle $L \subset \pi^*\mathcal{W}^\tau$ and show that any non-vanishing local section of $L$ is generic with respect to $\pi^*\nabla$. This shows there is a canonical flat projective structure associated to the Higgs bundle.\\

Let $z$ be a local holomorphic coordinate for an open subset $U$ of $\Sigma$. We have local coordinates $z,\theta$ for the open set $\pi^{-1}(U)$ of $U\Sigma$ where for a given tangent vector $X \in T_zU$, $\theta$ represents the angle between $\tfrac{\partial}{\partial x}$ and $X$. The canonical line bundle $L$ is defined as follows: the fibre $L_{(z,\theta)}$ is the real $1$-dimensional subspace spanned by $\pm e^{i\theta/2}dz^{-1/2} + \tau(\pm e^{i\theta/2}dz^{-1/2})$. Note that the points $\pm e^{i\theta/2}dz^{-1/2}$ in $\mathbb{P}_{\mathbb{R}}(K^{-1/2})$ correspond to the ray of tangent vectors in the direction $e^{i\theta}\tfrac{\partial}{\partial z}$ where $K^{-1}$ is identified with the tangent bundle. This shows that $L$ is canonically defined. \\

For a vector $v = [A,B,C,D]^t$ we have that $\tau v = [h\overline{D}, k^{-1}\overline{C} , k\overline{B} , h^{-1}\overline{A} ]^t$. Therefore we may write
\begin{equation}
\tau(e^{i/2\theta}dz^{-1/2}) = k^{-1}e^{-i\theta/2}{d\overline{z} \,}^{-1/2} = A = A'dz^{1/2} \in K^{1/2}.
\end{equation}
If we write $k = k' dz^{-1/2} {d\overline{z} \,}^{-1/2}$, $h = h'dz^{3/2}{d\overline{z } \, }^{-3/2}$ where $k',h'$ are non-vanishing real functions then
\begin{equation}
A' = (k')^{-1}e^{-i\theta/2}
\end{equation}
and $L_{(z,\theta)}$ is the real span of
\begin{equation}
s(z,\theta) = [0, A , \overline{A}k , 0 ]^t.
\end{equation}
Now $A$ is non-vanishing so $s$ defines a local section of $L$. However $s$ does not extend to a global section for if we go once around the fibre we end up with $-s$. In fact $L$ restricted to any fibre is clearly non-orientable. We could on the other hand pass to the double cover of $U\Sigma$, the unit circle bundle of $K^{-1/2}$ on which $L$ is clearly trivial. \\

In order to prove the genericity of $L$ it suffices to consider the local sections of form $s$ as defined. We have
\begin{equation}
\nabla s = \left[ \begin{matrix} \sqrt{3/2}h'k'A'dz^{3/2} \otimes d\overline{z} \\ dA +2(\partial b )A + \sqrt{2}k^{-1}\overline{A} \\ d(\overline{A}k) + \sqrt{2} A - 2(\partial b)\overline{A}k \\ \sqrt{3/2}\overline{A'}k' dz^{-3/2} \otimes dz \end{matrix} \right].
\end{equation}
The bottom and top entries are non-vanishing multiples of $d\overline{z}$ and $dz$ respectively, so to prove non-degeneracy it suffices to show that $\iota_{\tfrac{\partial}{\partial \theta}} \nabla s$ does not lie in $L$ for any point $(z,\theta)$. We have
\begin{equation}
\iota_{ \tfrac{\partial}{\partial \theta}} \nabla s  = \left[ \begin{matrix} 0 \\ \tfrac{\partial A'}{\partial \theta} dz^{1/2} \\ \tfrac{\partial (\overline{A'}k')}{\partial \theta} dz^{-1/2} \\ 0 \end{matrix} \right] = \left[ \begin{matrix} 0 \\ -\tfrac{i}{2} A \\ \tfrac{i}{2} \overline{A}k \\ 0 \end{matrix} \right]
\end{equation}
which proves non-degeneracy since $-\tfrac{i}{2}A$ is a non-vanishing imaginary multiple of $A$. Thus we have found a flat projective structure on $U\Sigma$. The monodromy of this structure is a representation in the Hitchin component for ${\rm PSp}(4,\mathbb{R})$. We make two observations from this: first the monodromy of the projective structure around the fibres is trivial, second since ${\rm PSp}(4,\mathbb{R})$ preserves a contact distribution on $\mathbb{RP}^3$, there is an associated contact distribution on $U\Sigma$. In fact a contact form is obtained by contracting the symplectic form on $\pi^*\mathcal{W}^\tau$ in the direction corresponding to the line bundle $L$, so we see that the contact distribution is transverse to the fibres.

One final observation is that the fibres of $U\Sigma$ are geodesics in the projective structure. This amounts to showing $\nabla_{\tfrac{\partial}{\partial \theta}} ( \nabla_{\tfrac{\partial}{\partial \theta}} s ) = 0 \; {\rm mod}(s)$ which follows from a direct computation.\\

\subsection{Convex-foliated projective structures}\label{cfps}
We will show that the projective structures constructed by the Higgs bundle approach coincide with the convex-foliated projective structures for uniformising representations. \\

Let us first explain the convex-foliated projective structure for a uniformising representation \cite{gui}. Let $\mathbb{H}^2$ represent $2$-dimensional hyperbolic space with the hyperbolic metric. As usual let $\Sigma$ be a compact Riemann surface of genus $g>1$. If $\Sigma$ is given a constant curvature metric then $\tilde{\Sigma}$ is identified with $\mathbb{H}^2$ up to isometry, such that $\pi_1(\Sigma)$ acts by isometries, hence defining a uniformising representation $\rho : \pi_1(\Sigma) \to {\rm PSL}(2,\mathbb{R})$ up to conjugation.

Let $U\mathbb{H}^2$ denote the unit tangent bundle of $\mathbb{H}^2$. We may define a projective structure on $U\Sigma$ with trivial holonomy around the fibres of $U\Sigma \to \Sigma$ by describing a developing map, that is a local diffeomorphism $\psi : U\mathbb{H}^2 \to \mathbb{RP}^3$ such that $\psi(u \gamma^l) = \hat{\rho}(\gamma)^{-1}\psi(u)$ where $\gamma^l \in \pi_1(U\Sigma)$ is any lift of $\gamma \in \pi_1(\Sigma)$ and $\hat{\rho}$ is the composition of $\rho$ with the map ${\rm PSL}(2,\mathbb{R}) \to {\rm PSL}(4,\mathbb{R})$ given by the principal $3$-dimensional subalgebra.\\

Let us note that $U\mathbb{H}^2$ can be identified with ${\rm PSL}(2,\mathbb{R})$. In fact ${\rm PSL}(2,\mathbb{R})$ acts freely and transitively on $U\mathbb{H}^2$. Moreover if $\mathbb{H}^2$ is identified with ${\rm PSL}(2,\mathbb{R})/{\rm U}(1)$, then $U\mathbb{H}^2$ is identified with the principal ${\rm U}(1)$-bundle ${\rm PSL}(2,\mathbb{R}) \to \mathbb{H}^2$ over $\mathbb{H}^2$.

Under the inclusion ${\rm SL}(2,\mathbb{R}) \to {\rm SL}(4,\mathbb{R})$ of the principal $3$-dimensional subalgebra, the fundamental representation of ${\rm SL}(4,\mathbb{R})$ becomes the third symmetric power $S^2(\mathbb{R}^2)$ of the fundamental representation of ${\rm SL}(2,\mathbb{R})$. This is clear since the adjoint representation of ${\rm SL}(4,\mathbb{R})$ under the principal $3$-dimensional subgroup contains an $S^6(\mathbb{R}^2)$ factor. We may consider $\mathbb{RP}^3$ as $\mathbb{P}(S^3(\mathbb{R}^2))$, the projectivisation of the third symmetric power of the fundamental representation for ${\rm SL}(2,\mathbb{R})$. Then ${\rm PSL}(2,\mathbb{R})$ acts on $\mathbb{RP}^3$ and has two open orbits. Elements of $\mathbb{RP}^3$ can be thought of a cubic polynomials up to scale, hence they are given by specifying their zeros. The two open orbits are those cubics with positive or negative discriminant. If the discriminant of a real cubic is positive then it has $3$ distinct real roots, if it is negative then it has $1$ real root and two distinct complex conjugate roots.\\

We may utilise either of the two open orbits to define a developing map. Taking a cubic $C$ with non-zero discriminant we have a map ${\rm PSL}(2,\mathbb{R}) \to \mathbb{RP}^3$ given by $g \mapsto g C$. Upon identifying $U\mathbb{H}^2$ with ${\rm PSL}(2,\mathbb{R})$ we have an equivariant map and since the orbit is open and $3$-dimensional the map is a local diffeomorphism, hence serves as a developing map. Note that if the discriminant of $C$ is positive then the stabiliser of $C$ is $\mathbb{Z}_3$ while if the discriminant is negative then the stabiliser is trivial. The choice of open orbit with negative discriminant yields the convex foliated projective structure for $\rho$ \cite{gui}. We examine this structure more carefully.\\

Given an element $u \in U\mathbb{H}^2$ we can associate to $u$ an element $g \in {\rm PSL}(2,\mathbb{R})$ and hence a cubic $g C$. However there is a more direct way of associating a cubic to $u$ with negative discriminant. To describe a cubic up to scale it suffices to specify the roots, one real and two conjugate. Let the unit tangent vector $u$ lie over the base point $\alpha \in \mathbb{H}^2$. This is a complex number in the upper half plane, hence $\alpha$ and $\overline{\alpha}$ will serve to define the conjugate roots. Consider the geodesic starting at $\alpha$ and heading in the direction $u$. We may follow this geodesic to the boundary and associate to it the boundary point $a \in \mathbb{RP}^1$. This will serve to define the real root. Since ${\rm PSL}(2,\mathbb{R})$ acts on $\mathbb{H}^2$ by isometries it is clear that we have an equivariant sequence of maps
\begin{equation*}
{\rm PSL}(2,\mathbb{R}) \to U\mathbb{H}^2 \to \mathbb{H}^2 \times \mathbb{RP}^1.
\end{equation*}
An equivariant map $U\mathbb{H}^2 \to \mathbb{P}(S^3(\mathbb{R}^2))$ is then obtained by taking the symmetric product of equivariant maps $\psi_2 :\mathbb{H}^2 \to \mathbb{P}(S^2(\mathbb{R}^2))$ and $\psi_1 :\partial \mathbb{H}^2 = \mathbb{RP}^1 \to \mathbb{P}(\mathbb{R}^2)$. These maps are as follows:
\begin{eqnarray*}
\psi_2(\alpha) &=& (\alpha Z + 1)(\overline{\alpha}Z + 1), \\
\psi_1([a_0,a_1]) &=& (a_0Z + a_1)
\end{eqnarray*}
where $Z$ is an indeterminate and $a = [a_0,a_1] \in \mathbb{RP}^1$. Note that the roots of these polynomials are not the points $\alpha, \overline{\alpha} , a$ but rather the corresponding points under the isomorphism of representations $\mathbb{R}^2 \simeq {\mathbb{R}^2}^*$. This is so that the polynomials transform under the representation $S^k(\mathbb{R}^2)$ rather than $S^k({\mathbb{R}^2}^*)$, that is a polynomial $P(Z)$ of degree $k$ transforms as
\begin{equation*}
\left[ \begin{matrix} a & b \\ c & d \end{matrix} \right] \cdot P(Z) = P \left( \frac{aZ+b}{cZ+d} \right) \cdot (cZ+d)^k.
\end{equation*}
By equivariance this is simply an alternative way of describing the developing map $\psi : U\mathbb{H} \to \mathbb{RP}^3$. It has the advantage that the map is naturally a product of two simpler maps $\psi_1, \psi_2$. By finding Higgs bundle interpretations for these two maps we will be able to identify this projective structure with the projective structure we constructed by Higgs bundle methods.\\

Now we show that the projective structure constructed in Section \ref{proj} matches this. To do this we must trivialise the bundle $E = K^{3/2} \oplus K^{1/2} \oplus K^{-1/2} \oplus K^{-3/2}$ with respect to the flat connection corresponding to the uniformising representation. The non-degenerate line bundle then defines a developing map and we check that up to a projective transformation this is the same map we get from the construction for the convex foliated projective structure.\\

Since the uniformising representation has holonomy in the principal three dimensional subgroup ${\rm SL}(2,\mathbb{R}) \subset {\rm SL}(4,\mathbb{R})$ we may first trivialise the associated two dimensional representation $K^{1/2} \oplus K^{-1/2}$ with respect to the flat connection and then identify the rank $4$ bundle $E$ with $S^3(K^{1/2} \oplus K^{-1/2})$.\\

Since we work with the uniformising representation we identify the universal cover of the surface with the upper half-plane $\mathbb{H}^2$ with the constant curvature $-2$ metric $\frac{dz d\overline{z}}{2y^2}$. The real covariant constant sections of $K^{1/2} \oplus K^{-1/2}$ are
\begin{equation}
s = \left[ \begin{matrix} s_1 \\ s_2 \end{matrix} \right]
\end{equation}
such that
\begin{equation}
\begin{aligned}
s_1 &= \tfrac{\overline{s_2}}{\sqrt{2}y} \\
s_2 &= (az+b)e^{i\pi/4}dz^{-1/2}
\end{aligned}
\end{equation}
where $a,b$ are real constants. Setting $(a,b) = (1,0),(0,1)$ we get two sections
\begin{eqnarray*}
e_1 &=& \tfrac{\overline{z}}{\sqrt{2}y}\lambda^{-1}X + z\lambda Y \\
e_2 &=& \tfrac{\lambda^{-1}}{\sqrt{2}y}X + \lambda Y
\end{eqnarray*}
where $\lambda = e^{i\pi/4}$, $X = \left[ \begin{matrix} 1 \\ 0 \end{matrix} \right]$, $Y = \left[ \begin{matrix} 0 \\ 1 \end{matrix} \right]$.\\

The non-degenerate line bundle $L \subset K^{3/2} \oplus K^{1/2} \oplus K^{-1/2} \oplus K^{-3/2}$ defining the projective structure constructed by Higgs bundle means can be thought of as the product of a line bundle $L_1 \subset K^{1/2} \oplus K^{-1/2}$ with the evident trivial line bundle $L_2 = 1 \subset K \oplus 1 \oplus K^{-1} = S^2(K^{1/2} \oplus K^{-1/2})$. When identifying $K \oplus 1 \oplus K^{-1}$ with $S^2(K^{1/2} \oplus K^{-1/2})$ and similarly for the third symmetric power we must be careful to ensure that the identification recovers the flat connection on the rank $4$ bundle $E$ since this is the connection we wish to trivialise. We have a standard matrix form of the principal three dimensional subgroup as subgroups of ${\rm SL}(3,\mathbb{R})$ and ${\rm SL}(4,\mathbb{R})$. With respect to these one finds that $dz , 1 , dz^{-1/2}$ should identify with $X^2,\sqrt{2}XY,Y^2$ and $dz^{3/2}, dz^{1/2} , dz^{-1/2} , dz^{-3/2}$ should identify with $X^3 , \sqrt{3}X^2Y , \sqrt{3}XY^2 , Y^3$.\\

The line bundle $L_2$ corresponds to $\sqrt{2}XY$, so we need to express this in terms of the covariantly constant basis $e_1e_1, e_1e_2, e_2e_2$. One finds that
\begin{equation*}
\sqrt{2}XY = \tfrac{1}{2y}(e_1 - ze_2)(e_1 - \overline{z}e_2).
\end{equation*}
Now the non-degenerate line bundle $L$ is spanned locally by a section $[0,A,\overline{A}k,0]^t$ where $A = \pm k^{-1}e^{i\theta/2}$. In terms of $X$ and $Y$ this is $\sqrt{3}AX^2Y + \sqrt{3}\overline{A}kXY^2 = \sqrt{3/2}(AX + \overline{A}kY)(\sqrt{2}XY)$. The line bundle $L_1$ is therefore spanned by $\sqrt{3/2}(AX + \overline{A}kY)$. If we write this as $a_1e_1 + a_2e_2$ then we solve for $a_1,a_2$. Since the line bundle is real we need only equate $Y$ coefficients:
\begin{equation*}
\pm \sqrt{3/2}e^{i\theta/2} = (a_1z + a_2)e^{i\pi/4}.
\end{equation*}
Thus
\begin{equation}
\begin{aligned}
\pm a_1 &= \sqrt{3/2}\left( \dfrac{1}{y} \right) {\rm sin}(\theta/2 - \pi/4) \\
\pm a_2 &= \sqrt{3/2}{\rm cos}(\theta/2 - \pi/4) - \sqrt{3/2} \left( \dfrac{x}{y} \right) {\rm sin}(\theta/2 -\pi/4).
\end{aligned}
\end{equation}
Now we can express the non degenerate line bundle $L$ as the span of
\begin{equation}
\phi = \sqrt{3/2}\frac{1}{2y}(e_1 - z e_2)(e_1 - \overline{z}e_2)(a_1e_1 + a_2e_2).
\end{equation}
Now let us compare this with the convex foliated projective structure. This has developing map $\psi : U\mathbb{H}^2 \to \mathbb{RP}^3 = \mathbb{P}(S^3(\mathbb{R}^2))$ given by
\begin{equation*}
\psi(z , u) = (zZ + 1)(\overline{z}Z+1)(aZ+1)
\end{equation*}
where $u$ is a unit tangent to $z \in \mathbb{H}^2$ and $a \in \mathbb{RP}^1 = \partial \mathbb{H}^2$ is the point of the boundary which the geodesic through $z$ in the direction $u$ approaches.\\

As before let $\theta$ denote the angle of clockwise rotation from $\partial_x$ to the tangent direction $u$. Then we may express $a$ in terms of $z$ and $\theta$ as follows. We may suppose $u = {\rm cos}(\theta)\partial_x + {\rm sin}(\theta)\partial_y$. Let the geodesic $\gamma$ through $z$ in the direction $u$ be given by the equation
\begin{equation*}
(\tilde{x}-c)^2 + \tilde{y}^2 = r^2
\end{equation*}
for constants $c,r$. Then by the definition of $\gamma$ and $a$ we have
\begin{eqnarray*}
(x-c)^2 + y^2 &=& r^2 \\
(a-c)^2 &=& r^2 \\
(x-c){\rm cos}(\theta) + y \, {\rm sin}(\theta) &=& 0.
\end{eqnarray*}
Solving we find that
\begin{equation*}
a = x + y \left( \frac{1 + {\rm sin}(\theta)}{{\rm cos}(\theta)} \right).
\end{equation*}
Now let us compare developing maps $\phi$ and $\psi$. We may relate the basis $e_1,e_2$ to $Z,1$ by the projective transformation $e_1/e_2 = -1/Z$. Then in terms of $Z$ $\phi$ becomes
\begin{equation*}
\phi = \sqrt{3/2}\frac{1}{2y} (zZ+1)(\overline{z}Z + 1)(a_2Z - a_1).
\end{equation*}
To check that $\phi$ and $\psi$ are the same map it remains only to show that $(a_2Z - a_1)$ is a multiple of $(aZ+1)$ (continuity takes care of the case $\theta = \pi/2$ where $a = \infty \in \mathbb{RP}^1$). A simple calculation shows $aa_1 + a_2 = 0$ and this proves the two linear factors are multiples of one another, hence the developing maps are equal.\\

Now our construction takes a conformal structure on $\Sigma$ and a holomorphic quartic differential and produces a projective structure on $U\Sigma$. In the case of a uniformising representation we see that the projective structure agrees with the corresponding convex-foliated projective structure of Guichard and Wienhard. But the convex-foliated projective structures forms a component of the space of projective structures on $U\Sigma$ \cite{gui}. By continuity our construction maps into the same component so we have constructed the same projective structures up to homeomorphism. Moreover the monodromy map on this component is a homeomorphism with the Hitchin component. It follows that our construction fills out the entire space of convex-foliated projective structures with contact structure. If Conjecture \ref{conj1} holds then in fact our construction is a bijection.

\subsection{Relation to line congruences and minimal surfaces}\label{rtlcams}
So far we have provided an alternative description for the convex-foliated projective structures but we have not provided any characterisation of our projective structures. We will relate the projective structures to line congruences and minimal surfaces.\\

We have constructed a projective structure on the unit tangent bundle $U\Sigma$ with the properties that the fibres are lines and the holonomy around the fibres is trivial. If we let $\tilde{\Sigma}$ be the universal cover of $\Sigma$ and $U\tilde{\Sigma}$ the unit tangent bundle of $\tilde{\Sigma}$ then the developing map descends to a map ${\rm dev} : U\tilde{\Sigma} \to \mathbb{RP}^3$ with the property that the fibres of $U\tilde{\Sigma} \to \tilde{\Sigma}$ are mapped bijectively to lines in $\mathbb{RP}^3$. By means of the Pl\"ucker embedding we have a map
\begin{equation}
\phi : \tilde{\Sigma} \to \mathbb{P}(\wedge^2 \mathbb{R}^4).
\end{equation}
Moreover if the projective structure has holonomy $\theta$ then $\phi$ is $\theta$-equivariant. We call such a map a $\theta$-equivariant {\em line congruence}. It is a $2$-parameter family of lines in $\mathbb{RP}^3$.\\

Let us recall the Klein quadric. The space of lines in $\mathbb{RP}^3$ is a projective variety embedded in $\mathbb{RP}^5 = \mathbb{P}(\wedge^2 \mathbb{R}^4)$ by the Pl\"ucker embedding. This sends the projective line spanned by two vectors $v,w$ to the point in projective space corresponding to $v \wedge w$.

A $2$-vector $\mu \in \wedge^2 \mathbb{R}^4$ is the wedge product of two vectors if and only if $\mu \wedge \mu = 0$. Therefore the Grassmannian of lines in $\mathbb{RP}^3$ is the variety $Q = \{ \mu \in \mathbb{P}(\wedge^2 \mathbb{R}^4) \; | \; \mu \wedge \mu = 0 \}$. Moreover the wedge product $\wedge^2 \mathbb{R}^4 \otimes \wedge^2 \mathbb{R}^4 \to \wedge^4 \mathbb{R}^4$ defines a signature $(3,3)$-inner product (understood as representations of ${\rm SL}(4,\mathbb{R})$) for which $Q$ is the projectivised null quadric, usually called the {\em Klein quadric}. From this point of view it follows that $Q$ has a natural signature $(2,2)$-conformal structure.

We now define what is meant by a line congruence \cite{sasa}:
\begin{defn}
Let $\Sigma$ be a surface. A {\em line congruence} on $\Sigma$ is a map $\phi : \Sigma \to Q$ from the surface into the Klein quadric. Suppose further that $\tilde{\Sigma}$ is the universal cover of $\Sigma$. Let $\rho : \pi_1(\Sigma) \to {\rm SL}(4,\mathbb{R})$ be a representation of the fundamental group of $\Sigma$ into ${\rm SL}(4,\mathbb{R})$. A {\em $\rho$-equivariant line congruence} on $\Sigma$ is a map $\phi : \tilde{\Sigma} \to Q$ such that $\phi \circ \gamma = \rho(\gamma)^{-1}\phi$ for all $\gamma \in \pi_1(\Sigma)$.
\end{defn}

Consider now a flat projective structure on a circle bundle $B$ over $\Sigma$. Let us assume that the fibres of $B$ are lines in the sense that development once around a fibre maps bijectively onto a line in $\mathbb{RP}^3$. Notice that the holonomy around the loop must preserve every point of the corresponding line. Since this is true for all fibres and since the development map is a local diffeomorphism, it follows that the holonomy around a loop must in fact fix each point in an open subset of $\mathbb{RP}^3$ and hence is trivial.

The holonomy of such a projective structure must then descend to a representation $\rho : \pi_1(\Sigma) \to {\rm SL}(4,\mathbb{R})$. If $\tilde{B}$ is the universal cover of $B$ then the developing map $\psi : \tilde{B} \to \mathbb{RP}^3$ descends to a map $\psi : \overline{B} \to \mathbb{RP}^3$ where $\overline{B}$ is a circle bundle over $\tilde{\Sigma}$ (in fact the pull-back of $B$ to a bundle over $\tilde{\Sigma}$). Since each fibre of $\overline{B} \to \tilde{\Sigma}$ maps bijectively to a line in $\mathbb{RP}^3$ we obtain a $\rho$-equivariant line congruence on $\Sigma$ where $\rho$ is the holonomy of the projective structure.\\

Given a $\rho$-equivariant line congruence on $\Sigma$ we will produce a circle bundle $B$ over $\Sigma$ and a $\rho$-equivariant map $\psi : \overline{B} \to \mathbb{RP}^3$. If the map $\psi$ is a local diffeomorphism then it defines a developing map for a projective structure on $B$ where the fibres develop into lines.

The Klein quadric $Q$ has a natural $\mathbb{RP}^1$-bundle over it $E \to Q$. In terms of the geometry of $Q$ this is one of the two bundles of maximal isotropic subspaces (the $\alpha$-planes). Viewing $Q$ as the Grassmannian of lines in $\mathbb{RP}^3$ the bundle $E$ is the tautological bundle. Evidently there is a natural map $e : E \to \mathbb{RP}^3$. Define $\overline{B} = \phi^*(E)$, the pull-back of $E$ to a circle bundle over $\overline{\Sigma}$. There is a natural map $\psi : \overline{B} \to \mathbb{RP}^3$ obtained by composing the natural maps $\phi^*(E) \to E$ and $e : E \to \mathbb{RP}^3$.\\

Now we find a natural action of $\pi_1(\Sigma)$ on $\overline{B}$ lifting the action on $\tilde{\Sigma}$. Let $a \in \phi^*(E)_u$ where $u \in \tilde{\Sigma}$, we have that $\psi(a) \in \phi(u)$ thought of as a line in $\mathbb{RP}^3$. Given $\gamma \in \pi_1(\Sigma)$ we have $\rho(\gamma)^{-1} \psi(a) \in \rho(\gamma)^{-1} \phi(u) = \phi(u \gamma)$. Now the line $\phi(u\gamma)$ is the image under $\psi$ of the fibre of $\overline{B}$ over $u\gamma$ so there is a unique point $b \in \overline{B}_{u\gamma}$ such that $\psi(b) = \rho(\gamma)^{-1}\psi(a)$. Hence define $a\gamma = b$. This gives the desired action which is clearly smooth. Now $\overline{B}$ is the pull-back to $\tilde{\Sigma}$ of the bundle $B = \overline{B}/\pi_1(\Sigma)$ over $B$. Now if $\psi$ is a local diffeomorphism it follows that $\psi$ is the developing map of a projective structure on $B$ such that development maps a fibre bijectively to a line.\\

We now determine the condition under which $\psi : \overline{B} \to \mathbb{RP}^3$ is a local diffeomorphism. Note that since the Klein quadric $Q$ has a $(2,2)$-conformal structure we can use $\phi : \tilde{\Sigma}$ to pull this back to a (possibly degenerate) conformal structure on $\tilde{\Sigma}$, moreover ${\rm SL}(4,\mathbb{R})$ acts on $Q$ by conformal transformations so the pull-back structure descends to $\Sigma$.

\begin{lemp}
The map $\psi$ is a local diffeomorphism if and only if the pull-back conformal structure is definite (positive or negative definite).
\begin{proof}
Locally we may write $\phi = v \wedge w$ where $v,w$ are vector valued functions on a coordinate chart with complex coordinate function $z$. Then locally $E$ has coordinates $(z,u)$ where $u$ is real and $\psi$ is given by $\psi(z,u) = v(z) + uw(z)$. Now $\psi$ is a local diffeomorphism if and only if $\psi \wedge \psi_u \wedge \psi_x \wedge \psi_y$ is non-vanishing. Clearly $\psi \wedge \psi_u = v \wedge w = \phi$. Suppose at some point $\psi \wedge \psi_u \wedge \psi_x \wedge \psi_y = 0$. Then
\begin{equation*}
A(v + uw)_x + B(v + uw)_y = 0 \; (\; {\rm mod} \; v,w ).
\end{equation*}
That is
\begin{equation*}
v_X + uw_X = 0 \; ( \; {\rm mod} \; v,w )
\end{equation*}
where $X = A\partial_x + B\partial_y$. Now
\begin{equation*}
\phi_X = v_X \wedge w + v \wedge w_X
\end{equation*}
so
\begin{equation*}
\phi_X \wedge \phi_X = 2v_X \wedge w \wedge v \wedge w_X = 0.
\end{equation*}
Thus the tangent vector $X$ is null in the induced conformal structure. So if $\psi$ is a local diffeomorphism then the induced conformal structure is definite. Reversing the argument proves the converse.
\end{proof}
\end{lemp}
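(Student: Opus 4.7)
The plan is to verify non-degeneracy of $d\psi$ by a local coordinate calculation and then translate the resulting wedge-product condition into a statement about null vectors for the induced conformal structure on $\Sigma$.

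First I would fix a local complex coordinate $z = x+iy$ on an open subset $U\subset\tilde{\Sigma}$ and a trivialisation of $\overline{B}$ over $U$ by writing $\phi = v\wedge w$ with $v,w : U\to \mathbb{R}^4$ linearly independent, and introducing a fibre coordinate $u\in\mathbb{R}$ so that points of $\overline{B}$ are pairs $(z,u)$ and $\psi(z,u) = v(z) + u\, w(z) \in\mathbb{R}^4$ (projectivised). Since $\overline{B}$ and $\mathbb{RP}^3$ both have dimension $3$, the map $\psi$ is a local diffeomorphism precisely where the $4$-form
\begin{equation*}
\Theta := \psi\wedge\psi_u\wedge\psi_x\wedge\psi_y
\end{equation*}
is non-vanishing; this is the standard criterion for a map into $\mathbb{RP}^3$ arising by projection from $\mathbb{R}^4\setminus\{0\}$.

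Next I would compute $\Theta$. Immediately $\psi\wedge\psi_u = (v+uw)\wedge w = v\wedge w = \phi$, so
\begin{equation*}
\Theta = \phi\wedge\psi_x\wedge\psi_y.
\end{equation*}
Suppose $\Theta$ vanishes at some point. Then $\psi_x$ and $\psi_y$ are linearly dependent modulo the plane spanned by $v$ and $w$, hence there is a non-zero real tangent vector $X = A\partial_x + B\partial_y$ with
\begin{equation*}
(v+uw)_X \equiv 0 \pmod{v,w},
\end{equation*}
i.e.\ $v_X + u\,w_X \in \mathrm{span}(v,w)$. The key point is now the Plücker identity
\begin{equation*}
\phi_X\wedge\phi_X = 2\, v_X\wedge w\wedge v\wedge w_X,
\end{equation*}
which vanishes as soon as $v_X + uw_X$ lies in $\mathrm{span}(v,w)$ for some $u\in\mathbb{R}$, because then $v_X\wedge w$ and $v\wedge w_X$ become proportional modulo $v\wedge w = 0$ in $\wedge^4$. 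Since the conformal structure on $Q$ is induced from the wedge pairing $\wedge^2\mathbb{R}^4\otimes\wedge^2\mathbb{R}^4\to\wedge^4\mathbb{R}^4$, the vanishing $\phi_X\wedge\phi_X = 0$ is exactly the statement that $X$ is null for the pull-back conformal structure $\phi^*(\,\cdot\,)$ on $\tilde{\Sigma}$.

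The converse direction is the same computation run backwards: if a non-zero real $X$ satisfies $\phi_X\wedge\phi_X = 0$, then $v_X\wedge w\wedge v\wedge w_X=0$, which forces a real linear relation $v_X + u\,w_X \in\mathrm{span}(v,w)$ for some real $u$, and at the point $(z,u)$ one reads off $\Theta = 0$. Hence $\psi$ fails to be a local diffeomorphism at some point of the fibre over $z$ if and only if the induced conformal structure at $z$ admits a real null tangent vector, i.e.\ is indefinite or degenerate; equivalently $\psi$ is a local diffeomorphism on all of $\overline{B}$ precisely when the pull-back conformal structure is (positive or negative) definite, as claimed.

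The only delicate step is the equivalence between the existence of a \emph{real} $u$ with $v_X + uw_X\in\mathrm{span}(v,w)$ and the vanishing of the real $4$-form $\phi_X\wedge\phi_X$; this is where definiteness (as opposed to merely non-degeneracy) of the conformal structure enters, since a complex null direction does not by itself obstruct $\psi$ being a local diffeomorphism over real points of the fibre. I would handle this by noting that $v,w,v_X,w_X$ span a real subspace of $\mathbb{R}^4$ and applying a dimension count there.
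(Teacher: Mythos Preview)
Your proof is correct and follows essentially the same approach as the paper: the same local coordinates $(z,u)$, the same criterion $\psi\wedge\psi_u\wedge\psi_x\wedge\psi_y\neq 0$, the same reduction to $v_X+uw_X\in\mathrm{span}(v,w)$, and the same identity $\phi_X\wedge\phi_X=2v_X\wedge w\wedge v\wedge w_X$. You are in fact more careful than the paper on the converse, where the paper simply says ``reversing the argument proves the converse'' while you correctly flag the need for a \emph{real} $u$; your proposed dimension count (if $w_X\notin\mathrm{span}(v,w)$ then $v,w,w_X$ are independent and the dependence forces $v_X=\alpha v+\beta w+\gamma w_X$ with real $\gamma$, giving $u=-\gamma$; otherwise $u=\infty$ works) is exactly what is needed.
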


If we start with a projective structure on a circle bundle $B$ over $\Sigma$ where development of each fibre is a bijection to a line in $\mathbb{RP}^3$, then it is clear that locally there exist coordinates $(z,u) \in \mathbb{C} \times \mathbb{RP}^1$ on $B$ such that the development map has the form $\psi(z,u) = v(z) + uw(z)$. Therefore the induced line congruence induces a definite conformal structure on $\Sigma$. We say that two projective structures on a circle bundle $B \to \Sigma$ are gauge equivalent if they are related by a diffeomorphism $\phi : B \to B$ lifting the identity $\Sigma \to \Sigma$. We have thus shown the following:
\begin{prop}
Let $\theta : \pi_1(\Sigma) \to {\rm SL}(4,\mathbb{R})$ be a representation. There is a bijection between $\theta$-equivariant line congruences inducing a definite conformal structure on $\Sigma$ and gauge equivalence classes of flat projective structures on circle bundles $B \to \Sigma$ with monodromy $\theta$ such that each fibre develops bijectively into a line of $\mathbb{RP}^3$.
\end{prop}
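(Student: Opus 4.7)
The plan is to assemble the two constructions already described in the preceding paragraphs into a pair of mutually inverse maps, and then check well-definedness on the appropriate equivalence classes.

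First I would make the forward map precise. Given a flat projective structure on a circle bundle $B\to\Sigma$ with monodromy $\theta$ and fibres developing bijectively onto lines in $\mathbb{RP}^3$, the fact (already noted) that the holonomy around each fibre pointwise fixes a line and yet the developing map is a local diffeomorphism forces this holonomy to be trivial, so the monodromy descends to $\pi_1(\Sigma)$. Then the developing map $\tilde{B}\to\mathbb{RP}^3$ descends to $\overline{B}=\tilde{\Sigma}\times_\Sigma B\to\mathbb{RP}^3$, and sending each fibre of $\overline{B}\to\tilde{\Sigma}$ to its image line in the Klein quadric defines a $\theta$-equivariant line congruence $\phi:\tilde{\Sigma}\to Q$. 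Two gauge equivalent projective structures differ by a fibre-preserving diffeomorphism of $B$ over the identity of $\Sigma$, and this clearly yields the same map $\phi$ since the line associated to a fibre is independent of how it is parametrised. The local coordinate form $\psi(z,u)=v(z)+uw(z)$ plus the lemma shows the induced conformal structure on $\Sigma$ is definite.

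Next I would make the reverse map precise. Given a $\theta$-equivariant line congruence $\phi:\tilde{\Sigma}\to Q$ inducing a definite conformal structure, form $\overline{B}=\phi^*E$ where $E\to Q$ is the tautological $\mathbb{RP}^1$-bundle, and take $\psi:\overline{B}\to\mathbb{RP}^3$ as the composition of the pull-back map with $e:E\to\mathbb{RP}^3$. The key point is the lift of the $\pi_1(\Sigma)$-action, already constructed above by using that for each $a\in\overline{B}_u$ and $\gamma\in\pi_1(\Sigma)$ there is a unique $b\in\overline{B}_{u\gamma}$ with $\psi(b)=\theta(\gamma)^{-1}\psi(a)$; this uses the injectivity of $\psi$ along each fibre, which holds because $\phi(u)$ is a genuine line. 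Quotient to obtain a circle bundle $B=\overline{B}/\pi_1(\Sigma)\to\Sigma$. By the lemma, definiteness of the induced conformal structure on $\Sigma$ is exactly what is needed for $\psi$ to be a local diffeomorphism, so $\psi$ is a developing map and defines a flat projective structure on $B$ with monodromy $\theta$ and with fibres developing bijectively to lines.

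Finally I would verify that the two constructions are mutually inverse up to gauge equivalence. Starting from a line congruence $\phi$, the forward construction applied to the resulting projective structure on $B=\phi^*E/\pi_1(\Sigma)$ recovers $\phi$ on the nose, because the image of each fibre under the developing map $\psi$ is by construction the line $\phi(u)\in Q$. Conversely, starting from a projective structure on $B\to\Sigma$, the reverse construction produces $\phi^*E$, and the canonical identification $B\cong\phi^*E$ sending a point of a fibre of $B$ to the pair consisting of its basepoint and its image point on the corresponding line in $\mathbb{RP}^3$ is a bundle isomorphism over $\Sigma$ that intertwines the two developing maps; this is the gauge equivalence. The only place requiring care is checking that this identification is smooth and $\pi_1(\Sigma)$-equivariant, but both follow from equivariance and smoothness of the developing map together with the fact that each fibre embeds into $\mathbb{RP}^3$. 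The main obstacle, such as it is, is bookkeeping: ensuring that the lifted $\pi_1(\Sigma)$-action on $\phi^*E$ constructed via $\psi$ agrees, under the identification $B\cong\phi^*E$, with the original deck action on $\tilde{B}$, so that the quotients match as bundles with projective structure rather than merely as smooth circle bundles.
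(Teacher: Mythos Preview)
Your proposal is correct and follows essentially the same approach as the paper: the paper assembles precisely the two constructions you describe (projective structure $\to$ line congruence by sending fibres to their image lines in the Klein quadric, and line congruence $\to$ projective structure by pulling back the tautological $\mathbb{RP}^1$-bundle and using the lemma on definiteness), and the proposition is stated as a direct consequence. If anything, you are more explicit than the paper in checking that the two constructions are mutually inverse up to gauge equivalence; the paper leaves this implicit.
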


Now let us restrict to line congruences with symplectic monodromy. Then there is a preserved symplectic form $\omega$ on on $\mathbb{R}^4$ and a corresponding symplectic form on $(\mathbb{R}^4)^*$, that is an element $\omega^* \in \wedge^2 \mathbb{R}^4$. Suppose the line congruence $\phi$ induces a definite conformal structure so that there is a projective structure on the circle bundle $B = \phi^*(E)$. The symplectic form $\omega$ induces a contact structure on $B$ such that the contact distribution is transverse to the fibres if and only of $\phi \wedge \omega^*$ is non-vanishing at all points on $\tilde{\Sigma}$. In particular this is the case for the projective structures we have constructed.

Let us assume this transversal condition. We can then uniquely lift $\phi$ to a map $\phi : \tilde{\Sigma} \to \wedge^2 \mathbb{R}^4$ such that $\phi \wedge \omega^* = -\omega^* \wedge \omega^* = -{\rm dvol}_4$. Then consider the map $s : \tilde{\Sigma} \to \wedge^2 \mathbb{R}^4$ defined by
\begin{equation}
s = \phi + \omega^*.
\end{equation}
This map has the properties that $s \wedge s = -{\rm dvol}_4$ and $s \wedge \omega^* = 0$. Therefore $s$ maps into the $5$-dimensional orthogonal complement of $\omega^*$ which is an irreducible representation of ${\rm Sp}(4,\mathbb{R})$ corresponding to the $5$-dimensional fundamental representation of ${\rm SO}(2,3)$. Moreover $s$ maps into the quadric of vectors of length $-1$. We have the following
\begin{prop}
Let $\phi$ be a line congruence constructed from a representation $\theta$ in the Hitchin component for ${\rm PSp}(4,\mathbb{R})$. Then the associated map $s$ into the quadric of length $-1$ vectors in $(\omega^*)^\perp$ is a $\theta$-equivariant minimal immersion. Conversely a $\theta$-equivariant minimal immersion into this quadric such that the image does not lie in any proper subspace defines precisely the line congruence and projective structure we have constructed.
\begin{proof}
The rank $5$ bundle associated to the $5$-dimensional representation is $V = K^2 \oplus K \oplus 1 \oplus K^{-1} \oplus K^{-2}$ and we find that $s$ corresponds to the line bundle $1 \subset V$. We have already seen that this corresponds to a minimal immersion in the quadric.
\end{proof}
\end{prop}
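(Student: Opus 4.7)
The plan is to identify the map $s$ with the developing section of a cyclic Higgs bundle on a rank $5$ bundle arising from the $5$-dimensional representation of $\mathfrak{sp}(4,\mathbb{C})\cong\mathfrak{so}(5,\mathbb{C})$, and then invoke the minimal-surface construction of Section~\ref{fcd}. Under the identification $W\mapsto \wedge^2 W/\langle\omega^*\rangle$, a weight count for the principal $\mathfrak{sl}(2,\mathbb{C})$ shows that the symplectic Higgs bundle $\mathcal{W}=K^{3/2}\oplus K^{1/2}\oplus K^{-1/2}\oplus K^{-3/2}$ of Section~\ref{proj} produces the rank $5$ bundle
\begin{equation*}
V = K^2\oplus K\oplus 1\oplus K^{-1}\oplus K^{-2},
\end{equation*}
with $\omega^*$ spanning one of the two trivial summands of $\wedge^2\mathcal{W}$. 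The image of $\Phi$ in $\mathrm{End}(V)\otimes K$ is then precisely the cyclic Higgs field of Section~\ref{fcd} for $p+q=2r+3=5$, $r=1$, with the same holomorphic $4$-differential $q$, and the compact and split real structures $\rho,\tau$ on $\mathcal{W}$ descend to the antilinear involutions used there.

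For the forward direction, the normalisation $\phi\wedge\omega^*=-\mathrm{dvol}_4$ gives $\langle s,s\rangle=-1$ and $s\perp\omega^*$, so $s$ takes values in $Q_{-1}\subset(\omega^*)^\perp$. Since $\phi$ comes from the real line $L\subset\mathcal{W}^\tau$ of Section~\ref{proj}, the section $s$ is real and projects onto a generator of the middle trivial factor $1\subset V$; this identifies $s$ with the canonical developing section $e_0$ of Section~\ref{fcd}. The flatness of the resulting $\mathrm{SO}(2,3)$-connection then puts us in exactly the cyclic setup of Section~\ref{fcd}, and the minimal-immersion property of $s$ follows, with monodromy $\theta$ by construction.

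For the converse I would start with a $\theta$-equivariant minimal immersion $s:\tilde{\Sigma}\to Q_{-1}$ whose image spans the ambient space. Forming the harmonic sequence $s=s_0,s_1,\ldots$ of Section~\ref{secquadrics} and using the non-degeneracy hypothesis, the sequence fills out $\mathbb{R}^{2,3}$ and endows the rank $5$ bundle on $\tilde{\Sigma}$ with a cyclic Higgs bundle structure of exactly the form of Section~\ref{fcd}. The inclusion of the Hitchin component for $\mathrm{PSp}(4,\mathbb{R})$ into that of $\mathrm{SO}(2,3)$ via the $5$-dimensional representation then lifts this uniquely to a cyclic symplectic Higgs bundle on $\mathcal{W}$, and reversing the construction of Section~\ref{proj} recovers the line congruence $\phi=s-\omega^*$ together with the projective structure.

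The main obstacle is the middle step: verifying that the Higgs field, Hermitian metric and real structures on $V$ coming from the symplectic picture of Section~\ref{proj} match those of Section~\ref{fcd}, including the normalisations of $\omega^*$, of the principal $\mathfrak{sl}(2)$-triple and of the highest weight vector, so that the Higgs bundle equations on $V$ literally reproduce the Toda system~(\ref{signedtoda3}). Once this identification has been set up, the forward statement and the converse are both essentially translations of Section~\ref{fcd}.
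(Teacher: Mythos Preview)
Your approach is correct and is essentially the same as the paper's: identify the $5$-dimensional bundle as $V=K^2\oplus K\oplus 1\oplus K^{-1}\oplus K^{-2}$, recognise $s$ as the section of the trivial summand $1\subset V$, and invoke the minimal-surface machinery of Section~\ref{fcd}. The paper's own proof is a two-line pointer to exactly this identification; you have simply spelled out the intermediate verifications (the $\wedge^2\mathcal{W}/\langle\omega^*\rangle$ computation, the matching of Higgs data with the odd case $p+q=2r+3=5$) and additionally sketched the converse via the harmonic sequence, which the paper's proof leaves implicit.
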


\section{$G_2$ Representations}\label{g2reps}
We will show that representations into the $G_2$ Hitchin component yield almost complex curves into a $6$-dimensional quadric. For details on the split real form of $G_2$ and the split octonions see Section \ref{octonions}. Our approach is is based on similar results for almost complex curves in $S^6$, which relates to the compact form of $G_2$ \cite{bol}.\\

Consider the $7$-dimensional representation $V$ of the split real form of $G_2$. This space identifies with the imaginary split octonions and has a metric of signature $(3,4)$. Consider the quadric $Q_+ = \{ x \in V \, | \, \langle x , x \rangle = 1 \}$. Then $x \in V$ is an element of $Q$ if and only if $x^2 = -1$. The map $J_x : V \to V$ given by
\begin{equation}
J_x(y) = x \times y
\end{equation}
where $\times$ denotes the split octonion cross product defines a complex structure on $T_xQ$. Letting $x$ vary over all elements of $Q$ this defines an almost complex structure $J$ on $Q$. Consider now a Riemann surface $\Sigma$ and a map $\phi : \Sigma \to Q$. The map $\phi$ is called an {\em almost complex curve} if it intertwines the almost complex structures of $\Sigma$ and $Q$, that is if
\begin{equation}
\phi \times \partial \phi = i \partial \phi.
\end{equation}
We see that for such a map $\langle \partial \phi , \partial \phi \rangle = 0$. Assuming $\phi$ is an immersion then we have that $\phi$ is conformal in the sense that if $g$ is the metric on $Q$ then either $\phi^*(g)$ or $-\phi^*(g)$ is a metric representing the conformal class on $\Sigma$. In the split $G_2$ case both possibilities can occur. In either case the cone $\{ t\phi(x) \, | t > 0 \}$ locally defines an associative submanifold of $V$ which can have signature $(3,0)$ or $(1,2)$. In fact a cone in $V$ with non-degenerate induced metric is associative if and only if the associated surface in the quadric is an almost complex curve. It also follows that almost complex curves are minimal surfaces of $Q$.\\

We now construct almost complex curves using Higgs bundles. First we describe the necessary algebra. Let $V = \mathbb{C}^7$ be a complex $7$-dimensional vector space. We represent elements of $V$ as column vectors. Define an inner product
\begin{equation}
\langle X , Y \rangle = X^t E Y
\end{equation}
where
\begin{equation}
E = \left[ \begin{matrix} 0 & 0 & 0 & 0 & 0 & 0 & -1 \\ 0 & 0 & 0 & 0 & 0 & 1 & 0 \\ 0 & 0 & 0 & 0 & -1 & 0 & 0 \\
0 & 0 & 0 & 1 & 0 & 0 & 0 \\ 0 & 0 & -1 & 0 & 0 & 0 & 0 \\ 0 & 1 & 0 & 0 & 0 & 0 & 0\\ -1 & 0 & 0 & 0 & 0 & 0 & 0 \end{matrix} \right].
\end{equation}
Then $\langle \; , \; \rangle$ has signature $(3,4)$. Let $\hat{\rho}(X) = -\overline{X}^t$ be an anti-involution for the compact real form ${\rm SO}(7,\mathbb{R})$ indeed one may check that the endomorphisms $X$ of $V$ such that $X^tE + EX = 0$ and $\overline{X}^t + X = 0$ form a Lie algebra isomorphic to $\mathfrak{so}(7,\mathbb{R})$. Let us define a Cartan involution $\sigma$ on $\mathfrak{so}(7,\mathbb{C})$
\begin{equation}
\sigma(X) = -HX^tH
\end{equation}
where
\begin{equation}
H = \left[ \begin{matrix} 0 & 0 & 0 & 0 & 0 & 0 & 1 \\ 0 & 0 & 0 & 0 & 0 & 1 & 0 \\ 0 & 0 & 0 & 0 & 1 & 0 & 0 \\
0 & 0 & 0 & 1 & 0 & 0 & 0 \\ 0 & 0 & 1 & 0 & 0 & 0 & 0 \\ 0 & 1 & 0 & 0 & 0 & 0 & 0\\ 1 & 0 & 0 & 0 & 0 & 0 & 0 \end{matrix} \right].
\end{equation}
We then have an anti-involution $\hat{\lambda} = \hat{\rho} \sigma$ which defines the split form $\mathfrak{so}(3,4)$. Indeed there is a corresponding anti-involution also denoted $\hat{\lambda}$ which acts on $V$ such that $\hat{\lambda}(Ax) = \hat{\lambda}(A)\hat{\lambda}(x)$, namely
\begin{equation}
\hat{\lambda}(x) = H\overline{x}.
\end{equation}
On the real subspace $V^{\hat{\lambda}}$ the inner product restricts to a real inner product of signature $(3,4)$.\\

Now we introduce the $G_2$ structure of $V$. Let $e_3 , e_2 , \dots , e_{-3}$ be the standard basis on $V$, that is for any vector we have $v = \left[ v^3 , v^2 , \dots , v^{-3} \right]^t = v^3e_3 + \dots + v^{-3}e_{-3}$. We define a $G_2$ structure on $V$ by the following identification:
\begin{equation}
\begin{aligned}
e_3 &= \frac{1}{\sqrt{2}}(jl + \sqrt{-1}kl), &
e_2 &= \frac{1}{\sqrt{2}}(j + \sqrt{-1}k), \\
e_1 &= \frac{1}{\sqrt{2}}(l + \sqrt{-1}il), &
e_0 &= i.
\end{aligned}
\end{equation}
and $e_{-i}$ is obtained from $e_i$ by replacing $\sqrt{-1}$ with $-\sqrt{-1}$. Here $i,j,k,l,il,jl,kl$ is a basis for the split octonions. With this assignment the anti-involutions $\hat{\rho}$, $\hat{\lambda}$ preserve the Lie algebra $\mathfrak{g}_2 \subset \mathfrak{so}(7,\mathbb{C})$.

We may define a principal $3$-dimensional subalgebra $x,e,\tilde{e}$ as follows:
\begin{equation}
\begin{aligned}
x &= {\rm diag}(3,2,1,0,-1,-2,-3) \\
e &= \left[ \begin{matrix} 0 & \sqrt{3} & 0 & 0 & 0 & 0 & 0 \\ 0 & 0 & \sqrt{5} & 0 & 0 & 0 & 0 \\ 0 & 0 & 0 & \sqrt{-6} & 0 & 0 & 0 \\
0 & 0 & 0 & 0 & \sqrt{-6} & 0 & 0 \\ 0 & 0 & 0 & 0 & 0 & \sqrt{5} & 0 \\ 0 & 0 & 0 & 0 & 0 & 0 & \sqrt{3} \\ 0 & 0 & 0 & 0 & 0 & 0 & 0 \end{matrix} \right] \\
\tilde{e} &= -\hat{\rho}(e).
\end{aligned}
\end{equation}

Now we may carry out the Higgs bundle construction for the Hitchin component for $G_2$. Such representations are described by holomorphic differentials of degrees $2$ and $6$. As usual for any such representation we may choose a complex structure on the Riemann surface so that the quadratic differential vanishes leaving only a degree $6$ differential $q$. The holomorphic adjoint bundle can be described as the $G_2$ endomorphisms of the following rank $7$ holomorphic bundle
\begin{equation}
\mathcal{W} = K^3 \oplus K^2 \oplus \dots \oplus K^{-3}.
\end{equation}
The Higgs field $\Phi$ has the form
\begin{equation}
\Phi = \left[ \begin{matrix} 0 & 0 & 0 & 0 & 0 & q & 0 \\ \sqrt{3} & 0 & 0 & 0 & 0 & 0 & q \\ 0 & \sqrt{5} & 0 & 0 & 0 & 0 & 0 \\ 0 & 0 & -\sqrt{-6} & 0 & 0 & 0 & 0 \\
0 & 0 & 0 & -\sqrt{-6} & 0 & 0 & 0 \\ 0 & 0 & 0 & 0 & \sqrt{5} & 0 & 0 \\ 0 & 0 & 0 & 0 & 0 & \sqrt{3} & 0 \end{matrix} \right].
\end{equation}
The metric is $h = e^{2\Omega}$ where $\Omega$ has the form
\begin{equation}
\Omega = {\rm diag}(X+Y , X , Y , 0 , -Y , -X , -(X+Y)).
\end{equation}
We let $\rho = {\rm Ad}_{e^{2\Omega}} \circ \hat{\rho}$ and $\lambda = \rho \sigma$ be the compact and split anti-involutions preserved by the associated connection $\nabla_A$. Let us also define $s = e^{2X}$ and $t = e^{2Y}$ so $s$ is a $(2,2)$-form and $t$ is a $(1,1)$-form. Then
\begin{equation}
\Phi^* = -\rho(\Phi) = \left[ \begin{matrix} 0 & \sqrt{3}t & 0 & 0 & 0 & 0 & 0 \\ 0 & 0 & \sqrt{5}s/t & 0 & 0 & 0 & 0 \\ 0 & 0 & 0 & \sqrt{-6}t & 0 & 0 & 0 \\
0 & 0 & 0 & 0 & \sqrt{-6}t & 0 & 0 \\ \overline{q}/(s^2t) & 0 & 0 & 0 & 0 & 0 & \sqrt{3}t \\ 0 & \overline{q}/(s^2t) & 0 & 0 & 0 & 0 & 0 \end{matrix} \right]
\end{equation}
and the connection form for $\nabla_A$ is $-2\partial \Omega$.\\

Consider the section $s = \left[ 0 , 0 , 0 , 1 , 0 , 0 , 0 \right]^t$. We have that $s$ is real with respect to the real structure on $\mathcal{W}$ associated to $\lambda$. In terms of the $G_2$ structure we can identify $s$ with $e_0 = i$, so $s^2 = -1$. Therefore $s$ develops into a map
\begin{equation}
\phi : \tilde{\Sigma} \to Q \subset V^{\hat{\lambda}}.
\end{equation}
This map is an almost complex curve. This amounts to showing $s \times \nabla_z s = i \nabla_z s$ where $\nabla = \nabla_A + \Phi - \rho(\Phi)$ is the flat connection. This is a straightforward computation. It is also clear that $\phi$ is an immersion and that the corresponding associative cone is of signature $(1,2)$. Therefore we have constructed almost complex curves to each representation in the Hitchin component for $G_2$.



\chapter{Parabolic Cartan geometries}\label{chap4}
Cartan geometries are a curved generalisation of homogeneous spaces that emerged from the works of E. Cartan \cite{cartan2}, \cite{car3} and also from T. Y. Thomas \cite{tho1}, \cite{tho2} from a different perspective. To each homogeneous space $G/H$ the associated Cartan geometries are spaces locally modeled on the homogeneous space. Parabolic geometries are a special case where the model space $G/P$ involves a parabolic subgroup. We have already encountered one type of parabolic geometry namely projective geometry for which the homogeneous model is projective space. Another familiar parabolic geometry is conformal geometry and Chapter \ref{chap5} is concerned with a parabolic geometry associated to the split form of $G_2$. In this chapter we will provide a rapid introduction to the theory of parabolic geometries as preparation for Chapter \ref{chap5}.\\

Section \ref{cartgeom} introduces Cartan geometries in general before specialising to parabolic geometries which require some algebraic background on parabolic subalgebras. Section \ref{tract} introduces the tractor connection and tractor bundles. These allow Cartan geometries to be understood in terms of more familiar objects namely vector bundles and connections. We finish with Section \ref{confgeom} which examines the particular case of conformal geometry in more detail.\\

\section{Cartan and parabolic geometries}\label{cartgeom}
We give a general introduction into the theory of parabolic geometries. References for parabolic geometries include \cite{beg}, \cite{cap3}, \cite{cap2}, \cite{cap}. See also \cite{sharpe} for an introduction to Cartan geometries.

\subsection{Cartan geometries}\label{cartgeom2}

\begin{defn}
Let $\mathfrak{g}$ be a Lie algebra and $\mathfrak{p}$ a subalgebra. Let $P$ be a Lie group with Lie algebra $\mathfrak{p}$. A {\em Cartan geometry} over a manifold $M$ is a principal $P$-bundle $\mathcal{G}$ together with a $\mathfrak{g}$-valued differential form $\omega \in \Omega^1(\mathcal{G},\mathfrak{g})$, called a {\em Cartan connection}, such that the following properties hold:
\begin{enumerate}
\item{At each point $u \in \mathcal{G}$, $\omega_u : T_u \mathcal{G} \to \mathfrak{g}$ is an isomorphism.}
\item{If $\tilde{X}$ is the vector field on $\mathcal{G}$ associated to $X \in \mathfrak{p}$ then $\omega (\tilde{X}) = X$.}
\item{If $R_p$ denotes the right translation on $\mathcal{G}$ associated to $p \in B$ then $R_p^* \omega = {\rm Ad}_{p^{-1}} (\omega)$.}
\end{enumerate}
\end{defn}

When referring to Cartan geometries or Cartan connections we sometimes need to make the group $P$ and Lie algebra $\mathfrak{g}$ clear in which case we will speak of a $(P, \mathfrak{g})$ Cartan geometry or $(P , \mathfrak{g})$ Cartan connection. Often a group $G \supset P$ for which $\mathfrak{g}$ is the Lie algebra will be present but it is not necessary for the definition. The idea behind Cartan geometries is that they are a curved generalisation of homogeneous spaces, specifically the principal $P$-bundle $\mathcal{G} \to M$ generalises the principal $P$-bundle $G \to G/P$ over the homogeneous space. The Cartan connection identifies the tangent space at each point of $\mathcal{G}$ with the tangent space $\mathfrak{g}$ of the homogeneous bundle.\\

A homogeneous space $G/P$ is equipped with a canonical Cartan connection, the Maurer-Cartan form for $G$. Recall that the Maurer-Cartan form $\omega$ of a Lie group $G$ is the $\mathfrak{g}$-valued $1$-form defined by sending $X \in T_gG$ to $L_{g^{-1}*}(X) \in T_eG = \mathfrak{g}$. Therefore Cartan connections can be thought of as a generalisation of the Maurer-Cartan forms. The key difference is that the translational property of a Cartan connection is only assumed to hold for the subgroup $P$ of $G$.\\

One of the key concepts in Cartan geometries is that of curvature. For a Cartan geometry $\mathcal{G}\to M$ with connection $\omega$ the {\em curvature} of the Cartan geometry is the $\mathfrak{g}$-valued $2$-form $\kappa \in \Omega^2(\mathcal{G},\mathfrak{g})$ given by
\begin{equation}\label{curvature}
\kappa = d\omega + \tfrac{1}{2}[\omega , \omega ]
\end{equation}
where the term $[\omega,\omega]$ is the combination of exterior product and the Lie bracket for $\mathfrak{g}$, namely $\tfrac{1}{2}[\omega,\omega](X,Y) = [\omega(X),\omega(Y)]$. Clearly $\kappa$ measures the failure of $\omega$ to satisfy the Maurer-Cartan equation and thus how deformed the Cartan geometry is from the homogeneous space. Via $\omega$ we can also view the curvature as a $\bigwedge^2\mathfrak{g}^* \otimes \mathfrak{g}$-valued function on $\mathcal{G}$ which we denote by $K$, so $K_u(X,Y) = \kappa_u(\omega_u^{-1}(X),\omega_u^{-1}(Y))$. Then (\ref{curvature}) becomes
\begin{equation}
K(X,Y) = [X,Y] - \omega( [\omega^{-1}(X),\omega^{-1}(Y)]),
\end{equation}
that is the curvature measures the difference between the Lie bracket on $\mathfrak{g}$ and the Lie bracket of the associated vector fields on $\mathcal{G}$.

The transformation property $R^*_p \omega = {\rm Ad}_{p^{-1}}\omega$ of a Cartan connection can be differentiated to give $\mathcal{L}_{\tilde{Y}}\omega = -[Y,\omega]$ where $Y \in \mathfrak{p}$ and $\tilde{Y} = \omega^{-1}(Y)$ is the associated vertical vector field. This simplifies to $d\omega(\tilde{Y}, \; ) = -[\omega(\tilde{Y}),\omega ]$. It follows that the curvature vanishes if either argument is vertical. Thus $K$ can be thought of as a $\bigwedge^2(\mathfrak{g}/\mathfrak{p})^* \otimes \mathfrak{g}$-valued function on $\mathcal{G}$. Lastly the transformation property of $\omega$ implies the same transformation property for the curvature $R^*_p \kappa = {\rm Ad}_{p^{-1}}\kappa $. These properties allow us to view curvature as a $2$-form valued section of the adjoint bundle $\mathcal{G} \times_P \mathfrak{g}$ on $M$, similar to the usual notion of the curvature of a connection.

\subsection{Parabolic subalgebras}\label{para}
We shall consider a special case of Cartan geometries known as parabolic geometries. To understand these geometries we first have to undertake some algebraic preliminaries on parabolic subalgebras.\\

Let $\mathfrak{g}$ be a real or complex semisimple Lie algebra and $\mathfrak{p}$ a subalgebra. Recall that $\mathfrak{p}$ is called a {\em parabolic subalgebra} \cite{cap2} if $\mathfrak{p}$ contains a maximal solvable subalgebra, i.e. a Borel subalgebra. An alternative definition is given in \cite{cald2}.\\

By means of Dynkin diagrams (or Satake diagrams for real algebras), all parabolic subalgebras can be dealt with in a uniform manner. Any parabolic subalgebra $\mathfrak{p} \subset \mathfrak{g}$ of a (real or complex) semisimple Lie algebra can be described as follows \cite{cap2}: there exists a Cartan subalgebra $\mathfrak{h} \subset \mathfrak{g}$, a system of positive roots $\Delta^+$ for $(\mathfrak{h},\mathfrak{g})$ and a subset $\Sigma \subset \Delta_0$ of the corresponding simple roots. We define the $\Sigma$-height of a root $\alpha = \Sigma_{\alpha_j \in \Delta_0} n_j \alpha_j$ as the integer $\Sigma_{\alpha_j \in \Sigma} n_j$. Then $\mathfrak{p}$ is the subalgebra of $\mathfrak{g}$ spanned by all root spaces of non-negative $\Sigma$-height (this contains $\mathfrak{h}$).

Conversely given any subset $\Sigma \subset \Delta_0$ of a system of simple roots the above construction defines a parabolic subalgebra, provided that $\mathfrak{g}$ is over the complex numbers. In the real case there are restrictions on which subsets $\Sigma$ define a subalgebra and this is described by Satake diagrams. In either case we see that parabolic subalgebras can be described by taking the Dynkin diagram for $\mathfrak{g}$ and crossing out the simple roots corresponding to $\Sigma$.

This method of describing parabolic subalgebras also gives $\mathfrak{g}$ the structure of a $|k|$-graded Lie algebra, namely we have that $\mathfrak{g} = \mathfrak{g}_{-k} \oplus \cdots \oplus \mathfrak{g}_0 \oplus \cdots \oplus \mathfrak{g}_k$ where $\mathfrak{g}_j$ is the subspace of $\mathfrak{g}$ with $\Sigma$-height equal to $j$. It should be noted that this grading is not canonical as it depends on the Cartan subalgebra and choice of positive roots, though it is unique up to conjugacy.\\

We define the following subalgebras of $\mathfrak{g}$: first $\mathfrak{g}_- = \mathfrak{g}_{-k} \oplus \cdots \oplus \mathfrak{g}_{-1}$ which as a representation of $\mathfrak{g}_0$ is isomorphic to $\mathfrak{g}/\mathfrak{p}$ and second $\mathfrak{p}_+ = \mathfrak{g}_1 \oplus \cdots \oplus \mathfrak{g}_k$.

\subsection{Lie algebra cohomology}\label{cohomo}
The Lie algebra cohomology \cite{chei} of $\mathfrak{g}_-$ with coefficients in $\mathfrak{g}$ appears in the theory of parabolic geometries in two different ways. The first cohomology appears as obstructions to the prolongation procedure and the second cohomology is related to possible values the curvature can take. As such we provide a definition and state a key result. Powerful results for calculating Lie algebra cohomology are obtained by harmonic Hodge theory \cite{kos}, \cite{och}.\\

We define the space of $n$-cochains as $C^n(\mathfrak{g}_-,\mathfrak{g}) = \bigwedge^n \mathfrak{g}_-^* \otimes \mathfrak{g}$ viewed as alternating multilinear maps. The differential $\partial : C^n(\mathfrak{g}_-,\mathfrak{g}) \to C^{n+1}(\mathfrak{g}_-,\mathfrak{g})$ is given by
\begin{eqnarray}\label{del}
&&(\partial \phi)(X_0,X_1, \dots , X_n) = \sum_{i=0}^n (-1)^i[X_i,\phi(X_0 , \dots , \hat{X_i} , \dots , X_n)] \nonumber \\
& & { } + \sum_{i<j} (-1)^{i+j}\phi([X_i,X_j],X_0 , \dots , \hat{X_i} , \dots , \hat{X_j} , \dots , X_n).
\end{eqnarray}
The differential satisfies $\partial^2 = 0$ and the cohomology groups are denoted $H^n(\mathfrak{g}_-,\mathfrak{g})$. The spaces $C^n(\mathfrak{g}_-,\mathfrak{g})$ can be decomposed into homogeneous components where $\phi \in C^n(\mathfrak{g}_-,\mathfrak{g})$ has homogeneity $l$ if for $X_j \in \mathfrak{g}_{i_j}$, $\phi(X_1, \dots , X_n ) \in \mathfrak{g}_{i_1 + \dots + i_n + l}$. We denote by $C^n_l(\mathfrak{g}_-,\mathfrak{g})$ the homogeneity $l$ subspace of $C^n(\mathfrak{g}_-,\mathfrak{g})$. The differential $\partial$ preserves homogeneity so it follows that the cohomology groups likewise have a decomposition $H^n(\mathfrak{g}_-,\mathfrak{g}) = \oplus_l H^n_l(\mathfrak{g}_-,\mathfrak{g})$ into homogeneous subspaces. The Lie algebra $\mathfrak{g}_0$ has a natural action on the cochains and the differential commutes with this action. It follows that the cohomology groups $H^n_l(\mathfrak{g}_-,\mathfrak{g})$ have naturally defined $\mathfrak{g}_0$-module structures.\\

Via the Killing form, the subalgebras $\mathfrak{g}_-$ and $\mathfrak{p}_+$ are dual. It follows that the negative of the dual of the differential $\partial : C^n(\mathfrak{p}_+,V) \to C^{n+1}(\mathfrak{p}_+,V)$, (defined similarly to (\ref{del})) where $V$ is any $\mathfrak{g}$-module, is a linear map $\partial^* : C^{n+1}(\mathfrak{g}_-,V^*) \to C^n(\mathfrak{g}_-,V^*)$. In particular we have a map $\partial^* : C^{n+1}(\mathfrak{g}_-,\mathfrak{g}) \to C^n(\mathfrak{g}_-,\mathfrak{g})$ satisfying $(\partial^*)^2 = 0$. The map $\partial^*$ is called the {\em codifferential}. In this context it is actually more natural to work with the codifferential and the resulting Lie algebra homology since there is a natural action of $\mathfrak{p}$, hence when $\mathfrak{g}$ is replaced with an adjoint bundle with structure group $P$, the codifferential is naturally defined as a bundle map.\\

Let $B$ denote the Killing form on $\mathfrak{g}$. There exists an involution $\rho : \mathfrak{g} \to \mathfrak{g}$, linear in the real case or antilinear in the complex case for which $B^*(X,Y) = -B(X,\rho(Y))$ defines a positive definite inner product in the real case or a positive definite Hermitian inner product in the complex case \cite{cap2}. Moreover $\rho$ can be chosen so that $\rho(\mathfrak{g}_i) = \mathfrak{g}_{-i}$. The inner product $B^*$ on $\mathfrak{g}$ induces inner products on the $C^n(\mathfrak{g}_-,\mathfrak{g})$, which will also be denoted $B^*$. The key properties of the codifferential are given by the following:
\begin{prop}\cite{cap2}
The differential $\partial$ and codifferential $\partial^*$ are adjoint with respect to $B^*$, i.e., $B^*(\partial \phi , \psi) = B^*(\phi , \partial^* \psi)$. Each space $C^n_l(\mathfrak{g}_-,\mathfrak{g})$ splits as a direct sum of the image of $\partial $ and the kernel of $\partial^*$. Each cohomology class contains a unique $\partial^*$-closed representative (i.e., $\partial$-closed and $\partial^*$-closed).
\end{prop}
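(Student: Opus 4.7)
The plan is to reduce the statement to finite-dimensional Hodge theory on the inner product complex $(C^n(\mathfrak{g}_-,\mathfrak{g}), B^*)$. Since each $C^n_l(\mathfrak{g}_-,\mathfrak{g})$ is finite dimensional and $B^*$ restricts to a (Hermitian) positive definite inner product on it, once I establish that $\partial$ and $\partial^*$ are adjoint with respect to $B^*$ and that both preserve the homogeneity grading, the remaining claims will follow from the standard orthogonal decomposition argument.

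First I would verify the adjointness $B^*(\partial\phi,\psi) = B^*(\phi,\partial^*\psi)$. The Killing form pairs $\mathfrak{g}_i$ non-degenerately with $\mathfrak{g}_{-i}$, giving an isomorphism $\mathfrak{g}_-^* \cong \mathfrak{p}_+$ of $\mathfrak{g}_0$-modules, and similarly $\mathfrak{g}^* \cong \mathfrak{g}$. The involution $\rho$, being compatible with the grading in the sense that $\rho(\mathfrak{g}_i) = \mathfrak{g}_{-i}$, implements these identifications in a manner compatible with $B^*(X,Y) = -B(X,\rho(Y))$. Under these identifications, the codifferential $\partial^* : C^{n+1}(\mathfrak{g}_-,\mathfrak{g}) \to C^n(\mathfrak{g}_-,\mathfrak{g})$ is by construction the negative dual of the Lie algebra differential on $C^\bullet(\mathfrak{p}_+,\mathfrak{g}^*)$, which unravels to $\rho^{-1} \partial^\dagger \rho$ (where $\partial^\dagger$ is the formal transpose with respect to $B$). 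The desired adjointness then becomes a direct formal consequence of the definition of $\partial$ via formula (\ref{del}) and the invariance of the Killing form.

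Next I would check that $\partial^*$ preserves the homogeneity grading, which follows from the fact that the Killing form pairing $\mathfrak{g}_i \times \mathfrak{g}_{-i} \to \mathbb{R}$ matches the grade shifts of $\partial$ with those of its dual, together with the grading compatibility of $\rho$. Then on each finite-dimensional space $C^n_l(\mathfrak{g}_-,\mathfrak{g})$ both $\partial$ and $\partial^*$ act, and $\partial^2 = 0$, $(\partial^*)^2 = 0$, with the two being adjoint. Standard linear algebra then yields the orthogonal decomposition
\begin{equation*}
C^n_l(\mathfrak{g}_-,\mathfrak{g}) \;=\; \operatorname{Im}(\partial) \;\oplus\; \operatorname{Im}(\partial^*) \;\oplus\; \mathcal{H}^n_l,
\end{equation*}
where $\mathcal{H}^n_l = \ker \partial \cap \ker \partial^*$: the three summands are pairwise orthogonal because $\partial^2 = 0$ forces $\operatorname{Im}(\partial) \subset \ker\partial = \operatorname{Im}(\partial^*)^\perp$, and dually. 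Collecting the first and third summand gives $\ker \partial^* = \operatorname{Im}(\partial^*)^\perp \cap \ker\partial^* $... more simply, $C^n_l = \operatorname{Im}(\partial) \oplus \ker \partial^*$, which is precisely the second assertion.

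Finally, for the third statement, since $\operatorname{Im}(\partial) \subset \ker \partial$ and $\mathcal{H}^n_l \subset \ker\partial$, the decomposition restricts to $\ker \partial = \operatorname{Im}(\partial) \oplus \mathcal{H}^n_l$. Hence every cohomology class in $H^n_l(\mathfrak{g}_-,\mathfrak{g})$ has a unique representative in $\mathcal{H}^n_l$, namely the harmonic one, which is both $\partial$- and $\partial^*$-closed. The one place that requires care is the adjointness calculation itself, particularly the sign conventions and the interaction between the complex conjugation (in the complex case) built into $\rho$ and the dualisation defining $\partial^*$; getting the two identifications $\mathfrak{g}^* \cong \mathfrak{g}$ and $\mathfrak{g}_-^* \cong \mathfrak{p}_+$ to match up consistently is the main bookkeeping obstacle, but no genuinely new idea is required beyond the invariance of the Killing form.
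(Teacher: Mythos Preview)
The paper does not supply its own proof of this proposition: it is stated with a citation to \cite{cap2} and used as a black box, so there is no argument in the paper to compare against. Your outline is the standard finite-dimensional Hodge-theoretic argument (essentially Kostant's) and is correct; the only genuinely nontrivial step is the adjointness, and you have correctly located the bookkeeping issue in matching the $\rho$-twisted inner product with the dualisation $\mathfrak{g}_-^*\cong\mathfrak{p}_+$ used to define $\partial^*$.
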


An element of $C^n(\mathfrak{g}_-,\mathfrak{g})$ that is $\partial$-closed and $\partial^*$-closed is called {\em harmonic}. Thus the harmonic elements define unique representatives for each cohomology class.

\subsection{Parabolic geometries}\label{parageom}
In this section following \cite{cap2} we take the algebraic structure of parabolic subalgebras and translate them into a geometric picture via Cartan geometries.\\

Let $\mathfrak{g}$ be a real or complex semisimple Lie algebra and $\mathfrak{p}$ a parabolic subalgebra. As in Section \ref{para}, give $\mathfrak{g}$ the structure of a $|k|$-graded algebra.

For the purpose of giving a uniform treatment it will be useful to assume we have a given Lie group $G$ with Lie algebra $\mathfrak{g}$. Let $P \subset G$ be the subgroup of elements $g$ such that ${\rm Ad}_g$ preserves the filtration $\mathfrak{g}_{(-k)} \supset \dots \supset \mathfrak{g}_{(k)}$ where $\mathfrak{g}_{(i)} = \mathfrak{g}_i \oplus \mathfrak{g}_{i+1} \oplus \cdots \oplus \mathfrak{g}_k$, $i= -k, \dots , k$. Further define $G_0 \subset G$ as the subgroup of elements $g$ such that ${\rm Ad}_g$ preserves the gradation of $\mathfrak{g}$. Then $P$ has Lie algebra $\mathfrak{p}$ and $G_0$ has Lie algebra $\mathfrak{g}_0$ \cite{cap2}. The structure of $P$ is given by the following:

\begin{prop}\cite{cap2}\label{groupform}
Given $g \in P$ there exists unique elements $g_0 \in G_0$ and $X_i \in \mathfrak{g}_i$ for $i=1,2, \dots k$ such that $g = g_0 {\rm exp}(X_1) \cdots {\rm exp}(X_k)$.
\end{prop}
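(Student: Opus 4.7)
The plan is to establish the proposition in three stages: first, describe the nilpotent subgroup $P_+$ corresponding to $\mathfrak{p}_+$; second, obtain a Levi-type decomposition $P = G_0 \cdot P_+$; third, refine the decomposition of $P_+$ along the grading to produce the ordered product $\exp(X_1)\cdots\exp(X_k)$ with uniqueness.

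For the first stage, I would observe that because $[\mathfrak{g}_i,\mathfrak{g}_j] \subseteq \mathfrak{g}_{i+j}$ and $\mathfrak{g}_m = 0$ for $m>k$, the subalgebra $\mathfrak{p}_+ = \mathfrak{g}_1 \oplus \cdots \oplus \mathfrak{g}_k$ is nilpotent. Hence the exponential map $\exp : \mathfrak{p}_+ \to G$ is a diffeomorphism onto a closed simply connected nilpotent subgroup $P_+ \subset G$. One checks $P_+ \subset P$ since ${\rm Ad}_{\exp X}$ with $X \in \mathfrak{p}_+$ preserves the filtration $\mathfrak{g}_{(i)}$ (it shifts up the grading). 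Set $P^{(j)} = \exp(\mathfrak{g}_j \oplus \cdots \oplus \mathfrak{g}_k)$; each $P^{(j)}$ is a closed normal subgroup of $P_+$, and $P^{(j)}/P^{(j+1)} \cong \mathfrak{g}_j$ as abelian groups, via $\exp$.

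For the second stage, consider the multiplication map $\mu : G_0 \times P_+ \to P$ given by $\mu(g_0,p) = g_0 p$. Its differential at the identity is the addition map $\mathfrak{g}_0 \oplus \mathfrak{p}_+ \to \mathfrak{p}$, which is a linear isomorphism. Since $G_0$ normalises $P_+$ (because ${\rm Ad}_{G_0}$ preserves the grading, hence each $\mathfrak{g}_i$ separately), the map $\mu$ is an injective local diffeomorphism between manifolds of the same dimension. Closedness of its image and connectedness arguments, together with the fact that $G_0$ is precisely the isotropy of the grading inside $P$, give $\mu$ as a diffeomorphism onto $P$; in particular every $g \in P$ has a unique decomposition $g = g_0 p$ with $g_0 \in G_0$ and $p \in P_+$.

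For the third stage, I would decompose $p \in P_+$ inductively along the filtration $P_+ = P^{(1)} \supseteq P^{(2)} \supseteq \cdots \supseteq P^{(k)} \supseteq \{e\}$. Write $p = \exp(Y)$ with $Y = Y_1 + \cdots + Y_k$, $Y_j \in \mathfrak{g}_j$. Set $X_1 := Y_1$; then by Baker--Campbell--Hausdorff, using that all brackets $[X_1,Y_j]$ lie in $\mathfrak{g}_{\geq 2}$, we obtain $\exp(-X_1)\exp(Y) \in P^{(2)}$. Repeating the argument inside $P^{(2)}/P^{(3)} \cong \mathfrak{g}_2$ determines $X_2 \in \mathfrak{g}_2$ uniquely, and so on, producing the required ordered product. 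Uniqueness is enforced at each step because the projection $P^{(j)} \to P^{(j)}/P^{(j+1)} \cong \mathfrak{g}_j$ picks out $X_j$ without ambiguity.

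The main technical obstacle is the Levi decomposition step: verifying that $\mu : G_0 \times P_+ \to P$ really is surjective, since the definition of $P$ given (as the stabiliser of the filtration) does not manifestly produce $G_0$-components. This is handled by exhibiting, for any $g \in P$, the induced graded automorphism of $\mathfrak{g} = \bigoplus \mathfrak{g}_i$ (well defined from ${\rm Ad}_g$ on the associated graded of the filtration) as an element of $G_0$, and showing that correcting $g$ by this element lands in $P_+$; the remainder of the argument is then a routine Baker--Campbell--Hausdorff induction.
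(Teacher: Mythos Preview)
The paper does not give its own proof of this proposition; it is stated with a citation to \v{C}ap--Schichl \cite{cap2} and used as a structural fact about parabolic subgroups. Your three-stage outline (identify $P_+=\exp(\mathfrak{p}_+)$ as a closed nilpotent subgroup, establish the Levi splitting $P=G_0\cdot P_+$, then peel off the graded factors via Baker--Campbell--Hausdorff) is exactly the standard argument one finds in that reference, and it is correct.

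One point worth tightening in your Stage~2: you assert that $\mu:G_0\times P_+\to P$ is injective, which amounts to $G_0\cap P_+=\{e\}$. This deserves a line of justification: if $g=\exp(X)\in G_0$ with $X\in\mathfrak{p}_+$ nonzero, take the lowest nonzero graded piece $X_m\in\mathfrak{g}_m$; then for $Y\in\mathfrak{g}_{-m}$ the $\mathfrak{g}_0$-component of ${\rm Ad}_g(Y)-Y$ is $[X_m,Y]$, which must vanish for all such $Y$ if $g\in G_0$, contradicting the non-degeneracy of the Killing pairing between $\mathfrak{g}_m$ and $\mathfrak{g}_{-m}$ in a $|k|$-graded semisimple algebra. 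Likewise, your surjectivity argument (pass to the associated graded automorphism and realise it by an element of $G_0$) presupposes that every grading-preserving automorphism of $\mathfrak{g}$ arising from $P$ actually lifts to $G_0\subset G$; this is true here because $G_0$ was \emph{defined} as the subgroup of $G$ preserving the grading, so the induced graded map of ${\rm Ad}_g$ is ${\rm Ad}_{g_0}$ for some $g_0$ once you check $g_0:=$``graded part of $g$'' lies in $G$. With those two details supplied, your proof is complete and matches the cited source.
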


Now define the subgroup $P_+ \subset P$ as the image of $\mathfrak{p}_+$ under the exponential map. One finds that $P$ is the semidirect product of $G_0$ and $P_+$.\\

Having established the various groups involved we now proceed to the geometry itself.

\begin{defn}
Let $(P,\mathfrak{g})$ be as above. A $(P,\mathfrak{g})$ Cartan geometry is called a {\em parabolic geometry}.
Let $\mathcal{G} \to M$ be a parabolic geometry with Cartan connection $\omega$ and curvature $K \in \Omega^0(\mathcal{G},\bigwedge^2 \mathfrak{g}^*_- \otimes \mathfrak{g})$. The Cartan connection $\omega$ is called {\em normal} if $\partial^* K = 0$ where $\partial^*$ is the codifferential $\partial^* : C^2(\mathfrak{g}_-,\mathfrak{g}) \to C^1(\mathfrak{g}_-,\mathfrak{g})$.
\end{defn}

Suppose we have a parabolic geometry $\pi :\mathcal{G}\to M$ with Cartan connection $\omega$. Given any $x \in M$ and $u \in \pi^{-1}(x)$ the exact sequence
\begin{equation}\label{exact}
0 \longrightarrow \mathfrak{p} \longrightarrow \mathfrak{g} \buildrel\pi_*\omega_u^{-1}\over\longrightarrow T_xM \longrightarrow 0
\end{equation}
induces an isomorphism $\phi_u :\mathfrak{g}_- = \mathfrak{g}/\mathfrak{p} \to T_xM$ of vector spaces and the transformation property of $\omega$ implies $\phi_{up} \circ {\rm Ad}_{p^{-1}} = \phi_u$. Therefore we have an isomorphism
\begin{equation*}
TM \simeq \mathcal{G} \times_{\rm Ad} \mathfrak{g} / \mathfrak{p}.
\end{equation*}
Now when $P$ acts on $\mathfrak{g}$ it preserves the filtration $\mathfrak{g}_{(-k)} \supset \dots \supset \mathfrak{g}_{(k)}$ and thus it also preserves the induced filtration $\mathfrak{g}_{(-k)}/\mathfrak{p} \supset \cdots \supset \mathfrak{g}_{(-1)}/\mathfrak{p} \supset \mathfrak{g}_{(0)}/\mathfrak{p} = \{0\}$ on $\mathfrak{g}_-$. Hence there is a corresponding filtration
\begin{equation*}
TM = T^{-k}M \supset \cdots \supset T^{-1}M \supset T^0M = 0
\end{equation*}
of the tangent bundle. Now it is clear that we also have isomorphisms
\begin{equation*}
T^jM / T^{j+1}M \simeq \mathcal{G} \times_{\rm Ad} \mathfrak{g}_{(j)}/\mathfrak{g}_{(j+1)}
\end{equation*}
for $j = -k , \dots , -1$. Therefore we can give the associated graded space $T_x^{-k}M/T_x^{-k+1}M \oplus \cdots \oplus T_x^{-1}M/\{0\}$ the structure of the Lie algebra $\mathfrak{g}_-$. Now the action of $P$ on $\mathfrak{g}_{(-k)}/\mathfrak{g}_{(-k+1)} \oplus \cdots \oplus \mathfrak{g}_{(-1)}/\mathfrak{g}_{(0)}$ factors through to $G_0$ which acts by automorphisms of the Lie algebra structure. Thus the bundle
\begin{equation}
{\rm Gr}TM = T^{-k}M/T^{-k+1}M \oplus \cdots \oplus T^{-1}M
\end{equation}
has on each fibre the structure of the Lie algebra $\mathfrak{g}_-$. We call ${\rm Gr}TM$ the {\em associated graded bundle}. The associated graded bundle has a reduction of structure group to $G_0$. Note that while $G_0$ preserves the algebraic structure on $\mathfrak{g}_-$ it can happen that $G_0$ is a proper subgroup of ${\rm Aut}(\mathfrak{g}_0)$. In this sense the associated graded bundle may have more structure than just the algebraic bracket operation.\\

Given a manifold $M$ with a filtration $TM = T^{-k}M \supset \cdots \supset T^{-1}M$ of the tangent bundle we can likewise form the associated graded bundle ${\rm Gr}TM$. Suppose the filtration has the property that if $X$ and $Y$ are respectively sections of $T^rM$ and $T^sM$, then their commutator $[X,Y]$ is a section of $T^{r+s}M$. We denote this property by $[T^rM,T^sM] \subset T^{r+s}M$. The commutator bracket then induces a bracket $ [\; , \; ]: T^{r}M/T^{r+1}M \otimes T^{s}M/T^{s+1}M \to T^{r+s}M/T^{r+s+1}M$. Note that only zero order terms survive the quotient so that this is an algebraic Lie bracket operation defined on the fibres of ${\rm Gr}TM$. We say that $M$ satisfies the {\em structure equations} for the pair $(P,\mathfrak{g})$ if
\begin{enumerate}
\item{The associated graded bundle ${\rm Gr}TM$ has a reduction of structure to $G_0$, hence in particular each fibre has the algebraic structure of $\mathfrak{g}_-$.}
\item{The Lie algebra structure on each fibre induced from vector field commutators matches the Lie algebra structure obtained from the reduction of structure to $G_0$.}
\end{enumerate}

\begin{rem}
Note that in the $|1|$-graded case the requirement that $M$ satisfies the structure equations reduces to the requirement that $TM$ has a reduction of structure to $G_0$.
\end{rem}

Suppose we have a $(P,\mathfrak{g})$ parabolic geometry $\mathcal{G} \to M$ inducing a filtration on $TM$ and an associated graded bundle ${\rm Gr}TM$. We will determine the conditions under which the structure equations hold.

Let $\kappa = \sum_{i = -k + 2}^{3k} \kappa^i$ be the decomposition of the curvature into homogeneous components so that if $X \in \mathfrak{g}_r$ and $Y \in \mathfrak{g}_s$ then $\kappa(X,Y) \in \mathfrak{g}_{r+s+i}$. Recall that the curvature measures the difference between Lie brackets in $\mathfrak{g}$ and their associated vector fields. It follows that a parabolic geometry satisfies the structure equations if and only if $\kappa^i = 0$ for $i \le 0$. We call a Cartan connection {\em regular} if it satisfies this homogeneity condition on the curvature.\\

A regular parabolic geometry yields a filtration on the tangent bundle solving the structure equations. Naturally one is interested in the converse question of whether a solution of the structure equations corresponds to a regular parabolic geometry. This is closer to the original notion of Cartan geometries and Cartan's equivalence method where structures on the base manifold define a canonical Cartan connection. The following proposition will confirm this point of view. Note that similar results but expressed differently can be found in earlier works \cite{tan}, \cite{tan2}, \cite{mori}:

\begin{prop}\label{cartanconn}\cite[Corollary 3.23]{cap2}
Let $M$ be a manifold that satisfies the structure equations for $(P,\mathfrak{g})$. Assume that $(\mathfrak{p},\mathfrak{g})$ is such that all the cohomology groups ${\rm H}^1_l(\mathfrak{g}_-,\mathfrak{g})$ with $l>0$ are trivial. Then there is a bijective correspondence between reductions of the associated graded bundle to $G_0$ and isomorphism classes of regular, normal $(P,\mathfrak{g})$ Cartan connections which recover the structure on ${\rm Gr}TM$.
\end{prop}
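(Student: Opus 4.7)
The plan is to prove the proposition by the standard inductive prolongation procedure, building the Cartan connection degree by degree on a tower of principal bundles and using the cohomological assumption both to remove obstructions and to pin down normal representatives uniquely.

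First I would start from the $G_0$-reduction $\mathcal{G}_0 \to M$ of the associated graded bundle $\mathrm{Gr}\,TM$ and construct, by a sequence of prolongations, a tower of principal bundles
\begin{equation*}
\mathcal{G}_0 \leftarrow \mathcal{G}_1 \leftarrow \cdots \leftarrow \mathcal{G}_k = \mathcal{G},
\end{equation*}
where $\mathcal{G}_i$ has structure group $P_i = G_0 \cdot \exp(\mathfrak{g}_1) \cdots \exp(\mathfrak{g}_i)$ (a subgroup of $P$ by Proposition \ref{groupform}). At each stage $\mathcal{G}_i$ is defined as a bundle of partial frames: points of $\mathcal{G}_{i+1}$ over a point of $\mathcal{G}_i$ parameterise lifts of the already-constructed partial Cartan data to the next degree in the filtration of $\mathfrak{g}$. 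The regularity hypothesis on the structure equations guarantees that the initial lift at the $G_0$-level is tautological, and at each subsequent step a local choice of lift exists by dimension count.

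Next I would carry out the inductive step. Suppose a partial Cartan connection has been built up to filtration degree $i$, meaning a $\mathfrak{g}/\mathfrak{g}_{(i+1)}$-valued form on $\mathcal{G}_i$ satisfying the analogues of the Cartan axioms modulo $\mathfrak{g}_{(i+1)}$. Two different prolongations to degree $i+1$ differ by a tensorial object valued in $C^1_{i+1}(\mathfrak{g}_-,\mathfrak{g})$, while the change in the curvature $\kappa$ they produce is controlled by $\partial$ applied to this object, landing in a homogeneity-$(i+1)$ piece of $C^2(\mathfrak{g}_-,\mathfrak{g})$. Requiring the normalisation $\partial^* \kappa = 0$ in homogeneity $i+1$ determines the prolongation uniquely up to an element of $\ker \partial \cap \ker \partial^*$ of the appropriate bidegree, i.e.\ up to an element of $H^1_{i+1}(\mathfrak{g}_-,\mathfrak{g})$; the obstruction to solving the normalisation equation at all lies in the same cohomology. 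Since $l = i+1 > 0$ throughout, the hypothesis $H^1_l(\mathfrak{g}_-,\mathfrak{g}) = 0$ yields both existence and uniqueness of the normal prolongation at this step.

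After iterating through $i = 0, 1, \dots, k-1$ one arrives at a principal $P$-bundle $\mathcal{G} = \mathcal{G}_k$ with a $\mathfrak{g}$-valued $1$-form $\omega$. I would then verify that the limit form $\omega$ is indeed a Cartan connection (the axioms of Section \ref{cartgeom2} follow from the corresponding properties at each finite stage and the structure of $P$ described in Proposition \ref{groupform}), that it is regular (the homogeneity $\le 0$ components of its curvature vanish because the inductive process at each step either kills or was built to be consistent with the structure equations), and that it is normal by construction. Conversely, starting from any regular normal Cartan connection, the identification $TM \simeq \mathcal{G} \times_P \mathfrak{g}/\mathfrak{p}$ and the induced filtration provide an associated graded bundle together with a canonical $G_0$-reduction, and the two constructions are mutually inverse up to isomorphism, again because the uniqueness in the inductive step forces any isomorphism of underlying $G_0$-structures to lift.

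The main obstacle is the inductive step: one must set up the prolongation so that the deviation between two candidate lifts, the change in the normalisation condition they induce, and the residual curvature obstruction all fit together as a single complex whose cohomology is exactly $H^1_l(\mathfrak{g}_-,\mathfrak{g})$. Organising the bookkeeping of filtration degrees so that normality at degree $l$ is equivalent to a $\partial^*$-closedness condition that, combined with the Bianchi identity forcing $\partial$-closedness of the obstruction, produces a harmonic representative is the technical heart of the argument; once this is in place the vanishing hypothesis does the rest.
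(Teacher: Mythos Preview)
The paper does not prove this proposition at all: it is quoted as \cite[Corollary 3.23]{cap2} and immediately used, with only the remark afterward that many Cartan connections recover the given structure on $\mathrm{Gr}\,TM$ but a unique one is normal. So there is no proof in the paper to compare against.

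That said, what you have sketched is essentially the prolongation argument of \v{C}ap--Schichl that the citation points to: build a tower $\mathcal{G}_0 \leftarrow \cdots \leftarrow \mathcal{G}_k$ of principal $P_i$-bundles, at each step interpret the ambiguity in extending the partial Cartan connection as an element of $C^1_{l}(\mathfrak{g}_-,\mathfrak{g})$, use the Hodge-type decomposition of Section~\ref{cohomo} to impose $\partial^*\kappa = 0$ degree by degree, and invoke the vanishing of $H^1_l(\mathfrak{g}_-,\mathfrak{g})$ for $l>0$ to get both existence and uniqueness of the normal extension. Your outline captures the architecture correctly; the one point to be careful about is that in the actual argument the obstruction to extending lies in $H^2$ while the freedom lies in $H^1$, and it is the $H^1$ vanishing that kills the ambiguity (not the obstruction to existence, which is handled separately via the regularity hypothesis and the structure of the torsion). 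But as a high-level plan your proposal matches the standard proof the paper is citing.
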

Under the hypotheses of this proposition there will generally exist many Cartan connections that yield the given structure on ${\rm Gr}TM$ but there exists a unique such Cartan connection that is normal.\\

\begin{rem}
The parabolic subalgebras $\mathfrak{p} \subset \mathfrak{g}$ such that not all the cohomology groups ${\rm H}^1_l(\mathfrak{g}_-,\mathfrak{g})$ with $l>0$ are trivial are known \cite{cap2}. These cases can be interpreted to mean that aside from the structure equations, additional structure has to be given to define the parabolic geometry. For the parabolic geometries we shall be henceforth considering this issue will not come into play, although it does matter for projective geometries. In the case of projective geometries the structure equations are vacuous and so additional structure is required to describe such geometries (namely the equivalence class of affine connections).
\end{rem}

\section{Tractor description of parabolic geometries}\label{tract}
Now that we have introduced parabolic geometries, we introduce the tractor bundle framework for describing such geometries in the more familiar terms of vector bundles and connections (in the usual sense). The formalism dates back to the work of T. Y. Thomas and was revived in modern form in \cite{beg}, see also \cite{cap3}.\\


Let $\mathcal{G}\to M$ be a $(P,\mathfrak{g})$ Cartan geometry with Cartan connection $\omega$. Let $G \supset P$ be a Lie group with Lie algebra $\mathfrak{g}$. Then the bundle $\tilde{\mathcal{G}} = \mathcal{G} \times_P G$ has an ordinary $\mathfrak{g}$-valued connection $\tilde{\omega}$ such that $i^*\tilde{\omega} = \omega$ where $i$ is the inclusion $\mathcal{G} \to \tilde{\mathcal{G}}$. To see this note that $\tilde{\omega}$ is defined on $i_*(T\mathcal{G})$ to agree with $\omega$ and then $\tilde{\omega}$ is automatically defined on all other tangent vectors by means of the defining properties of a connection. This connection is called the {\em tractor connection}.\\

Let $V$ be any representation of $G$. Then there is an associated bundle
\begin{equation*}
\mathcal{V} = \mathcal{G} \times_P V = \tilde{\mathcal{G}} \times_G V.
\end{equation*}
Such bundles are useful because the tractor connection can then be considered as a connection $\overrightarrow{\nabla}$ on $\mathcal{V}$. We call $\mathcal{V}$ a {\em tractor bundle}. Typically a parabolic subgroup $P \subset G$ is the subgroup of $G$ preserving some partial flag in $V$ and this induces a flag structure on $\mathcal{V}$ which defines a reduction of structure from $\tilde{\mathcal{G}}$ to $\mathcal{G}$. More generally if a tractor bundle $\mathcal{V}$ can be given structure so as to define a reduction of structure from $\tilde{\mathcal{G}}$ to $\mathcal{G}$ then we can recover the Cartan geometry from $\mathcal{V}$ and $\overrightarrow{\nabla}$.\\

In the case of parabolic geometries it is possible to relate the tractor connection to certain affine connections which we now introduce. A {\em Weyl structure} is a reduction of structure $\sigma : \mathcal{G}_0 \to \mathcal{G}$ to a principal $G_0$-bundle. Such sections always exist globally \cite{slovak}, since $P/G_0 \simeq P_+$ is contractible.

A Weyl structure provides a splitting of the filtration on the tangent bundle and hence an isomorphism $TM \simeq {\rm Gr}TM$. Furthermore the $G_0$-invariant grading of $\mathfrak{g}$ allows us to decompose $\sigma^*\omega = \omega_{-k} + \dots + \omega_k$ where $\omega_i$ is $\mathfrak{g}_i$-valued. One sees that the $\mathfrak{g}_0$ component $\sigma^*(\omega_0)$ is an ordinary $\mathfrak{g}_0$-valued connection $\nabla^\sigma$ on $\mathcal{G}_0$. This connection can then be viewed as a connection on any vector bundle associated to a representation of $G_0$. In particular $\nabla^\sigma$ can be regarded as a connection on $TM$. We call $\nabla^\sigma$ a {\em Weyl connection} or {\em preferred connection}.\\

Given a Weyl structure $\sigma : \mathcal{G}_0 \to \mathcal{G}$ let us write $\omega_- = \omega_{-k} + \dots + \omega_{-1}$ and $\omega_+ = \omega_1 + \dots + \omega_k$ so that $\sigma^*\omega = \omega_- + \omega_0 + \omega_+$. Now since $\omega_-$ and $\omega_+$ vanish on the vertical distribution for $\mathcal{G}_0$ they can be viewed as $1$-form valued sections of the adjoint bundle $\mathcal{A} = \mathcal{G} \times_{\rm Ad} \mathfrak{g} \simeq \mathcal{A}_- \oplus \mathcal{A}_0 \oplus \mathcal{A}_+$. The relation between the tractor connection $\overrightarrow{\nabla}$ and the Weyl connection $\nabla^\sigma$ can then be written as
\begin{equation*}
\overrightarrow{\nabla} = \omega_- + \nabla^\sigma + \omega_+.
\end{equation*}
We note further that $\omega_- : TM \to \mathcal{A}_-$ provides an isomorphism $TM \simeq \mathcal{A}_-$. Now since $\mathcal{A}_0$ acts faithfully on $\mathcal{A}_-$ we can identify $\mathcal{A}_0$ with a subbundle of ${\rm End}(TM)$. Via the Killing form $\mathcal{A}_+$ and $\mathcal{A}_-$ are dual so $\mathcal{A}_+ \simeq T^*M$. The adjoint bundle now has the form
\begin{equation*}
\mathcal{A} = TM \oplus \mathcal{A}_0 \oplus T^*M.
\end{equation*}
So for any tangent vector $X$ we can identify $\omega_-(X)$ with $X$ and $\omega_+(X)$ can be identified with a $1$-form which we denote $\mathbf{P}^\sigma(X)$. Under these identifications we have
\begin{equation}\label{weyltra}
\overrightarrow{\nabla}_X = X + \nabla_X^\sigma + \mathbf{P}^\sigma(X).
\end{equation}
This is the fundamental relation between the tractor connection and the Weyl connections. The map $\mathbf{P}^\sigma : TM \to T^*M$ is called the {\em rho tensor} and depends on the choice of $\sigma$.

\section{Conformal geometry}\label{confgeom}
We now consider in more detail a specific instance of a parabolic geometry, namely conformal geometry. Although conformal geometry can be approached as a topic within Riemannian geometry, it is beneficial to take a more invariant approach, the history of which traces back to the work of Cartan in \cite{cartan2}. We now understand this work as an example of a parabolic Cartan geometry, see for example \cite{cap3}, \cite{kobayashi}.

\subsection{M\"obius space}\label{mobius}
Our first step is to understand the homogeneous model space for conformal geometry. Let $V$ be a vector space of dimension $n$ and $[g]$ a conformal class of $(p,q)$ signature inner products (henceforth referred to as metrics) on $V$. Throughout this section we will assume $n \ge 3$. We define the {\em conformal group} ${\rm CO}(V) = {\rm O}(V) \times \mathbb{R}^*$ and index $2$ subgroup ${\rm CO}_+(V) = \{ g \in {\rm CO}(V) \, | \, \det{g} > 0 \}$.\\

Our homogeneous model for conformal geometry consists of Lie algebras $\mathfrak{p} \subset \mathfrak{g}$ where $\mathfrak{g} = \mathfrak{g}_{-1} + \mathfrak{g}_0 + \mathfrak{g}_1 \simeq V + \mathfrak{co}(V) + V^*$ and $\mathfrak{p} = \mathfrak{g}_0 + \mathfrak{g}_1$. Thus $\mathfrak{g}$ is a $|1|$-graded Lie algebra given by the relations
\begin{equation}
\begin{aligned}
\lbrack v, \hat{v}\rbrack &= \lbrack \lambda,\hat{\lambda} \rbrack  = 0\\
\lbrack A,\hat{A} \rbrack &= A\hat{A}- \hat{A}A \\
\lbrack A, v \rbrack &= Av \\
\lbrack \lambda , A \rbrack &= A^*\lambda \\
\lbrack v , \lambda \rbrack &= \lambda \otimes v - (gv) \otimes (g^{-1}\lambda) + \lambda (v)I
\end{aligned}
\end{equation}
where $v,\hat{v} \in V, \lambda , \hat{\lambda} \in V^* , A, \hat{A} \in \mathfrak{co}(V)$. Notice that the last of these commutators depends only on the conformal class of the metric $g$.

There is an isomorphism $ V + \mathfrak{co}(V) + V^* \simeq \mathfrak{so}(E)$ where $E$ is an $(n+2)$-dimensional vector space with signature $(p+1,q+1)$ metric $\tilde{g}$. As vector spaces we write $E = \mathbb{R} \oplus V \oplus \mathbb{R}$, then the metric is given by
\begin{equation}\label{matrixA}
\tilde{g} = \left[ \begin{matrix} 0 & 0 & -1 \\ 0 & g & 0 \\ -1 & 0 & 0 \end{matrix} \right]
\end{equation}
and the isomorphism of algebras is given by sending $(v,A,\lambda) \in V + \mathfrak{co}(V) + V^*$ to
\begin{equation}\label{matrixB}
\left[ \begin{matrix} -\alpha & \lambda & 0 \\ v & \hat{A} & g^{-1}\lambda \\ 0 & gv & \alpha \end{matrix} \right]
\end{equation}
where $A = \hat{A} + \alpha I$ and $\hat{A} \in \mathfrak{so}(V)$ (so $\alpha = {\rm tr}(A)/n$). It follows that as a representation of $\mathfrak{co}(V)$, we have $E \simeq L[-1] \oplus V[-1] \oplus L[1]$, where $L[w]$ denotes the $1$-dimensional representation given by $\mathfrak{co}(V) \ni \phi \mapsto (w/n){\rm tr}(\phi)$ and we use the notation $W[w] = W \otimes L[w]$ for a representation $W$.

Thus far the construction of the metric $\tilde{g}$ on $E$ depends on the choice of representative $g$ of the conformal structure on $V$. However we may instead consider the conformal metric ${\bf g} = {\rm det}(g)^{-1/n}g$. A short calculation shows that ${\bf g} \in S^2( V^*) [2]$ and is independent of the representative metric $g$. Now ${\bf g}$ is a well-defined metric on $V[-1]$ and so we can replace $g$ by ${\bf g}$ in the formulas (\ref{matrixA}) and (\ref{matrixB}) to obtain a uniquely defined metric on $E$. To summarise we have that $E = L[-1] \oplus V[-1] \oplus L[1]$ with metric
\begin{equation}
\tilde{g} = \left[ \begin{matrix} 0 & 0 & -1 \\ 0 & {\bf g} & 0 \\ -1 & 0 & 0 \end{matrix} \right]
\end{equation}
and an isomorphism of Lie algebras $V + \mathfrak{co}(V) + V^* \simeq \mathfrak{so}(\mathcal{V})$ where $(v,A,\lambda)$ maps to
\begin{equation}
\left[ \begin{matrix} -\alpha & \lambda & 0 \\ v & \hat{A} & {\bf g}^{-1}\lambda \\ 0 & {\bf g}v & \alpha \end{matrix} \right].
\end{equation}
We shall call $E$ the {\em tractor space}.\\

Our homogeneous model for conformal geometry is now ${\rm SO}(E)/P$ where $P$ is the stabiliser in ${\rm SO}(E)$ of the null line $L[-1] \subset L[-1] \oplus V[-1] \oplus L[1]$. Note that the Lie algebra of $P$ is $\mathfrak{p}$. The group ${\rm SO}(E)$ acts transitively on the projectivised null quadric $Q = \{ u \in \mathbb{P}(\mathcal{V}) | \; \tilde{g}(u,u)=0 \}$ and $P$ is the stabiliser of a point in $Q$ so the homogeneous space ${\rm SO}(E)/P$ is diffeomorphic to the quadric $Q$. Note that in place of ${\rm SO}(E)$ we could just as easily use ${\rm O}(E)$ and replace $P$ by the appropriate stabiliser. The only difference is that ${\rm SO}(E)$ preserves an orientation on $Q$ so we are considering oriented conformal geometry.\\

Given a scale, that is a non-zero element $\xi \in L[1]$ we can embed $V$ in $E$ via $v \mapsto (\xi^{-1},\xi^{-1}v,1/2 \, \xi g(v,v))^t$ where $g = \xi^{-2}{\bf g}$ is the metric corresponding to the scale. Although this map depends on a choice of scale, the induced map $V \to \mathbb{P}(E)$ is uniquely defined as $v \mapsto \left[1,v,1/2 \, {\bf g}(v,v) \right]^t$. Clearly this map gives a diffeomorphism between $V$ and a neighborhood of the origin $o = [1,0,0]^t$ such that $0 \in V$ maps to the origin. \\

The conformal structure on $V$ can be extended to $Q$ in such a way that ${\rm SO}(E)$ acts as the group of conformal transformations of $Q$ (or more strictly ${\rm PSO}(E)$). In this context ${\rm SO}(E)$ may be called the {\em M\"{o}bius group} and $Q$ {\em M\"{o}bius space}. We can think of the M\"{o}bius space $Q$ as the conformal compactification of $V$, on which all the infinitesimal automorphisms of the flat conformal structure can be integrated to automorphisms, not just locally defined as is the case for $V$.

\subsection{Normal conformal Cartan connections}\label{nccc}
We now consider an $n$-dimensional oriented manifold $M$ with signature $(p,q)$ conformal structure $[g]$. The conformal structure can be equivalently viewed as a ${\rm CO}_+(p,q)$-structure on $M$, let $\mathcal{G}_0$ denote this ${\rm CO}_+(p,q)$-principal bundle. Naturally one wishes to classify these and to that end one may apply the equivalence method. This was achieved by Cartan in \cite{cartan2}, the result is that to each conformal structure is associated a canonical normal Cartan connection, which is called the {\em normal conformal Cartan connection}.\\

The principal bundle for this Cartan geometry is the first prolongation of $\mathcal{G}_0$ \cite{kobayashi} and one finds that the Cartan geometry is modeled on $G/P = Q$, the M\"{o}bius space of Section \ref{mobius} (strictly speaking $G$ should be ${\rm PSO}(p+1,q+1)$ but in odd dimensions this is isomorphic to ${\rm SO}(p+1,q+1)$). In fact one can deduce the existence and uniqueness of the normal conformal connection without going through the prolongation procedure. It follows from Proposition \ref{cartanconn} that normal conformal connections are in bijection with reductions of the structure group of the tangent bundle to $G_0 = {\rm CO}_+(p,q)$, which implies the existence and uniqueness.\\

The normal conformal connection is distinguished by its curvature $\kappa$ having the following form: viewed as a $\mathfrak{g}$-valued $2$-form we have that the $\mathfrak{g}_{-1} = V$-valued component vanishes, the $\mathfrak{g}_0 = \mathfrak{co}(p,q)$-valued component is the conformal Weyl curvature. Moreover it then follows that the $\mathfrak{g}_1 = V^*$-valued component is the Cotton-York tensor \cite{cap3}.\\

Consider the tractor bundle $\mathcal{E}$ associated to the standard representation of $G = {\rm SO}(p+1,q+1)$. In our previous terminology we called this representation the tractor space $E$, hence the tractor bundle is $\mathcal{E} = \tilde{\mathcal{G}} \times_G E$. As a representation of $G_0$, we have found that $E = L[-1] \oplus V[-1] \oplus L[+1]$, where $V$ is the fundamental representation of $G_0$. Thus on restriction to structure group $G_0$ via a Weyl structure we have that
\begin{equation}
\mathcal{E} = \mathcal{L}[-1] \oplus TM[-1] \oplus \mathcal{L}[+1]
\end{equation}
where we use the notation $\mathcal{L}[w]$ to denote the line bundle associated to the representation $L[w]$.

It can be shown that the preferred connections corresponding to the normal conformal Cartan connection are precisely those connections that are torsion free and preserve the conformal structure \cite{slovak}. Thus amongst the preferred connections are the Levi-Civita connections of representative metrics of the conformal class; however not every preferred connection preserves a metric. In fact a preferred connection preserves a metric if and only if it preserves a scale, i.e., a non-vanishing section of $\mathcal{E}[1]$.\\

Let $\overrightarrow{\nabla}$ denote the tractor connection and let $\nabla^\sigma$ denote the preferred connection corresponding to a Weyl structure $\sigma$. Then as in equation (\ref{weyltra}) we have
\begin{equation}
\overrightarrow{\nabla}_X = \nabla^\sigma_X + X + \mathbf{P}^\sigma(X).
\end{equation}
In the case of conformal geometry the rho tensor $\mathbf{P}^\sigma$ can be expressed in terms of the Ricci curvature of the corresponding Weyl connection \cite{cap3}. We let $R$ be the Riemann curvature of $\nabla^\sigma$, ${\rm Ric}(X,Y) = {\rm tr}(Z \to R(Z,X)Y)$ the Ricci curvature and $s = {\rm tr}({\rm Ric})$ the scalar curvature. Then
\begin{equation}
\mathbf{P}^\sigma(X,Y) = -\frac{1}{n-2}\left( \frac{1}{n}{\rm Ric}(X,Y) + \frac{n-1}{n}{\rm Ric}(Y,X) - \frac{1}{2n-2}sg(X,Y) \right).
\end{equation}

Given a Cartan geometry we have an intrinsically defined connection, the tractor connection. Thus we can consider the holonomy of this connection, which we call tractor holonomy. In the case of conformal geometry we call this the {\em conformal holonomy} of the conformal structure. In Chapter \ref{chap5} we will consider $G_2$ conformal holonomy.



\chapter{Conformal $G_2$ holonomy}\label{chap5}
In this chapter we will study signature $(2,3)$ conformal geometry with holonomy in the split form of $G_2$. As discussed in the introdution this has a history that traces back to the work of Cartan \cite{cartan1}. This geometry has examined for several different perspectives including the works of Bryant and Hsu \cite{bryant}, Agrachev \cite{agrachev}, Nurowski \cite{nurowski1}, \cite{nurowski2}, Zelenko \cite{zel1}, \cite{zel2}, \v{C}ap and Sagerschnig \cite{casa} and Hammerl \cite{ham}. Our contribution is mostly to clarify the geometry at first from the point of view of parabolic geometries then later purely in terms of conformal geometry. We start off in Section \ref{octonions} by studying split $G_2$ and the corresponding homogeneous space. The corresponding parabolic geometry is then considered and shown to be equivalent to a generic $2$-plane distribution in $5$ dimensions. Then in Section \ref{confg2} we relate this $G_2$ geometry to conformal $G_2$ holonomy. In Section \ref{spinors} we consider the geometry from the point of view of spinors and show that $G_2$ holonomy corresponds to a spinor satisfying the twistor spinor equation and further that the generic $2$-plane distribution corresponds to the annihilator of this spinor. Finally in Section \ref{examples} we give two examples of such generic $2$-distributions.


\section{Split octonions and split $G_2$}\label{octonions}
We will study a parabolic geometry for the split real form of $G_2$. In order to understand the geometry we must first examine the algebraic structure of split $\mathfrak{g}_2$ and the parabolic subalgebra.\\

The split form of $G_2$ can be conveniently described in terms of the split octonions, a split signature version of the more familiar octonions. For more details see \cite{harvey}.

We define the {\em split octonions} $\widetilde{\mathbb{O}}$ as the $8$-dimensional algebra over $\mathbb{R}$ spanned by elements $1,i,j,k,l,li,lj,lk$ satisfying the multiplication given in Figure \ref{mult}.
\begin{figure}[h]
\begin{center}
\leavevmode
\begin{tabular}[c]{|c|c|c|c|c|c|c|c|}
\hline
$1$ & $i$ & $j$ & $k$ & $l$ & $li$ & $lj$ & $lk$ \\
\hline
$i$ & $-1$ & $k$ & $-j$ & $-li$ & $l$ & $-lk$ & $lj$ \\
\hline
$j$ & $-k$ & $-1$ & $i$ & $-lj$ & $lk$ & $l$ & $-li$ \\
\hline
$k$ & $j$ & $-i$ & $-1$ & $-lk$ & $-lj$ & $li$ & $l$ \\
\hline
$l$ & $li$ & $lj$ & $lk$ & $1$ & $i$ & $j$ & $k$ \\
\hline
$li$ & $-l$ & $-lk$ & $lj$ & $-i$ & $1$ & $k$ & $-j$ \\
\hline
$lj$ & $lk$ & $-l$ & $-li$ & $-j$ & $-k$ & $1$ & $i$ \\
\hline
$lk$ & $-lj$ & $li$ & $-l$ & $-k$ & $j$ & $-i$ & $1$ \\
\hline
\end{tabular}
\end{center}
\caption{Multiplication table for the split octonions}\label{mult}
\end{figure}
\\

Let $x = x_0 1 + x_1 i + \cdots + x_7 lk$ denote an element of $\widetilde{\mathbb{O}}$. We define the conjugate as $\bar{x} = x_0 - x_1 i - \cdots - x_7 lk$, and the norm of $x$ as ${\rm N}(x) = x\bar{x} = x_0^2 + x_1^2 + x_2^2 + x_3^2 -x_4^2 -x_5^2 -x_6^2 - x_7^2$. In fact the norm is the quadratic form associated to the split signature metric $g(x,y) = {\rm Re}(x\bar{y})$.\\

Consider the group ${\rm Aut}(\widetilde{\mathbb{O}})$ of linear isomorphisms of $\widetilde{\mathbb{O}}$ preserving the algebra. This is a Lie group and we can show that the Lie algebra is $\tilde{\mathfrak{g}}_2$, the split real form of $G_2$. We note that any automorphism must preserve the norm so that ${\rm Aut}(\widetilde{\mathbb{O}})$ is a subgroup of ${\rm O}(4,4)$. But an automorphism must also preserve the identity $1 \in \widetilde{\mathbb{O}}$ and hence acts trivially on the $1$-dimensional subspace ${\rm Re}(\widetilde{\mathbb{O}})$. Thus the automorphisms are determined by their action on the orthogonal complement ${\rm Re}(\widetilde{\mathbb{O}})^\perp = {\rm Im}(\widetilde{\mathbb{O}})$ which has a signature $(3,4)$ metric. Thus the automorphism group is a subgroup of ${\rm O}(3,4)$, in fact a subgroup of ${\rm SO}(3,4)$.\\

Let $x,y \in {\rm Im}(\widetilde{\mathbb{O}})$. We define the cross product $ \times : {\rm Im}(\widetilde{\mathbb{O}}) \times {\rm Im}(\widetilde{\mathbb{O}}) \to {\rm Im}(\widetilde{\mathbb{O}})$ by
\begin{equation}
x \times y = \frac{1}{2}(xy - yx)
\end{equation}
and we also note since $x$ and $y$ are imaginary we have
\begin{equation}
g(x,y) = -\frac{1}{2}(xy + yx).
\end{equation}

The algebraic structure of the split octonions is encoded in the trilinear form $\phi$ on ${\rm Im}(\widetilde{\mathbb{O}})$ given by
\begin{equation}
\phi(x,y,z) = g(x \times y , z).
\end{equation}
As the split octonions are an alternative algebra we find that $\phi$ is skew-symmetric. In fact
\begin{equation}
\phi = dx^{123} + dx^{145} + dx^{167} + dx^{246} + dx^{275} + dx^{347} + dx^{356}
\end{equation}
where $dx^{ijk}$ denotes $dx^i \wedge dx^j \wedge dx^k$. Since the $3$-form $\phi$ encodes the algebraic structure of the split octonions, the automorphism group is simply the stabiliser of $\phi$ in ${\rm GL}({\rm Im}(\widetilde{\mathbb{O}}))$. We denote the automorphism group by $\tilde{G}_2$ and the corresponding Lie algebra by $\tilde{\mathfrak{g}}_2$. We shall see that this is indeed the split real form of $G_2$.\\





We use a basis that most clearly shows the parabolic structure of $\tilde{\mathfrak{g}}_2$. With respect to the ordered basis
\begin{equation*}
\left\{\tfrac{1}{\sqrt{2}}(i +li),\tfrac{1}{\sqrt{2}}(lj -j),\tfrac{1}{\sqrt{2}}(k -lk),l,\tfrac{1}{\sqrt{2}}(k +lk),\tfrac{1}{\sqrt{2}}(j+lj),\tfrac{1}{\sqrt{2}}(i -li) \right\}
\end{equation*}
the algebra $\tilde{\mathfrak{g}}_2$ consists of matrices of the form:
\begin{equation}\label{g2}
\left[ \begin{matrix} u_1+u_4 & \lambda^1 & \lambda^2 & \lambda^3 & \lambda^4 & \lambda^5 & 0 \\ v^1 & u_1 & u_2 & \sqrt{2}\lambda^2 & \tfrac{1}{\sqrt{2}}\lambda^3 & 0 & \lambda^5 \\ v^2 & u_3 & u_4 & -\sqrt{2}\lambda^1 & 0 & \tfrac{1}{\sqrt{2}}\lambda^3 & -\lambda^4 \\ v^3 & \sqrt{2}v^2 & -\sqrt{2}v^1  & 0 &  -\sqrt{2}\lambda^1 & -\sqrt{2}\lambda^2 & \lambda^3 \\ v^4 & \tfrac{1}{\sqrt{2}}v^3 & 0 & -\sqrt{2}v^1 & -u_4 & u_2 & -\lambda^2 \\ v^5 & 0 & \tfrac{1}{\sqrt{2}}v^3 & -\sqrt{2}v^2 & u_3 & -u_1 & \lambda^1 \\ 0 & v^5 & -v^4 & v^3 & -v^2 & v^1 & -u_1-u_4 \end{matrix} \right].
\end{equation}

The parabolic structure now becomes clear. The $14$ variables $u_1, \dots , \lambda^5$ can be thought of as linear coordinates on $\tilde{\mathfrak{g}}_2$ and thus as a basis for the dual $\tilde{\mathfrak{g}}_2^*$. The entries $u_1,u_4$ correspond to a Cartan subalgebra, with the element $u_1 = u_4 = 1$ serving as a grading element. The root space decomposition is as shown in Figure \ref{root2}. Note we have included a dividing line to establish a system of positive roots.

\begin{figure}[h]
\setlength{\unitlength}{1mm}
\begin{picture}(90,60)
\put(60,30){\vector(2,1){20}} \put(82,42){$\lambda^5$}
\put(60,30){\vector(1,2){10}} \put(72,52){$\lambda^4$}
\put(60,30){\vector(1,1){10}} \put(72,42){$\lambda^3$}
\put(60,30){\vector(1,0){10}} \put(72,30){$\lambda^2$}
\put(60,30){\vector(0,1){10}} \put(60,42){$\lambda^1$}
\put(60,30){\vector(1,-1){10}} \put(72,18){$u_2$}
\put(60,30){\vector(-1,1){10}} \put(48,42){$u_3$}
\put(60,30){\vector(-1,0){10}} \put(44,30){$v^2$}
\put(60,30){\vector(0,-1){10}} \put(60,16){$v^1$}
\put(60,30){\vector(-1,-1){10}} \put(44,18){$v^3$}
\put(60,30){\vector(-2,-1){20}} \put(34,18){$v^5$}
\put(60,30){\vector(-1,-2){10}} \put(44,10){$v^4$}
\linethickness{0.05mm}
\put(60,30){\line(-1,3){7}}
\put(60,30){\line(1,-3){7}}
\put(48,50){$-$}
\put(56,50){$+$}
\end{picture}
\caption{Root space decomposition}\label{root2}
\end{figure}
We see that for the given system of positive roots the simple roots correspond to $u_2$ and $\lambda^1$. A simple calculation shows $u_2$ is the larger of the two. Let $\tilde{\mathfrak{p}}$ be the parabolic subalgebra corresponding to the diagram $\times \Lleftarrow \circ$. We see that $\tilde{\mathfrak{p}}$ consists of those elements with $v^1 = \dots = v^5 = 0$. Moreover this is precisely the subalgebra that preserves the line $o$ spanned by $i+li$. The graded structure associated to the parabolic subalgebra $\tilde{\mathfrak{p}}$ is as follows:
\begin{center}
\leavevmode
\begin{tabular}{|c|c|c|c|c|c|c|}
\hline
$\mathfrak{g}_{-3}^*$ & $\mathfrak{g}_{-2}^*$ & $\mathfrak{g}_{-1}^*$ & $\mathfrak{g}_0^*$ & $\mathfrak{g}_1^*$ & $\mathfrak{g}_2^*$ & $\mathfrak{g}_3^*$ \\
\hline
$v^5,v^4$ & $v^3$ & $v^2 , v^1$ & $u_1,u_2,u_3,u_4$ & $\lambda^2,\lambda^1$ & $\lambda^3$ & $\lambda^5 , \lambda^4$ \\
\hline
\end{tabular}
\end{center}
Now let us define $\tilde{P}$ and $\tilde{G}_0$ to be the subgroups of $\tilde{G}_2$ preserving respectively the filtration and grading on $\tilde{\mathfrak{g}}_2$. Then the corresponding Lie algebras are $\tilde{\mathfrak{p}}$ and $\mathfrak{g}_0$.
\begin{lemp}\label{lemg0}
The natural map
\begin{equation}
\phi: \tilde{G}_0 \to {\rm GL}(\mathfrak{g}_{-1})
\end{equation}
is an isomorphism.
\begin{proof}
First we show $\phi$ is injective. Suppose $g \in \tilde{G}_0$ and $\phi(g)=0$, that is $g$ acts trivially on $\mathfrak{g}_-$. Taking Lie brackets of elements of $\mathfrak{g}_{-1}$ generates $\mathfrak{g}_-$ so $g$ acts trivially on $\mathfrak{g}_-$. Moreover $\mathfrak{g}_0$ can be identified with $\mathfrak{gl}(\mathfrak{g}_{-1})$ so $g$ also acts trivially on $\mathfrak{g}_0$. Now suppose $Y \in \mathfrak{g}_i$ for $i >0$ then for all $X \in \mathfrak{g}_{-i}$ we have
\begin{equation*}
\left[ {\rm Ad}_g Y - Y , X \right] = 0.
\end{equation*}
This is only possible if ${\rm Ad}_g Y = Y$. Thus we have shown $g$ acts trivially on $\tilde{\mathfrak{g}}_2$. We can think of $g$ as an endomorphism of the imaginary split octonions that commutes with $\tilde{\mathfrak{g}}_2$. But the only such endomorphisms are multiples of the identity and then in fact then we must have $g$ is the identity since no other multiple of the identity is contained in $\tilde{G}_2$.\\

Next we show $\phi$ is surjective. Since $\mathfrak{g}_0$ is isomorphic to $\mathfrak{gl}(2)$ we need only show that in the image of $\phi$ is an orientation reversing element. For this define an involution $\tau$ which is the identity on $k,l,lk$ and acts as multiplication by $-1$ on $i,j,li,lj$. Then it is clear that $\tau \in \tilde{G}_2$. In fact $\tau \in \tilde{G}_0$ and the action of $\tau$ on $\mathfrak{g}_{-1}$ is orientation reversing.
\end{proof}
\end{lemp}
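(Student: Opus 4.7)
The plan is to prove injectivity and surjectivity separately, exploiting the graded structure laid out above and the fact that the distribution $\mathfrak{g}_{-1}$ is bracket-generating in $\mathfrak{g}_-$.

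For injectivity, suppose $g \in \tilde{G}_0$ acts trivially on $\mathfrak{g}_{-1}$. Inspection of the matrix form \eqref{g2} (or equivalently the root diagram) shows that $[\mathfrak{g}_{-1},\mathfrak{g}_{-1}] = \mathfrak{g}_{-2}$ and $[\mathfrak{g}_{-1},\mathfrak{g}_{-2}] = \mathfrak{g}_{-3}$, so $\operatorname{Ad}_g$ is trivial on all of $\mathfrak{g}_-$. Because $\operatorname{ad}:\mathfrak{g}_0\hookrightarrow\mathfrak{gl}(\mathfrak{g}_{-1})$ is faithful (the two simple negative root spaces already span $\mathfrak{g}_{-1}$, and the centraliser of $\mathfrak{g}_-$ in $\mathfrak{g}_0$ is zero), the induced action of $g$ on $\mathfrak{g}_0$ is trivial as well. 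The Killing form provides a $\tilde{G}_0$-equivariant pairing $\mathfrak{g}_i \otimes \mathfrak{g}_{-i} \to \mathbb{R}$, so $g$ acts trivially on each $\mathfrak{g}_i$, $i>0$, and hence on all of $\tilde{\mathfrak{g}}_2$. Viewing $g$ as an element of $\operatorname{GL}(\operatorname{Im}(\widetilde{\mathbb{O}}))$ commuting with the irreducible action of $\tilde{\mathfrak{g}}_2$ on $\operatorname{Im}(\widetilde{\mathbb{O}})$, Schur's lemma forces $g=\lambda I$; but $g$ preserves the $3$-form $\phi$ encoding the split octonion product, so $\lambda^3=1$ and hence $\lambda=1$ over $\mathbb{R}$.

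For surjectivity, compute $\dim \tilde{G}_0 = \dim \mathfrak{g}_0 = 4 = \dim \operatorname{GL}(\mathfrak{g}_{-1})$; combined with injectivity this makes $\phi$ a local diffeomorphism, so the image is an open subgroup and therefore contains the identity component $\operatorname{GL}^+(\mathfrak{g}_{-1})$. It remains to produce one element of $\tilde{G}_0$ whose image reverses orientation on $\mathfrak{g}_{-1}$. The plan is to exhibit an explicit involution $\tau$ of $\widetilde{\mathbb{O}}$ which is an algebra automorphism, preserves the grading on $\tilde{\mathfrak{g}}_2$, and acts by $-1$ on one of the two basis vectors $v^1,v^2$ of $\mathfrak{g}_{-1}$ while fixing the other; a natural candidate is the map fixing $k,l,lk$ and sending $i,j,li,lj$ to their negatives, which one checks preserves the multiplication table in Figure \ref{mult}.

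The main obstacle I anticipate is the surjectivity half: not the dimension count or the existence of an orientation-reversing involution in $\tilde{G}_2$, both of which are routine, but rather verifying that such a candidate involution actually lies in $\tilde{G}_0$ (i.e.\ preserves the chosen grading) and then tracing through the basis of $\mathfrak{g}_{-1}$ given after \eqref{g2} to confirm that its action has negative determinant. This is a bookkeeping check involving the explicit change of basis from $\{i,j,k,l,li,lj,lk\}$ to the basis exhibiting the parabolic structure, and the conclusion then follows.
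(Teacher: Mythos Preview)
Your proposal is correct and follows essentially the same approach as the paper: both prove injectivity by propagating triviality from $\mathfrak{g}_{-1}$ to all of $\tilde{\mathfrak{g}}_2$ via bracket-generation and then invoking irreducibility of the $7$-dimensional representation, and both handle surjectivity by the dimension count plus the explicit involution $\tau$ fixing $k,l,lk$ and negating $i,j,li,lj$. The only cosmetic differences are that you use the Killing form pairing $\mathfrak{g}_i\otimes\mathfrak{g}_{-i}\to\mathbb{R}$ where the paper uses the bracket $[\mathfrak{g}_i,\mathfrak{g}_{-i}]\to\mathfrak{g}_0$ directly, and you make the Schur-lemma step and the $\lambda^3=1$ conclusion explicit where the paper leaves these implicit.
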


\begin{prop}
Let $P_o$ be the subgroup of $\tilde{G}_2$ preserving the line spanned by $o = i + li$. Then $P_o = \tilde{P}$.
\begin{proof}
From Lemma \ref{lemg0} we find that $\tilde{G}_0$ consists of matrices of the form
\begin{equation*}
\left[ \begin{matrix} a & 0 & 0 & 0 & 0 \\ 0 & A & 0 & 0 & 0 \\ 0 & 0 & 1 & 0 & 0 \\ 0 & 0 & 0 & a^{-1}A & 0 \\ 0 & 0 & 0 & 0 & a^{-1} \end{matrix} \right]
\end{equation*}
where $A \in {\rm GL}(2)$ and $a = {\rm det}(A)$. Furthermore we have that $\tilde{P} = \tilde{G}_0 \ltimes \tilde{P}_+$ where $\tilde{P}_+$ is the group generated by exponentials of elements of $\tilde{\mathfrak{p}}_+ = \mathfrak{g}_1 \oplus \mathfrak{g}_2 \oplus \mathfrak{g}_3$. From this we see immediately that $\tilde{P}$ preserves the null line spanned by $i + li$.\\

Conversely suppose $g \in \tilde{G}_2$ preserves the line spanned by $i +li$. Since $g$ preserves the cross product structure we can show that $g$ has the form
\begin{equation*}
g = \left[ \begin{matrix} a & * & * & * & * \\ 0 & A & * & * & * \\ 0 & 0 & 1 & * & * \\ 0 & 0 & 0 & a^{-1}A & * \\ 0 & 0 & 0 & 0 & a^{-1} \end{matrix} \right]
\end{equation*}
where again $A \in {\rm GL}(2)$ and $a = {\rm det}(A)$. From this it is straightforward to show that $g$ preserves the filtration on $\tilde{\mathfrak{g}}_2$ hence $g \in \tilde{P}$.
\end{proof}
\end{prop}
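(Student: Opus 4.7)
The plan is to prove the two inclusions $\tilde{P} \subseteq P_o$ and $P_o \subseteq \tilde{P}$ separately, using the explicit matrix description in (\ref{g2}) together with Lemma \ref{lemg0}.

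First I would handle the easy direction $\tilde{P} \subseteq P_o$. By the semidirect decomposition $\tilde{P} = \tilde{G}_0 \ltimes \tilde{P}_+$ and Lemma \ref{lemg0}, elements of $\tilde{G}_0$ act on the ordered basis of ${\rm Im}(\widetilde{\mathbb{O}})$ as the indicated block-diagonal matrices; in particular they scale the first basis vector $\tfrac{1}{\sqrt{2}}(i+li)$ by $a = \det(A)$, hence preserve the line $o$. For $\tilde{P}_+$ it suffices to check on generators: each root vector of positive height shown in (\ref{g2}) (those with nonzero entries only in columns strictly past the first) annihilates the first basis vector, so $\exp$ of any element of $\tilde{\mathfrak{p}}_+$ fixes $o$ pointwise. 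Combining gives $\tilde{P} \subseteq P_o$.

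The main content is the reverse inclusion. Suppose $g \in \tilde{G}_2$ preserves the line spanned by $o = i+li$. I would argue that $g$ is forced to have the block upper-triangular form claimed, by exploiting the compatibility of $g$ with the cross product and the associated metric. The key observation is that starting from the single preserved line $o$, the algebraic structure of the split octonions builds up a canonical flag: the cross product $o \times \bullet$ sends $\mathfrak{g}_{-1}$-type directions into $\mathfrak{g}_{-2}$-type directions, and so on, while the metric picks out the orthogonal companions. More concretely, writing $L_o$ for the null line spanned by $o$, set $L_o^{(2)} = L_o + (L_o \times {\rm Im}(\widetilde{\mathbb{O}}))$ and iterate, and let $L_o^\perp$ denote the metric orthogonal. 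I expect that the subspaces $L_o \subset L_o^{(2)} \subset L_o^{(3)} \subset (L_o^{(3)})^\perp \subset (L_o^{(2)})^\perp \subset L_o^\perp \subset {\rm Im}(\widetilde{\mathbb{O}})$ recover exactly the filtration adapted to the chosen basis, and that each of these subspaces is intrinsically defined from the cross product and $L_o$ alone. Since $g$ preserves both, it preserves this filtration, which forces the claimed block upper-triangular shape of $g$, with the diagonal blocks $(a,A,1,a^{-1}A,a^{-1})$ and $a = \det A$ coming from compatibility with the cross product and the normalisation of the metric.

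Once $g$ is known to have this form, it immediately preserves the filtration on $\tilde{\mathfrak{g}}_2$ used to define $\tilde{P}$: the induced adjoint action on each graded piece $\mathfrak{g}_i$ is just the corresponding tensorial transformation built from the diagonal blocks, which is block upper-triangular in the grading by construction. Hence $g \in \tilde{P}$, completing the proof.

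The hard part will be verifying carefully that preservation of $o$ and of the cross product really does force the full block triangular shape displayed — that is, showing that the chain $L_o \subset L_o^{(2)} \subset \dots$ constructed above coincides with the filtration built into the basis of (\ref{g2}), and that no ambiguity in the diagonal blocks remains beyond the stated $(a,A,1,a^{-1}A,a^{-1})$ pattern. This amounts to a direct but somewhat tedious calculation with the multiplication table in Figure \ref{mult}, which is the step I would expect to occupy most of the write-up.
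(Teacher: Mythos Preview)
Your approach is the same as the paper's: show $\tilde{P}\subseteq P_o$ from $\tilde{P}=\tilde{G}_0\ltimes\tilde{P}_+$ and Lemma~\ref{lemg0}, and for the converse use the fact that $g$ preserves the cross product (and the metric) to force $g$ into the displayed block upper-triangular form, whence $g$ preserves the filtration on $\tilde{\mathfrak{g}}_2$. The paper simply asserts the second step without writing out the flag argument; your plan to recover the flag on ${\rm Im}(\widetilde{\mathbb{O}})$ from $L_o$ using cross product and metric is exactly how one fills in that assertion.

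One concrete correction to your flag construction: on the $7$-dimensional representation the grading has weights $2,1,1,0,-1,-1,-2$, so the $\tilde{P}$-invariant flag has only five steps, of dimensions $1,3,4,6,7$, not seven as your chain $L_o\subset L_o^{(2)}\subset L_o^{(3)}\subset(L_o^{(3)})^\perp\subset\cdots$ suggests. Moreover $L_o + (L_o\times{\rm Im}(\widetilde{\mathbb{O}}))$ is already the $4$-dimensional step $\langle e_1,e_2,e_3,e_4\rangle$, not the $3$-dimensional one; a direct computation with the multiplication table gives $o\times e_j=0$ for $j=1,2,3$, $o\times e_4\in\langle e_1\rangle$, $o\times e_5\in\langle e_2\rangle$, $o\times e_6\in\langle e_3\rangle$, $o\times e_7\in\langle e_4\rangle$. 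The $3$-dimensional step is instead $\ker(o\times\,\cdot\,)=\langle e_1,e_2,e_3\rangle$, its metric orthogonal is $\langle e_1,e_2,e_3,e_4\rangle$, and $L_o^\perp=\langle e_1,\dots,e_6\rangle$. Each of these is manifestly preserved by any $g\in\tilde{G}_2$ fixing $L_o$, which gives the block shape; the identification of the diagonal blocks as $(a,A,1,a^{-1}A,a^{-1})$ with $a=\det A$ then follows from compatibility with the metric and $3$-form, exactly as you say.
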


Let $Q$ be the space of all null lines in ${\rm Im}(\tilde{\mathbb{O}})$. This space can be identified with the null quadric in $\mathbb{P}({\rm Im}(\tilde{\mathbb{O}}))$. The group $\tilde{G}_2$ acts transitively on this space and the stabiliser of the line spanned by $o = i+li$ is the parabolic subgroup $P_o = \tilde{P}$. Therefore $Q$ identifies with the homogeneous space $\tilde{G}_2 / \tilde{P}$. This is the homogeneous space on which the parabolic geometry for $(\tilde{P} , \tilde{\mathfrak{g}}_2)$ is modeled.\\

Suppose we have a regular parabolic geometry $\tilde{\mathcal{P}} \to M$ for $(\tilde{P},\tilde{\mathfrak{g}}_2)$. Then we have a filtration of $TM$
\begin{equation}
V^{-1} \subset V^{-2} \subset V^{-3} = TM
\end{equation}
where $V^{-1}$ is $2$-dimensional and $V^{-2}$ is $3$-dimensional. We also have the associated graded bundle
\begin{equation}
{\rm Gr}TM = V^{-1} \oplus \left( V^{-2}/V^{-1} \right) \oplus \left( V^{-3}/V^{-2} \right).
\end{equation}
Since the parabolic geometry is regular we have that $M$ satisfies the structure equations. Let us examine the structure equations in more detail. The Lie algebra $\mathfrak{g}_- = \mathfrak{g}_{-1} \oplus \mathfrak{g}_{-2} \oplus \mathfrak{g}_{-3}$ is spanned by $a_1, \dots , a_5$ where $a_1, a_2$ span $\mathfrak{g}_{-1}$, $a_3$ spans $\mathfrak{g}_{-2}$ and $a_4, a_5$ span $\mathfrak{g}_{-3}$. The algebra is determined by the relations
\begin{equation}
\left[ a_1 , a_2 \right] = a_3, \; \; \; \left[ a_1 , a_3 \right] = a_4, \; \; \; \left[ a_2 , a_3 \right] = a_5
\end{equation}
with all other commutators given by skew symmetry or vanish. It follows that $M$ satisfies the structure equations if and only if the $2$-distribution $V = V^{-1} \subset TM$ is maximally non-integrable or {\em generic}. By this we mean that $\partial V = V + \left[ V , V \right]$ is three dimensional and $\partial (\partial V) = \partial V + \left[ \partial V , \partial V \right]$ is $5$-dimensional. These are the largest possible dimensions that $\partial V$ and $\partial \partial V$ can have.

Now the Lie algebra $\mathfrak{g}_0$ is isomorphic to $\mathfrak{gl}(2)$ and as representations of $\mathfrak{g}_0$ we have
\begin{equation}
\begin{aligned}
\mathfrak{g}_{-1} &= V^* \\
\mathfrak{g}_{-2} &= \wedge^2 V^* \\
\mathfrak{g}_{-3} &= V^* \otimes \wedge^2 V^*
\end{aligned}
\end{equation}
where $V$ is the standard $2$-dimensional representation. But this is precisely the reduction of structure provided by the Lie bracket of vector fields, therefore there is a unique reduction of structure of ${\rm Gr}TM$ to $\mathfrak{g}_0$ which is compatible with the structure equations. In light of this we now have

\begin{prop}
Let $M$ be a $5$-manifold with generic $2$-plane distribution $V$. There exists a unique normal, regular parabolic geometry $\mathcal{P} \to M$ modeled on the homogeneous space $Q = \tilde{G}_2 / \tilde{P}$ such that the filtration on $TM$ induced by the parabolic geometry agrees with the filtration induced by $V$
\end{prop}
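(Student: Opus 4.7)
The plan is to deduce the proposition as a direct application of Proposition \ref{cartanconn} once we verify that the generic $2$-distribution $V$ determines a filtered tangent bundle satisfying the structure equations for $(\tilde P,\tilde{\mathfrak g}_2)$ together with a canonical reduction of the associated graded bundle to $\tilde G_0$, and that the relevant first cohomology groups vanish.

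First I would set $V^{-1}=V$, $V^{-2}=V+[V,V]=\partial V$, and $V^{-3}=\partial V+[\partial V,\partial V]=\partial\partial V$. The genericity assumption says precisely that $V^{-2}$ has rank $3$ and $V^{-3}=TM$, so that $\mathrm{rk}(V^{-i}/V^{-i+1})=\dim\mathfrak g_{-i}$ for $i=1,2,3$. By the Leibniz/Jacobi identities the Lie bracket of vector fields descends to a tensorial map $V^{-r}/V^{-r+1}\otimes V^{-s}/V^{-s+1}\to V^{-r-s}/V^{-r-s+1}$, which gives $\mathrm{Gr}\,TM$ the fibrewise structure of a nilpotent Lie algebra. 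Because of the commutation relations spelt out just before the statement, this bracket is \emph{everywhere} the bracket of $\mathfrak g_-\simeq\mathfrak g_{-1}\oplus\mathfrak g_{-2}\oplus\mathfrak g_{-3}$: the rank $3$ condition forces $[V^{-1},V^{-1}]=V^{-2}\;(\mathrm{mod}\,V^{-1})$ to be non-degenerate, and then the rank $5$ condition forces $[V^{-1},V^{-2}]=V^{-3}\;(\mathrm{mod}\,V^{-2})$ to be non-degenerate as well. Hence, fibre by fibre, the graded bracket is isomorphic to that of $\mathfrak g_-$.

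Next I would use the identifications $\mathfrak g_{-1}=V^*$, $\mathfrak g_{-2}=\wedge^2 V^*$, $\mathfrak g_{-3}=V^*\otimes\wedge^2 V^*$ as $\tilde G_0=\mathrm{GL}(2)$-modules from the computation in the excerpt. The above shows that the inclusion $V\hookrightarrow TM$ together with the two iterated brackets provides canonical isomorphisms $V^{-2}/V^{-1}\simeq \wedge^2 (V^{-1})^*{}^*$ and $V^{-3}/V^{-2}\simeq (V^{-1})^*\otimes\wedge^2(V^{-1})^*{}^*$ of $\tilde G_0$-modules. This gives a distinguished reduction $\mathcal G_0\to \mathrm{Gr}\,TM$ of the structure group of the frame bundle of $\mathrm{Gr}\,TM$ to $\tilde G_0$, and any other reduction compatible with the bracket structure would have to carry $V^{-1}$ and the two brackets to the same standard model, hence it coincides with this one. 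So the reduction is unique.

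With the reduction in hand, Proposition \ref{cartanconn} supplies the unique normal, regular Cartan connection provided that $H^1_l(\mathfrak g_-,\tilde{\mathfrak g}_2)=0$ for all $l>0$. This is where I expect the actual work to lie: one computes these cohomologies by Kostant's Hodge theory, identifying the harmonic representatives of $H^1(\mathfrak g_-,\tilde{\mathfrak g}_2)$ with a $\tilde G_0$-submodule of $\mathfrak g_-^*\otimes\tilde{\mathfrak g}_2$ whose homogeneous degrees are determined by the Hasse diagram of the Weyl group for the parabolic $\times\Lleftarrow\circ$. For this parabolic the length-one Weyl-word contributes a single irreducible piece sitting in strictly negative homogeneity (classically this represents the fundamental curvature invariant of a generic $(2,3,5)$-distribution, equivalently Cartan's quartic binary form). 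Since all contributions to $H^1$ live in non-positive homogeneity, the hypothesis $H^1_l=0$ for $l>0$ is satisfied, and the proposition follows. The filtration induced by the resulting Cartan geometry agrees with $V\subset\partial V\subset\partial\partial V$ by construction, because the reduction of structure we fed into the theorem was built from $V$.
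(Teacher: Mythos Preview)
Your approach is essentially the same as the paper's: verify the structure equations, exhibit the canonical $\tilde G_0$-reduction of $\mathrm{Gr}\,TM$ using the bracket isomorphisms, and invoke Proposition~\ref{cartanconn}. The paper in fact presents this argument in the paragraphs immediately preceding the proposition and then states the result without a separate proof block.

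There is one slip in your cohomology discussion. Cartan's binary quartic is the fundamental \emph{curvature} invariant of a generic $(2,3,5)$-distribution; it lives in $H^2(\mathfrak g_-,\tilde{\mathfrak g}_2)$ (length-two Weyl elements), not in $H^1$. It also sits in \emph{positive} homogeneity, which is precisely why it can occur as a curvature component of a regular connection. What you need is that the length-one contribution to $H^1$ has non-positive homogeneity, and that statement is correct for this parabolic, but it is not the quartic. The paper does not attempt an explicit Kostant computation here; it simply appeals to the known classification (the Remark following Proposition~\ref{cartanconn}, citing \cite{cap2}) of those parabolics for which $H^1_l$ with $l>0$ fails to vanish, and observes that the $\tilde G_2$ case is not among them. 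If you want to keep your more explicit route, just correct the identification: the single length-one piece in $H^1$ is concentrated in non-positive degree (this is the statement that no extra data beyond the filtration is needed), while Cartan's quartic is the harmonic part of $H^2$ and governs the curvature.
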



\section{Conformal $G_2$ geometry}\label{confg2}
As shown in the previous section the Cartan geometry $(\tilde{\mathcal{P}},\omega)$ obtained from a generic $2$ distribution on a $5$-dimensional manifold $M$ is modeled on the homogeneous space $\tilde{G_2}/\tilde{P}$ which is signature $(2,3)$ conformal M\"{o}bius space. We now show that the manifold $M$ has a naturally defined signature $(2,3)$ conformal structure $[g]$. This conformal structure was discovered by Nurowski \cite{nurowski1}. The connection between the $G_2$-geometry and the conformal geometry can be understood as a special case of a more general relation between parabolic geometries of different types \cite{dosl}.\\

Let us note that given a generic $2$-distribution $V$ on a $5$-dimensional manifold $M$, the associated graded bundle ${\rm Gr}TM \simeq V \oplus \wedge^2 V \oplus (\wedge^2 V \otimes V)$ has a natural $(\wedge^2 V )^2$-valued pairing such that $V$ is isotropic and $V^\perp = V \oplus \wedge^2 V$, so it is perhaps not so surprising that $M$ should have a natural $(2,3)$-conformal structure. However a conformal structure on the associated graded bundle does not uniquely determine a conformal structure on the tangent bundle. The additional information is encoded into the parabolic geometry associated to the distribution.\\

Recall that $Q = \tilde{G}_2 / \tilde{P}$ is the space of null lines in the $7$-dimensional representation $V = {\rm Im}(\tilde{\mathbb{O}})$. But $\tilde{G}_2 \subset {\rm SO}(3,4)$ and $Q$ can also be identified with the homogeneous space $Q = {\rm SO}(3,4)/P$ where $P$ is the stabiliser in ${\rm SO}(3,4)$ of a null line. This suggests a relation between Parabolic geometries for $(\tilde{P} , \tilde{\mathfrak{g}}_2)$ and for $(P , \mathfrak{so}(3,4))$. We will show that a $(\tilde{P} , \tilde{\mathfrak{g}}_2)$ geometry defines a conformal parabolic geometry and that conversely a conformal geometry with $\tilde{G}_2$ holonomy reduces to a $(\tilde{P} , \tilde{\mathfrak{g}}_2)$ geometry.\\

First consider going from a parabolic geometry for $(\tilde{P} , \tilde{\mathfrak{g}}_2)$ to a conformal geometry. For this we have the following:
\begin{prop}
Let $G$ be a Lie group with Lie algebra $\mathfrak{g}$ and let $G_1,G_2$ be Lie subgroups with corresponding Lie algebras $\mathfrak{g}_1,\mathfrak{g}_2$ such that $\mathfrak{g} = \mathfrak{g}_1 + \mathfrak{g}_2$. Let $\mathcal{G} \to M$ be a $(\mathfrak{g}_2,G_1 \cap G_2)$ Cartan geometry with Cartan connection $\omega$. Then there exists a unique $(\mathfrak{g},G_1)$ Cartan connection $\omega_1$ on $\mathcal{G}_1 = \mathcal{G} \times_{G_1 \cap G_2} G_1$ such that $i^*\omega_1 = \omega$ where $i^*$ is the inclusion $i : \mathcal{G} \to \mathcal{G}_1$.
\end{prop}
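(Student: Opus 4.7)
The plan is to construct $\omega_1$ by first specifying it on tangent spaces at points of $\mathcal{G} \subset \mathcal{G}_1$ via the inclusion $i$, and then extending to all of $\mathcal{G}_1$ by forcing $G_1$-equivariance. Uniqueness will then be essentially tautological: any candidate $\omega_1$ satisfying $i^*\omega_1 = \omega$ is pinned down on $T\mathcal{G}$, and the third Cartan axiom $R_g^*\omega_1 = \mathrm{Ad}_{g^{-1}}\omega_1$ forces it everywhere else.

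The first substantial step is a tangent space decomposition. Since $\mathcal{G}\subset\mathcal{G}_1$ is a $(G_1\cap G_2)$-reduction, for $u\in\mathcal{G}$ the intersection of $T_u\mathcal{G}$ with the $G_1$-vertical space at $u$ is precisely the vertical space for the $(G_1\cap G_2)$-action on $\mathcal{G}$. A dimension count using $\dim M = \dim\mathfrak{g}_2 - \dim(\mathfrak{g}_1\cap\mathfrak{g}_2)$ (forced by $\omega$ being a linear isomorphism) together with $\mathfrak{g}=\mathfrak{g}_1+\mathfrak{g}_2$ then shows that $T_u\mathcal{G}_1 = T_u\mathcal{G} + V_u^{G_1}$, with overlap exactly the $(G_1\cap G_2)$-vertical. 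Writing any $X\in T_u\mathcal{G}_1$ (non-uniquely) as $X = Y + \tilde W_u$ with $Y\in T_u\mathcal{G}$ and $W\in\mathfrak{g}_1$, I would define $\omega_1(X) := \omega(Y) + W$. Well-definedness reduces to the fact that for $Z\in\mathfrak{g}_1\cap\mathfrak{g}_2$ the fundamental field $\tilde Z_u$ lies in $T_u\mathcal{G}$ and satisfies $\omega(\tilde Z_u)=Z$, which is the fundamental-field axiom for the Cartan connection $\omega$.

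Next I would propagate $\omega_1$ from $\mathcal{G}$ to all of $\mathcal{G}_1$ by demanding $R_g^*\omega_1 = \mathrm{Ad}_{g^{-1}}\omega_1$ for $g\in G_1$. Every point of $\mathcal{G}_1$ has the form $u\cdot g$ with $u\in\mathcal{G}$, $g\in G_1$; if $u\cdot g = u'\cdot g'$ then $u' = u\cdot h^{-1}$ and $g' = h g$ for some $h\in G_1\cap G_2$, and compatibility of the two prescriptions reduces to the $(G_1\cap G_2)$-equivariance of $\omega$, which holds by hypothesis. The first and third Cartan axioms for $\omega_1$ are then built in; the fundamental-field axiom on $\mathcal{G}$ holds by definition and propagates to $\mathcal{G}_1$ by equivariance.

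The main obstacle — really the only genuine thing to verify — is that $\omega_1$ is a pointwise linear isomorphism $T\mathcal{G}_1\to\mathfrak{g}$. The dimensions match, since
\begin{equation*}
\dim T_u\mathcal{G}_1 = \dim M + \dim G_1 = \bigl(\dim\mathfrak{g}_2 - \dim(\mathfrak{g}_1\cap\mathfrak{g}_2)\bigr) + \dim\mathfrak{g}_1 = \dim(\mathfrak{g}_1+\mathfrak{g}_2) = \dim\mathfrak{g},
\end{equation*}
so it suffices to check injectivity at a point $u\in\mathcal{G}$. If $\omega_1(Y+\tilde W_u) = \omega(Y) + W = 0$, then $W = -\omega(Y)\in\mathfrak{g}_1\cap\mathfrak{g}_2$, which puts $\tilde W_u$ back inside $T_u\mathcal{G}$; hence $Y+\tilde W_u\in T_u\mathcal{G}$ has $\omega$-image zero, and injectivity of $\omega$ does the rest. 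Equivariance carries injectivity to every point of $\mathcal{G}_1$, completing the construction and proving uniqueness at the same time.
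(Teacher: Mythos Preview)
Your proposal is correct and is exactly the ``straightforward exercise in using the Cartan connection axioms'' that the paper invokes in lieu of a proof; you have supplied the details the paper omits. The decomposition $T_u\mathcal{G}_1 = T_u\mathcal{G} + V_u^{G_1}$, the well-definedness check via the fundamental-field axiom on $\mathfrak{g}_1\cap\mathfrak{g}_2$, the extension by $G_1$-equivariance, and the injectivity argument are all the right ingredients, and your dimension count correctly uses $\mathfrak{g}=\mathfrak{g}_1+\mathfrak{g}_2$ at the crucial point.
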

\begin{proof}
This is a straightforward exercise in using the Cartan connection axioms.
\end{proof}
In our case we have that $G = {\rm SO}(3,4)$, $G_2 = \tilde{G}_2$, $G_1 = P$ and $G_1 \cap G_2 = \tilde{P}$. We have $\mathfrak{g} = \mathfrak{g}_1 + \mathfrak{g}_2$. It follows that $\omega$ uniquely extends to a conformal Cartan connection. When $\omega$ is normal it is not immediately clear that the corresponding conformal connection is normal. It was recently proven that in fact the associated conformal connection is indeed normal \cite{ham}.\\

It follows that the tangent bundle of $M$ has a corresponding reduction of structure to ${\rm CO}(2,3)$, that is $M$ has an associated conformal structure. Furthermore the conformal structure has conformal holonomy contained in $\tilde{G}_2$.\\

We now consider the converse situation. Let $\mathcal{P}\to M$ be a conformal parabolic geometry with Cartan connection $\omega$. Let $\mathcal{G} = \mathcal{P} \times_P {\rm SO}(3,4)$ be the associated tractor bundle and extend $\omega$ to $\mathcal{G}$ defining the associated tractor connection. We are interested in the case where the tractor connection has holonomy in $\tilde{G}_2$. This means that there is a reduction of structure to a principal $\tilde{G}_2$-subbundle $\tilde{\mathcal{G}} \subset \mathcal{G}$ preserved by the tractor connection. Therefore $\omega$ restricts to a $\tilde{\mathfrak{g}}_2$-valued connection form on $\tilde{\mathcal{G}}$.

Now $\mathcal{P}$ is the bundle of frames in $\mathcal{G}$ that fix a certain line bundle $\mathcal{L} \subset \mathcal{T}$ in the tractor bundle. Therefore we have a principal $\tilde{P}$-subbundle $\tilde{\mathcal{P}}$ of $\tilde{\mathcal{G}}$ defined as the bundle of frames of $\tilde{\mathcal{G}}$ preserving the line bundle $\mathcal{L}$. The tractor connection restricts to a $\mathfrak{g}_2$-valued $1$-form $\omega$ on $\tilde{\mathcal{P}}$ which we can show is a Cartan connection.
\begin{prop}
The $\tilde{\mathfrak{g}}_2$-valued $1$-form defined on $\tilde{\mathcal{P}}$ as above is a Cartan connection. Moreover if the conformal connection is torsion free then the $(\tilde{P}, \tilde{\mathfrak{g}}_2)$ Cartan connection is regular.
\begin{proof}
The only thing that is not immediate is that for each $u \in \tilde{\mathcal{P}}$, $\omega : T_u \tilde{\mathcal{P}} \to \tilde{\mathfrak{g}}_2$ is an isomorphism. This is easily seen from the following commutative diagram
\begin{equation*}\xymatrix{
T_u \tilde{\mathcal{G}} \ar[dr]^\omega \ar[rrr] & & & T_u\mathcal{G} \ar[dl]_\omega \\
& \tilde{\mathfrak{g}}_2 \ar[r] & \mathfrak{so}(3,4)  & \\
T_u\tilde{\mathcal{P}} \ar[ur]^\omega \ar[uu] \ar[rrr] & & & T_u\mathcal{P} \ar[ul]_\omega \ar[uu]}
\end{equation*}
and the fact that $\omega : T_u\mathcal{P} \to \mathfrak{so}(3,4)$ is an isomorphism.
\end{proof}
\end{prop}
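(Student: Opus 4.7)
The Cartan axioms (ii) and (iii) are essentially free: the equivariance $R_p^*\omega = \mathrm{Ad}_{p^{-1}}\omega$ for $p \in \tilde P$ is inherited from the same property for the enlarged bundle $\mathcal{P}$ with structure group $P \supset \tilde P$, and likewise $\omega(\tilde X)=X$ for $X\in\tilde{\mathfrak{p}}\subset\mathfrak{p}$ holds because the fundamental vector field generated by $X$ on $\tilde{\mathcal{P}}$ is (by construction) the restriction of the $P$-fundamental vector field on $\mathcal{P}$. So the whole task reduces to showing that at each point $u\in\tilde{\mathcal{P}}$ the linear map $\omega_u:T_u\tilde{\mathcal{P}}\to\tilde{\mathfrak{g}}_2$ is a bijection.

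To handle this I would exploit the commutative diagram in the statement. Injectivity of $\omega_u|_{T_u\tilde{\mathcal{P}}}$ is immediate since $T_u\tilde{\mathcal{P}}\subset T_u\mathcal{P}$ and $\omega_u$ is already an isomorphism on $T_u\mathcal{P}$ by hypothesis. For the target condition, note that $T_u\tilde{\mathcal{P}}\subset T_u\tilde{\mathcal{G}}$; the holonomy reduction of the tractor connection to $\tilde G_2$ means precisely that $\omega$, viewed as an ordinary principal connection on $\tilde{\mathcal{G}}$, takes values in $\tilde{\mathfrak{g}}_2$, so $\omega_u(T_u\tilde{\mathcal{P}})\subseteq\tilde{\mathfrak{g}}_2$. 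Surjectivity then follows by a dimension count: $\dim\tilde{\mathfrak{p}} = \dim\mathfrak{g}_0+\dim\mathfrak{g}_1+\dim\mathfrak{g}_2+\dim\mathfrak{g}_3 = 4+2+1+2 = 9$, so $\dim\tilde{\mathcal{P}} = 5 + 9 = 14 = \dim\tilde{\mathfrak{g}}_2$, and an injection between spaces of equal finite dimension is onto.

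For the regularity claim, my plan is to trace the curvature through the inclusions. The curvature $\kappa$ of the restricted Cartan connection on $\tilde{\mathcal{P}}$ agrees with the pullback (via $i:\tilde{\mathcal{P}}\hookrightarrow\mathcal{P}$) of the conformal curvature, since the curvature is built from $d\omega+\tfrac{1}{2}[\omega,\omega]$ and $i^*$ commutes with both operations. The torsion-free hypothesis means the conformal curvature has no component in $\mathfrak{g}^{\mathrm{conf}}_{-1}$, i.e.\ it is valued in the conformal parabolic $\mathfrak{p}^{\mathrm{conf}}$. Since $\tilde{\mathfrak{g}}_2\cap\mathfrak{p}^{\mathrm{conf}}=\tilde{\mathfrak{p}}$, the restricted curvature takes values in $\tilde{\mathfrak{p}}=\mathfrak{g}_0\oplus\mathfrak{g}_1\oplus\mathfrak{g}_2\oplus\mathfrak{g}_3$. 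A quick homogeneity bookkeeping in the $|3|$-grading finishes the proof: for $X\in\mathfrak{g}_{-i}$ and $Y\in\mathfrak{g}_{-j}$ with $i,j\in\{1,2,3\}$, the component $\kappa(X,Y)\in\mathfrak{g}_{\geq 0}$ automatically has homogeneity $\geq i+j\geq 2>0$, which is exactly the regularity condition.

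The two places where I expect to have to be careful are (a) justifying that the conformal filtration on $\mathfrak{so}(3,4)$ intersects $\tilde{\mathfrak{g}}_2$ cleanly along the $\tilde{\mathfrak{p}}$-filtration, so that the torsion statement really translates into ``$\tilde{\mathfrak{p}}$-valued curvature'', and (b) checking that the inclusion $T_u\tilde{\mathcal{P}}\subset T_u\tilde{\mathcal{G}}$ genuinely lets us assert $\tilde{\mathfrak{g}}_2$-valuedness (as opposed to values merely in some intermediate subalgebra). Both are straightforward once one has the explicit matrix description of $\tilde{\mathfrak{g}}_2\subset\mathfrak{so}(3,4)$ from Section \ref{octonions}, but they are the only genuinely non-formal input beyond the commutative-diagram argument.
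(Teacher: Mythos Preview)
Your argument is correct and follows the same route as the paper: the paper's proof is precisely the commutative-diagram chase you have unpacked (injectivity from $T_u\tilde{\mathcal{P}}\subset T_u\mathcal{P}$, target in $\tilde{\mathfrak{g}}_2$ from $T_u\tilde{\mathcal{P}}\subset T_u\tilde{\mathcal{G}}$, surjectivity by dimension). You have in fact supplied more than the paper does, since the paper's written proof addresses only the isomorphism axiom and leaves the regularity claim entirely to the reader; your filtration-intersection and homogeneity-bookkeeping argument for regularity is the natural one and is correct.
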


We note again that when the conformal connection is normal the associated $(\tilde{P},\tilde{\mathfrak{g}}_2)$ Cartan connection is normal \cite{ham}. Thus we have two constructions that are mutually inverse which allow us to regard the geometry as either a conformal geometry with $\tilde{G}_2$ holonomy or as a $(\tilde{P},\tilde{\mathfrak{g}}_2)$ geometry.\\

Let $\tilde{\mathcal{P}} \to M$ be the parabolic geometry associated to a generic $2$-distribution on $M$. Associated to the $7$-dimensional representation of $\tilde{G}_2$ is a tractor bundle $\mathcal{T}$ on $M$ which is equipped with the tractor connection. Now since $\tilde{\mathcal{P}}$ has structure group $\tilde{P}$ which is the subgroup of $\tilde{G}_2$ fixing a null line, there is a corresponding line subbundle $\mathcal{L} \subset \mathcal{T}$ which is null with respect to the signature $(3,4)$ metric on $\mathcal{T}$. We will say that a local frame $\{ e_1, \dots , e_7 \}$ for $\mathcal{T}$ is a {\em $\tilde{P}$-frame} if the identification of $\{ e_1 , \dots , e_7 \}$ with our standard basis $\{\tfrac{1}{\sqrt{2}}(i +li),\tfrac{1}{\sqrt{2}}(lj -j),\tfrac{1}{\sqrt{2}}(k -lk),l,\tfrac{1}{\sqrt{2}}(k +lk),\tfrac{1}{\sqrt{2}}(j+lj),\tfrac{1}{\sqrt{2}}(i -li) \}$ respects the $\tilde{G}_2$ structure and if the line bundle $\mathcal{L}$ is spanned by $e_1$. Clearly we can always locally find a $\tilde{P}$-frame.

In a $\tilde{P}$-frame the tractor connection will have a connection matrix of the form (\ref{g2}) where the coefficients $v_1,v_2, \dots $ are now to be considered as $1$-forms. The conformal structure on $M$ can be described as follows. Let $\langle \, , \, \rangle$ be the metric on $\mathcal{T}$. Choose a local non-vanishing section $s$ of the line bundle $\mathcal{L}$. Then a local metric $g$ on $M$ representing the conformal structure is given by
\begin{equation}
g(X,Y) = \langle \overrightarrow{\nabla}_X s , \overrightarrow{\nabla}_Y s \rangle.
\end{equation}
Taking $s = e_1$ as a local section we find the conformal structure is represented by
\begin{equation}
g = -2v^1 v^5 + 2v^2v^4 - (v^3)^2.
\end{equation}
The $1$-forms $v^1, \dots , v^5$ define a coframe on $M$. Moreover the graded structure of $\tilde{\mathfrak{g}}_2$ shows that the annihilator of $v^3,v^4,v^5$ is the generic $2$-distribution $V^{-1}$ and the annihilator of $v^4,v^5$ is the $3$-distribution $V^{-2} = V + \left[ V , V \right]$. From this we note that $V^{-1}$ is a bundle of maximal isotropics of the conformal structure and $V^{-2}$ is the annihilator of $V^{-1}$.\\


\section{Twistor spinor formulation}\label{spinors}
We show for arbitrary dimension and signature that a constant spinor for the tractor bundle corresponds to a solution to the twistor spinor equations. Now the group $G_2$ occurs as the stabiliser of a spinor in 7 dimensions. This implies in the case of $(2,3)$ conformal structures arising from $2$-distributions that there exists a constant spinor for the tractor connection. Since in this case we are considering indefinite conformal structures we can also consider pure spinors. It so happens that in signature $(2,3)$, all non-vanishing spinors are pure so that in particular the twistor spinor defines a $2$-dimensional maximal isotropic distribution. We show that this is in fact the original distribution. Thus the twistor spinor has a direct interpretation in terms of the $5$-dimensional geometry.


\subsection{The twistor spinor equation}
We shall refer to a manifold with a ${\rm Spin}(p,q) \times \mathbb{R}^*$ structure as a {\em conformal spin manifold}. Any representation of ${\rm Spin}(p,q)$ is also a representation of ${\rm Spin}(p,q) \times \mathbb{R}^*$ for which $\mathbb{R}^*$ acts trivially. Hence one can define spin bundles for a conformal spin manifold. Moreover one can take the tensor product with a $1$-dimensional representation of $\mathbb{R}^*$ to obtain weighted spinor bundles.\\

On a $(p,q)$ signature conformal manifold we also have the tractor bundle $\mathcal{E}$ which is an ${\rm SO}(p+1,q+1)$-bundle, so we can also consider lifting the structure of $\mathcal{E}$ to ${\rm Spin}(p+1,q+1)$. The existence of splittings $\mathcal{E} \simeq \mathcal{L}[-1] \oplus T[-1] \oplus \mathcal{L}[1]$ shows that on a conformal spin manifold this is possible. Given a spinor representation for ${\rm Spin}(p+1,q+1)$ we can then form the associated spinor bundle. In any case we can always form this bundle locally. In this section we study such spinors and show how to relate them to spinors of the underlying manifold. In particular covariant constancy under the tractor connection will lead to the twistor spinor equation \cite{peri}, \cite{hab}.\\

Let $V$ be a vector space with $(p,q)$ signature conformal structure $[g]$ and let ${\bf g}$ be the conformal metric on $V[-1]$. Let $E = L[-1] \oplus V[-1] \oplus L[1]$ be the associated tractor space with signature $(p+1,q+1)$ metric
\begin{equation}
\tilde{g} = 2e^0e^\infty + {\bf g},
\end{equation}
where $e^0 \in L^*[-1]$, $e^\infty \in L^*[1]$. Note for convenience both $\tilde{g}$ and ${\bf g}$ have changed sign from the conventions of Section \ref{mobius}.\\

We use the convention that the Clifford algebra $\mathit{Cl}(W)$, associated to a vector space $W$ with metric $g$ is defined by the relation $x^2 = -g(x,x)$ \cite{lawson}. Let $S(E)$ denote an irreducible $\mathit{Cl}(E)$ module, so that by restriction $S(E)$ is a ${\rm Spin}(E)$ module. Our first objective is to understand the structure of $S(E)$ with respect to the isomorphism $\mathfrak{spin}(E) \simeq V \oplus \mathfrak{co}(V) \oplus V^*$. We consider each of the various parts in turn.\\

Consider $\mathfrak{spin}(V[-1]) \subset \mathfrak{spin}(E)$. By restriction $S(E)$ is a $\mathit{Cl}(V[-1])$-module and the action of $\mathfrak{spin}(V[-1])$ on $S(E)$ is simply this Clifford multiplication. We now make use of the isomorphism $\mathit{Cl}_{p+1,q+1} \simeq \mathit{Cl}_{1,1} \otimes \mathit{Cl}_{p,q}$. We can think of $L[-1] \oplus L[1]$ as a $2$-dimensional space with metric
\begin{equation}
g_{1,1} = 2e^0e^\infty = \hat{e}^2 - \hat{\epsilon}^2
\end{equation}
where $\{\hat{e},\hat{\epsilon}\}$ is a dual basis to the basis $\{e,\epsilon \}$ given by
\begin{equation}
\left[ \begin{matrix} e \\ \epsilon \end{matrix} \right] = \frac{1}{\sqrt{2}}\left[ \begin{matrix} 1 & 1 \\ 1 & -1 \end{matrix} \right] \left[ \begin{matrix} e_0 \\ e_\infty \end{matrix} \right].
\end{equation}
Now $\mathit{Cl}_{1,1} \simeq {\rm End}(\mathbb{R}^2)$ with an explicit representation given by
\begin{equation}\label{matrixrep1}
e = \left[ \begin{matrix} 0 & 1 \\ -1 & 0 \end{matrix} \right] \qquad \epsilon = \left[ \begin{matrix} 0 & 1 \\ 1 & 0 \end{matrix} \right].
\end{equation}
Consider the linear map $\phi : E \to \mathit{Cl}_{1,1} \otimes \mathit{Cl}(V[-1])$ defined by
\begin{eqnarray}
\phi(u) &=& e\epsilon \otimes u \quad u \in V, \nonumber \\
\phi(e) &=& e \otimes 1, \\
\phi(\epsilon) &=& \epsilon \otimes 1. \nonumber
\end{eqnarray}
Since $\phi$ has the property $\phi(x)^2 = -\tilde{g}(x,x)$, $\phi$ extends uniquely to an isomorphism $\phi : \mathit{Cl}(E) \to \mathit{Cl}_{1,1} \otimes \mathit{Cl}(V[-1])$. Now as a $\mathit{Cl}(V[-1])$-module, $S(E)$ splits into the $\pm 1$ eigenspaces $S_\pm$ of $e\epsilon$:
\begin{equation}
S(E) = S_+ \oplus S_-.
\end{equation}
Moreover, the action of $e$ and $\epsilon$ intertwine $S_\pm$ so we see that $S_\pm$ are isomorphic as vector spaces with Clifford multiplication by $V[-1]$ differing only by a factor of $-1$. It follows that $S_\pm$ are irreducible $\mathit{Cl}(V[-1])$-modules. Under this decomposition we see that $\mathfrak{spin}(V[-1])$ just acts diagonally by Clifford multiplication.\\

To understand how the remaining parts of $\mathfrak{spin}(E)$ act we must understand the isomorphism $\mathfrak{so}(E) \to \mathfrak{spin}(E)$. Given $x,y \in E$, let $x \wedge y$ denote the element of $\mathfrak{so}(E)$ given by
\begin{equation}
(x \wedge y)z = \tilde{g}(x,z)y - \tilde{g}(y,z)x.
\end{equation}
Then the isomorphism $\mathfrak{so}(E) \to \mathfrak{spin}(E)$ is given by \cite{lawson}
\begin{equation*}
x \wedge y \mapsto \tfrac{1}{4}[x,y] = \tfrac{1}{4}(xy-yx),
\end{equation*}
the product being Clifford multiplication.\\

Now consider the homothety part $kI \in \mathfrak{co}(V)$ as contained in $\mathfrak{so}(E)$ and hence in $\mathfrak{spin}(E)$. One verifies that the corresponding element is $\tfrac{1}{2}k\epsilon e$. This acts on $S_\pm$ as multiplication by $\mp\tfrac{1}{2}k$ and it follows that $S_+$ has conformal weight $-\tfrac{1}{2}$ and $S_-$ has conformal weight $\tfrac{1}{2}$ so a better way to express the decomposition of $S(E)$ is
\begin{equation}
S(E) = S_+[-\tfrac{1}{2}] \oplus S_-[\tfrac{1}{2}].
\end{equation}

Now consider the action of $V \subset \mathfrak{spin}(E)$. One verifies the corresponding Clifford action is $-\tfrac{1}{2}Xe_\infty$ or in matrix form this is
\begin{equation}
\rho(X) = \left[ \begin{matrix} 0 & 0 \\ -\tfrac{1}{\sqrt{2}}X \cdot & 0 \end{matrix} \right].
\end{equation}
Note that this is consistent with the fact that Clifford multiplication by an element of $V$ will raise the conformal weight by $1$.\\

Lastly consider $\lambda \in V^*$. The corresponding element of $\mathfrak{spin}(E)$ is $\tfrac{1}{2}({\bf g}^{-1}\lambda )e_0$, or in matrix form
\begin{equation}
\rho(\lambda) = \left[ \begin{matrix} 0 & \tfrac{1}{\sqrt{2}}({\bf g}^{-1}\lambda ) \cdot \\ 0 & 0 \end{matrix} \right].
\end{equation}
Once again note that this is consistent with the conformal weights.\\

The tractor connection $\overrightarrow{\nabla}$ on a conformal spin manifold $M$ induces a connection on the spinor bundle associated to $S(E)$. We denote this bundle by $S(\mathcal{E})$ and the associated connection will be denoted the same as the tractor connection $\overrightarrow{\nabla}$. After reducing the structure group of $\mathcal{E}$ to ${\rm Spin}(p,q)$ by a choice of Weyl structure we have that $S(\mathcal{E})$ decomposes as
\begin{equation}
S(\mathcal{E}) = S_+(M)[-\tfrac{1}{2}] \oplus S_-(M)[\tfrac{1}{2}]
\end{equation}
where $S_+(M),S_-(M)$ are the spin bundles associated to $S_+,S_-$. A choice of Weyl structure also allows us to write the tractor connection as $\overrightarrow{\nabla}_X = \nabla_X + \rho(X) + \rho(\mathbf{P}(X))$ from which we deduce the tractor connection on $S(\mathcal{E})$ has the following form:
\begin{equation}
\overrightarrow{\nabla}_X = \left[ \begin{matrix} \nabla_X & \tfrac{1}{\sqrt{2}}{\bf g}^{-1}\mathbf{P}(X)\cdot \\ -\tfrac{1}{\sqrt{2}}X\cdot & \nabla_X \end{matrix} \right].
\end{equation}

Now suppose $\Psi$ is a covariantly constant spinor with respect to the tractor connection. Let
\begin{equation}
\Psi = \left[ \begin{matrix} \psi_1 \\ \psi_2 \end{matrix} \right]
\end{equation}
be the decomposition of $\Psi$ into $\pm 1$ eigenspaces of $e\epsilon$. The the equation $\overrightarrow{\nabla}\Psi = 0$ becomes
\begin{equation}\label{covconst}
\left[ \begin{matrix} \nabla_X \psi_1 +\tfrac{1}{\sqrt{2}}{\bf g}^{-1}\mathbf{P}(X)\cdot \psi_2 \\ \nabla_X \psi_2 - \tfrac{1}{\sqrt{2}}X\cdot \psi_1 \end{matrix} \right] = \left[ \begin{matrix} 0 \\ 0 \end{matrix} \right]
\end{equation}
for all vector fields $X$. Contracting the second of the two equations over a dual basis via Clifford multiplication yields
\begin{equation}\label{twispi2}
D\psi_2 + \tfrac{1}{\sqrt{2}}n \psi_1 = 0
\end{equation}
where $D$ is the Dirac operator and $n={\rm dim}(M)$. Substituting back into the second equation of (\ref{covconst}), we arrive at the {\em twistor spinor equation}:
\begin{equation}\label{twispi}
\nabla_X \psi_2 + \tfrac{1}{n}X \cdot D\psi_2 = 0.
\end{equation}

Now $\psi_2$ is the conformal weight $\tfrac{1}{2}$ component of $\Psi$. Let us now consider the parabolic subalgebra $\mathfrak{p} = \mathfrak{co}(V) \oplus V^*$. This preserves the subspace of conformal weight $-\tfrac{1}{2}$ spinors and gives rise to an exact sequence:
\begin{equation}
0 \to S_+[-\tfrac{1}{2}] \to S(\mathcal{E}) \to S_-[\tfrac{1}{2}] \to 0.
\end{equation}
Thus a spinor with values in $S(\mathcal{E})$ projects onto a weight $\tfrac{1}{2}$ spinor on $M$ and since the $V^*$ part of $\mathfrak{p}$ acts trivially on the projection, this spinor is independent of choice of Weyl structure.\\

Conversely suppose we have a spinor $\psi$ on $M$ of conformal weight $\tfrac{1}{2}$ that satisfies the twistor spinor equation (\ref{twispi}). We can show this lifts to a unique spinor $\Psi$ for the bundle $S(\mathcal{E})$ that is preserved by the tractor connection. We define $\Psi = (\psi_1 , \psi_2)^t$ by setting $\psi_2 = \psi$ and we define $\psi_1$ so as to satisfy equation (\ref{twispi2}) by $\psi_1 = -\sqrt{2}/n D\psi$. We then find that the second equation of (\ref{covconst}) holds. In fact the first equation holds as well although the computation is somewhat lengthy. Thus we have deduced:
\begin{prop}
A covariantly constant spinor for the tractor connection on $S(\mathcal{E})$ projects to a solution of the twistor spinor equation. Conversely a spinor satisfying the twistor spinor equation uniquely lifts to a covariantly constant section of $S(\mathcal{E})$.
\end{prop}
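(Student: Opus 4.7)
The forward direction is essentially the calculation already presented immediately before the statement: decomposing $\Psi = (\psi_1,\psi_2)^t$ with respect to the eigenspace splitting of $e\epsilon$, the condition $\overrightarrow{\nabla}\Psi = 0$ becomes the coupled system \eqref{covconst}; Clifford-contracting the lower component yields $\psi_1 = -\tfrac{\sqrt{2}}{n} D\psi_2$, which, substituted back into the second equation of \eqref{covconst}, gives the twistor spinor equation \eqref{twispi} for $\psi_2$. The projection onto $S_-[\tfrac{1}{2}]$ is well defined because the $V^*$ component of $\mathfrak{p}$ annihilates this quotient, so the result is independent of the Weyl structure used to describe the splitting.

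For the converse, given a twistor spinor $\psi$ of weight $\tfrac{1}{2}$ on $M$, one is forced to set $\psi_1 = -\tfrac{\sqrt{2}}{n} D\psi$ and $\psi_2 = \psi$ in order to satisfy \eqref{twispi2}; this is also the definition that secures uniqueness of the lift. With these choices the second equation of \eqref{covconst} holds by construction, so the whole matter reduces to verifying the first equation
\[
\nabla_X\psi_1 + \tfrac{1}{\sqrt{2}}\,{\bf g}^{-1}\mathbf{P}(X)\cdot\psi_2 \;=\; 0,
\]
or equivalently to establishing the spinorial identity
\[
\nabla_X D\psi \;=\; \tfrac{n}{2}\,{\bf g}^{-1}\mathbf{P}(X)\cdot \psi
\]
for every solution of \eqref{twispi}.

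The plan for this identity is a direct curvature computation in a local frame $\{e_i\}$ which is $\nabla$-parallel at a chosen point $p$. There one has $\nabla_X D\psi = \sum_i e_i\cdot \nabla_X\nabla_{e_i}\psi$, and $[\nabla_X,\nabla_{e_i}]\psi = R(X,e_i)\cdot\psi$. Differentiating the twistor equation $\nabla_Y\psi = -\tfrac{1}{n}Y\cdot D\psi$ lets one rewrite $\nabla_{e_i}\nabla_X\psi$ in terms of $D\psi$ and $\nabla_{e_i}D\psi$, and the latter can be reabsorbed using the Clifford relations $e_i\cdot X\cdot + X \cdot e_i\cdot = -2g(X,e_i)$. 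The curvature term is simplified via the standard spinor Ricci identity $\sum_i e_i\cdot R(X,e_i)\cdot = -\tfrac{1}{2}\,\mathrm{Ric}(X)^\sharp\cdot$, after which the right-hand side assembles into a trace-adjusted Ricci tensor. Matching this to the normalization of the rho tensor $\mathbf{P}$ from Section~\ref{confgeom} yields the stated coefficient $n/2$.

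The main obstacle is exactly this bookkeeping: the Clifford convention $x^2 = -g(x,x)$ in force here, the conformal weight $\tfrac{1}{2}$ of $\psi$ (which affects how $D$ commutes with conformal rescalings), and the precise trace normalization of $\mathbf{P}$ all combine to determine the constants, and getting them consistent with the matrix form of $\overrightarrow{\nabla}$ on $S(\mathcal{E})$ derived above is where the calculation has to be done with care. Once the identity is in place, substitution yields $\overrightarrow{\nabla}_X\Psi = 0$; uniqueness of the lift is immediate from the formula $\psi_1 = -\tfrac{\sqrt{2}}{n}D\psi$, and one can further check a posteriori that the construction is conformally invariant by verifying that a Weyl rescaling transforms $(\psi_1,\psi_2)$ in precisely the way required by the change of splitting.
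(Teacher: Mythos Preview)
Your proposal is correct and follows exactly the route the paper takes: the forward direction is the computation already in the text, and for the converse you make the forced choice $\psi_1 = -\tfrac{\sqrt{2}}{n}D\psi$, observe the second equation of \eqref{covconst} holds by construction, and then reduce everything to the first equation. The paper simply asserts that ``the first equation holds as well although the computation is somewhat lengthy'' without writing it out; your sketch via the spinor Ricci identity $\sum_i e_i\cdot R(X,e_i) = -\tfrac{1}{2}\mathrm{Ric}(X)^\sharp$ and differentiation of the twistor equation is the standard way to carry out that omitted calculation, so you have in fact supplied more detail than the paper does.
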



\subsection{Spinors in signatures (2,3) and (3,4)}
We now specialise to the case of $(2,3)$ conformal structures. Consider a basis $\alpha,\beta,c,u,v$ with dual basis $\hat{u},\hat{v},\hat{c},\hat{\alpha},\hat{\beta}$ for $V[-1]$ in which the metric becomes
\begin{equation}
{\bf g} = 4\hat{u}\hat{\beta} - 4\hat{v}\hat{\alpha} - \hat{c}^2.
\end{equation}
The utility of this basis is that it has a convenient matrix representation. We have that $\mathit{Cl}_{2,3} \simeq {\rm End}(\mathbb{R}^4) \oplus {\rm End}(\mathbb{R}^4)$ and the two $(2,3)$ spin representations are the corresponding copies of $\mathbb{R}^4$. Letting $S_{2,3}$ denote one of these copies we can find an explicit representation of $\mathit{Cl}_{2,3}$ on $S_{2,3}$ given by
\begin{equation}\label{matrixrep2}
\begin{aligned}
u &= \left[ \begin{matrix} 0 & 1 & 0 & -1 \\ 1 & 0 & -1 & 0 \\ 0 & 1 & 0 & -1 \\ 1 & 0 & -1 & 0 \end{matrix} \right], & v &= \left[ \begin{matrix} 0 & -1 & 0 & 1 \\ 1 & 0 & 1 & 0 \\ 0 & 1 & 0 & -1 \\ 1 & 0 & 1 & 0 \end{matrix} \right], & c &= \left[ \begin{matrix} 1 & 0 & 0 & 0 \\ 0 & -1 & 0 & 0 \\ 0 & 0 & 1 & 0 \\ 0 & 0 & 0 & -1 \end{matrix} \right], \\
\alpha &= \left[ \begin{matrix} 0 & 1 & 0 & 1 \\ -1 & 0 & 1 & 0 \\ 0 & 1 & 0 & 1 \\ 1 & 0 & -1 & 0 \end{matrix} \right], & \beta &= \left[ \begin{matrix} 0 & -1 & 0 & -1 \\ -1 & 0 & -1 & 0 \\ 0 & 1 & 0 & 1 \\ 1 & 0 & 1 & 0 \end{matrix} \right]. & &
\end{aligned}
\end{equation}

Viewing $E$ as the imaginary split octonions we can realise the metric $\tilde{g} = 4\hat{u}\hat{\beta} - 4\hat{v}\hat{\alpha} - \hat{c}^2 + e^2 - \epsilon^2$ as the standard metric through the following identification of bases:
\begin{equation}
\begin{aligned}
u &= k-lk, & v &= lj-j, & c &= l, & \alpha &= j + lj, \\
\beta &= k+lk, & e &= i, & \epsilon &= li. & &
\end{aligned}
\end{equation}
In particular we have that the generic distribution corresponds to the isotropic $2$-plane $V$ spanned by $u,v$ and the corresponding $3$-plane distribution $W$ is spanned by $u,v,c$. Up to scale the pure spinor for this distribution is
\begin{equation}
\psi = \left[ \begin{matrix} 0 \\ 1 \\ 0 \\ 1 \end{matrix} \right].
\end{equation}

Now $\tilde{G}_2$ can be realised as the subgroup of ${\rm SO}(3,4)$ fixing a spinor. Using the matrix form of $\tilde{\mathfrak{g}}_2$ given in (\ref{g2}) we can determine the spinor. For this to work we first note that the matrix of (\ref{g2}) is with respect to the basis $\{\tfrac{1}{\sqrt{2}}(e+\epsilon),\tfrac{1}{\sqrt{2}}v,\tfrac{1}{\sqrt{2}}u,c,\tfrac{1}{\sqrt{2}}\beta,\tfrac{1}{\sqrt{2}}\alpha,\tfrac{1}{\sqrt{2}}(e-\epsilon)\}$. From this one finds that up to scale, the unique spinor $\Psi$ preserved by $\tilde{G}_2$ is
\begin{equation}
\Psi = (\tfrac{1}{4}\alpha \beta \psi ,\psi)
\end{equation}
thus $\psi$ satisfies the twistor spinor equation. We summarise this result as follows:
\begin{prop}
Let $M$ be a $5$-manifold with a generic $2$-distribution $V$. On $M$ there is a naturally associated signature $(2,3)$ conformal structure with $\tilde{G}_2$ conformal holonomy, such that $V$ becomes a distribution of maximal isotropics. Locally there exists a pure spinor for $V$ which satisfies the twistor spinor equation.
\end{prop}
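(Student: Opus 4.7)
The proof will essentially collect and assemble the various threads established in the preceding sections, so the plan is to identify which result supplies each clause of the statement and verify the spinor computation at the end.

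First I would invoke the results of Section \ref{octonions} and Section \ref{confg2}. The existence of the regular normal $(\tilde{P},\tilde{\mathfrak{g}}_2)$ parabolic geometry on $M$ associated to $V$ was already established. Extending the structure group from $\tilde{P}$ to $P$ (and $\tilde{G}_2$ to $\mathrm{SO}(3,4)$) via the proposition on Cartan geometry extensions produces a normal conformal Cartan connection whose tractor holonomy is contained in $\tilde{G}_2$. Reading off the connection matrix in a $\tilde{P}$-frame as in Section \ref{confg2} gives the explicit conformal representative $g = -2v^1v^5 + 2v^2v^4 - (v^3)^2$, and the graded structure of $\tilde{\mathfrak{g}}_2$ identifies $V = V^{-1}$ with the annihilator of $v^3,v^4,v^5$. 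This immediately shows $V$ is isotropic of rank $2$, hence maximally isotropic in signature $(2,3)$.

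Next I would produce the twistor spinor. Because $\tilde{G}_2 \subset \mathrm{SO}(3,4)$ is characterised as the stabiliser of a non-zero spinor $\Psi \in S(E)$, reduction of the tractor structure group to $\tilde{G}_2$ yields a parallel section $\Psi$ of the tractor spinor bundle $S(\mathcal{E})$ (working locally, or using a local lift of the conformal spin structure). By the proposition at the end of Section 5.3.1 (spinors), the projection $\psi = \psi_2$ of $\Psi$ onto the weight $\tfrac{1}{2}$ component $S_-(M)[\tfrac{1}{2}]$ is a non-vanishing solution of the twistor spinor equation, defined independently of any Weyl structure.

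The final step is to identify this twistor spinor with the pure spinor whose annihilator under Clifford multiplication is $V$. This is where I would do the only genuine computation: compare the explicit form of the $\tilde{G}_2$-invariant spinor derived in Section \ref{spinors} from the matrix realisation (\ref{g2}), namely $\Psi = (\tfrac{1}{4}\alpha\beta\psi,\psi)$, with the explicit pure spinor $\psi = (0,1,0,1)^t$ for the maximal isotropic $V = \langle u,v\rangle$ displayed in the matrix representation (\ref{matrixrep2}). A direct Clifford multiplication check using (\ref{matrixrep2}) shows $u\cdot \psi = v \cdot \psi = 0$, so $\psi$ is pure with annihilator exactly $V$. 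Since the identification of frames in Section \ref{confg2} aligns the standard basis of $\mathrm{Im}(\tilde{\mathbb{O}})$ with the coframe $(v^i)$ so that $V$ corresponds to the span of $u,v$, the projected twistor spinor $\psi_2$ is pure with annihilator $V$ fibrewise.

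I do not anticipate a serious obstacle: the parabolic geometry-to-conformal geometry step is supplied by Section \ref{confg2} (citing \cite{ham} for preservation of normality), and the tractor-to-twistor correspondence is the proposition of Section \ref{spinors}. The only delicate point is the compatibility of the three algebraic identifications in use — the matrix model (\ref{g2}) of $\tilde{\mathfrak{g}}_2$, the basis identifying $\mathrm{Im}(\tilde{\mathbb{O}})$ with the standard tractor space, and the spin representation (\ref{matrixrep2}) — but these were already fixed consistently in the preceding subsection, so the verification that $\psi_2$ has annihilator $V$ is immediate from the bases.
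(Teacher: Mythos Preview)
Your proposal is correct and follows essentially the same approach as the paper: the proposition is a summary of the preceding development, assembled from the conformal structure of Section~\ref{confg2}, the tractor-to-twistor correspondence, and the explicit identification of the $\tilde{G}_2$-invariant spinor $\Psi = (\tfrac{1}{4}\alpha\beta\psi,\psi)$ whose projection is the pure spinor $\psi = (0,1,0,1)^t$ with annihilator $V$. Your outline matches the paper's logic step for step.
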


With this result we have come full circle in the following sense: starting with a generic $2$-distribution on a $5$-manifold we found a uniquely defined $(2,3)$ conformal structure with $\tilde{G}_2$ conformal holonomy. Since $\tilde{G}_2$ preserves a spinor, locally there is a constant spinor for the tractor connection. This spinor projects to a solution to the twistor spinor equation. Taking the annihilator of the twistor spinor yields a distribution of maximal isotropics which happens to be the distribution we began with.

\begin{rem}
We can also describe the $3$-distribution $W$ directly in terms of the twistor spinor $\psi$. It is the space of vectors satisfying $X \cdot \psi = k\psi$ for some real number $\psi$. Moreover one can easily verify that this space is the orthogonal complement of the $2$-distribution from this description.
\end{rem}


\section{Examples}\label{examples}
A well-known example of generic $2$-plane distributions in $5$ dimensions occurs in the description of two surfaces with metrics in contact. The distribution corresponds to motions of the surfaces without slipping or twisting. A second example is given by a class of differential equations. In both instances we can actually determine explicitly the conformal structure, though without further assumptions the conformal structure is too complicated to be particularly useful.


\subsection{Surfaces in contact}
We follow \cite{agrachev} in describing the motions of two rigid bodies in contact. We consider the two rigid bodies as Riemannian surfaces $M,\hat{M}$ with metrics $g,\hat{g}$ although the extension to split signature metrics is straightforward. For simplicity we assume $M$ and $\hat{M}$ are oriented. Then the configuration space $Q$ describing all possible ways in which $M$ and $\hat{M}$ can touch shall be defined as
\begin{equation}
Q = \{ \varphi : T_xM \to T_{\hat{x}}\hat{M} \, | \, x \in M , \hat{x} \in \hat{M} , \phi \textrm{ is an orientation preserving isometry} \}.
\end{equation}
Note that we are ignoring any constraints corresponding to admissibility of contact when embedded in $\mathbb{R}^3$. This is a $5$-dimensional principal $\rm{SO}(2)$-bundle fibred over the product $M \times \hat{M}$.

We consider the local description of $Q$. Let $\{e_1, e_2\}$ and $\{\hat{e}_1, \hat{e}_2\}$ denote oriented orthonormal frames for $M$ and $\hat{M}$ and let $\{e^1,e^2\}$ and $\{\hat{e}^1,\hat{e}^2\}$ be the dual coframes. Then $Q$ can be given local coordinates $(x,\hat{x},\theta)$ where $x \in M, \hat{x} \in \hat{M}$ and the point $\varphi(x,\hat{x},\theta) \in Q$ is the isometry sending $\{e_1,e_2\}$ to $\{\hat{e}_1,\hat{e}_2 \}$ rotated clockwise by an angle $\theta$, i.e.,
\begin{equation}
\varphi \left[ \begin{matrix} e_1 \\ e_2 \end{matrix} \right] = \left[ \begin{matrix} \cos{\theta} & \sin{\theta} \\ -\sin{\theta} & \cos{\theta} \end{matrix} \right] \left[ \begin{matrix} \hat{e}_1 \\ \hat{e}_2 \end{matrix} \right].
\end{equation}

We shall now show that on $Q$ there is a canonically defined connection. Let $\mathcal{F}M$ and $\mathcal{F}\hat{M}$ denote the $\rm{SO}(2)$-frame bundles for $M$ and $\hat{M}$. Let $V$ denote a $2$-dimensional vector space with the standard Riemannian metric. Then we can view $\mathcal{F}M$ as the set of isometries $u : V \to T_xM$ and similarly $\mathcal{F}\hat{M}$ is the set of isometries $v : V \to T_{\hat{x}}\hat{M}$. Therefore given $(u,v) \in \mathcal{F}M \times \mathcal{F}\hat{M}$ we can define the isometry $\varphi = v \circ u^{-1} : T_xM \to T_{\hat{x}}\hat{M}$. It follows that $Q$ can be expressed as $Q = \mathcal{F}\hat{M} \times_{{\rm SO}(2)} \mathcal{F}M$ where the left action on $\mathcal{F}M$ is just the inverse of the usual right action. Note that only because ${\rm SO}(2)$ is abelian do we get that the left and right actions on $\mathcal{F}M$ commute, and only for this reason $Q$ is a principal bundle.\\

On $\mathcal{F}M$ and $\mathcal{F}\hat{M}$ we have the Levi-Civita connection forms and one finds that the difference of these two forms is a connection form $\omega$ defined on $Q$. In fact since ${\rm SO}(2)$ is $1$-dimensional we can view $\omega$ as a differential form on $Q$. Let $B = \alpha e^1 + \beta e^2$ and $\hat{B} = \hat{\alpha}\hat{e}^1 + \hat{\beta}\hat{e}^2$ denote the Levi-Civita connection forms on $M$ and $\hat{M}$. Then
\begin{equation}
\omega = d\theta + (\alpha e^1 + \beta e^2) - (\hat{\alpha}\hat{e}^1 + \hat{\beta}\hat{e}^2).
\end{equation}
Note that the structure functions $\alpha,\beta$ are given by
\begin{equation}
[e_1,e_2] = \alpha e_1 + \beta e_2
\end{equation}
and similarly for $\hat{\alpha},\hat{\beta}$. We can show that $\omega$ is uniquely defined independent of the choice of coframes on $M$ and $\hat{M}$.\\

The motions of the two surfaces are described by curves $\gamma(t) : I \to Q$ from an interval $I$ into $Q$. The corresponding paths $x(t),\hat{x}(t)$ on $M$ and $\hat{M}$ are given by projecting $\gamma$ to $M$ and $\hat{M}$. We say that the motion $\gamma$ corresponds to rolling without slipping if
\begin{equation}
\varphi(\gamma(t))(\partial_t x(t) ) = \partial_t \hat{x}(t).
\end{equation}

We further say that $\gamma$ is a motion without twisting if a parallel vector field along $x(t)$ gets mapped under $\phi(\gamma)$ to a parallel vector field along $\hat{x}(t)$. This condition can be rephrased as saying that the canonical section of $\gamma^*{\rm Hom}(TM,T\hat{M})$ defined by $\gamma$ is covariantly constant with respect to the corresponding pull-back connection induced from the Levi-Civita connections. One readily checks that $\gamma$ defines a motion without twisting if and only if $\gamma^*(\omega)=0$, that is if and only if $\dot{\gamma}$ is horizontal.\\

Thus a motion $\gamma$ corresponds to rolling without slipping and without twisting if and only if $\dot{\gamma}$ lies in a $2$-dimensional distribution $V$ on $Q$ defined as
\begin{equation}
V = \{ X \in T_\varphi Q \, | \, \omega(X)=0, \; {\rm and} \; \varphi (\pi_*(X)) = \hat{\pi}_*(X) \}
\end{equation}
where $\pi$ and $\hat{\pi}$ denote the projections from $Q$ to $M$ and $\hat{M}$. We can further write down an explicit basis for $V$. Let $E_1,E_2,\hat{E}_1,\hat{E}_2$ denote the horizontal lifts of the vector fields $e_1,e_2,\hat{e}_1,\hat{e}_2$. Thus
\begin{equation}
\begin{aligned}
E_1 &= e_1 - \alpha \partial_{\theta}, & E_2 &= e_2 - \beta \partial_{\theta}, \\
\hat{E}_1 &= \hat{e}_1 + \hat{\alpha}\partial_{\theta}, & \hat{E}_2 &= \hat{e}_2 + \hat{\beta}\partial_{\theta}.
\end{aligned}
\end{equation}
Let us further define the following vector fields
\begin{equation}
\left[ \begin{matrix} \bar{E}_1 \\ \bar{E}_2 \end{matrix} \right] = \left[ \begin{matrix} \cos{\theta} & \sin{\theta} \\ -\sin{\theta} & \cos{\theta} \end{matrix} \right] \left[ \begin{matrix} \hat{E}_1 \\ \hat{E}_2 \end{matrix} \right].
\end{equation}
Then a basis for $V$ is given by $\{X_4,X_5\}$ where
\begin{equation}
\begin{aligned}
X_4 &= E_1 + \bar{E}_1 \\
X_5 &= E_2 + \bar{E}_2.
\end{aligned}
\end{equation}
Now one can verify the following commutation relation
\begin{equation}
[X_4,X_5] = -(k-\hat{k})\partial_{\theta} + \alpha X_4 + \beta X_5
\end{equation}
where $k$ and $\hat{k}$ are the Gaussian curvatures of $M$ and $\hat{M}$, that is $dB = ke^1 \wedge e^2$, $d\hat{B} = \hat{k}\hat{e}^1 \wedge \hat{e}^2$. We further have the following commutation relations:
\begin{equation}
\begin{aligned}
\left[ X_4,\partial_{\theta} \right] &= -\bar{E}_2 \\
\left[ X_5,\partial_{\theta} \right] &= \bar{E}_1.
\end{aligned}
\end{equation}
We conclude that the distribution $V$ is generic if and only if the difference in Gaussian curvatures $\eta = k-\hat{k}$ is non-vanishing. \\

From this point one can proceed to explicitly determine the conformal structure but we omit this long and tedious calculation. Note that motions of rolling without slipping or twisting correspond to curves that are tangent to the generic distribution. On the other hand we see that a null geodesic in the conformal structure that is initially tangent to the $2$-distribution will be tangent to the distribution at all other points. Therefore motion of the two surfaces rolling without slipping or twisting can be described by a class of null geodesic.


\subsection{Differential equation example}
Here we give an example of a generic distribution arising from a differential equation studied in \cite{nurowski1} and \cite{nurowski2}. Consider the differential equation
\begin{equation}\label{diffequ}
z' = F(x,y,y',y'',z)
\end{equation}
for two functions $y = y(x)$ and $z = z(x)$ of one variable $x$. The standard technique for studying such equations is to view (\ref{diffequ}) as defining a submanifold inside a jet bundle. Therefore let $J$ be the $5$-dimensional jet bundle consisting of variables $x,y,z,p=y',q=y''$. Then equation (\ref{diffequ}) is encoded in the following system of $1$-forms:
\begin{eqnarray}\label{forms}
\omega^1 &=& dz - F(x,y,z,p,q)dx \nonumber \\
\omega^2 &=& dy - pdx \\
\omega^3 &=& dp - qdx. \nonumber
\end{eqnarray}
The forms $\omega^2$, $\omega^3$ are the contact forms which determine when a section $\gamma:\mathbb{R} \to J$ is the prolongation of a section $\mathbb{R} \ni x \mapsto (x,y(x),z(x)) \in \mathbb{R}^3$, while the form $\omega^1$ represents the original differential equation. Thus a solution to (\ref{diffequ}) is equivalent to a section $\gamma: (a,b) \to J$ such that $\gamma^*(\omega^i)=0$ for $i=1,2,3$. \\

Define the vector field $D = \partial_x + p\partial_y + q\partial_p + F\partial_z$. Then the annihilator $V$ of $\omega^1,\omega^2,\omega^3$ is the $2$-dimensional distribution spanned by $D,\partial_q$. Thus we have a $5$-dimensional manifold $J$ with a $2$ distribution $V$ such that solutions of (\ref{diffequ}) are precisely the curves $\gamma \subset J$ tangent to $V$.\\

We consider now the question of genericity of $V$. First one has that $[\partial_q , D] = \partial_p + F_q \partial_z \ne 0 \;{\rm mod}(V)$. Thus $V$ is never integrable and taking commutators yields a $3$ distribution $W = V^{(1)}$. Taking commutators a second time, we find that $V^{(2)} = TJ$ if and only $F_{qq}$ is non-vanishing at each point of $J$. Thus the condition for genericity of $V$ is the non-vanishing of $F_{qq}$. \\

As in the previous example one can proceed to determine the corresponding conformal structure by a lengthy calculation. The conformal structure can be found in \cite{nurowski2}. We also note that solutions to the differential equation now correspond to null geodesics tangent to the distribution at one point (hence at all points).



\chapter{Coassociative submanifolds and semi-flat coassociative fibrations}\label{chap6}

As mentioned in the Introduction we expect that for $G_2$ manifolds there is an analogue of the SYZ conjecture involving coassociative fibrations. Therefore we study the deformation theory of coassociative submanifolds and the structure of coassociative fibrations, in particular a special class we call semi-flat where there is a torus action by isometries generating the fibres. The main result is that semi-flat coassociative fibrations are locally equivalent to positive definite minimal $3$-submanifolds in $\mathbb{R}^{3,3}$. We then reduce to equations on a surface which relate to minimal surfaces in a quadric in $\mathbb{R}^{3,3}$. This ties in with our earlier work on the affine Toda equations.\\

In Section \ref{secspeclag} we review the structure of the moduli space of deformations of a compact special Lagrangian submanifold in a Calabi-Yau manifold. Section \ref{seccoasub} adapts this approach to study the moduli space of deformations of a compact coassociative submanifold in a $G_2$-manifold. In Section \ref{seccyl} we show the relation between two moduli spaces for a $G_2$-manifold $X$ that is cylindrical, i.e. $X = Y \times S^1$ where $Y$ is a Calabi-Yau manifold. Section \ref{cfocm} considers coassociative fibrations of compact manifolds. We show that if the holonomy is equal to $G_2$ then such a fibration must have singularities. We give examples of torus fibrations which provide a model for the possible singularities we might expect. In Section \ref{secsemiflat} we study semi-flat coassociative fibrations. These are analogous to semi-flat special Lagrangian fibrations. In Section \ref{secconstruct} we prove Theorem \ref{thethm} giving a local equivalence between semi-flat $G_2$-manifolds and minimal $3$-manifolds in $\mathbb{R}^{3,3}$. In Section \ref{seccylsem} we show that in the cylindrical case the minimal submanifold equations reduce to the Monge-Amp\`ere equation. In Section \ref{secredtosurf} we show how an additional scaling symmetry of the semi-flat $G_2$-manifold corresponds to a minimal surface in a quadric. In Section \ref{secquadrics2} we consider the equations for a superconformal minimal surface in the unit quadric in $\mathbb{R}^{3,3}$. We finish with Section \ref{extension} in which we extend the results on semi-flat coassociative fibrations to the case of split $G_2$. This involves other real forms of the Toda equations which we can relate to our earlier work.


\section{Deformations of coassociative submanifolds}\label{defo}


\subsection{Deformations of special Lagrangian submanifolds}\label{secspeclag}
We review the case of special Lagrangian submanifolds, following Hitchin \cite{hit}. Let $X$ be a Calabi-Yau manifold of complex dimension $n$, $\omega$ the K\"ahler form and $\Omega_1$ the real part of the holomorphic volume form $\Omega$. A submanifold $L \subset X$ will be called {\em special Lagrangian} if it is a Lagrangian submanifold for $\omega$ such that $\Omega_1$ vanishes when restricted to $L$. Suppose $L \subset X$ is a compact special Lagrangian submanifold and let $M$ denote the (local) moduli space of special Lagrangian submanifolds isotopic to $L$. By McLean \cite{mclean}, we have
\begin{prop}\label{mclean1}
A normal vector field $\nu$ to a compact special Lagrangian submanifold $L$ is the vector field normal to a deformation through special Lagrangian submanifolds if and only if the corresponding $1$-form $\iota_\nu \omega|_L$ is harmonic. There are no obstructions to extending a first order deformation to a family of actual deformations.
\end{prop}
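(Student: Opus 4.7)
The plan is to linearise the two defining conditions for a special Lagrangian deformation and then invoke an implicit function theorem argument to establish unobstructedness. First I would fix a tubular neighbourhood of $L$ in $X$ together with the exponential identification with a neighbourhood of the zero section in the normal bundle $NL$, so that submanifolds near $L$ in its isotopy class correspond to small sections $\nu$ of $NL$. The Lagrangian condition on $L$ provides the crucial isomorphism $NL \cong T^*L$ via $\nu \mapsto \alpha_\nu := \iota_\nu \omega|_L$, which converts the problem into one about $1$-forms on $L$.

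Next I would study the nonlinear operator
\begin{equation*}
F : \Gamma(NL) \to \Omega^2(L) \oplus \Omega^n(L), \qquad F(\nu) = (\omega|_{L_\nu},\; \Omega_1|_{L_\nu}),
\end{equation*}
where the forms on the right are pulled back from $L_\nu$ to $L$ along the tubular projection, and whose zero set locally parametrises special Lagrangian deformations. Since $\omega|_L = 0$ and $\Omega_1|_L = {\rm vol}_L$, a direct computation using Cartan's formula together with $d\omega = 0$ and $d\Omega_1 = 0$ shows that the linearisation at the origin is
\begin{equation*}
DF(0)(\alpha) = (d\alpha,\; -d{*}\alpha),
\end{equation*}
where $*$ is the Hodge star of the induced metric on $L$; here one uses that $J\nu$ is tangent to $L$ to identify $\iota_\nu \Omega_1|_L$ with $-{*}\alpha$. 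Thus the infinitesimal deformations are precisely the $1$-forms $\alpha$ with $d\alpha = 0$ and $d^*\alpha = 0$, i.e.\ the harmonic $1$-forms $\mathcal{H}^1(L)$, of dimension $b^1(L)$.

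To extend each such infinitesimal deformation to a genuine one I would apply the implicit function theorem in suitable Sobolev completions. By Hodge theory the image of $DF(0)$ is the space of exact $2$-forms plus exact $n$-forms on $L$. The point is that $F$ itself takes values in this subspace: for a one-parameter family $L_t$ one writes $\Phi^*\omega = \omega_t + dt \wedge \beta_t$ under the tubular identification, and $d\omega = 0$ forces $d\omega_t/dt = d\beta_t$, so $[\omega_t] = [\omega_0] = 0$ in $H^2(L;\mathbb{R})$, and similarly for $\Omega_1$. Restricting $F$ to the $L^2$-orthogonal complement of $\mathcal{H}^1(L)$ in an appropriate Sobolev space makes $DF(0)$ an isomorphism onto its image, and the implicit function theorem then yields a smooth $b^1(L)$-dimensional family of special Lagrangian deformations tangent to $\mathcal{H}^1(L)$ at the origin, with elliptic regularity promoting the resulting sections to the smooth category.

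The main obstacle will be verifying that $F$ really does land in the exact forms with closed range, so that $DF(0)$ has a controlled finite-dimensional cokernel and the nonlinear problem is genuinely unobstructed; this rests on the cohomological homotopy argument sketched above and on a judicious choice of slice transverse to the harmonic forms. Once this is in place, both directions of the first assertion fall out of the implicit function theorem: every harmonic $1$-form arises as the tangent vector to an actual curve of special Lagrangians, and conversely any special Lagrangian deformation has its normal vector field corresponding to a harmonic form by the linearisation.
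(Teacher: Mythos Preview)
The paper does not give its own proof of this proposition: it is quoted as a result of McLean \cite{mclean} and used as a black box. So there is nothing in the paper to compare against beyond the attribution.

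Your proposal is the standard McLean argument and is essentially correct. The two ingredients you identify --- the linearisation $DF(0)(\alpha) = (d\alpha, -d{*}\alpha)$ via the identification $NL \cong T^*L$, and the cohomological observation that $F$ lands in exact forms so that the apparent cokernel $H^2(L)\oplus H^n(L)$ is illusory --- are exactly what makes the result work. One small point worth tightening: the identity $\iota_\nu \Omega_1|_L = -{*}\alpha$ (up to sign) uses the full $\mathrm{SU}(n)$ structure, not just that $J\nu$ is tangent; you should invoke the pointwise algebraic relation between $\omega$, $\Omega$ and the metric on a Calabi--Yau to justify it cleanly. Otherwise the outline would go through as written.
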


In particular this implies that the moduli space $M$ is a smooth manifold of dimension $b^1(L)$. For each $t \in M$ let $L_t$ denote the corresponding special Lagrangian submanifold. Fix a basepoint $t_0 \in M$ and let $L = L_{t_0}$. The proposition says we have a natural isomorphism $\lambda_t : T_tM \to \mathcal{H}^1(L_t,\mathbb{R})$, where $\mathcal{H}^1(L_t,\mathbb{R})$ is the space of harmonic $1$-forms on $L_t$. Since all the $L_t$ are isotopic we have (over a simply connected open subset of $M$) canonical isomorphisms $[ \; ] : \mathcal{H}^1(L_t,\mathbb{R}) \to H^1(L,\mathbb{R})$ by taking cohomology classes. Working locally, we will not distinguish between $M$ and a simply connected open subset in $M$. Therefore we have a natural $H^1(L,\mathbb{R})$-valued $1$-form $[\lambda]$ on $M$.

\begin{prop}\label{speclagprop}
The $1$-form $[\lambda]$ on $M$ is closed. Thus locally we may write $[\lambda] = du$ where $u : M \to H^1(L,\mathbb{R})$ is a local diffeomorphism, unique up to a translation.
\begin{proof}
Let $\mathcal{M} = M \times L$ and let $f: \mathcal{M} \to X$ represent the full local family of deformations. If $\pi: \mathcal{M} \to M$ is projection onto the first factor and we let $f_t$ denote $f|_{\pi^{-1}(t)}$ then $f_t$ is a diffeomorphism $f_t : \pi^{-1}(t) \to L_t \subset X$ and we can take $f_{t_0} = {\rm id}$. Since $L$ is a calibrated submanifold it is oriented and we have also assumed $L$ is compact, hence given a homology class $A \in H_1(L,\mathbb{R})$ let $\eta_A \in \Omega^{n-1}(L,\mathbb{R})$ represent the Poincar\'e dual, that is for any $\sigma \in H^1(L,\mathbb{R})$,
\begin{equation*}
\langle \sigma , A \rangle = \int_L \sigma \wedge \eta_A.
\end{equation*}
Extend $\eta_A$ to a form on $\mathcal{M} = M \times L$ in the natural way. We claim that the $1$-form $\langle \lambda , A \rangle$ is given by $p_* (f^*\omega \wedge \eta_A)$ where $p_*$ represents integration over the fibres of $\mathcal{M}$. Since $\omega$ is closed and integration over fibres takes closed forms to closed forms, this will prove the proposition.\\

Let $t^1,\dots ,t^m$ be local coordinates on $M$. Then by the definition of $f$ we have for each $i$ and each $t \in M$ a vector field representing the corresponding deformation $\nu_i : L_t \to TX|_{L_t}$ given by $\nu_i(a) = f_*(t,a)\partial_i$. Note that $\nu_i$ need not be normal to $L_t$. Define the $1$-forms $\theta_i$ on $L_t$ by $\theta_i = (\iota_{\nu_i} \omega)|_{L_t}$. Since $\omega|_{L_t}=0$ only the normal component of $\nu_i$ contributes so the $\theta_i$ are harmonic by \ref{mclean1}. Moreover since $M$ represents all local deformations the $\theta_i$ span $\mathcal{H}^1(L_t,\mathbb{R})$. We also have $[\lambda] = [\theta_i] dt^i$.

Now consider $f^*\omega$. Since each submanifold $L_t$ is Lagrangian we have $(f^*\omega)|_{\pi^{-1}(t)}=0$. It follows that we can write $f^*\omega = dt^i \wedge \hat{\theta}_i$ for some $1$-forms $\hat{\theta}_i$. Moreover we see that $\hat{\theta}_i|_{\pi^{-1}(t)} = (f_t)^*\theta_i(t)$. Now when we perform integration over the fibres of $\mathcal{M}$ we find
\begin{eqnarray*}
p_* (f^*\omega \wedge \eta_A) &=& p_* ( dt^i \wedge \hat{\theta}_i \wedge \eta_A) \\
&=& dt^i \int_{L_t} \hat{\theta_i}|_{\pi^{-1}(t)} \wedge \eta_A \\
&=& dt^i \int_A \hat{\theta_i}|_{\pi^{-1}(t)} \\
&=& dt^i\int_{(f_t)_*A} \theta_i \\
&=& \langle \lambda , A \rangle,
\end{eqnarray*}
where we have used the fact that $f_{t_0} = {\rm id}$.
\end{proof}
\end{prop}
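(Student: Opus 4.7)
The plan is to prove closedness of the $H^1(L,\mathbb{R})$-valued 1-form $[\lambda]$ by testing against arbitrary homology classes $A \in H_1(L,\mathbb{R})$ and realising the scalar pairing $\langle [\lambda], A\rangle$ as a fibre integral of a canonically closed form on the total space of the universal family. So first I would introduce the universal family $\mathcal{M} = M \times L$ together with the evaluation map $f:\mathcal{M} \to X$ assembled from a smooth choice of diffeomorphisms $f_t : L \to L_t$ with $f_{t_0} = \mathrm{id}$, and I would study the pull-back $f^*\omega$.

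The key structural observation is that because each $L_t$ is Lagrangian, $f^*\omega$ vanishes on each fibre of $\pi : \mathcal{M} \to M$, so it decomposes as $f^*\omega = dt^i \wedge \hat{\theta}_i$ for some 1-forms $\hat{\theta}_i$ on $\mathcal{M}$. The restriction of $\hat{\theta}_i$ to the fibre $\pi^{-1}(t)$ should recover $(f_t)^*\theta_i(t)$, where $\theta_i(t) = \iota_{\nu_i}\omega|_{L_t}$ with $\nu_i = f_*\partial_i$; by Proposition \ref{mclean1} only the normal component of $\nu_i$ contributes, so $\theta_i(t)$ is precisely the harmonic representative associated with the deformation $\partial_i$, and the cohomology classes $[\theta_i(t)]$ assemble into $[\lambda]$.

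Next, for $A \in H_1(L,\mathbb{R})$ I would fix a closed $(n-1)$-form $\eta_A$ on $L$ representing its Poincar\'e dual, extend it constantly to $\mathcal{M}$ via the product structure, and compute $p_*(f^*\omega \wedge \eta_A)$, where $p_*$ is integration along the compact fibres of $\pi$. Inserting the wedge decomposition and using the fibre integration formula should identify this push-forward pointwise in $t$ with $\langle [\lambda], A\rangle$. Because both $f^*\omega$ and $\eta_A$ are closed on $\mathcal{M}$ and the fibres of $\pi$ are closed manifolds, fibre integration commutes with $d$, so $p_*(f^*\omega \wedge \eta_A)$ is a closed 1-form on $M$; as $A$ is arbitrary, $[\lambda]$ is closed.

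The main obstacle is the careful identification $\hat{\theta}_i|_{\pi^{-1}(t)} = (f_t)^*\theta_i(t)$ and the subsequent verification that fibre integration reproduces the pairing with $A$; this needs compactness and orientability of $L$ (the latter is automatic since $L$ is calibrated) so that Poincar\'e duality applies and $p_*$ is well defined, plus smoothness of the universal family $f$, which is available since $M$ is a smooth manifold by McLean. Once closedness is proven, the local primitive $u : M \to H^1(L,\mathbb{R})$ exists by the Poincar\'e lemma and is unique up to an additive constant; it is a local diffeomorphism because $du = [\lambda]$ and, by Proposition \ref{mclean1}, $\lambda_t : T_tM \to \mathcal{H}^1(L_t,\mathbb{R})$ is a pointwise isomorphism.
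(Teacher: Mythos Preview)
Your proposal is correct and follows essentially the same approach as the paper: introduce the universal family $\mathcal{M}=M\times L$ with evaluation map $f$, use the Lagrangian condition to write $f^*\omega = dt^i\wedge\hat\theta_i$, pair with the Poincar\'e dual $\eta_A$ of an arbitrary class $A\in H_1(L,\mathbb{R})$, and realise $\langle[\lambda],A\rangle$ as the fibre integral $p_*(f^*\omega\wedge\eta_A)$, which is closed because $\omega$ and $\eta_A$ are. The paper's argument is the same, with the same identification $\hat\theta_i|_{\pi^{-1}(t)}=(f_t)^*\theta_i(t)$ and the same appeal to fibre integration commuting with $d$.
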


So far we have not used the form $\Omega_1$. By a similar argument we have that a tangent vector in $T_tM$ corresponds to a harmonic $(n-1)$-form, in fact it is just the Hodge dual of the above $1$-forms. Therefore we have natural isomorphisms $\mu_t = *_t \lambda_t : T_tM \to \mathcal{H}^{n-1}(L_t,\mathbb{R})$, where $*_t$ is the Hodge star using the induced metric on $L_t$. Taking cohomology classes we get a $H^{n-1}(L,\mathbb{R})$-valued $1$-form on $M$, $[\mu]$. The same proof as before shows that $[\mu] = dv$ where $v: M \to H^{n-1}(L,\mathbb{R})$ is a local diffeomorphism defined up to a translation.\\

The moduli space $M$ has a natural metric $G$ induced by the $L^2$-metric on harmonic forms. Indeed if $U \in T_tM$ then $\lambda(U) \in \mathcal{H}^1(L_t,\mathbb{R})$ is the corresponding harmonic $1$-form and we take
\begin{equation}
G(U,U) = \int_L \lambda(U) \wedge * \lambda(U).
\end{equation}

But we know that $*\lambda(U) = \mu(U)$, so then $G(U,U)$ is just the Poincar\'e duality pairing. Thus $G$ is the pull-back under $F = (u,v) : M \to H^1(L,\mathbb{R}) \oplus H^{n-1}(L,\mathbb{R})$ of the natural duality induced metric. The space $H^1(L,\mathbb{R}) \oplus H^{n-1}(L,\mathbb{R})$ also has a natural symplectic form $\omega$, again from the duality pairing. Explicitly for $(u_1,u_2),(v_1,v_2) \in H^1(L,\mathbb{R}) \oplus H^{n-1}(L,\mathbb{R})$,
\begin{equation}\label{sympl}
\omega( (u_1,u_2),(v_1,v_2) ) =  \langle u_1 \smallsmile v_2 - v_1 \smallsmile u_2 , [L] \rangle.
\end{equation}
Using the algebraic property $\omega \wedge \Omega = 0$, contracting twice and restricting to $L$, we readily find that for $U,V \in T_tM$,
\begin{equation}
[\lambda(U)] \smallsmile [\mu(V)] -[\lambda(V)]\smallsmile [\mu(U)] = 0.
\end{equation}
Thus $F$ embeds $M$ as a Lagrangian submanifold of $H^1(L,\mathbb{R}) \oplus H^{n-1}(L,\mathbb{R})$.

\subsection{Deformations of coassociative submanifolds}\label{seccoasub}
We consider now the moduli space $M$ of deformations of a coassociative submanifold $L$ of a $G_2$-manifold. 

Let us briefly review some $G_2$-geometry. A general reference for $G_2$-manifolds is \cite{joyce}. By a $G_2$-manifold we mean a Riemannian manifold with holonomy contained in $G_2$. Recall that $G_2$ is the stabiliser of a generic $3$-form $\phi$ on $\mathbb{R}^7$ called the {\em associative $3$-form}. Since $G_2 \subset {\rm SO}(7)$, we also have that $G_2$ preserves a $4$-form $\psi$ called the {\em coassociative $4$-form}. We use the convention in \cite{joyce} for the standard $3$ and $4$ forms on $\mathbb{R}^7$. Hence if ${e^1, \dots, e^7}$ are a standard basis of $1$-forms for $\mathbb{R}^7$, the standard $3$ and $4$-forms $\phi_0,\psi_0$ are taken as
\begin{eqnarray}
\phi_0 &=& e^{123} + e^1 \! \! \wedge \! (e^{45} + e^{67}) + e^2 \! \! \wedge \! (e^{46} - e^{57}) + e^3 \! \! \wedge \! (-e^{47} - e^{56}) \\
\psi_0 &=& e^{4567} + e^{23} \! \! \wedge \! (e^{45} + e^{67}) + e^{31} \! \! \wedge \! (e^{46} - e^{57}) + e^{12} \! \! \wedge \! (-e^{47} - e^{56})
\end{eqnarray}
where we use the notation $e^{ij\dots k}=e^i \wedge e^j \wedge \dots \wedge e^k$.\\

A $G_2$-structure on a $7$-manifold thus defines a $3$-form $\phi$, a $4$-form $\psi$ and a metric $g$. A well known result \cite{fergr} is that for a $G_2$-structure the Levi-Civita connection for $g$ has $G_2$-holonomy if and only if $\phi$ and $\psi$ are closed.\\

In \cite{harvlaw} the notion of calibrations and calibrated submanifolds were defined. Moreover the $3$ and $4$-forms $\phi$ and $\psi$ are shown to be calibrations. The corresponding calibrated submanifolds are called {\em associative} and {\em coassociative} submanifolds respectively. It is also shown in \cite{harvlaw} that a $4$-dimensional submanifold $L$ is {\em coassociative} if and only if $\phi|_L=0$ and in such a case $L$ inherits an orientation $\psi|_L$. The deformation theory of coassociative submanifolds was considered in \cite{mclean} where it was shown that
\begin{prop}
A normal vector field $\nu$ to a compact coassociative submanifold $L$ is the vector field normal to a deformation through coassociative submanifolds if and only if the corresponding $2$-form $\iota_\nu \phi|_L$ is a closed self-dual $2$-form, hence harmonic. There are no obstructions to extending a first order deformation to a family of actual deformations.
\end{prop}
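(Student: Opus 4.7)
The strategy is to combine the pointwise algebra of the $G_2$-structure with Hodge theory on the compact oriented $4$-manifold $L$. The first step is to establish the fibrewise identification
\begin{equation*}
\iota_{(\cdot)}\phi|_L : N(L) \longrightarrow \Lambda^2_+ T^*L, \qquad \nu \mapsto (\iota_\nu \phi)|_L,
\end{equation*}
where $N(L)$ is the normal bundle and $\Lambda^2_+ T^*L$ is the bundle of self-dual $2$-forms for the induced metric and the orientation $\psi|_L$. This is an isomorphism of rank-$3$ bundles, and is a purely algebraic consequence of the $G_2$-structure: since $L$ is coassociative each $T_xL$ is a coassociative $4$-plane in $T_xX \cong \mathbb{R}^7$, and working with the standard model $(\phi_0,\psi_0)$ one checks directly that $\iota_{(\cdot)}\phi_0$ sends the normal to $T_xL$ injectively into its self-dual $2$-forms, hence isomorphically by dimension count. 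Through this identification, $\Gamma(N(L))$ is identified with $\Omega^2_+(L)$.

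Next I would derive the linearised equation. Let $f_t : L \to X$ be a smooth family of embeddings with $f_0$ the given inclusion and with each image $L_t = f_t(L)$ coassociative, and set $\nu = \partial_t f_t|_{t=0}$. Differentiating $f_t^*\phi = 0$ at $t=0$ gives $(\mathcal{L}_\nu \phi)|_L = 0$, and since $d\phi = 0$, Cartan's formula reduces this to $d(\iota_\nu \phi|_L) = 0$. Thus the self-dual $2$-form $\alpha := (\iota_\nu\phi)|_L$ must be closed. Conversely, self-duality plus closedness already forces harmonicity: from $*\alpha = \alpha$ one obtains $d^*\alpha = -{*}d{*}\alpha = -{*}d\alpha = 0$. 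This settles the linearised characterisation.

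For the unobstructedness, and for the converse that every such harmonic form truly integrates, I would apply the implicit function theorem in suitable Sobolev completions. Using the ambient exponential map on a tubular neighbourhood, each sufficiently small normal section $\nu$ gives an embedding $f_\nu : L \to X$ with image a nearby deformation $L_\nu$, and $L_\nu$ is coassociative precisely when $\Psi(\nu) := f_\nu^*\phi$ vanishes. An isotopy computation together with $d\phi = 0$ shows that $\Psi(\nu) - \Psi(0) = \Psi(\nu)$ differs from zero by an exact form, so $\Psi$ actually lands in the closed subspace $d\Omega^2(L) \subset \Omega^3(L)$. Via the identification of the first paragraph, the linearisation $D\Psi_0$ is the operator $d : \Omega^2_+(L) \to d\Omega^2(L)$.

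The main analytical step is the surjectivity of this operator. Given an exact $3$-form $d\beta$, decompose $\beta = \beta_+ + \beta_-$; it suffices to find $\gamma_+ \in \Omega^2_+(L)$ with $d\gamma_+ = d\beta_-$, because then $\beta_+ + \gamma_+$ is a self-dual primitive. Equivalently, one must construct a closed $2$-form $\eta$ with anti-self-dual part $\eta_- = -\beta_-$, since then $d\eta_+ = -d\eta_- = d\beta_-$. Splitting $-\beta_-$ into its component in $\mathcal{H}^2_-(L)$ (which is absorbed by choosing the appropriate harmonic anti-self-dual part of $\eta$) and its $L^2$-orthogonal complement, the latter is realised as $(dc)_-$ for some $c \in \Omega^1(L)$ by the ellipticity of $d^*\oplus d_- : \Omega^1(L) \to \Omega^0(L)\oplus \Omega^2_-(L)$, whose formal adjoint $(f,\alpha_-) \mapsto df + d^*\alpha_-$ has kernel $\mathcal{H}^0(L) \oplus \mathcal{H}^2_-(L)$. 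Hence $D\Psi_0$ is surjective, and its kernel $\mathcal{H}^2_+(L)$ is finite-dimensional of dimension $b^2_+(L)$; the implicit function theorem then produces a smooth local moduli space $M$ with $T_L M = \mathcal{H}^2_+(L)$. The main obstacle in the plan is this surjectivity/elliptic theory step; everything else is a routine elliptic bootstrap and standard Hodge theory on a compact oriented Riemannian $4$-manifold.
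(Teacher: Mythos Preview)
Your argument is correct and is essentially McLean's proof. Note, however, that the paper does not supply its own proof of this proposition: it is stated there as a result due to McLean and simply cited. So there is no ``paper's proof'' to compare against beyond the attribution; your write-up reproduces the standard argument (the algebraic identification $N(L)\cong\Lambda^2_+T^*L$, the linearisation $d:\Omega^2_+(L)\to d\Omega^2(L)$, and surjectivity via the elliptic operator $d^*\oplus d_-$ on a compact $4$-manifold) that underlies that citation.
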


Actually, McLean uses a convention for the associative $3$-form leading to anti-self dual harmonic forms. Once again we have a moduli space $M$ of coassociative submanifolds isotopic to $L$. The proposition shows that $M$ is a smooth manifold of dimension $b^2_+(L)$.

We now further investigate the structure of the moduli space. From the proposition we have for each $t\in M$ a natural isomorphism $\lambda_t : T_tM \to \mathcal{H}^2_+(L_t,\mathbb{R})$, where $\mathcal{H}^2_+(L_t,\mathbb{R})$ is the space of harmonic self-dual $2$-forms on $L_t$. Since the self-dual cohomology on $L_t$ depends on the metric which varies with $t$ we do not have a natural identification of all the $\mathcal{H}^2_+(L_t,\mathbb{R})$. However we can still take the cohomology class of $\lambda_t$ giving an element of $H^2(L,\mathbb{R})$. Therefore we have a natural $H^2(L,\mathbb{R})$-valued $1$-form $[\lambda]$ on $M$. Repeating the proof of \ref{speclagprop} we find
\begin{prop}
The $1$-form $[\lambda]$ on $M$ is closed. Thus locally we may write $[\lambda] = du$ where $u : M \to H^2(L,\mathbb{R})$ is an immersion unique up to a translation.
\end{prop}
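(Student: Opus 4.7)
The argument will parallel the special Lagrangian proof (Proposition \ref{speclagprop}), replacing the symplectic form $\omega$ with the closed associative $3$-form $\phi$ and replacing the Poincar\'e dual of a $1$-cycle with that of a $2$-cycle. First I would set $\mathcal{M} = M \times L$ and choose a smooth family map $f : \mathcal{M} \to X$ with $f_{t_0} = \mathrm{id}_L$, so that $f_t = f|_{\pi^{-1}(t)}$ is a diffeomorphism onto $L_t$. For local coordinates $t^1,\dots,t^m$ on $M$ the vector fields $\nu_i(a) = f_*(t,a)\partial_i$ represent the first-order deformations, and by McLean's theorem $\lambda_t(\partial_i)$ is identified with $\theta_i(t) = (\iota_{\nu_i}\phi)|_{L_t}$, a harmonic self-dual $2$-form (the tangential component of $\nu_i$ contributes nothing because $\phi|_{L_t} = 0$).

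The central step is to establish the fibre-integral formula
\begin{equation*}
\langle [\lambda], A\rangle = p_*(f^*\phi \wedge \eta_A)
\end{equation*}
for every class $A \in H_2(L,\mathbb{R})$ with harmonic Poincar\'e dual $\eta_A$ (pulled back from $L$ to $\mathcal{M}$ via the projection), where $p_*$ denotes integration over the fibre $L$. Since $\phi|_{L_t} = 0$, the pure $L$-component of $f^*\phi$ in the $M \times L$ bidegree decomposition vanishes, so one can write $f^*\phi = dt^i \wedge \hat\theta_i + (\text{terms of $M$-degree} \ge 2)$ with $\hat\theta_i|_{\pi^{-1}(t)} = (f_t)^*\theta_i(t)$. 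Only the $(1,2)$-piece contributes to the $(1,4)$-part of $f^*\phi \wedge \eta_A$, and integrating over $L$ together with Poincar\'e duality on each $L_t$ and the isotopy identification $(f_t)^* : H^2(L_t,\mathbb{R}) \to H^2(L,\mathbb{R})$ recovers $\langle[\lambda_t(\partial_i)], A\rangle\, dt^i$ exactly as in the special Lagrangian computation.

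With this formula, closedness of $[\lambda]$ is immediate: $f^*\phi$ is closed because $d\phi = 0$ on a $G_2$-manifold, $\eta_A$ is closed, and fibre integration commutes with $d$ over compact oriented fibres, so $d\langle[\lambda], A\rangle = 0$ for all $A$ and hence $d[\lambda] = 0$. The Poincar\'e lemma on the simply connected $M$ then yields a primitive $u : M \to H^2(L,\mathbb{R})$ with $du = [\lambda]$, unique up to an additive constant. Finally $du_t$ is the composition of the isomorphism $\lambda_t : T_tM \to \mathcal{H}^2_+(L_t,\mathbb{R})$ with the inclusion $\mathcal{H}^2_+(L_t,\mathbb{R}) \hookrightarrow H^2(L_t,\mathbb{R}) \cong H^2(L,\mathbb{R})$, which is injective, so $u$ is an immersion. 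I do not anticipate a significant obstacle; the only noteworthy difference from the special Lagrangian case is that here $\lambda_t$ hits only the self-dual harmonic forms, so $u$ is generally only an immersion rather than a local diffeomorphism (the image has codimension $b^2_-(L)$ in $H^2(L,\mathbb{R})$).
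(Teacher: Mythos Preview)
Your proposal is correct and is exactly the approach the paper takes: it simply says ``Repeating the proof of \ref{speclagprop} we find'' the result, i.e.\ one replaces $\omega$ by the closed $3$-form $\phi$, replaces $1$-cycles by $2$-cycles, and uses the same fibre-integration identity $\langle[\lambda],A\rangle = p_*(f^*\phi\wedge\eta_A)$ together with $d\phi=0$. Your observation that $u$ is only an immersion (image of codimension $b^2_-(L)$) rather than a local diffeomorphism is precisely the point the paper is making in changing the wording of the conclusion.
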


We will see that the map $u$ is in a sense the correct analogy of the map $F$ obtained in the special Lagrangian case. From the previous proposition we immediately have
\begin{prop}
The natural $L^2$-metric $G$ on $M$ given by
\begin{equation}
G(U,U) = \int_L \lambda(U) \wedge \lambda(U)
\end{equation}
is the pull-back under $u:M \to H^2(L,\mathbb{R})$ of the non-degenerate inner product on $H^2(L,\mathbb{R})$ given by the intersection form. Since $u$ maps each tangent space to a positive definite subspace, $G$ is positive definite.
\end{prop}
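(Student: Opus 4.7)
The plan is to deduce both claims directly from the previous proposition, which gives $du = [\lambda]$, together with two elementary facts: the intersection form on $H^2(L,\mathbb{R})$ is computed by wedge products of any closed de Rham representatives, and a self-dual harmonic form squares to its pointwise norm times the volume form.

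First, I would unpack the pull-back. For $U \in T_tM$, the previous proposition tells us that $du_t(U) \in H^2(L,\mathbb{R})$ is precisely the cohomology class $[\lambda_t(U)]$ of the harmonic self-dual $2$-form $\lambda_t(U) = \iota_\nu \phi|_{L_t}$. If $Q$ denotes the intersection form, then for any closed representatives $\alpha,\beta$ one has $Q([\alpha],[\beta]) = \int_L \alpha \wedge \beta$; since $\lambda_t(U)$ is harmonic, hence closed, I may take both representatives to be $\lambda_t(U)$ itself and read off
\begin{equation*}
(u^*Q)(U,U) \;=\; Q\bigl([\lambda_t(U)],[\lambda_t(U)]\bigr) \;=\; \int_L \lambda_t(U) \wedge \lambda_t(U) \;=\; G(U,U),
\end{equation*}
which is the first assertion.

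For the positivity statement, I would use self-duality pointwise. Since $\lambda_t(U)$ is self-dual with respect to the induced metric on $L_t$, we have $\lambda_t(U) \wedge \lambda_t(U) = \lambda_t(U) \wedge *\lambda_t(U) = |\lambda_t(U)|^2\,\mathrm{dvol}_{L_t}$. Integrating gives $G(U,U) \geq 0$, with equality only if $\lambda_t(U)$ vanishes identically, which by McLean's isomorphism $\lambda_t : T_tM \to \mathcal{H}^2_+(L_t,\mathbb{R})$ forces $U = 0$. Thus the image subspace $du_t(T_tM) \subset H^2(L,\mathbb{R})$ sits inside the positive cone of the intersection form, and $u$ realises $M$ locally as an immersed positive-definite submanifold of $(H^2(L,\mathbb{R}),Q)$.

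There is no serious obstacle here: the argument is essentially bookkeeping, the one subtle point being to note that although the self-dual decomposition of $\mathcal{H}^2(L_t,\mathbb{R})$ varies with the metric (and hence with $t$), the identification of the $L^2$-pairing on self-dual harmonic forms with the topologically defined intersection pairing on $H^2(L,\mathbb{R})$ is metric-independent, which is exactly why the pull-back statement makes sense globally on $M$.
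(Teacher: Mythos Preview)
Your proof is correct and is exactly the argument the paper has in mind: the paper states this proposition with no proof beyond the phrase ``From the previous proposition we immediately have'', and you have simply unpacked that immediacy using $du = [\lambda]$ together with the fact that on self-dual forms the wedge-square equals the $L^2$-norm density. There is nothing to add.
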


Compared with the special Lagrangian case, we have lost the Lagrangian aspect of $M$ as there is no canonical symplectic structure on $H^2(L,\mathbb{R})$.\\

Next we turn to global issues. Suppose we have two points $s,t \in M$ and two curves joining them. Along the curves we get isotopies between $L_s$ and $L_t$ and hence corresponding isomorphisms of their cohomology. If the two curves are not homotopic these two isomorphisms need not agree, though in either case a basis for $H^2(L_s,\mathbb{Z})$ (modulo torsion) will be sent to a basis for $H^2(L_t,\mathbb{Z})$ preserving the intersection form. Therefore the ambiguity is an element of ${\rm \bf SO}(Q,\mathbb{Z}) = {\rm \bf SO}(Q) \cap {\rm \bf SL}(b^2,\mathbb{Z})$ where $Q$ denotes the intersection form and $b^2$ is the second Betti number of $L$. The unit determinant condition follows since for any closed curve we have an isotopy through coassociative submanifolds preserving the orientation induced by the calibrating form $\psi$. In the case where $L = T^4$, cohomology is generated by the $1$-cocycles so the ambiguity can be thought of as an element of ${\rm \bf SL}(4,\mathbb{Z})$ acting via the representation ${\rm \bf SL}(4,\mathbb{Z}) \to {\rm \bf SO}(3,3,\mathbb{Z})$ on $2$-forms.\\

The above ambiguity prevents us from defining $[\lambda]$ as a global $1$-form on $M$. However we can define $[\lambda]$ on the universal cover $\hat{M}$ of $M$. Then since $[\lambda]$ is closed and $\hat{M}$ simply connected we may write $[\lambda] = d\hat{u}$ giving a developing map $\hat{u} : \hat{M} \to H^2(L,\mathbb{R})$ of the universal cover $\hat{M}$ of $M$. Note that while $[\lambda]$ is defined on $M$ up to ambiguity in ${\rm \bf SO}(Q,\mathbb{Z})$, there is a second ambiguity in $u$, namely translations. Therefore the monodromy representation has the form $\rho : \pi_1(M) \to {\rm \bf SO}(Q,\mathbb{Z}) \ltimes \mathbb{R}^{b^2(L)}$.

\subsection{Cylindrical $G_2$-manifolds}\label{seccyl}
Following \cite{joy2} we call a $G_2$-manifold {\em cylindrical} if it has the form $\hat{X} = X \times S^1$ where $X$ is a Calabi-Yau manifold. In this case we expect a relation between the moduli space of special Lagrangian submanifolds $L \subset X$ and the corresponding coassociative submanifold $\hat{L} = L \times S^1 \subset \hat{X}$. This will be shown to be the case and justifies our claim that we have found the analogous structure on the moduli space of coassociative submanifolds.\\

\begin{defn}
We say that a $G_2$-manifold $(\hat{X},\phi,\hat{g})$ is {\em cylindrical} if there is a Calabi-Yau manifold $(X,g,\omega,\Omega)$ such that $\hat{X}$ is diffeomorphic to $X \times T$, where $T$ is the flat torus $(\mathbb{R}/\mathbb{Z},dt^2)$, and such that the metric $\hat{g}$ and $3$-form $\phi$ are given by
\begin{eqnarray}
\hat{g} &=& g + dt^2 \\
\phi &=& {\rm Re}(\Omega) + \omega \wedge dt.
\end{eqnarray}
\end{defn}

To be more explicit give $\mathbb{C}^3$ a dual basis $(z^1,z^2,z^3)=(e^1+ie^4,e^2+ie^5,e^3+ie^6)$. Let $\omega$ be the K\"ahler form, $\Omega = \Omega_1 + i\Omega_2$ the holomorphic $3$-form, namely
\begin{eqnarray}
\omega &=& e^1 \wedge e^4 + e^2 \wedge e^5 + e^3 \wedge e^6 \\
\Omega &=& (e^1+ie^4)\wedge (e^2+ie^5) \wedge (e^3 + ie^6).
\end{eqnarray}
Therefore
\begin{equation}
\phi = e^{123} - e^{345} + e^{246} - e^{156} + (e^{14} + e^{25} + e^{36})\wedge dt.
\end{equation}
The volume form is $-e^{123456}\wedge dt$. Therefore we have
\begin{eqnarray}
\psi &=& -(e^{456}-e^{126}-e^{153}-e^{423})\wedge dt - e^{2536} - e^{1436} - e^{1425}\\
&=& {\rm Im}(\Omega) \wedge dt - \tfrac{1}{2}\omega^2.
\end{eqnarray}
It is clear that the last identity $\psi = {\rm Im}(\Omega) \wedge dt - \tfrac{1}{2}\omega^2$ will be true for any cylindrical $G_2$-manifold.

Let $L \subset X$ be a compact special Lagrangian submanifold, then $\hat{L} = L \times T \subset \hat{X}$ is a coassociative submanifold. We wish to compare the deformation theory of these two submanifolds. In fact they are easily seen to be the same as follows from the following straightforward proposition
\begin{prop}
Let $(L,g)$ be a compact oriented Riemannian $3$-manifold and let $(\hat{L},\hat{g})$ be the Riemannian $4$-manifold with $\hat{L} = L \times S^1$, $\hat{g} =g + dt^2$ and orientation $dV_{\hat{L}} = dV_L \wedge dt$. Let $\mathcal{H}^1(L)$ denote harmonic $1$-forms on $L$ and $\mathcal{H}^2_+(\hat{L})$ the harmonic self-dual $2$-forms on $\hat{L}$. Then the map $e : \mathcal{H}^1(L) \to \mathcal{H}^2_+(\hat{L})$ defined by
\begin{equation}\label{e}
e(\alpha) = \tfrac{1}{\sqrt{2}}(*_g \alpha + \alpha \wedge dt)
\end{equation}
is an isomorphism and an isometry of $L^2$ metrics.
\end{prop}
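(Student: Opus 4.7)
\medskip

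\noindent\textbf{Proof plan.} The plan is to verify the four things $(i)$ $e(\alpha)$ is self-dual, $(ii)$ $e(\alpha)$ is harmonic when $\alpha$ is, $(iii)$ $e$ is a bijection, and $(iv)$ $e$ preserves the $L^2$-inner product. All four rest on one computation, namely the action of $\ast_{\hat g}$ on forms pulled back from $L$. Taking a local orthonormal coframe $e^1,e^2,e^3$ for $T^*L$ and appending $dt$, so that $dV_{\hat L}=e^1\wedge e^2\wedge e^3\wedge dt$, a short calculation shows that for any $1$-form $\alpha$ on $L$
\begin{equation*}
\ast_{\hat g}\alpha = (\ast_g\alpha)\wedge dt,\qquad \ast_{\hat g}(\alpha\wedge dt)=\ast_g\alpha,
\end{equation*}
and that for any $2$-form $\beta$ on $L$, $\ast_{\hat g}\beta=(\ast_g\beta)\wedge dt$. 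These identities are the workhorse of everything that follows; no estimates or PDE machinery are needed.

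First I would apply the identities above to $e(\alpha)=\tfrac{1}{\sqrt2}(\ast_g\alpha+\alpha\wedge dt)$ to obtain $\ast_{\hat g}e(\alpha)=\tfrac{1}{\sqrt2}(\alpha\wedge dt+\ast_g\alpha)=e(\alpha)$, confirming self-duality. For harmonicity, since any self-dual form on an oriented Riemannian $4$-manifold is harmonic as soon as it is closed, it suffices to check $d\,e(\alpha)=0$. But $d(\ast_g\alpha)=-\ast_g d\ast_g\alpha \cdot(\text{sign})$ vanishes because $\alpha$ is co-closed, and $d(\alpha\wedge dt)=d\alpha\wedge dt$ vanishes because $\alpha$ is closed; so $e$ indeed lands in $\mathcal{H}^2_+(\hat L)$.

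Next I would prove bijectivity. Injectivity is immediate, since recovering the ``$\alpha\wedge dt$'' summand from $e(\alpha)$ recovers $\alpha$. For surjectivity, take any $\omega\in\mathcal{H}^2_+(\hat L)$. The circle acts by isometries on $\hat L$, so averaging $\omega$ over this action yields another harmonic form in the same cohomology class; by uniqueness of harmonic representatives $\omega$ is itself $S^1$-invariant, and therefore splits as $\omega=\beta+\gamma\wedge dt$ with $\beta,\gamma$ pulled back from $L$. Applying the star identities above, self-duality $\ast_{\hat g}\omega=\omega$ forces $\beta=\ast_g\gamma$, while closedness $d\omega=0$ forces $d\gamma=0$ and $d\ast_g\gamma=0$, i.e.\ $\gamma\in\mathcal{H}^1(L)$. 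Then $\omega=e(\sqrt{2}\,\gamma)$, proving surjectivity.

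Finally, to show $e$ is an isometry, I would compute directly. Since $e(\alpha)$ is self-dual, $\langle e(\alpha),e(\alpha)\rangle_{L^2(\hat L)}=\int_{\hat L}e(\alpha)\wedge e(\alpha)$. Expanding, the term $\ast_g\alpha\wedge\ast_g\alpha$ is a $4$-form on the $3$-manifold $L$ and so vanishes, the term with two $dt$ factors vanishes, and the cross term gives $\alpha\wedge\ast_g\alpha\wedge dt=|\alpha|^2\,dV_{\hat L}$ after the standard sign check. Integrating over the unit-length circle produces $\int_L|\alpha|^2\,dV_L=\langle\alpha,\alpha\rangle_{L^2(L)}$. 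The only mildly subtle step is the $S^1$-averaging argument in the surjectivity part; the rest is a pure bookkeeping exercise with the Hodge star on a product.
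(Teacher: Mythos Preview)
The paper does not actually supply a proof of this proposition: it is introduced with the phrase ``as follows from the following straightforward proposition'' and left at that. Your argument correctly fills in the routine verifications the paper omits --- the Hodge-star identities on the product, self-duality and closedness of $e(\alpha)$, the $S^1$-averaging step for surjectivity, and the $L^2$ computation --- and there is nothing to compare against beyond the word ``straightforward''. Your proof is correct; the one point worth tightening is the sentence ``$d(\ast_g\alpha)=-\ast_g d\ast_g\alpha\cdot(\text{sign})$ vanishes because $\alpha$ is co-closed'', which as written is circular in appearance: just say directly that $\alpha$ harmonic implies $\delta\alpha=0$, hence $d\ast_g\alpha=0$.
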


Now suppose we have a deformation of $L$ in $X$ through special Lagrangian submanifolds. Let $\nu$ be the corresponding normal vector field on $L$. We can also consider $\nu$ to be a normal vector field to $\hat{L}$. Then
\begin{eqnarray*}
\iota_\nu \phi &=& \iota_\nu \Omega_1 + \iota_\nu \omega \wedge dt \\
&=& *_g \iota_\nu \omega + \iota_\nu \omega \wedge dt \\
&=& \sqrt{2}e(\iota_\nu \omega).
\end{eqnarray*}

We see now that not only do tangent vectors in the moduli space coincide, but the corresponding metrics coincide, up to a constant factor. Therefore, at least locally, the moduli space of deformations of $L$ in $X$ and $\hat{L}$ in $\hat{X}$ are isometric. More explicitly let $M$ be the (local) moduli space. Let $F = (u,v) : M \to H^1(L,\mathbb{R}) \oplus H^2(L,\mathbb{R})$ be the natural immersion obtained in the special Lagrangian case. Let $U : M \to H^2(\hat{L},\mathbb{R})$ be the natural immersion in the coassociative case. The map $e$ of equation (\ref{e}) can be replaced by a more general isometry $e : H^1(L,\mathbb{R}) \oplus H^2(L,\mathbb{R}) \to H^2(\hat{L},\mathbb{R})$ given by
\begin{equation}
e(\alpha,\beta) = \tfrac{1}{\sqrt{2}}(\beta + \alpha \smallsmile [dt]).
\end{equation}
Then we may take $U = \sqrt{2} e \circ F$ (recall $F$ and $U$ are only defined up to a translation).

\section{Coassociative fibrations}\label{coassf}
We now turn our attention to the case of coassociative fibrations. In particular we will consider a relatively simple class of fibrations we call {\em semi-flat} (perhaps more correctly $4/7$ flat) which extend the similar notion of a semi-flat special Lagrangian fibration. To start though, we look at some global aspects of coassociative fibrations of compact manifolds in Section \ref{cfocm}.

\subsection{Coassociative fibrations of compact manifolds}\label{cfocm}
We will be considering fibrations $\pi : X \to B$ of $G_2$-manifolds such that the fibres are coassociative submanifolds. Such fibrations may have important application to string theory and M-theory \cite{gyz}. 

Given the general difficulty of constructing $G_2$-manifolds it should be no surprise that finding coassociative fibrations with $G_2$-holonomy is a difficult task. It follows from the work of Bryant \cite{bry2} that many examples of coassociative fibrations exist, but these need not be compact or complete. An example of a complete coassociative fibration $S^3 \times \mathbb{R}^4 \to S^3$ is found in \cite{brsa}.\\

We can also consider fibrations which degenerate, that is $\pi$ need not be a locally trivial fibre bundle. In fact we will show that if $X$ is compact and has finite fundamental group then any fibration coassociative or not must degenerate. Even allowing degeneracy it is still difficult to find coassociative fibrations. The author is aware of only one class of examples of compact coassociative fibrations of $G_2$-holonomy \cite{kov}.

\begin{prop}
Let $X$ be a compact $G_2$-manifold with finite fundamental group. Then $X$ admits no locally trivial fibre bundles $X \to B$ onto a $3$-dimensional base.
\begin{proof}
We may assume $X$ is connected. Further it suffices to replace $X$ by its universal cover which is also compact, so we assume $X$ is simply connected. Let $\pi : X \to B$ be a locally trivial fibre bundle where $B$ is a $3$-manifold and let $F$ be one of the fibres. If $F$ is not connected we may replace $B$ by its universal cover $\tilde{B}$ to get another fibration $\pi_1 : X \to \tilde{B}$ with fibre equal to a connected component of $F$. So we may as well assume $B$ is simply connected and $F$ is connected.

Now since $B$ is a compact simply connected $3$-manifold it is oriented and $H_1(B,\mathbb{Z}) = H_2(B,\mathbb{Z}) = 0$. Then by the Hurewicz theorem $\pi_2(B) = 0$ also (of course the Poincar\'e conjecture implies the base is diffeomorphic to the $3$-sphere but we don't need this fact). The long exact sequence of homotopy groups implies that $\pi_1(F) = 0$. Further $F$ must be oriented since $X$ and $B$ are.\\

We will make use of the Leray-Serre spectral sequence (with coefficients in $\mathbb{R}$) in order to gain information on the cohomology of $X$. Since $B$ is simply connected we have $E^{p,q}_2 = H^p(B,\mathbb{R}) \otimes H^q(F,\mathbb{R})$. In order to calculate $H^2(X,\mathbb{R})$ and $H^3(X,\mathbb{R})$ there is only one relevant non-trivial differential to consider $d_3 : E^{0,2}_3 \to E^{3,0}_3$, with $E^{0,2}_3 = H^2(F,\mathbb{R})$ and $E^{3,0}_3 = H^3(B,\mathbb{R})$. We then have $H^2(X,\mathbb{R}) = {\rm ker}(d_3)$ and $H^3(X,\mathbb{R}) = {\rm coker}(d_3)$. However the differential $d_3$ must vanish for otherwise we have $H^3(X,\mathbb{R}) = 0$ which is impossible on a compact $G_2$-manifold. Thus $d_3 = 0$ and it follows that the following maps are isomorphisms
\begin{equation}
\begin{aligned}
i^* &: H^2(X , \mathbb{R}) \to H^2(F,\mathbb{R}) \\
\pi^* &: H^3(B,\mathbb{R}) \to H^3(X,\mathbb{R})
\end{aligned}
\end{equation}
where $i^*$ is induced from the inclusion of some fibre $i : F \to X$ and $\pi^*$ is induced by the projection $\pi : X \to B$.\\

Let $\phi$ be the $G_2$ $3$-form on $X$. Then the cohomology class of $\phi$ has the form $\left[ \phi \right] = c \pi^* \left[ dvol_B \right]$ where $dvol_B$ is a volume form on $B$ such that $\int_B dvol_B = 1$ and $c$ is some non-vanishing constant. We claim that $\pi^*(dvol_B)$ is Poincar\'e dual to the fibre $F$. This is a straightforward consequence of fibre integration. Therefore if $\mu$ is a closed $4$-form on $X$ we have
\begin{equation}\label{poinc}
\int_F i^*\mu = \int_X \mu \wedge \pi^*(dvol_B) = c^{-1} \int_X \mu \wedge \phi.
\end{equation}
Let us recall two cohomological properties of compact $G_2$ manifolds \cite{joyce} where we continue to assume that $H^1(X,\mathbb{R}) = 0$. First there is a symmetric bilinear form $\langle \, , \, \rangle$ on $H^2(X,\mathbb{R})$ given by
\begin{equation}\label{pairing}
\langle \eta , \xi \rangle = \int_X \eta \wedge \xi \wedge \phi.
\end{equation}
This form is negative definite. Secondly if $p_1(X) \in H^4(X,\mathbb{R})$ is the first Pontryagin class of $X$ then
\begin{equation}\label{lessthanzero}
\int_X p_1(X) \wedge \phi \, < 0.
\end{equation}
Combining (\ref{pairing}) with (\ref{poinc}) and the fact that $i^* : H^2(X,\mathbb{R}) \to H^2(F,\mathbb{R})$ is an isomorphism we find that the intersection form on $F$ is negative definite. Hence Donaldson's theorem implies the intersection form of $F$ is diagonalisable, i.e. of the form ${\rm diag}(-1,-1, \dots , -1)$. Now $F$ is a spin manifold because $X$ is spin and the normal bundle $N$ of $F$ in $X$ is trivial so that $w_2(TF) = w_2(TF \oplus N) = i^* w_2(TX) = 0$ where $w_2$ denotes the second Stiefel-Whitney class. But now from Wu's formula the intersection form of $F$ must be even. Therefore the intersection form must be trivial and $H^2(F,\mathbb{R}) = 0$.

Now we also have that $p_1(F) = p_1(TF \oplus N) = i^*p_1(X)$. Therefore
\begin{equation*}
\int_X p_1(X) \wedge \phi = c \int_F p_1(F) = 0
\end{equation*}
where the last equality follows from the Hirzebruch signature theorem. But this contradicts (\ref{lessthanzero}), hence such fibre bundles $\pi : X \to B$ can not exist.
\end{proof}
\end{prop}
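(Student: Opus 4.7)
The plan is to argue by contradiction, exploiting the very restrictive cohomological properties of compact $G_2$-manifolds (in particular the negative-definite pairing on $H^2$ defined by cupping with $\phi$, and the sign constraint on $\int_X p_1(X)\wedge\phi$) together with Donaldson's theorem on negative-definite $4$-manifolds. First I would pass to the universal cover, which is again compact since $\pi_1(X)$ is finite, so without loss of generality $X$ is simply connected. If $\pi:X\to B$ is a locally trivial fibration onto a $3$-manifold, then $B$ is compact and orientable; by passing to its universal cover (and then to a connected component of the fibre) I can further assume $B$ and the fibre $F$ are connected and simply connected. Thus $B$ is a compact simply connected $3$-manifold, giving $H_1(B)=H_2(B)=0$, and the long exact sequence of homotopy groups forces $\pi_1(F)=0$; in particular $F$ is a closed, oriented, simply connected $4$-manifold, hence spin and with diagonalisable intersection form only when Donaldson applies.

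Next I would run the Leray--Serre spectral sequence with real coefficients. Because $B$ is simply connected, $E_2^{p,q}=H^p(B)\otimes H^q(F)$, and the only differential that can affect $H^2$ or $H^3$ of $X$ is $d_3:E_3^{0,2}=H^2(F)\to E_3^{3,0}=H^3(B)$. Since $X$ is a compact $G_2$-manifold, $H^3(X,\mathbb{R})\neq 0$ (the class $[\phi]$ is nonzero), so $d_3$ cannot be surjective, hence $d_3=0$. This gives the two isomorphisms $i^*:H^2(X)\xrightarrow{\sim}H^2(F)$ and $\pi^*:H^3(B)\xrightarrow{\sim}H^3(X)$. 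Consequently $[\phi]=c\,\pi^*[\mathrm{dvol}_B]$ for some $c\neq 0$, and $\pi^*[\mathrm{dvol}_B]$ is Poincar\'e dual to the fibre class, so for any closed $4$-form $\mu$ on $X$ I obtain the fibre-integration identity $\int_F i^*\mu=c^{-1}\int_X\mu\wedge\phi$.

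Now I would import the two key facts about compact $G_2$-manifolds with $b^1=0$: the pairing $\langle\eta,\xi\rangle=\int_X\eta\wedge\xi\wedge\phi$ on $H^2(X,\mathbb{R})$ is negative definite, and $\int_X p_1(X)\wedge\phi<0$. Combining the first with the fibre-integration formula and the isomorphism $i^*$, the intersection form $Q_F$ of $F$ is negative definite. Since $F$ is simply connected the normal bundle of $F$ in $X$ is trivial, so $w_2(F)=i^*w_2(X)=0$ and $F$ is spin, which by Wu's formula forces $Q_F$ to be even. Donaldson's theorem, however, says a negative-definite intersection form on a smooth closed simply connected $4$-manifold is diagonalisable as $\mathrm{diag}(-1,\dots,-1)$; the only such form that is also even is the trivial one, so $H^2(F,\mathbb{R})=0$ and $\sigma(F)=0$. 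But then $p_1(F)=i^*p_1(X)$ and the Hirzebruch signature theorem yield $\int_F p_1(F)=3\sigma(F)=0$, and hence $\int_X p_1(X)\wedge\phi=c\int_F p_1(F)=0$, contradicting the sign constraint.

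The main obstacle, and the step I would check most carefully, is the spectral-sequence analysis: one has to verify not just that $d_3=0$ but also that the resulting class $[\phi]$ genuinely transgresses the fibre class (so that $\pi^*\mathrm{dvol}_B$ is Poincar\'e dual to $F$ and the fibre-integration identity holds with a nonzero constant). Everything else is a careful bookkeeping of standard $4$-manifold and $G_2$ results: spin-ness of $F$ via triviality of the normal bundle, Wu's formula, Donaldson's theorem, and the signature theorem. The argument uses no coassociativity of the fibres anywhere, so in fact the proposition rules out all locally trivial $3$-dimensional-base fibrations, not only the coassociative ones.
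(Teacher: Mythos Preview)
Your argument is essentially identical to the paper's: the same reduction to simply connected $X$, $B$, $F$, the same Leray--Serre computation forcing $d_3=0$, the same fibre-integration identity, and the same endgame via Donaldson, Wu, and Hirzebruch against the two $G_2$ cohomological inequalities. One small slip: triviality of the normal bundle of $F$ has nothing to do with $\pi_1(F)=0$ --- it is trivial simply because $F$ is a fibre, so its normal bundle is canonically $T_bB\times F$.
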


\begin{rem}
The above proposition assumed that $X$ is a $G_2$-manifold with finite fundamental group. For a compact $G_2$-manifold $X$ this is equivalent to the holonomy of $X$ being equal to the whole of $G_2$ \cite{joyce}.
\end{rem}

Next we consider the question of what smooth fibres a compact coassociative fibration can have. Clearly if $F$ is such a fibre we must have $b^2_+(F) \ge 3$ and moreover $\wedge^2_+ T^*F$ has a trivialisation by harmonic forms (in particular $F$ has an ${\rm SU}(2)$-structure). From \cite{bry2} any such $4$-manifold is a fibre of a coassociative fibration, though it need not be compact or complete. We also have:
\begin{prop}\label{riemfibr}
Let $\pi : X \to B$ be a coassociative fibration with compact fibres. Then the base $B$ can be given a metric such that $\pi$ is a Riemannian submersion (around non-singular fibres) if and only if the fibres are Hyperk\"ahler. Moreover in this case the base identifies with the moduli space of deformations of a fibre and the base metric $g_B$ is related to the moduli space $L^2$ metric $g_{L^2}$ by
\begin{equation}\label{basemetric}
g_B = \frac{1}{2 {\rm vol(F)}}g_{L^2}.
\end{equation}
\begin{proof}
Let $b \in B$ and $F = \pi^{-1}(b)$. Choose a basis $v_i$ for $T_bB$ and let $\tilde{v}_i$ be the horizontal lifts. Define corresponding harmonic $2$-forms $\omega_i$ by $\omega_i = \iota_{\tilde{v}_i} \phi |_F$. Then the $\omega_i$ are a frame for $\wedge^2_+T^*F$. Now if $g$ is the metric on $X$ then
\begin{equation}
g(A , B) dvol_X = \frac{1}{6}\phi \wedge \iota_A \phi \wedge \iota_B \phi.
\end{equation}
From this it follows that
\begin{equation}\label{wedge}
\omega_i \wedge \omega_j = 2g(\tilde{v}_i , \tilde{v}_j ) dvol_F.
\end{equation}
Now suppose $B$ has a metric such that $\pi$ is a Riemannian submersion. Choose the $v_i$ to be an orthonormal basis. Then the $\omega_i$ are Hyperk\"ahler forms on $F$. Conversely suppose $F$ is a compact Hyperk\"ahler $4$-manifold. Then since $b^2_+(F) = 3$ we have that the space of self-dual harmonic $2$-forms is $3$-dimensional, spanned by the Hyperk\"ahler forms. Thus the $\omega_i$ are constant linear combinations of the Hyperk\"ahler forms, hence $g(\tilde{v}_i , \tilde{v}_j)$ is constant along the fibres. So $B$ can be given a metric $g_B$ making $\pi$ a Riemannian submersion. In this case since $b^2_+(F) = 3$ we see that the base exhausts all deformations of a fibre through coassociative submanifolds so that the base identifies with the moduli space of deformations. Moreover if we integrate (\ref{wedge}) over $F$ we get (\ref{basemetric}).
\end{proof}
\end{prop}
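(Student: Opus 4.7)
The strategy is to exploit the pointwise identity $g(A,B)\,{\rm dvol}_X = \tfrac{1}{6}\phi\wedge\iota_A\phi\wedge\iota_B\phi$ relating the $G_2$ metric to the associative $3$-form. Fix $b \in B$, set $F = \pi^{-1}(b)$, pick any basis $v_1, v_2, v_3$ of $T_b B$ with horizontal lifts $\tilde v_1, \tilde v_2, \tilde v_3$, and define $\omega_i = \iota_{\tilde v_i}\phi|_F$ on $F$. By McLean's theorem together with the coassociativity of the fibres, each $\omega_i$ is a self-dual harmonic $2$-form on $F$. Substituting $A = \tilde v_i$, $B = \tilde v_j$ into the identity and restricting to $F$ yields the pointwise relation
\begin{equation*}
\omega_i \wedge \omega_j \;=\; 2\,g(\tilde v_i, \tilde v_j)\,{\rm dvol}_F,
\end{equation*}
which is the single computation driving both directions.

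For the forward direction, suppose $B$ carries a metric making $\pi$ a Riemannian submersion and take $v_1, v_2, v_3$ orthonormal. The identity above becomes $\omega_i \wedge \omega_j = 2\delta_{ij}\,{\rm dvol}_F$, which is exactly the algebraic condition for $\omega_1, \omega_2, \omega_3$ to be a Hyperk\"ahler triple of parallel self-dual forms on the compact $4$-manifold $F$. Hence every smooth fibre is Hyperk\"ahler.

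For the converse, assume the fibres are compact Hyperk\"ahler. The main obstacle is showing that $g(\tilde v_i, \tilde v_j)$ is constant along $F$, since only then does it descend to a well-defined tensor on $B$. Here I would use that on a compact Hyperk\"ahler $4$-manifold one has $b^2_+(F) = 3$ and the space of harmonic self-dual $2$-forms is globally trivialised by the three Hyperk\"ahler forms; the $\omega_i$, being harmonic and self-dual, must be constant linear combinations of this global frame, so $\omega_i \wedge \omega_j$ is a constant multiple of ${\rm dvol}_F$ and constancy of $g(\tilde v_i, \tilde v_j)$ follows. This descends to a bona fide Riemannian metric $g_B$ on $B$ making $\pi$ a Riemannian submersion.

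Since McLean's theorem identifies $T_{[F]} M$ with $\mathcal{H}^2_+(F,\mathbb R)$, which is now $3$-dimensional and matches $\dim B$, the natural classifying map $B \to M$ is a local diffeomorphism, identifying $B$ with the moduli space. For the metric comparison, integrate the pointwise identity over $F$ to obtain
\begin{equation*}
\int_F \omega_i \wedge \omega_j \;=\; 2\,{\rm vol}(F)\,g_B(v_i, v_j),
\end{equation*}
and recognise the left-hand side, via the correspondence $v_i \leftrightarrow \omega_i = \lambda(v_i)$, as $g_{L^2}(v_i, v_j)$. This yields $g_B = \tfrac{1}{2\,{\rm vol}(F)}\,g_{L^2}$, completing the proof.
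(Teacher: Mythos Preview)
Your proposal is correct and follows essentially the same approach as the paper: both hinge on the pointwise identity $g(A,B)\,{\rm dvol}_X = \tfrac{1}{6}\phi\wedge\iota_A\phi\wedge\iota_B\phi$, derive $\omega_i\wedge\omega_j = 2g(\tilde v_i,\tilde v_j)\,{\rm dvol}_F$, and use it in both directions together with $b^2_+(F)=3$ for the Hyperk\"ahler case, then integrate over $F$ for the metric comparison.
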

Note that a smooth Hyperk\"ahler $4$-manifold is either a torus or a K3 surface. Therefore these are likely candidates for the fibres of a compact coassociative fibration.\\

We now move on to the question of what sort of singularities can occur for a compact coassociative fibration $f:X \to B$ and what does the discriminant locus $\Delta = \{ b \in B \, | \, \exists \, x \in f^{-1}(b), \, {\rm rank}(df_x) < 3 \} \subset B$ look like?

In the case of special Lagrangian fibrations $Y \to B$ that are sufficiently well behaved the discriminant locus has codimension $2$ \cite{bai}. Under assumptions that the singularities are well behaved Baier \cite{bai} shows that if $\Delta$ is smooth then $Y$ has vanishing Euler characteristic, so generally we expect $\Delta$ not to be smooth.

Returning to the case of coassociative fibrations $X \to B$ we might likewise expect under reasonable assumptions on the singularities that the discriminant locus $\Delta \subset B$ has codimension $2$. Interestingly the constraint on smoothness in the special Lagrangian case no longer seems to be an issue since $X$ is odd dimensional, hence always has vanishing Euler characteristic. Moreover Kovalev has constructed examples of compact coassociative K3 fibrations with discriminant locus a smooth link \cite{kov}.\\

We will not investigate the issue of singularities in any depth and instead we will simply provide a model for producing examples of compact coassociative fibrations on manifolds with $G_2$-structures with torsion, that is the $3$-form will not be closed.

Suppose we have a compact Hyperk\"ahler $8$-manifold $M$ with holomorphic Lagrangian fibration $\pi : M \to \mathbb{CP}^2$ (with singularities). That is if $\omega_I, \omega_J, \omega_K$ are the Hyperk\"ahler forms then the non-singular fibres of $\pi$ are complex submanifolds with respect to $\omega_I$ and are Lagrangian with respect to the holomorphic symplectic form $\Omega = \omega_J + i\omega_K$. Note that the non-singular fibres are necessarily tori. We can give $M$ the structure of a ${\rm Spin}(7)$-manifold where the $4$-form is
\begin{equation}
\Phi = \frac{1}{2}\omega_I^2 + \frac{1}{2}\omega_J^2 - \frac{1}{2}\omega_K^2.
\end{equation}
This makes the fibres of $\pi$ into Cayley $4$-folds, indeed since we can write $\Phi = \omega_I^2/2 + \Omega \wedge \overline{\Omega}/2$ it follows that $\Phi|_{{\rm ker}\pi_*} = \omega_I^2/2$ which is the volume form on the fibres since the fibres are complex submanifolds. We have therefore produced an example of a compact Cayley $4$-fold fibration (albeit with holonomy in ${\rm Sp}(2)$). Note that the discriminant locus $\Delta \subset \mathbb{CP}^2$ is an algebraic curve $\mathbb{CP}^2$.

To get a coassociative fibration let us take a smooth embedded $3$-sphere $S^3 \subset \mathbb{CP}^2$ that meets $\Delta$ transversally and avoids any singular points of $\Delta$. Therefore if $\Delta$ has real dimension $2$ then $\Delta \cap S^3$ will be a smooth link in $S^3$. Let $X = \pi^{-1}(S)$. We claim that $X$ has an almost $G_2$ structure such that $\pi: X \to S^3$ is a coassociative fibration with discriminant locus the link $\Delta \cap S^3$. To see this let $v$ be the unit normal to $TX$ and let $v^* = g(v, \, )$ where $g$ is the metric on $M$. Then on $TM|_X$ we may write
\begin{equation}
\Phi = v^* \wedge \phi + \psi
\end{equation}
where $\iota_v \phi =  \iota_v \psi = 0$. Then $\phi$ defines an associative $3$-form on $X$ which is generally not closed and $\psi$ is the corresponding $4$-form. Note on the other hand that $\psi$ is closed since $\Phi|_X = \psi$. This also shows that the smooth fibres of $\pi$ are coassociative submanifolds.
To make this example more explicit let $f:S \to \mathbb{CP}^1$ be an elliptic fibration of a K3 surface over $\mathbb{CP}^1$. Then we will take our Hyperk\"ahler $8$-manifold $M$ to be the Hilbert scheme ${\rm Hilb}^2S$ of pairs of points on $S$. This can be concretely described as follows: the product $S \times S$ has a natural $\mathbb{Z}_2$-action given by exchanging points. The Hilbert scheme ${\rm Hilb}^2S$ is obtained from the quotient $(S \times S) / \mathbb{Z}_2$ by blowing up the diagonal. Now if we combine the map $M = {\rm Hilb}^2S \to (S \times S) / \mathbb{Z}_2$ with the map $f : S \to \mathbb{CP}^1$ we see that we can map each point of $M$ to a pair of unordered points in $\mathbb{CP}^1$ which we can think of as the zeros of a quadratic polynomial, hence we get an induced map $\pi : M \to \mathbb{CP}^2$. This is in fact an example of holomorphic symplectic fibration.\\

To complete our example we should discuss what sort of singularities can be obtained and what the discriminant locus looks like. Matsushita \cite{mat} determines the types of singularities that can occur (except over a finite set of points) in a holomorphic symplectic fibration of the type we are considering. In fact most of the singularities listed in \cite{mat} are realised by taking the Hilbert scheme of two points on a K3 elliptic fibration in the previously described way. From this one should be able to the determine the monodromy representation on the fibre homology. It is also known the type of links that can occur as the result of placing a $3$-sphere around a singularity of a curve in $\mathbb{CP}^2$ \cite{boi}. For example a singularity of the form $z_1^p + z_2^q = 0$ (in affine coordinates) where $p$ and $q$ are coprime leads to a $(p,q)$-torus knot.

\subsection{Semi-flat coassociative fibrations}\label{secsemiflat}

\begin{defn}
Let $X$ be a $G_2$-manifold. We say that $X$ is {\em semi-flat} if there is a $T^4$-action on $X$ preserving the $3$-form and such that the orbits are coassociative tori.
\end{defn}

In this section we will denote the metric associated to a $G_2$-manifold $X$ by $g$. The cross product $ \times : \wedge^2 TX \to TX$ is then defined by the relation
\begin{equation}
g( A \times B , C) = \phi(A,B,C)
\end{equation}
for all tangent vectors $A,B,C$ in a given tangent space.\\

Let $X$ be a semi-flat $G_2$-manifold. We have a distribution $V$ on $X$ tangent to the $T^4$-action which we call the vertical distribution. We also have the corresponding distribution of normals $V^\perp$ which we call the horizontal distribution. We will show that the horizontal distribution is integrable.

\begin{lemp}
Let $K$ be a vector field preserving the $3$-form on a $G_2$-manifold. Then $K$ preserves the metric and $4$-form.
\begin{proof}
We note that for any two tangent vectors $A,B$
\begin{equation}\label{gdvol}
g(A , B) dvol = \frac{1}{6}\phi \wedge \iota_A \phi \wedge \iota_B \phi.
\end{equation}
This is an isomorphism $TX \to T^*X \otimes \wedge^7 T^*X$. Taking the determinant gives an isomorphism $\wedge^7 TX \to (\wedge^7 T^*X)^8$ or a section of $(\wedge^7 T^*X)^9$. This is essentially $dvol^9$. Since $K$ preserves $\phi$ it now follows that $K$ preserves $dvol$ and hence by (\ref{gdvol}) the metric. Finally $\psi = *\phi$ must then be preserved as well.
\end{proof}
\end{lemp}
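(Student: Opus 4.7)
The plan is to exploit the fact that, on a $G_2$-manifold, the metric (and hence the Hodge star, and hence $\psi = *\phi$) is completely determined by $\phi$ alone via the identity
\begin{equation*}
g(A,B)\,dvol = \frac{1}{6}\phi \wedge \iota_A \phi \wedge \iota_B \phi.
\end{equation*}
Since the right-hand side is built tensorially from $\phi$, if $\mathcal{L}_K\phi = 0$ then $\mathcal{L}_K$ also annihilates the right-hand side, so the tensor $h \in S^2 T^*X \otimes \wedge^7 T^*X$ defined by $h(A,B) = g(A,B)\,dvol$ is $K$-invariant.

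First I would separate $g$ from $dvol$ in the invariant $h$. View $h$ as a symmetric bundle map $TX \to T^*X \otimes \wedge^7 T^*X$ and take its determinant. Since $h = g \otimes dvol$, in dimension seven we get $\det h = \det g \otimes dvol^{\otimes 7} = dvol^{\otimes 9}$ (up to a universal non-zero constant, using $\det g = dvol^{\otimes 2}$). Because $h$ is $K$-invariant, so is $\det h$, hence $dvol$ is $K$-invariant. Feeding this back into the identity displayed above forces $g(A,B)$ itself to be $K$-invariant for every pair $A,B$, so $\mathcal{L}_K g = 0$.

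Once $K$ preserves both $g$ and the orientation $dvol$, it preserves the Hodge star operator, and therefore $\psi = *\phi$ is $K$-invariant as well. The only step with any content is the extraction of $dvol$ from the combined tensor $g \otimes dvol$; I would expect the minor obstacle there to be bookkeeping of the $\wedge^7 T^*X$ weights to confirm the exponent, but this is routine linear algebra on a single tangent space and does not require any analytic or differential-geometric input beyond non-degeneracy of $g$.
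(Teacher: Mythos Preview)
Your proposal is correct and is essentially the same argument as the paper's: both use the identity $g(A,B)\,dvol=\tfrac{1}{6}\phi\wedge\iota_A\phi\wedge\iota_B\phi$ to obtain a $K$-invariant bundle map $TX\to T^*X\otimes\wedge^7 T^*X$, take its determinant to extract $dvol^{\otimes 9}$ (hence $dvol$) as $K$-invariant, deduce $\mathcal{L}_K g=0$, and conclude $\mathcal{L}_K\psi=\mathcal{L}_K(*\phi)=0$. Your write-up is in fact slightly more explicit than the paper's about the weight count that yields the exponent $9$.
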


\begin{lemp}\label{formofphi}
Let $X$ be a $G_2$-manifold. Suppose $V$ is a distribution of coassociative subspaces and $V^\perp$ the corresponding orthogonal distribution of associative subspaces. Then locally there exists orthonormal frames $\{e_j\}$, $1\le j \le 4$ for $V$ and $\{a_i\}$, $1\le i \le 3$ for $V^\perp$ such that the $3$-form $\phi$ and $4$-form $\psi$ have the following forms with respect to the corresponding coframe:
\begin{eqnarray}
\phi \! \! &=& \! \! a^{123} + a^1 \! \! \wedge \! (e^{12}+e^{34}) + a^2 \! \! \wedge \! (e^{13} - e^{24}) + a^3 \! \! \wedge \! (-e^{14} - e^{23}) \label{standardphi} \\
\psi \! \! &=& \! \! e^{1234} + a^{23} \! \! \wedge \! (e^{12}+e^{34}) + a^{31} \! \! \wedge \! (e^{13} - e^{24}) + a^{12} \! \! \wedge \! (-e^{14} - e^{23}) \label{standardpsi}.
\end{eqnarray}
\begin{proof}
Follows since $G_2$ is transitive on the set of associative (or coassociative) subspaces \cite{harvlaw}.
\end{proof}
\end{lemp}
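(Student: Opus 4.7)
The plan is to deduce this from the pointwise linear-algebra statement that $G_2$ acts transitively on associative (equivalently coassociative) splittings of $\mathbb{R}^7$, and then to promote the pointwise statement to a local smooth one via a frame-bundle section argument.

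First I would work at a single point $x_0 \in X$ and fix an orthonormal identification $T_{x_0}X \simeq \mathbb{R}^7$ under which $\phi_{x_0}$ becomes the standard $3$-form $\phi_0$ written at the beginning of Section 6.1. Under this identification, $V^\perp_{x_0}$ is some associative $3$-plane. By the result of Harvey and Lawson cited in the paper, $G_2$ acts transitively on the Grassmannian of associative $3$-planes in $\mathbb{R}^7$, and the orthogonal complement of an associative $3$-plane is the corresponding coassociative $4$-plane. Hence there is an element of $G_2$ carrying $V^\perp_{x_0}$ to $\mathrm{span}(e_1,e_2,e_3)$ and $V_{x_0}$ to $\mathrm{span}(e_4,e_5,e_6,e_7)$. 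Composing with the original identification and relabelling $e^1,e^2,e^3 \mapsto a^1,a^2,a^3$ and $e^4,e^5,e^6,e^7 \mapsto e^1,e^2,e^3,e^4$, the formula for $\phi_0$ becomes exactly the expression for $\phi$ in the statement; taking the Hodge dual (or reading off $\psi_0$ directly in the same way) gives the expression for $\psi$. This settles the claim pointwise.

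Second, to make the choice of frame smooth in a neighbourhood of $x_0$, I would observe that the splitting $TX = V^\perp \oplus V$ defines a reduction of the $G_2$-frame bundle of $X$ to the closed subgroup $H \subset G_2$ stabilising the standard associative/coassociative splitting of $\mathbb{R}^7$. The quotient $G_2/H$ is the smooth associative Grassmannian, so this reduction is a smooth principal $H$-bundle over a neighbourhood of $x_0$ and admits smooth local sections. Any such section provides the required smooth orthonormal frames $\{a_i\}$ and $\{e_j\}$, and with respect to the dual coframes $\phi$ and $\psi$ are given by the stated formulas by construction.

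The argument presents no real obstacle: the only mildly concrete point is the rewriting of $\phi_0$ after the relabelling above, which is a direct line-by-line check from the formulas for $\phi_0$ and $\psi_0$ fixed at the start of Section 6.1. Everything else is a formal consequence of the transitivity of $G_2$ on associative planes and the existence of smooth local sections of principal bundles.
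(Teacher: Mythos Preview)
Your proof is correct and follows essentially the same approach as the paper, which simply cites the transitivity of $G_2$ on associative (equivalently coassociative) subspaces from Harvey--Lawson. You have merely filled in the details the paper leaves implicit: the pointwise normal form via transitivity and the passage to smooth local frames via a section of the reduced principal bundle.
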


\begin{prop}
Let $X$ be a semi-flat $G_2$-manifold. The horizontal distribution is integrable.
\begin{proof}
We denote the vertical distribution spanned by the $T^4$-action by $V$ and the corresponding horizontal distribution by $V^\perp$. Let $e_j$, $j=1,2,3,4$ denote a basis for the infinitesimal action. The $e_j$ define a commuting frame for $V$. Moreover since the $e_j$ span a coassociative distribution, the products $e_j \times e_k$ are horizontal vector fields that span $V^\perp$. For any vector fields $A,B$ we have
\begin{equation}\label{leibnitz}
\mathcal{L}_{e_j}(A \times B) = \mathcal{L}_{e_j}(A) \times B + A \times \mathcal{L}_{e_j}(B).
\end{equation}
Indeed for any vector field $C$ we have the relation $g( A \times B , C ) = \phi(A,B,C)$. Taking the Lie derivative with respect to $e_j$ we obtain (\ref{leibnitz}). If we take $A = e_k, B = e_l$ we now obtain $[e_j , e_k \times e_l] = 0$. We can therefore take any three linearly independent products $e_k \times e_l$ to obtain a frame for $V^\perp$. Denote such a frame $a_1,a_2,a_3$. Therefore we have $[e_j, a_i] = [e_j , e_k] = 0$. Let $e^j$ and $a^i$ denote the corresponding coframe. We then have $\mathcal{L}_{e_j}(e^k) = \mathcal{L}_{e_j}(a^i) = 0$.\\

We now introduce a bi-grading on differential forms as follows. We identify $V^*$ and $(V^\perp)^*$ as subbundles of $T^*X$. Then a differential form of type $(p,q)$ is a section of $\wedge^p (V^\perp)^* \otimes \wedge^q V^*$. It follows directly that Lie differentiating with respect to $e_j$ preserves the bi-grading. Let $\alpha$ be a $(p,q)$-form invariant under the $T^4$-action. We claim that $d\alpha$ is a sum of homogeneous components, each of type $(a,b)$ with $b \le q$. Indeed we have $de^j(e_k,e_l) = -e^j([e_k,e_l]) = 0$ and $de^j(e_k,a_i) = -e^j([e_k,a_i]) = 0$ so $de^j$ is of type $(2,0)$. Similarly $da^i$ is of type $(2,0)$. The general result now follows.\\

It follows from (\ref{formofphi}) that $\phi$ is a sum of $(3,0)$ and $(1,2)$ terms and $\psi$ is a sum of $(2,2)$ and $(0,4)$ terms. Now since $\phi$ and $\psi$ are preserved by the $e_j$, their grading homogeneous components must also be preserved. In particular if we decompose $\psi$ as $\psi = \psi_{(0,4)} + \psi_{(2,2)}$, we find that $d(\psi_{(0,4)})$ is of type $(1,4)+(2,3)$ while $d(\psi_{(2,2)})$ is of type $(3,2)$. Therefore since $d\psi = 0$ the $(2,3)$ component of $d(\psi_{(0,4)})$ must vanish. Now since the vertical $V$ consists of coassociative subspaces $\psi_{(0,4)} = fe^{1234}$ for a nowhere vanishing function $f$. We find
\begin{equation}
[d(\psi_{(0,4)})]_{(2,3)} = f de^1 \wedge e^{234} - f e^1 \wedge de^2 \wedge e^{34} + f e^{12} \wedge de^3 \wedge e^4 - fe^{123} \wedge de^4.
\end{equation}
Setting this to zero implies that $de^1 = \dots = de^4 = 0$. It follows that the horizontal distribution is integrable.
\end{proof}
\end{prop}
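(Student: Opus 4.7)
The plan is to apply the Frobenius theorem in dual form: $V^\perp$ is integrable if and only if each of the $1$-forms $e^1,\ldots,e^4$ annihilating $V^\perp$ has exterior derivative lying in the ideal they generate. In fact I would aim for the stronger statement $de^j = 0$ outright, which is the cleanest route given the structure imposed by the $T^4$-action.

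First I would exploit the $T^4$-action to set up a bi-grading on differential forms by horizontal/vertical degree. The key structural observation is that the infinitesimal generators $e_j$ commute among themselves and, because $T^4$ acts by $G_2$-automorphisms hence preserves the cross product, also commute with the horizontal frame $a_i = e_k \times e_l$. Consequently $de^j$ and $da^i$ can have no $(1,1)$ or $(0,2)$ components and are purely of type $(2,0)$. This already provides a partial Frobenius obstruction turned on its head: I now need to show these specific $(2,0)$-forms actually vanish.

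The decisive step is to exploit $d\psi = 0$. From Lemma \ref{formofphi}, decompose $\psi = \psi_{(2,2)} + \psi_{(0,4)}$ with $\psi_{(0,4)} = f\,e^{1234}$ for some nowhere-vanishing function $f$, since $V$ is coassociative and so $\psi|_V$ is a volume form on $V$. Computing $d\psi_{(0,4)}$, the derivatives $de^j$ (being $(2,0)$) contribute a $(2,3)$-form of shape $f\sum_j (-1)^{j-1} de^j \wedge e^{1\cdots\hat{j}\cdots 4}$, while $df \wedge e^{1234}$ contributes only a $(1,4)$ piece (the vertical part vanishes for dimensional reasons, as $V$ is $4$-dimensional). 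Crucially, $d\psi_{(2,2)}$ carries no $(2,3)$ piece: each candidate term $a^{kl} \wedge d(e^{ij})$ would be of type $(4,1)$ and hence zero, since the horizontal distribution is only three-dimensional. So the $(2,3)$ component of $d\psi = 0$ isolates the relation
\[
f \sum_{j=1}^{4} (-1)^{j-1} de^j \wedge e^{1\cdots\hat{j}\cdots 4} = 0.
\]
Since $f$ is nowhere zero and the four vertical $3$-forms $e^{1\cdots\hat{j}\cdots 4}$ are linearly independent, this forces $de^j = 0$ for each $j$, yielding Frobenius integrability of $V^\perp$ (and in fact the stronger statement that the $e^j$ are locally exact).

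The main conceptual obstacle is to recognize that the useful constraint lives precisely in the $(2,3)$ bi-degree piece of $d\psi = 0$; the other components either carry no information about $de^j$ (the $(3,2)$ piece constrains $d\psi_{(2,2)}$ which involves derivatives of the $a^i$), or are forced to vanish for dimensional reasons. Pinning down the bi-graded structure of $\psi$ via the coassociative condition on $V$, which makes $\psi_{(0,4)}$ a multiple of the vertical volume form, is what allows the isolation of the relevant equation.
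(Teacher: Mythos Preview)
Your proposal is correct and follows essentially the same route as the paper: set up the bi-grading via the $T^4$-invariant frame, observe that $de^j$ and $da^i$ are of type $(2,0)$, and then isolate the $(2,3)$ component of $d\psi=0$ to force $de^j=0$. The paper's argument is identical in structure; the only cosmetic difference is that it phrases the vanishing of the $(2,3)$ part of $d\psi_{(2,2)}$ via the general rule ``$d$ on invariant $(p,q)$-forms yields vertical degree $\le q$'' rather than by inspecting terms as you do.
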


From the proof of this proposition we see that on a semi-flat $G_2$-manifold we can find invariant local frames $\{e_j\}$ for the vertical $V$ and $\{a_i\}$ for the horizontal $V^\perp$. Invariance means we have commutation relations $[e_j,e_k] = [e_j,a_i] = 0$. Since $\phi$ is also invariant we can make invariant frame changes such that $\phi$ and $\psi$ have the forms (\ref{standardphi}) and (\ref{standardpsi}). This frame is necessarily orthonormal.\\

Let $w \in U \subset \mathbb{R}^3$ be local coordinates for a leaf of the horizontal distribution. We can take $U$ sufficiently small that we have an embedded submanifold $i:U \to X$. Define a map $f:U \times T^4 \to X$ by $f(w,x) = x \cdot i(w)$ where $x$ acts on $i(w)$ by the $T^4$-action. It is immediate that $f$ is an immersion sending $T(T^4)$ to the vertical distribution and $TU$ to the horizontal. Moreover by sufficiently restricting $U$ we may assume $f$ is injective. Let $\pi : X \to U$ denote the locally defined projection. The fibres of $\pi$ are coassociative submanifolds. Therefore we can identify $U$ as the local moduli space of coassociative deformations of fibres as in Proposition \ref{riemfibr}.\\


The metric $g$ has the form $g = g_V + g_{V^\perp}$ where $g_V$ and $g_{V^\perp}$ are metrics on the vertical and horizontal respectively. Now since $g$ is $T^4$-invariant we have that $g_V$ is a flat metric on each orbit and $g_{V^\perp}$ is the pull-back under $\pi$ of a metric $g_U$ on the base $U$ which is related to the $L^2$ moduli space metric by (\ref{basemetric}).


We see that a semi-flat $G_2$ manifold is a principal $T^4$-bundle $\pi: X \to M$ over a base $M$ that can be identified with the moduli space of deformations of the fibres.

\subsection{Construction of semi-flat $G_2$-manifolds}\label{secconstruct}
We find an equivalent local characterisation of semi-flat $G_2$-manifolds in terms of minimal submanifolds.\\

Suppose for the moment that we have a semi-flat $G_2$-manifold. As usual let us take an invariant local frame $a_1,a_2,a_3,e_1, \dots , e_4$ such that $\phi$ has the form (\ref{standardphi}). Let $w = (w^1,w^2,w^3)$ denote local horizontal coordinates and let $x^i \in \mathbb{R}/\mathbb{Z}$, $1 \le i \le 4$ be standard coordinates for $T^4$. So the $dw^i$ are $(1,0)$-forms and the $dx^a$ are $(0,1)$-forms. Hence we see that we can uniquely write $\phi$ as
\begin{equation}
\phi = dvol_M + dw^i \wedge \theta_i
\end{equation}
where the $\theta_i$ are $(0,2)$-forms. It is also clear that $\theta_i|_{T_w}$ is a harmonic self-dual $2$-form on $T_w$ representing the deformation of $T_w$ in the $\partial / \partial w^i$ direction.\\

Thinking of $M$ as the moduli space of coassociative deformations we recall that there is a locally defined function $u : M \to H^2(T,\mathbb{R})$ such that $du$ is the $H^2(T,\mathbb{R})$-valued $1$-form $w \mapsto [ \theta_i |_{T_w} ]dw^i$. We can give $H^2(T,\mathbb{R})$ coordinates $a_{ij}$ such that $a_{ij}$ corresponds to the cohomology class $[a_{ij}dx^{ij}]$. Therefore we have $6$ functions $u_{ab}$ on $M$ such that $u(w) = [u_{ab}(w)dx^{ab}]$. We then have
\begin{eqnarray*}
du &=& d[u_{ab}dx^{ab}]\\
&=& \left[ \frac{\partial u_{ab}}{\partial w^i} dx^{ab} \right] dw^i \\
&=& [\theta_i |_{T_w}] dw^i.
\end{eqnarray*}
Hence $[\theta_i |_{T_w}] = [\tfrac{\partial u_{ab}}{\partial w^i}(w) dx^{ab} ]$, in fact since the $\theta_i$ have no $dw^j$ terms and are constant with respect to fibre coordinates (being harmonic) we have
\begin{equation}\label{theta}
\theta_i = \frac{\partial u_{ab}}{\partial w^i} dx^{ab}.
\end{equation}

By (\ref{basemetric}) the base metric $g_M$ and $L^2$ moduli space metric on $M$ are related by $g_{L^2} = 2{\rm vol}(T^4)g_M$. We can explicitly integrate the $L^2$ metric. If $g_M = g_{ij}dw^i dw^j$ then
\begin{eqnarray*}
g_{ij} &=& \dfrac{1}{2{\rm vol(T^4)}}\int_{T^4} u_* \left( \frac{\partial}{\partial w^i} \right) \wedge u_* \left( \frac{\partial}{\partial w^j} \right) \nonumber \\
&=& \dfrac{1}{2{\rm vol(T^4)}}\int_{T^4} \frac{\partial u_{ab}}{\partial w^i}dx^{ab} \wedge \frac{\partial u_{cd}}{\partial w^j}dx^{cd} \nonumber \\
&=& \dfrac{1}{2{\rm vol(T^4)}}\epsilon^{abcd} \frac{\partial u_{ab}}{\partial w^i} \frac{\partial u_{cd}}{\partial w^j}
\end{eqnarray*}
where $\epsilon^{abcd}$ is antisymmetric and $\epsilon^{1234}=1$. So we have
\begin{equation}\label{metric}
2g_M(A,B){\rm vol(T^4)}dx^{1234} = u_*(A) \wedge u_*(B),
\end{equation}
so $g_M$ is essentially the pull-back of the wedge product.\\

We are now in a position to reverse the construction. Suppose $M$ is an oriented $3$-manifold with a function $u : M \to H^2(T,\mathbb{R})$, $u = [u_{ab}dx^{ab}]$. We assume that $u_*$ sends the tangent spaces of $M$ into maximal positive definite subspaces of $H^2(T,\mathbb{R})$. Choose a positive constant $\tau$. The significance of $\tau$ is that it represents the volume of the coassociative fibres. We may pull back the intersection form to define a positive definite metric $h$ on $M$ given by
\begin{equation}\label{metrich}
2h(A,B)\tau dx^{1234} = u_*(A) \wedge u_*(B).
\end{equation}
Let $dvol_h$ denote the volume form for this metric. We may then define self-dual $2$-forms $\theta_i$ by equation (\ref{theta}), hence we can define a $3$-form $\phi$ on $M \times T^4$ by
\begin{equation}\label{constructedphi}
\phi = dvol_h + dw^i \wedge \theta_i.
\end{equation}
We can easily verify that $\phi$ is closed by noting that from (\ref{theta}) we have $(\partial/\partial w^j)\theta_i = (\partial/\partial w^i)\theta_j$.\\

It is clear that for any given $u$ and $\tau$, $\phi$ has the correct algebraic form for an associative $3$-form. By (\ref{gdvol}), $\phi$ determines a metric $g$ on $X$ and a corresponding volume form $dvol_X = dvol_M \wedge dvol_T$. It follows from (\ref{constructedphi}) that the induced metric $g$ agrees with $h$ on the horizontal distribution so that $dvol_M= \, dvol_h$. Moreover one can further show $dvol_T = \tau dx^{1234}$.\\

Now consider $\psi = *\phi$. Since the $\theta_i$ are self-dual $2$-forms on each fibre we find
\begin{equation}
\psi = dvol_T + *_3 dw^i \wedge \theta_i
\end{equation}
where $*_3$ denotes the Hodge star with respect to $g$ restricted to the base. We can see that a necessary condition for $\psi$ to be closed is that $c$ is constant. Indeed if $\psi$ is closed then since it is a calibrating form and the fibres of $X$ are isotopic calibrated submanifolds they must have equal volume. Therefore assume $c$ is constant. We calculate
\begin{eqnarray*}
d\psi &=& 0 + d(*_3 dw^i) \wedge \theta_i + *_3 dw^i \wedge d\theta_i \\
&=& \Delta w^i dvol_M \wedge \theta_i + g^{ij} \iota_j dvol_M \wedge dw^k \wedge \frac{\partial^2 u_{ab}}{\partial w^k \partial w^i} dx^{ab} \\
&=& \Delta w^i dvol_M \wedge \frac{\partial u_{ab}}{\partial w^i} dx^{ab} + g^{ij} dvol_M \wedge \frac{\partial^2 u_{ab}}{\partial w^j \partial w^i} dx^{ab} \\
&=& \left( \Delta w^i \frac{\partial u_{ab}}{\partial w^i} + g^{ij} \frac{\partial^2 u_{ab}}{\partial w^j \partial w^i} \right) dvol_M \wedge dx^{ab}.
\end{eqnarray*}
Hence $\psi$ is closed if and only if for each $a,b$ we have
\begin{equation}\label{uequ}
g^{ij} \frac{\partial^2 u_{ab}}{\partial w^j \partial w^i} + \Delta w^i \frac{\partial u_{ab}}{\partial w^i} = 0.
\end{equation}
Note that the Laplacian on functions on the base $\Delta = *^{-1}d*d = -\delta d$ is given by
\begin{equation}
\Delta f = g^{ij} \left(\frac{\partial^2 f}{\partial w^i \partial w^j} - {\Gamma^k}_{ij} \frac{\partial f}{\partial w^k} \right).
\end{equation}
Where ${\Gamma^k}_{ij}$ are the Christoffel symbols
\begin{equation}
{\Gamma^k}_{ij} = \frac{1}{2}g^{km}( \partial_i g_{jm} + \partial_j g_{im} - \partial_m g_{ij} ).
\end{equation}
In particular, applied to a coordinate function we have $\Delta w^k = -g^{ij}{\Gamma^k}_{ij}$. Substituting this into equation (\ref{uequ}) we get
\begin{equation}
\Delta u_{ab} = 0.
\end{equation}
This says that the map $u : M \to H^2(T,\mathbb{R})$ is harmonic where $M$ is given the metric $g$. However the pull-back metric on $M$ induced by $u$ differs from $g$ only by a constant so equivalently $u$ is harmonic with respect to the induced metric. Another way of saying this is that the map $u$ is a minimal immersion or that $M$ is locally embedded as a minimal $3$-submanifold \cite{eells}.
\begin{thm}\label{thethm}
Let $M$ be an oriented $3$-manifold and $u : M \to \wedge^2 \mathbb{R}^4$ a map with the property that $u$ maps the tangent spaces of $M$ into maximal positive definite subspaces of $\wedge^2 \mathbb{R}^4$ and let $\tau$ be a positive constant. Let $h$ be the pull-back metric defined in equation (\ref{metrich}) with volume $dvol_h$. Let $X = M \times \left( \mathbb{R}/\mathbb{Z} \right)^4$ and define $\phi \in \Omega^3(X,\mathbb{R})$ by
\begin{equation}
\phi = \, dvol_h + du,
\end{equation}
where $u$ is considered as a $2$-form on $X$. Then $(X,\phi)$ is a semi-flat $G_2$-manifold if and only if $u$ is a minimal immersion. Moreover every semi-flat $G_2$-manifold locally has this form.
\end{thm}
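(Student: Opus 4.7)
The proof will be a direct verification of both directions using the explicit calculation already carried out in the discussion preceding the theorem. My plan is to treat the two implications separately, and to observe that most of the algebraic work has already been done in establishing formulas (\ref{theta})--(\ref{uequ}).

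For the forward direction, I would first check that the 3-form $\phi = dvol_h + du$ has the correct pointwise algebraic type to define a $G_2$-structure. Given $w \in M$, choose an oriented orthonormal coframe $a^1,a^2,a^3$ for $h$ at $w$ and write the self-dual 2-forms $\theta_i = \partial u/\partial w^i$ (viewed on the torus fibre) in an orthonormal fibre coframe $e^1,\ldots,e^4$. The defining property that $u_*$ maps $T_wM$ to a maximal positive definite subspace of $\wedge^2\mathbb{R}^4$, combined with the scaling built into (\ref{metrich}), forces $\theta_1,\theta_2,\theta_3$ to be an orthonormal basis of the self-dual 2-forms on the fibre, so after an $\mathrm{SO}(3)$ rotation of the $a^i$ we recover exactly the model expression (\ref{standardphi}) of Lemma \ref{formofphi}. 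This makes $\phi$ a genuine $G_2$ 3-form inducing the product metric $g = h + g_{T^4}$ where $g_{T^4}$ is the flat metric on the fibre of volume $\tau$. Closedness $d\phi = 0$ is immediate since $du$ is exact and $dvol_h$ is pulled back from the 3-manifold $M$. Closedness $d\psi = 0$ is precisely the computation already carried out in (\ref{uequ}): writing $\psi = \tau\,dx^{1234} + *_3 dw^i \wedge \theta_i$ and expanding, the condition $d\psi=0$ reduces to $\Delta u_{ab} = 0$ for each $a,b$, which is the defining equation of a harmonic (equivalently, minimal) isometric immersion $u : (M,h) \to H^2(T^4,\mathbb{R})$.

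For the converse direction, suppose $(X,\phi)$ is any semi-flat $G_2$-manifold. I would invoke the structural results already established: the horizontal distribution $V^\perp$ is integrable, so locally $X \cong M \times T^4$ with $\pi : X \to M$ a projection whose fibres are the coassociative $T^4$-orbits. An invariant horizontal-vertical adapted orthonormal coframe exists in which $\phi$ takes the model form (\ref{standardphi}); writing this in terms of local base coordinates $w^i$ on $M$ and flat coordinates $x^a$ on $T^4$, and decomposing by bidegree, the purely horizontal component of $\phi$ is $dvol_M$ while the $(1,2)$-component has the shape $dw^i \wedge \theta_i$ with each $\theta_i$ a $T^4$-invariant self-dual 2-form on the fibre. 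The deformation-theoretic discussion of Section \ref{seccoasub} identifies $M$ locally with the moduli space of coassociative deformations and gives a map $u : M \to H^2(T^4,\mathbb{R})$ with $du = [\theta_i]\,dw^i$; combining with the explicit formula (\ref{theta}) and the metric formula (\ref{metric}), one sees that $\phi$ is reconstructed from $u$ and the fibre volume $\tau = \mathrm{vol}(T^4)$ exactly as in the theorem.

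The hard part, or rather the only part requiring care, will be the algebraic normalisation in the forward direction: one must check that the pointwise assumption that $u_*$ maps into a positive definite three-plane of $(\wedge^2\mathbb{R}^4, \wedge)$, together with the normalising factor $2\tau$ in (\ref{metrich}), is \emph{exactly} what is needed to make $\theta_1,\theta_2,\theta_3$ of the correct norms and mutual inner products to match the self-dual 2-forms $e^{12}+e^{34}$, $e^{13}-e^{24}$, $-e^{14}-e^{23}$ appearing in the model (\ref{standardphi}). Everything else is either a direct computation already performed above or a reassembly of facts already proved, so the theorem follows once this normalisation is verified.
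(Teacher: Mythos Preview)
Your proposal is correct and follows essentially the same approach as the paper: the ``proof'' in the paper is precisely the discussion spread across Section~\ref{secconstruct} preceding the theorem statement, and you have accurately identified and reorganised those steps into the two implications. Your emphasis on the algebraic normalisation is in fact more explicit than the paper's own treatment, which simply asserts that ``$\phi$ has the correct algebraic form for an associative $3$-form'' without spelling out why the positive-definiteness hypothesis and the factor $2\tau$ in (\ref{metrich}) conspire to make the $\theta_i$ an orthonormal basis of self-dual forms on the fibre.
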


We can improve on Theorem \ref{thethm} by determining the global properties of semi-flat $G_2$-manifolds. For completeness we may define a {\em locally semi-flat} $G_2$-manifold as a locally trivial $T^4$-fibration $\pi : X \to M$ with semi-flat local trivialisations. Now $M$ identifies as the moduli space of deformations of the fibres, so we have a developing map $\tilde{u}: \tilde{M} \to \wedge^2 \mathbb{R}^4$ on the universal cover $\tilde{M}$ of $M$ with monodromy representation $\rho:\pi_1(M) \to {\rm \bf SL}(4,\mathbb{Z}) \ltimes \wedge^2 \mathbb{R}^4$. Moreover $\tilde{u}$ is a minimal immersion. Therefore we have a semi-flat $G_2$-manifold $\tilde{X} = \tilde{M} \times \left( \mathbb{R}/\mathbb{Z} \right)^4$. The $3$-form $\tilde{\phi}$ is
\begin{equation*}
\tilde{\phi} = \, dvol_{h} + d\tilde{u}.
\end{equation*}
Clearly $\tilde{\phi}$ can be identified with the lift of the $3$-form $\phi$ on $X$.\\

Considering transition functions for $X$ as a bundle over $M$ we have $X = \tilde{M} \times_\lambda \left( \mathbb{R}/\mathbb{Z} \right)^4$ where $\lambda$ is a representation $\lambda : \pi_1(M) \to {\rm \bf SL}(4,\mathbb{Z}) \ltimes \left( \mathbb{R}/\mathbb{Z} \right)^4$. Now for $\tilde{\phi}$ to descend to $\phi$ on $X$ we must have that the ${\rm \bf SL}(4,\mathbb{Z})$ parts of $\rho$ and $\lambda$ agree.\\

Conversely given an oriented $3$-manifold $M$ we see that locally semi-flat $G_2$-manifolds with base $M$ correspond to data $(\tau,\tilde{u},\rho,\lambda)$ where
\begin{itemize}
\item{$\tau$ is a positive constant,}
\item{$\rho$ is a representation $\rho: \pi_1(M) \to {\rm \bf SL}(4,\mathbb{Z}) \ltimes \wedge^2 \mathbb{R}^4$,}
\item{$\lambda$ is a representation $\lambda: \pi_1(M) \to {\rm \bf SL}(4,\mathbb{Z}) \ltimes \left( \mathbb{R}/\mathbb{Z} \right)^4$,}
\item{$\tilde{u}$ is a map $\tilde{u} : \tilde{M} \to \wedge^2 \mathbb{R}^4$ sending the tangent spaces of $\tilde{M}$ to positive definite subspaces,}
\end{itemize}
with the properties that $\tilde{u}$ is a minimal immersion and the ${\rm \bf SL}(4,\mathbb{Z})$ parts of $\rho$ and $\lambda$ agree. The semi-flat case occurs when the ${\rm \bf SL}(4,\mathbb{Z})$ parts of $\rho$ and $\lambda$ are trivial.

\subsection{Cylindrical semi-flat $G_2$-manifolds}\label{seccylsem}
We will show that the semi-flat $G_2$ equations reduce to the Monge-Amp\`ere equation if we assume that the resulting $G_2$-manifold is cylindrical. This coincides with the result of Hitchin \cite{hit} in which special Lagrangian fibrations with flat fibres are produced from the Monge-Amp\`ere equation.\\

Let us write the $4$-torus $T^4$ as a product $T^4 = T^3 \times T^1$. We have an isometry $e : H^1(T^3,\mathbb{R}) \oplus H^2(T^3,\mathbb{R}) \to H^2(T^4,\mathbb{R})$ as follows:
\begin{equation}
e(\alpha, \beta) = (\alpha \smallsmile [dt] + \beta)/\sqrt{2}
\end{equation}
where $t \in \mathbb{R}/\mathbb{Z}$ is the standard coordinate for $T$. We can express a map $\tilde{u} : M \to H^2(T^4,\mathbb{R})$ as the composition $\tilde{u} = \sqrt{2}e( u,v)$ where $u : M \to H^1(T^3,\mathbb{R})$, $v: M \to H^2(T^3,\mathbb{R})$.\\

From the map $\tilde{u} : M \to H^2(T^4,\mathbb{R})$ we can construct a semi-flat $G_2$-manifold $\pi : X \to M$ as before. Now we assume there is a $T^1$ subgroup of the $T^4$-action on $X$ such that the orbits have constant length. Therefore we can write $X = Y \times T$ and the $G_2$-equations then imply that $X$ is a cylindrical $G_2$-manifold constructed from a Calabi-Yau manifold $Y$. Note also that $Y$ is a semi-flat special Lagrangian fibration over the same base.\\

As in Section \ref{secspeclag} we have a symplectic structure (\ref{sympl}) on $H^1(T^3,\mathbb{R}) \oplus H^2(T^3,\mathbb{R})$ arising from duality such that $(u,v)$ locally embeds $M$ as a Lagrangian submanifold. The maps $u,v$ are local diffeomorphisms hence writing $u = [u_i dx^i]$ and $v = [\tfrac{1}{2!} \epsilon_{ijk}v^idx^{jk}]$ we may take either the $u_i$ or $v^j$ as coordinates on $M$. Moreover since $M$ is Lagrangian it is locally the graph of a gradient, that is there exist functions $\phi,\psi$ on $M$ such that $v^i = \frac{\partial \phi}{\partial u_i}$, $u_i = \frac{\partial \psi}{\partial v^i}$. A little arithmetic shows that after applying an element of ${\rm SL}(4,\mathbb{Z})$ we may assume $t = x^4$. Now we write $\tilde{u} = [\tilde{u}_{ij}dx^{ij}]$ then we find
\begin{eqnarray}
\tilde{u}_{i4} &=& u_i, \\
\tilde{u}_{ij} &=& \tfrac{1}{2}\epsilon_{ijk}v^k, \; \; i,j,k \le 3.
\end{eqnarray}
If we check the formula for the metric induced by $\tilde{u}$ we find that up to a multiple it is given by
\begin{equation}
g = \frac{\partial^2 \phi}{\partial u_i \partial u_j} du_i du_j = \frac{\partial^2 \psi}{\partial v^i \partial v^j} dv^i dv^j.
\end{equation}
Now starting with the relation $v^j = \frac{\partial \phi}{\partial u_j}$ we find $dv^j = \frac{\partial^2 \phi}{\partial u_j \partial u_k}du_k$. Substituting this into the expression for $g$ we find that $h^{ij} = \frac{\partial^2 \psi}{\partial v^i \partial v^j}$ is the inverse of $g_{ij} = \frac{\partial^2 \phi}{\partial u_i \partial u_j}$, that is $g_{ij}h^{jk} = \delta^k_i$. Let us introduce some notation: $\partial_i = \tfrac{\partial}{\partial u_i}$, $\partial^j = \tfrac{\partial}{\partial v^j}$, $\phi_{ij \dots k} = \partial_i \partial_j \dots \partial_k \phi$, $\psi^{ij \dots k} = \partial^i \partial^j \dots \partial^k \psi$. We also note that $\partial^j = \psi^{jk}\partial_k$. Now we can calculate the Christoffel symbols in the $u_i$ coordinates:
\begin{equation}
{\Gamma^k}_{ij} = \frac{1}{2}\psi^{km}\phi_{mij}.
\end{equation}
Now we calculate $\Delta \tilde{u}_{ab}$. First suppose $b=4$ so that $\tilde{u}_{ab} = u_a$. Then we find
\begin{eqnarray}
\Delta u_a &=& \psi^{ij}\left( 0 - \frac{1}{2}\psi^{km}\phi_{mij}\delta^a_k \right) \nonumber \\
&=& -\frac{1}{2}\psi^{ij}\psi^{am}\phi_{mij}. \label{laplacian}
\end{eqnarray}
If we take the relation $\phi_{ij}\psi^{jk} = \delta^k_i$ and differentiate we find
\begin{equation}
\phi_{mij}\psi^{jk} = -\phi_{ij}\phi_{mr} \psi^{rjk}.
\end{equation}
Substituting into equation (\ref{laplacian}) we find that
\begin{equation}
\Delta u_a = \frac{1}{2}\phi_{ij}\psi^{aij}.
\end{equation}
Similarly we find
\begin{equation}
\Delta v^a = \frac{1}{2}\psi^{ij}\phi_{aij}.
\end{equation}
Therefore the $G_2$-equations in this case reduce to
\begin{eqnarray*}
\phi_{ij}\psi^{aij} &=& 0, \\
\psi^{ij}\phi_{aij} &=& 0.
\end{eqnarray*}

Now let us recall Jacobi's formula in the case where the matrix valued function $\phi$ is invertible:
\begin{equation}
d {\rm det}(\phi) = {\rm det}(\phi) {\rm tr}(\phi^{-1}d\phi).
\end{equation}
Therefore the Monge-Amp\`ere equation $d {\rm det}(\phi) = 0$ is equivalent to ${\rm tr}(\phi^{-1}d\phi) = 0$. But
\begin{eqnarray*}
{\rm tr}(\phi^{-1}d\phi) &=& {\rm tr}( \psi d\phi) \\
&=& \psi^{ij}\phi_{aij} du_a.
\end{eqnarray*}
Similarly we can interchange the roles of $\phi$ and $\psi$. Hence the $G_2$ equations in this case are equivalent to the Monge-Amp\`ere equation.
\begin{rem}
We note that in \cite{hit} the Monge-Amp\`ere equation is also shown to be equivalent to $M$ being calibrated with respect a calibrating form that is a linear combination of the volume forms of $H^1(T^3,\mathbb{R})$ and $H^2(T^3,\mathbb{R})$. This agrees with the fact that $M$ is minimally immersed.
\end{rem}

\section{Relation to minimal surfaces}\label{rtms}
We impose an additional degree of symmetry on a semi-flat $G_2$-manifold. The additional symmetry is shown to correspond to the base locally having the structure of a minimal cone in $\mathbb{R}^{3,3}$ which in turn is equivalent to a minimal surface in the quadric of unit vectors.\\

\subsection{Reduction to surface equations}\label{secredtosurf}
Let $\pi:X \to M$ be a semi-flat $G_2$-manifold constructed from a minimal immersion $u: M \to \mathbb{R}^{3,3}$. We suppose there is a vector field $U$ on $X$ such that $U$ commutes with the $T^4$-action. It is not possible for such a vector field to preserve the $3$-form $\phi$ up to scale, that is $\mathcal{L}_U \phi = \lambda \phi$ for some non-vanishing function $\lambda$, for in this case the fibres of the $T^4$-fibration would not have constant volume. Therefore we consider a slightly different symmetry. We suppose that
\begin{equation}\label{scale}
\mathcal{L}_U\phi = \lambda \phi + 2 \lambda dvol_M
\end{equation}
as such a symmetry will preserve the volume of the fibres.\\

Let $U =V+W = V^i  \tfrac{\partial}{\partial w^i} + W^a \tfrac{\partial}{\partial x^a} $. For $U$ to commute with the $T^4$-action we must have the $W^a$ and $V^i$ are independent of $x$. We will show that $W$ is a vector field generated by the $T^4$-action, hence we need only consider $V$.

Recall that locally a semi-flat $G_2$-manifold $X = M \times T^4$ with coordinates $(w,x)$ has the $3$-form
\begin{equation}
\phi = dvol_M + du_{ab} \wedge dx^{ab} = dvol_M + dw^i \wedge \theta_i
\end{equation}
where $u : M \to H^2(T^4,\mathbb{R})$ is a minimal immersion and $\theta_i = \tfrac{\partial u_{ab}}{\partial w^i}dx^{ab}$. For simplicity we will take ${\rm vol}(T^4)=1$.

Since $\phi$ is closed, the condition on $U$ is that $\mathcal{L}_U\phi = d(i_U\phi)= \lambda \phi + 2\lambda dvol_M$. We find that
\begin{equation*}
i_U\phi = i_V dvol_M + V^i \theta_i + dw^i \wedge i_W \theta_i
\end{equation*}
and that
\begin{equation*}
d(i_U \phi) = {\rm div}(V)dvol_M + d(V^i\theta_i) - dw^i \wedge d(W^a \frac{\partial u_{ab}}{\partial w^i}dx^b ).
\end{equation*}
Equating this to (\ref{scale}) we find
\begin{eqnarray}
{\rm div}(V) &=& 3\lambda, \\
d(V^i \theta_i) &=& \lambda dw^i \wedge \theta_i, \\
\frac{\partial W^a}{\partial w^i}\frac{\partial u_{ab}}{\partial w^j} &=& \frac{\partial W^a}{\partial w^j}\frac{\partial u_{ab}}{\partial w^i}.
\end{eqnarray}
We can show that $W$ must be independent of the base variables. This follows from the algebraic fact that if $\theta_1,\theta_2,\theta_3$ are a basis of self-dual $2$-forms and $A_1,A_2,A_3$ are vectors such that $i_{A_i}\theta_j = i_{A_j}\theta_i$ then $A_i = 0$. Hence $W$ is a vector field coming from the $ T^4$-action. Therefore we ignore $W$.\\

Now the equations for $V$ are $3\lambda = {\rm div}(V)$ and $d(V^i \theta_i) = \lambda dw^i \wedge \theta_i = \lambda du$. Taking exterior derivatives we find $0 = d\lambda \wedge du$ hence $\tfrac{\partial \lambda}{\partial w^i}\tfrac{\partial u}{\partial w^j} = \tfrac{\partial \lambda}{\partial w^j}\tfrac{\partial u}{\partial w^i}$, that is $\tfrac{\partial \lambda}{\partial w^i}\theta_j = \tfrac{\partial \lambda}{\partial w^j}\theta_i$. But $\{\theta_i\}$ are linearly independent so we have $d\lambda = 0$. Therefore the second equation for $V$ becomes $d(V^i\theta_i - \lambda u)=0$ or $V^i \tfrac{\partial u}{\partial w^i} = \lambda u + c$ where $c$ is constant. There are now two cases to consider; when $\lambda =0$ and $\lambda \ne 0$.

The $\lambda = 0$ case can readily be shown to correspond to minimal surfaces in $\mathbb{R}^{2,3}$. Such minimal surfaces correspond locally to holomorphic maps $\tau : \Sigma \to Q$ from a Riemann surface into an open subset of a quadric given by $Q = \{ v \in \mathbb{C} \otimes \mathbb{R}^{2,3} \; | \; \langle v , v \rangle = 0, \; \langle v , \overline{v} \rangle > 0 \}$. There is a local Weierstrass representation for the corresponding minimal immersion $\phi$ \cite{li}:
\begin{equation*}
\phi(z) = \phi(0) + {\rm Re}\int_0^z \tau(\zeta) d\zeta.
\end{equation*}
Now assume $\lambda \ne 0$. Then we can redefine $u$ to absorb the constant $c$ so we have $u_*(V) = V^i \tfrac{\partial u}{\partial w^i} = \lambda u$. Now we can rescale $V$ such that $u_*(V) = u$. If the vector field $V$ vanishes at a point $w \in M$ then $u(w) = u_*(V_w)=0$. Now the map $u:M \to H^2(T^4,\mathbb{R})$ is an immersion so $V$ vanishes at isolated points. Away from the zeros of $V$ we may find local coordinates $(x,y,t) \in \Sigma \times I$ such that $V = \tfrac{\partial}{\partial t}$. Hence we have $\tfrac{\partial u}{\partial t}(x,y,t) = u(x,y,t)$. The solution is of the form $u(x,y,t) = u(x,y)e^t$. The induced metric $g$ on $M$ has the property that $g(x,y,t) = e^{2t}g(x,y)$, hence $\tfrac{\partial}{\partial t}$ satisfies ${\rm div}(\tfrac{\partial}{\partial t})=3$ as required.\\

We will attempt to find local coordinates that diagonalise the metric on $M$. By changing the local slice $\Sigma \to M$ along the $t$ direction we have freedom $u(x,y) \mapsto u(x,y)e^{\rho(x,y)}$ where $\rho$ is an arbitrary smooth function on $\Sigma$. We calculate (on $t=0$)
\begin{equation*}
2 g \left( \frac{\partial}{\partial t} , \frac{\partial}{\partial x} \right) dx^{1234} = e^{2\rho}( u \wedge \frac{\partial u}{\partial x} + u \wedge u \frac{\partial \rho}{\partial x} )
\end{equation*}
and similarly for $g \left( \frac{\partial}{\partial t} , \frac{\partial}{\partial y} \right)$. Let us define functions $r,s$ by
\begin{eqnarray}
r u \wedge u &=& -u \wedge \frac{\partial u}{\partial x} \\
s u \wedge u &=& -u \wedge \frac{\partial u}{\partial y}.
\end{eqnarray}
Note that this is possible because $2g(\tfrac{\partial}{\partial t} , \tfrac{\partial}{\partial t} )dx^{1234} = u \wedge u \neq 0$. Then we can locally find a function $\rho(x,y)$ such that $g(\tfrac{\partial}{\partial t} , \tfrac{\partial}{\partial x} ) = g(\tfrac{\partial}{\partial t} , \tfrac{\partial}{\partial y}) = 0$ if and only if $\tfrac{\partial r}{\partial y} = \tfrac{\partial s}{\partial x}$. This follows easily from the definitions of $r$ and $s$. Therefore our metric now has the form
\begin{equation*}
g(x,y,t) = e^{2t}(c(x,y)dt^2 + g_{\Sigma}(x,y) ).
\end{equation*}
Setting $r = e^t$ we may write this as
\begin{equation*}
g(x,y,r) = c(x,y)dr^2 + r^2g_{\Sigma}(x,y).
\end{equation*}
Let us also note that $2c dx^{1234} = u \wedge u$ so that $2\tfrac{\partial c}{\partial x} dx^{1234} =  2 u \wedge \tfrac{\partial u}{\partial x} = 0$ and similarly for $y$. Thus $c$ is constant and by scaling we can assume $c=1$. Therefore the metric is
\begin{equation}
g = dr^2 + r^2 g_{\Sigma}
\end{equation}
and our minimal $3$-fold is a cone.\\

It is well known that given a minimal $M$ submanifold in $S^n$, the cone $M \times (0,\infty)$ over $M$ is a minimal submanifold of $\mathbb{R}^{n+1}$ \cite{sim}. We give a generalization of this result. For a manifold $M$ with (possibly indefinite) metric $g$ let $(CM,\hat{g})$ denote the cone where $CM = M \times (0,\infty)$ , $\hat{g} = dr^2 + r^2g$. Given a map $\phi : (M,g) \to (N,h)$ we define the {\em radial extension} $\hat{\phi} : CM \to CN$ by $\hat{\phi}(x,r) = (\phi(x),r)$.

\begin{prop}
The radial extension $\hat{\phi} : CM \to CN$ is minimal if and only if $\phi : M \to N$ is minimal.
\begin{proof}
First we note that $\phi$ is a Riemannian immersion if and only if $\hat{\phi}$ is a Riemannian immersion. Let us use coordinates $x^1, \dots , x^m$ on $M$ and let $r = x^0$. We use the convention that indices $i,j,k,\dots $ do not take the value $0$. Likewise give $CN$ coordinates $r=y^0,y^1, \dots , y^n$. We have
\begin{equation}
\begin{aligned}
\hat{g}_{00} &= 1, \; \; \; & \hat{g}_{0i} &= 0, \; \; \; & \hat{g}_{ij} &= r^2 g_{ij} \\
\hat{g}^{00} &= 1, \; \; \; & \hat{g}^{0i} &= 0, \; \; \; & \hat{g}^{ij} &= \frac{1}{r^2} g^{ij}.
\end{aligned}
\end{equation}
We readily verify the following relation between the Christoffel symbols on $M$ and $CM$:
\begin{equation}
\begin{aligned}
\leftexp{CM}{{\Gamma^k}_{ij}} &= \leftexp{M}{{\Gamma^k}_{ij}}, \; \; \; &
\leftexp{CM}{{\Gamma^0}_{ij}} &= -rg_{ij}, \; \; \; &
\leftexp{CM}{{\Gamma^k}_{0j}} &= \frac{1}{r}\delta^k_j, \\
\leftexp{CM}{{\Gamma^k}_{00}} &= 0, \; \; \; &
\leftexp{CM}{{\Gamma^0}_{0j}} &= 0, \; \; \; &
\leftexp{CM}{{\Gamma^0}_{00}} &= 0.
\end{aligned}
\end{equation}
There are similar relations for the Christoffel symbols on $CN$. The map $\hat{\phi}$ has the properties
\begin{eqnarray*}
\phi^0 &=& x^0 \\
\partial_0 \phi^\gamma &=& 0.
\end{eqnarray*}
Recall that the tension field $\tau(\phi)$ for a map $\phi :M \to N$ is the section of $\phi^*(TN)$ obtained by taking the trace of the second fundamental form:
\begin{equation}
\tau^\gamma(\phi) = g^{ij}\left( \partial^2_{ij} \phi^\gamma - \leftexp{M}{{\Gamma^k}_{ij}} \partial_k \phi^\gamma + \leftexp{N}{{\Gamma^\gamma}_{\alpha \beta}} \partial_i \phi^\alpha \partial_j \phi^\beta \right).
\end{equation}
The map $\phi$ is harmonic if and only if $\tau(\phi)=0$. Similarly we have a torsion field $\tau(\hat{\phi})$. We calculate
\begin{eqnarray}
\tau^\gamma(\hat{\phi}) &=& \frac{1}{r^2} \tau^\gamma(\phi) \\
\tau^0(\hat{\phi}) &=& \frac{1}{r} g^{ij} \left( g_{ij} - h_{\alpha \beta}\partial_i \phi^\alpha \partial_j \phi^\beta \right).
\end{eqnarray}
The result follows.
\end{proof}
\end{prop}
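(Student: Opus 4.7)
The plan is to reduce the statement to a direct computation with the tension field formula, exploiting the very special structure of the cone metric and the radial extension. The only input beyond routine bookkeeping is the observation that when $\phi$ is a Riemannian immersion the trace term in $\tau^0(\hat\phi)$ vanishes identically.

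First I would verify that $\hat\phi$ is a Riemannian immersion iff $\phi$ is: this is immediate from the block-diagonal form $\hat g = dr^2 + r^2 g$ together with the fact that $\hat\phi$ is the identity on the radial factor and equal to $\phi$ on each slice $\{r = \mathrm{const}\}$, since then $\hat\phi^*\hat h = dr^2 + r^2 \phi^* h$.

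Next I would set up coordinates exactly as in the statement: $x^0 = r$ on $CM$ and $y^0 = r$ on $CN$, with Latin indices ranging over the $M$- and $N$-directions. The key structural properties of the radial extension are $\hat\phi^0 = x^0$ (so $\partial_i \hat\phi^0 = 0$ and $\partial_0 \hat\phi^0 = 1$) and $\partial_0 \hat\phi^\gamma = 0$ for $\gamma \neq 0$. Then I would record the Christoffel symbols for the cone metric displayed in the statement (all of which follow mechanically from $\hat g_{ij} = r^2 g_{ij}$, $\hat g_{00} = 1$, $\hat g_{0i} = 0$) and the analogous formulas on $CN$. These inputs plug directly into the tension field formula
\begin{equation*}
\tau^\gamma(\hat\phi) = \hat g^{ab}\bigl(\partial^2_{ab}\hat\phi^\gamma - \leftexp{CM}{{\Gamma^c}_{ab}}\partial_c\hat\phi^\gamma + \leftexp{CN}{{\Gamma^\gamma}_{\alpha\beta}}\partial_a\hat\phi^\alpha\partial_b\hat\phi^\beta\bigr).
\end{equation*}
Because $\hat g^{0i} = 0$ and $\hat\phi$ depends only on the $x^i$ in its non-radial components, most cross-terms drop out. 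For $\gamma \neq 0$ the sum collapses to $\hat g^{ij}$ times the $M$-expression plus the extra Christoffel contribution from $\leftexp{CM}{{\Gamma^k}_{0j}} = \tfrac{1}{r}\delta^k_j$ acting on $\partial_k \hat\phi^\gamma$, but that contribution appears multiplied by $\hat g^{00}$ paired with $\partial_0\hat\phi^\gamma = 0$; after the dust settles one obtains $\tau^\gamma(\hat\phi) = r^{-2}\tau^\gamma(\phi)$.

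For the radial component $\gamma = 0$ the only surviving contributions come from $\leftexp{CM}{{\Gamma^0}_{ij}} = -r g_{ij}$ and from the $CN$-Christoffels $\leftexp{CN}{{\Gamma^0}_{\alpha\beta}}$, which by the cone formulas equal $-r h_{\alpha\beta}$ when $\alpha,\beta \neq 0$ and vanish otherwise. Assembling these gives exactly $\tau^0(\hat\phi) = r^{-1} g^{ij}\bigl(g_{ij} - h_{\alpha\beta}\partial_i\phi^\alpha\partial_j\phi^\beta\bigr)$ as claimed.

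Finally I would conclude: assuming $\phi$ is a Riemannian immersion (which is part of minimality and is equivalent to $\hat\phi$ being one), the pullback relation $g_{ij} = h_{\alpha\beta}\partial_i\phi^\alpha\partial_j\phi^\beta$ makes $\tau^0(\hat\phi)$ vanish automatically, so $\hat\phi$ is harmonic iff $\tau^\gamma(\hat\phi) = 0$ for all $\gamma \neq 0$, iff $\tau^\gamma(\phi) = 0$, iff $\phi$ is harmonic. The only mildly delicate step is keeping track of the cancellations in the $\gamma \neq 0$ computation, but since everything is linear in the Christoffel corrections and the radial derivatives of $\hat\phi^\gamma$ vanish, it is purely mechanical rather than a conceptual obstacle.
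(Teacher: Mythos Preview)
Your proposal is correct and follows essentially the same route as the paper: set up cone coordinates, compute the Christoffel symbols for the cone metric, plug into the tension field formula to obtain $\tau^\gamma(\hat\phi)=r^{-2}\tau^\gamma(\phi)$ and $\tau^0(\hat\phi)=r^{-1}g^{ij}(g_{ij}-h_{\alpha\beta}\partial_i\phi^\alpha\partial_j\phi^\beta)$, then conclude. You even make explicit the final step (that the Riemannian-immersion condition kills $\tau^0(\hat\phi)$) which the paper leaves as ``The result follows.''
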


Let $\mathbb{R}^{p,q}$ denote $\mathbb{R}^n$ with a signature $(p,q)$ inner product. We say that a submanifold $X$ of $\mathbb{R}^{p,q}$ is a cone if $X$ is diffeomorphic to $\Sigma \times (0,\infty)$ such that $i(x,r) = ri(x,1)$ where $i$ is the inclusion $i : X \to \mathbb{R}^{p,q}$ and the induced metric on $X$ is of the form $dr^2 + r^2g_\Sigma$ where $g_\Sigma$ is independent of $r$.

Restricting to $r=1$ we have an inclusion $ i : \Sigma \to Q \subset \mathbb{R}^{p,q}$ where $Q = \{ v \in \mathbb{R}^{p,q} | \; \langle v , v \rangle = 1 \} = {\rm O}(p,q)/{\rm O}(p-1,q)$. Conversely such a map defines a cone in $\mathbb{R}^{p,q}$. We thus have
\begin{corp}
There is a bijection between minimal cones with definite induced metric in $\mathbb{R}^{p,q}$ and minimal submanifolds of ${\rm O}(p,q)/{\rm O}(p-1,q)$ with definite induced metric.
\end{corp}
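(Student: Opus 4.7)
The plan is to exhibit the bijection directly as a consequence of the preceding proposition on radial extensions, once we recognise $\mathbb{R}^{p,q} \setminus \{0\}$ as essentially a cone over the quadric $Q = {\rm O}(p,q)/{\rm O}(p-1,q)$. First I would fix $Q = \{v \in \mathbb{R}^{p,q} \mid \langle v, v\rangle = 1\}$ with its induced metric $g_Q$ of signature $(p-1,q)$, and observe that the map $Q \times (0,\infty) \to \mathbb{R}^{p,q} \setminus \{v : \langle v,v\rangle \le 0\}$ given by $(x,r) \mapsto rx$ is a diffeomorphism under which the flat metric pulls back to $dr^2 + r^2 g_Q$. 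Hence this open subset of $\mathbb{R}^{p,q}$ is isometric to the cone $CQ$.

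Given a submanifold $i : \Sigma \to Q$ with definite induced metric $g_\Sigma = i^*g_Q$, its radial extension $\hat{i} : C\Sigma \to CQ = \mathbb{R}^{p,q} \setminus \{\cdots\}$ is an immersed cone in $\mathbb{R}^{p,q}$ with induced metric $dr^2 + r^2 g_\Sigma$, which is definite precisely when $g_\Sigma$ is. Conversely, given a cone $j : X \to \mathbb{R}^{p,q}$ in the sense of the definition (diffeomorphism $X \simeq \Sigma \times (0,\infty)$ with $j(x,r) = rj(x,1)$ and induced metric $dr^2 + r^2 g_\Sigma$), the restriction $j|_{r=1}$ maps $\Sigma$ into $Q$ and recovers $\Sigma$ as a submanifold with induced metric $g_\Sigma$; moreover $j = \hat{i}$ under the identification above. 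This sets up the bijection at the level of submanifolds (with definite induced metrics), with the two constructions $\Sigma \mapsto C\Sigma$ and $X \mapsto X \cap Q$ mutually inverse.

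Finally, to transfer the minimal condition across this bijection, I would invoke the proposition verbatim: with $M = \Sigma$, $g = g_\Sigma$, $N = Q$, $h = g_Q$, and taking $\phi = i : \Sigma \to Q$, the radial extension $\hat{\phi} = \hat{i}$ is minimal in $CQ$ (equivalently in $\mathbb{R}^{p,q}$) if and only if $i$ is minimal in $Q$. Since minimality is a local property of Riemannian immersions, the (possibly singular) apex $r = 0$ of the cone does not affect the equivalence.

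No serious obstacle is anticipated: the proposition already does the analytic work via the explicit relation between the tension fields of $\phi$ and $\hat\phi$. The only point requiring a small check is that the signature assumptions match up, namely that $g_\Sigma$ definite on $\Sigma$ corresponds to $dr^2 + r^2 g_\Sigma$ definite on $C\Sigma$, which is immediate since $dr^2$ is positive definite and the two summands are orthogonal.
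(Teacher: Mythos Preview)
Your proposal is correct and follows essentially the same route as the paper: the text immediately preceding the corollary already sets up the bijection between cones in $\mathbb{R}^{p,q}$ and submanifolds of the unit quadric $Q = {\rm O}(p,q)/{\rm O}(p-1,q)$ via restriction to $r=1$ and its inverse, and the corollary is then stated as an immediate consequence of the proposition on radial extensions. You simply make explicit the identification of $\{v : \langle v,v\rangle > 0\}$ with $CQ$ and spell out how the proposition is applied, which the paper leaves implicit.
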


In our situation we have a minimal surface $u : \Sigma \to {\rm O}(3,3)/{\rm O}(2,3)$.

\subsection{Minimal surfaces of signature $(3,3)$}\label{secquadrics2}

Let us apply the results of Section \ref{secquadrics} to the case of a minimal surface $\phi$ into $Q = \{ x \in \mathbb{R}^{3,3} \, | \, \langle x , x \rangle = 1 \}$. In this case the maximum possible isotropy order is $2$. In this case we have $\mu_1 = 1$, $\mu_2 = -1$, $\mu_3 = 1$, $q$ is a holomorphic cubic differential and the equations become
\begin{eqnarray}
2(w_1)_{z\overline{z}} &=& -e^{2w_2-2w_1} - e^{2w_1}, \label{d23toda1} \\
2(w_2)_{z\overline{z}} &=& q\overline{q}e^{-2w_2} + e^{2w_2 - 2w_1} \label{d23toda2}.
\end{eqnarray}
Let $a_1 = H_1 = e^{2w_1}$ and $a_2 = H_2/H_1 = e^{2w_2-2w_1}$. Then $a_1$ and $a_2$ are positive $(1,1)$-forms and can be thought of as metrics on $\Sigma$. But from the above equations we see that $a_1$ has strictly positive curvature while $a_2$ has strictly negative curvature. Therefore there are no compact solutions without singularities. The equation for an elliptic affine sphere in $\mathbb{R}^3$ \cite{lyz} appears as a special case of equations (\ref{d23toda1}),(\ref{d23toda2}). Indeed the elliptic affine sphere equation is
\begin{equation}\label{tzitz1}
2(w_1)_{z\overline{z}} = -q\overline{q}e^{-4w_1} - e^{2w_1}.
\end{equation}
If we set $H_2 = q\overline{q}/H_1$ then equation (\ref{tzitz1}) yields a solution to (\ref{d23toda1}) and (\ref{d23toda2}) (strictly speaking it is a solution away from the zeros of $q$, however one can show the corresponding minimal immersion extends over the zeros of $q$) . In fact we can explain this reduction more directly in terms of $G_2$ geometry. Equation (\ref{tzitz1}) corresponds to the equation for a cylindrical semi-flat $G_2$-manifold with scaling symmetry as in Section \ref{secredtosurf}.

There are other cases of minimal surfaces in $Q \subset \mathbb{R}^{3,3}$ that we can find similar equations for, namely real forms of the affine Toda equations for the affine Dynkin diagrams $A^{(1)}_1$ and $B^{(1)}_2$. These are minimal surfaces for which the image lies in a proper subspace of $\mathbb{R}^{3,3}$. The equations follow from the results of Section \ref{secquadrics}.

\section{Extension to split $G_2$-manifolds}\label{extension}
With a little work we can extend the semi-flat construction to produce split $G_2$-manifolds. Consider the split octonions $\tilde{\mathbb{O}} = \mathbb{R} \oplus {\rm Im}\, \tilde{\mathbb{O}}$. We say that a $3$-dimensional subspace $V$ of ${\rm Im}\, \tilde{\mathbb{O}}$ is {\em associative} if the split octonion metric is non-degenerate on $V$ and $\mathbb{R} \oplus V$ is closed under multiplication. The algebra $\mathbb{R} \oplus V$ is isomorphic to either the quaternions $\mathbb{H}$ when $V$ is positive definite, or the split quaternions $\tilde{\mathbb{H}}$ when $V$ has signature $(1,2)$. We call such associative subspaces {\em definite associative} and {\em split associative} respectively. This partitions the associative subspaces into two orbits of split $G_2$.

Similarly we call a $4$-dimensional subspace of ${\rm Im}\, \tilde{\mathbb{O}}$ {\em coassociative} if it is the orthogonal complement in ${\rm Im}\, \tilde{\mathbb{O}}$ of an associative subspace. If $V$ is a definite associative subspace then the corresponding coassociative subspace $V^\perp$ has signature $(0,4)$ and will be called {\em definite coassociative}. If $V$ is split associative then $V^\perp$ has signature $(2,2)$ and will be called {\em split coassociative}. Coassociative subspaces are characterised by the fact that they are non-degenerate $4$-dimensional subspaces $W$ such that the split octonion cross product of any two vectors in $W$ is orthogonal to $W$.\\

In what follows we restrict attention to ${\rm Im}\, \tilde{\mathbb{O}}$ equipped with the split octonion cross product. The $3$-form $\phi$ is given by
\begin{equation}
\phi(a,b,c) = \langle a \times b , c \rangle.
\end{equation}
The signature $(3,4)$ metric by
\begin{equation}
g(A,B)dvol = \frac{1}{6}\iota_A \phi \wedge \iota_B \phi \wedge \phi
\end{equation}
provided that we use the orientation opposite to the standard orientation on ${\rm Im}\, \tilde{\mathbb{O}}$. Finally the $4$-form $\psi$ is defined by $\psi = * \phi$. Given a decomposition ${\rm Im}\, \tilde{\mathbb{O}} = A \oplus C$ into an associative and corresponding coassociative subspace, we can describe $\phi$ as follows. Let $a_1,a_2,a_3$ be an orthogonal basis for $V$ with $\langle a_i,a_i \rangle = \epsilon_i$ where $(\epsilon_1,\epsilon_2,\epsilon_3) = (1,1,1)$ in the definite associative case or $(1,-1,-1)$ in the split associative case. Let $a^i$ be the corresponding coframe. There is a corresponding basis of self-dual $2$-forms $\omega_1,\omega_2,\omega_3 \in \wedge^2 C^*$ such that $\omega_i \wedge \omega_j = 2\delta_{ij}\epsilon_i vol_C$ where $vol_C$ is the volume form for $C$. Then the $3$-form is
\begin{equation}
\phi = a^{123} - a^i \wedge \omega_i.
\end{equation}
The volume form is $a^{123} \wedge vol_C$.\\

Consider a $7$-manifold $X$ with split $G_2$-structure. This structure is completely determined by a $3$-form $\phi \in \Omega^3(X)$. We also have a corresponding $4$-form $\psi = *\phi$. We say $X$ is a {\em split $G_2$-manifold} if $\phi$ and $\psi$ are closed.

We may define associative and coassociative submanifolds but in this case there are two types of each. Since we wish to construct coassociative fibrations we have a choice as to which type of coassociative submanifolds to consider. Let $M$ be an oriented $3$-manifold and consider a map $u : M \to \mathbb{R}^{3,3} \simeq \wedge^2 \mathbb{R}^4$. We also fix a constant volume form $dvol_C$ on $\mathbb{R}^4$. We require that $u$ sends each tangent space of $M$ to a non-degenerate subspace of $\mathbb{R}^{3,3}$ with respect to the signature $(3,3)$ metric, hence this induces a metric on $M$. In the definite coassociative case this metric has signature $(3,0)$ and in the split coassociative case the signature is $(1,2)$. The metric $h$ is given by
\begin{equation}
2h(A,B)dvol_C = u_*(A) \wedge u_*(B).
\end{equation}
Now consider $u$ as a $2$-form on $X = M \times T^4$ where $T^4 = (\mathbb{R}/ \mathbb{Z})^4$. Then on $X$ we define a $3$-form $\phi$ by
\begin{equation}
\phi = dvol_M - du
\end{equation}
where $dvol_M$ is the volume form induced by the orientation on $M$ and the metric $h$. It follows that $\phi$ is closed and gives $X$ a split $G_2$-structure. Therefore $\phi$ defines a signature $(3,4)$ metric $g$ on $X$ and we find that $g$ restricted to $TM$ agrees with $h$. Now we may determine when the $4$-form $\psi = *\phi$ is closed. The derivation is essentially the same as in the compact case. We find that $\psi$ is closed if and only if $u$ is a minimal immersion. We therefore have that locally semi-flat coassociative fibred split $G_2$ manifolds correspond to minimal immersions in $\mathbb{R}^{3,3}$ of signature $(3,0)$ in the definite coassociative case and signature $(1,2)$ in the split coassociative case. \\

If we consider the case where $M$ is a cone over a surface such that the radial direction is negative definite we find this corresponds to minimal surfaces in the quadric $Q_{-1} = \{x \in \mathbb{R}^{3,3} | \langle x , x \rangle = -1 \}$. This is the other quadric than in the compact $G_2$ case. We have seen that such minimal surfaces can be constructed from Higgs bundles in the Hitchin component for ${\rm SO}(3,3)$ or ${\rm SO}(2,3) = PSp(4,\mathbb{R})$. This gives a further interpretation of the Hitchin component for $PSp(4,\mathbb{R})$.


\chapter{Further questions}\label{further}
We discuss some unresolved questions raised from our work.\\

In relation to Higgs bundles, the Toda equations and the Hitchin component:
\begin{enumerate}
\item{Although we were able to link the affine Toda equations to cyclic Higgs bundles this only holds under specific reality conditions. It would be useful to have some existence results which cover all reality conditions, in particular for the case arising in relation to coassociative fibrations.}
\item{In an attempt to prove the energy functionals of Section \ref{energy} have non-degenerate minima their Hessians were investigated. The difficulty here is that the Hessian involves terms related to gauge invariance. In order to show positive definiteness one would need to make sharp estimates of these terms.}
\item{Our method for investigating the Hitchin components is only adequate for the rank $2$ case. In higher rank cases there are more differentials and the entire component can not be exhausted by cyclic Higgs bundles alone. Therefore it seems that a different approach would be required. However we can still gain partial information by considering the special case of cyclic Higgs bundles.}
\item{In the case of ${\rm PSp}(4,\mathbb{R})$ we showed that the convex-foliated projective structures of Guichard and Wienhard were equivalent to a class of projective structures on the unit tangent bundle where the fibres were lines. However the convex-foliated projective structures correspond to the Hitchin component for ${\rm PSL}(4,\mathbb{R})$. Therefore a natural question to ask is whether a similar statement holds for all convex-foliated structures. That is given any convex-foliated projective structure is it homeomorphic to a projective structure on the unit tangent bundle where the fibres are lines and if so does this projective structure have any other distinguishing features?}

\end{enumerate}

In relation to parabolic geometries and $G_2$ conformal holonomy:
\begin{enumerate}
\item{Although we have given examples of conformal structure with $G_2$ holonomy, the complexity of the formulas has prohibited further progress. In particular it is not at all clear under what conditions the holonomy is the whole of $G_2$, nor have we investigated what subgroups of $G_2$ can occur as conformal holonomy groups. In the case of definite signature conformal Einstein structures the possible local holonomy groups have been classified \cite{armstrong} but the case of indefinite and non-Einstein conformal structures remains incomplete. For some results in the indefinite case see \cite{lei}.}

\item{Aside from the two examples given there are other situations in which a generic $2$-plane distribution occurs in $5$ dimensions. In particular consider a $4$-manifold $M$ with $(2,2)$-conformal structure. The bundle of maximal isotropics is a $5$-manifold. In the case where the $M$ is orientable the maximal isotropics can put into two classes, the so-called $\alpha$-planes and $\beta$-planes. Thus there are two $5$-dimensional $S^1$-bundles over $M$, each with a naturally defined plane distribution \cite{akivis}. This geometry is a real version of the twistor space in Riemannian signature.}



\item{Consider the parabolic geometry corresponding to the other maximal parabolic of split real $G_2$. This is in fact a $5$-dimensional contact geometry. However, what is interesting is that there is a duality between the two homogeneous spaces. The homogeneous space corresponding to the other maximal parabolic can be described as the collection of lines tangent to the $2$-plane distribution in the homogeneous space corresponding to the first. The point line duality also works in reverse, that is a certain class of lines in one homogeneous space correspond to points in the other. It is interesting to note that this point line duality has an analogue in the corresponding parabolic geometries. If $M$ is a $5$-manifold with generic $2$-distribution, one can consider a special class of curves in $M$ known as abnormal extremals. In fact one can show that these curves are precisely the null geodesics everywhere tangent to the $2$-distribution. These form a $5$-dimensional contact manifold \cite{bryant}. The problem to investigate is whether this space can be given the structure of a parabolic geometry corresponding to the other maximal parabolic of $G_2$, and if so investigate the consequences of this duality.}
\end{enumerate}

In relation to coassociative fibrations:
\begin{enumerate}
\item{Since we have shown that a compact coassociative fibration (with $G_2$ holonomy) must degenerate the obvious question is what kind of singularities can occur? More specifically are there reasonable conditions under which the discriminant locus can be shown to have codimension $2$ or to be smooth? Also there is the question of what type of smooth fibres can occur. The two obvious candidates are a $4$-torus and a $K3$ surface but are there any others?}
\item{Seperate from this thesis we considered $G_2$ manifolds fibred by coassociative ALE manifolds. We found that under the further assumption that the fibration is a Riemannian submersion $X \to B$, the map $u : B \to H^2(F,\mathbb{R})$ is still harmonic where $F$ is the fibre. Beyond this it seems very difficult to say much about ALE fibrations in full generality. With additional simplifying assumptions one may be able to reduce to a more tractable problem.}
\item{Since we found that semi-flat coassociative fibrations correspond to minimal $3$-submanifolds in $\mathbb{R}^{3,3}$ it would be good to have a better understanding of such submanifolds and in particular some non-trivial explicit examples. It would also be good to examine the possible singularities that can occur in semi-flat fibrations.}
\end{enumerate}




\addcontentsline{toc}{chapter}{Bibliography}

\end{document}